\documentclass{amsart}          

\usepackage{amsmath,amsthm}     
\usepackage{amssymb,bbm}            
\usepackage{euscript}           
\usepackage{array,enumerate,calc,graphicx,xcolor}
\usepackage[matrix,arrow,curve,frame]{xy}    
\usepackage[colorlinks]{hyperref}
\usepackage{stmaryrd}
\usepackage{tikz-cd}

\makeatletter
\newcommand*\bigcdot{\mathpalette\bigcdot@{.7}}
\newcommand*\bigcdot@[2]{\mathbin{\vcenter{\hbox{\scalebox{#2}{$\m@th#1\bullet$}}}}}
\makeatother

\usepackage{tikz}
\usetikzlibrary{matrix,arrows,decorations.pathmorphing}

\xymatrixcolsep{1.9pc}                          
\xymatrixrowsep{1.9pc}
\newdir{ >}{{}*!/-5pt/\dir{>}}                  

\newtheorem{defn}[subsection]{Definition}
\newtheorem{prop}[subsection]{Proposition}

\newtheorem{cor}[subsection]{Corollary}
\newtheorem{lemma}[subsection]{Lemma}

\theoremstyle{definition}  
\newtheorem{example}[subsection]{Example}

\newtheorem{remark}[subsection]{Remark}

  {\end{list}}

\newcommand{\dfn}{\textbf} 

\newcommand{\mdfn}[1]{\dfn{\mathversion{bold}#1}} 

\newcommand{\tens}              {\otimes}               

\newcommand{\iso}               {\cong}

\newcommand{\cat}{\EuScript}    
\newcommand{\cB}{{\cat B}}      

\newcommand{\cD}{{\cat D}}

\newcommand{\Mod}{\text{Mod}}

\newcommand{\field}[1]  {\mathbb #1} 

\newcommand{\D}          {\field D}

\newcommand{\R}         {\field R}

\newcommand{\Z}         {\field Z}
\newcommand{\C}         {\field C}
\newcommand{\Q}         {\field Q}

\DeclareMathOperator*{\im}{Im}

\DeclareMathOperator{\Hom}{Hom}
\DeclareMathOperator{\uHom}{\underline{Hom}}
\DeclareMathOperator{\uExt}{\underline{Ext}}

\DeclareMathOperator{\uTor}{\underline{Tor}}
\DeclareMathOperator{\Ext}{Ext}
\DeclareMathOperator{\Tor}{Tor}

\DeclareMathOperator{\tr}{tr}

\newcommand{\ra}{\rightarrow}                   
\newcommand{\lra}{\longrightarrow}              
\newcommand{\lla}{\longleftarrow}               
\newcommand{\llra}[1]{\stackrel{#1}{\lra}}      
\newcommand{\llla}[1]{\stackrel{#1}{\lla}}      

\newcommand{\inc}{\hookrightarrow}              

\newcommand{\blank}{-}                          
\newcommand{\id}{id}                            
\newcommand{\und}{\underline}

\newcommand{\bdot}{\bullet}

\newcommand{\he}{\simeq}
\newcommand{\pt}{pt}

\numberwithin{equation}{section}

\newcommand{\Ab}{\cat Ab}

\newcommand{\bbox}{\Box}

\newcommand{\MMack}[2]{\xymatrix{
{#1} \ar@(ul,dl)[] \ar@<0.5ex>[r] & {#2}\ar@<0.5ex>[l]}}

\newcommand{\dbox}{\bbox \!\!\!\!\bbox}  

\newcommand{\MMod}{\!-\!\Mod}

\newcommand{\mZ}{\underline{\smash{\Z}}}
\newcommand{\mQ}{\underline{\smash{\Q}}}

\DeclareMathOperator{\Res}{Res}
\DeclareMathOperator{\Ind}{Ind}

\DeclareMathOperator{\lcm}{lcm}

\newcommand{\uH}{\underline{H}}

\DeclareMathOperator{\Div}{Div}
\newcommand{\ucD}{\underline{\cD}}
\newcommand{\bbone}{\mathbf{1}}
\newcommand{\ts}{\chi}

\newcommand{\mInd}{\underline{\Ind}}
\newcommand{\mRes}{\underline{\Res}}

\newcommand{\lamu}[1]{\lambda_{\und{#1}}}
\DeclareMathOperator{\tors}{tors}
\DeclareMathOperator{\len}{length}

\numberwithin{equation}{subsection}

\newcommand{\cln}{\colon\!}
\newcommand{\lur}[1]{\langle \underline{#1}\rangle}

\begin{document}
\title[Bredon Equivariant cohomology for cyclic groups]{The Bredon equivariant cohomology of a point for cyclic groups}

\author{Daniel Dugger}
\address{Department of Mathematics, University of Oregon, Eugene, OR 97403}
\author{Christy Hazel}
\address{Department of Mathematics, Grinnell College, Grinnell, IA 50112}

\begin{abstract}
We study the $RO(G)$-graded Bredon cohomology of a point in the case where $G$ is a cyclic group of odd order, expanding on the information provided by previous studies such as \cite{Z} and \cite{BD}.   Our methods center on the purely algebraic aspects of this matter, which interpret it as the ``stable homotopy groups of spheres'' problem for the derived category of modules over the constant-coefficient Mackey ring.
\end{abstract}

\maketitle

\tableofcontents

\section{Introduction}
\label{se:intro}

Let $G$ be a finite group and let $\mZ_G$ (or just $\mZ$) denote the constant-coefficient Mackey functor with value $\Z$.  The $RO(G)$-graded Bredon cohomology theory $H^\star(\blank;\mZ)$ has been much-studied in recent years, with interest getting a strong boost after its role (for $G=C_8$) in the solution of the Kervaire invariant one problem \cite{HHR1}.  Despite the notable interest in the theory, there are still many mysteries even around the ground ring $H^\star(\pt;\mZ)$.  Recent papers that focus on computing this ground ring for various groups $G$ include \cite{AMR}, \cite{BD}, \cite{BG1}, \cite{BG2}, \cite{DV}, \cite{G1}, \cite{HK}, \cite{KL}, \cite{Li}, \cite{Lu}, \cite{Ya1}, \cite{Ya2}, \cite{Z}, and there are undoubtedly many others.  
But while there are a host of papers on this problem, they sometimes offer only partial information or else information that is so complicated that it can be unclear how to make use of it.   

Our motivation in the present work was to take another look at this problem, but from a very concrete perspective.  The computation of $H^\star(\pt;\mZ)$ can be regarded as a purely algebraic problem: in the derived category $\cD(\mZ_G)$ of $\mZ_G$-modules one wants to write down all the invertible objects and compute all maps between them.  We wanted to understand what parts of this are easy and what parts are hard, and why the hard parts are hard.  And as a starting point we were satisfied to focus on the case where $G$ is a cyclic group.  

To jump to the end, the final results of our investigation are mixed.  We still cannot give a complete description of the ring $H^\star(\pt;\mZ)$ except when the cyclic group $G$ is very small (where ``small'' means that the order has at most three divisors, and so is either a prime or the square of a prime).   Nevertheless, we can give quite a bit of information about these rings. And we think we can give an overall 
guide to these rings that is a bit more user-friendly than perhaps has been available before.
\medskip

To begin to state the results in more detail, let $G=C_n$ be the cyclic group of order $n$, with a fixed generator $t$.  Let $\lambda(d)$ be $\R^2=\C$ with the action of $C_n$ where $t$ acts as multiplication by $e^{2\pi i\cdot d/n}$.  
When $n$ is odd the representations $1$ and $\lambda(d)$ for $1\leq d\leq \tfrac{n-1}{2}$ give a complete set of irreducibles for $C_n$.
When $n$ is even the sign representation on $\R$ must be added to the list.  For the rest of this paper we assume that $n$ is odd; the methods and results can all be adapted for $n$ even, but things are complicated enough without trying to deal with the two cases simultaneously.  

So for $n$ odd we have that $RO(C_n)=\Z^{\frac{n+1}{2}}$, and this is the group that grades our cohomology theory.  It is known that in fact there are several ``extra periodicities'' present in this setting, and that it is sufficient to restrict to the subgroup generated by the $\lambda(d)$ where $d|n$.  We will not explain this fully at the moment (see Remark~\ref{re:cells} for a discussion), but from the algebraic perspective we adopt below, this is a non-issue since the $\lambda(d)$ where $d\nmid n$ do not even appear.  

When $d|n$ we set $\lambda_d=\lambda(\tfrac{n}{d})$.  This re-indexing is a little annoying at first, but becomes extremely convenient.  Note that $\lambda_1$ is the rank $2$ trivial representation.

Let $\Div_n$ denote the set of positive divisors of $n$. In the free abelian group $\Z\langle \Div_n\rangle$ it will be useful to have the symbol $\lambda_b$---in addition to its above meaning---also denote the generator of $\Div_n$ corresponding to $b\in \Div_n$.  Let $\D_n=(\Z\langle 1\rangle \oplus\Z\langle \Div_n\rangle)/(1+1=\lambda_1)$, and observe that we can regard $\D_n$ as a subgroup of $RO(C_n)$.  It will sometimes be helpful to adopt the notation $\lambda_0=1\in \D_n$, just as a convenience.  The group $\D_n$ will replace $RO(C_n)$ as the preferred grading for our cohomology theory.   When we write $H^\star(\blank;\mZ)$ the intention is that $\star\in \D_n$, and we will reserve the asterisk $*$ for integer gradings.  

To talk about the derived category $\cD(\mZ)$ we first need some language and tools from the category of $\mZ$-modules. These were developed at length in \cite{DHone}, and here we will just give a very brief summary (more detail is in Section~\ref{se:Zmod} below).  Write $\Theta_d$ for the orbit $C_n/(t^d)$. Some basic objects in the category of $\mZ$-modules are:

\vspace{0.1in}
\begin{tabular}{|l|l|}
\hline
$F_d=F_{\Theta_d}$ & free $\mZ$-module on a generator in spot $\Theta_d$ (note that $F_1=\mZ$). \\
$I_d\subseteq \mZ$ & ideal generated by $1_d\in \mZ(\Theta_d)$.\\
$\mZ/I_d$ & the quotient module.  \\
$I_d/I_e$ & quotient, defined when $d\mid e$. \\ 
\hline
\end{tabular}
\vspace{0.1in}

\noindent
The category of $\mZ$-modules is often regarded as the equivariant analog of the category of abelian groups, but unlike the latter there is not a simple list of finitely-generated indecomposables.  The above list of basic objects is not anywhere close to being comprehensive, they just happen to be a convenient class with which to get started. 

The category of $\mZ$-modules has a closed symmetric monoidal structure, the tensor being the box product $\bbox$ and the co-tensor denoted $\uHom(\blank,\blank)$. 

For each $d|n$ we define $S^{\lambda_d}$ to be the length two chain complex
\[ 0 \lra F_d \llra{\id-Rt} F_d \llra{Rp} F_1 \lra 0.
\]
(see Section~\ref{se:Zmod} for our conventions about maps).  This is really the reduced cellular chain complex for an evident cell structure on the one-point compactification of the representation $\lambda_d$, but we can ignore the geometric origin for now and just take it as an algebraic object. See Remark~\ref{re:cells} for an explanation of the geometry. 
For a sequence $\und{b}=(b_1,\ldots,b_s)$ where each $b_i|n$, define
\[ S^{\lamu{b}}=S^{\lambda_{b_1}}\bbox \cdots \bbox S^{\lambda_{b_s}}.
\]
We will identify the symbol $\lamu{b}$ with the associated representation $\bigoplus_i \lambda_{b_i}$ when convenient.

With only a bit of trouble one can show algebraically that $S^{\lambda_d}$ is an invertible object in $\cD(\mZ)$ 
and that an inverse is $\uHom(S^{\lambda_d},\mZ)$
(see Proposition~\ref{pr:sphere-invertible}).  Define $S^{-\lambda_d}$ to be this complex, and for a sequence $\und{b}$ as above define
\[ S^{-\lamu{b}}=S^{-\lambda_{b_1}}\bbox \cdots \bbox S^{-\lambda_{b_s}}.
\]

To extend to a $\D_n$-grading,
fix once and for all the order $\leq$ of the divisors of $n$ and for $v=\sum_{d\neq 1} v_d\lambda_d\in \D_n$ define
\[ S^v=S^{v_0}\bbox \Bigl (\bbox_{v_d>0} (S^{\lambda_d})^{\bbox(v_d)}\Bigr ) \bbox \Bigl ( \bbox_{v_d<0} (S^{-\lambda_d})^{\bbox(-v_d)}
\Bigr )
\]
where the box product factors inside the parentheses are ordered using $\leq$.  For $v,w\in \D_n$ it is not true that $S^{v+w}$ {\it equals\/} $S^v\bbox S^w$, but one can specify a canonical isomorphism in the derived category  between them (see Section~\ref{se:D_stable} for details).

Given a $\beta\in \Div_n$ written as $\beta=m_0\lambda_0+\sum_i m_i \lambda_{b_i}$ with all $b_i> 1$ and $m_i\in \Z$, define $\dim \beta=m_0+2\sum_i m_i$.  Call this the \dfn{geometric dimension} of $\beta$.  This is just the real dimension of the associated representation.  Let us also define the \dfn{fixed dimension} of $\beta$ to be $m_0$. 

For $v\in \D_n$ one has the following fundamental connection between Bredon cohomology and the category $\cD(\mZ)$: 
\[ H^v(\pt;\mZ)=\cD(\mZ)(\mZ,S^v).
\]
Note that $\cD(\mZ)(\blank,\blank)$ denotes maps in $\cD(\mZ)$.  
Said more globally, the Bredon cohomology ring of a point is
\[ H^\star(\pt;\mZ)=H^\star(\pt)=H^\star=\bigoplus_{v\in \D_n} \cD(\mZ)(\mZ,S^v).
\]
The first three expressions will be used interchangeably.
The ring structure comes from the box product of maps.  Note that it is important here that $\D_n$ is free, otherwise there can be issues around associativity (see \cite{D} for a complete discussion).  
As an application of \cite[Proposition 1.2(3)]{D} one finds that $H^\star(\pt)$ satisfies the skew-commutative rule
\begin{equation}
\label{eq:skew-comm}
x\cdot y=(-1)^{(\text{fixed-dim. of $|x|$})\cdot (\text{fixed-dim. of $|y|$})} y\cdot x.
\end{equation}
See Section~\ref{se:D_stable} for details, as well as Corollary~\ref{co:comm-intro} below for a further, but non-obvious, simplification.

So now at last we see the purely algebraic problem in front of us: understanding $H^\star(\pt;\mZ)$ comes down to computing homotopy classes of maps $\mZ\ra S^{v}$ for $v\in \D_n$ and understanding how they multiply.  It is the ``stable homotopy groups of spheres'' problem but in the setting of $\cD(\mZ)$. \medskip

The category of $\mZ$-modules is enriched over itself, and this passes to the derived category $\cD(\mZ)$.  For chain complexes $X$ and $Y$ we write $\ucD(\mZ)(X,Y)$ for the $\mZ$-module function object, and we write
\[ \uH^v(X)=\ucD(X,S^v), \qquad \uH^v=\uH^v(\pt)=\uH^v(\mZ)=\ucD(\mZ,S^v).
\]
In words one often refers to this as ``Mackey-functor-valued cohomology''.  

The collection of groups $H^{\lamu{b}+k}(\pt)$ where $\und{b}$ is any sequence of divisors and $k\in \Z$ is called the \dfn{positive cone} inside of $H^\star(\pt)$.  It is easy to see that these groups vanish for $k<-\dim \lamu{b}$ and $k>0$, so that the nonzero groups lie in a wedge (or cone).  The positive cone is a subring of $H^\star(\pt)$.  
The collection of groups $H^{k-\lamu{b}}(\pt)$ where some $b_i>1$ is called the \dfn{negative cone}. Here they vanish for $k>\dim \lamu{b}$ or $k\leq 0$.   This is also a subring of $H^\star(\pt)$.  The positive and negative cones together are called the \dfn{regular region} of $H^\star(\pt)$.  Note that the regular region is {\it not\/} a subring, though.  

The homogeneous parts of $H^\star(\pt)$ that are outside the regular region are called \dfn{irregular}.  The general situation to keep in mind is that the positive cone is a Noetherian ring that in practice might be somewhat accessible, the negative cone is a non-Noetherian ring that can be difficult to describe but in some ways is not much more complicated than the positive cone, whereas the irregular region is a netherworld that seems to largely defy attempts at a complete description.  Often papers that claim to compute $H^\star_G$ leave out the irregular region altogether.  Note that when $G=C_\ell$, $\ell$ a prime, there is no irregular region in the $\D_\ell$-grading, making this case particularly simple.  \bigskip

Now we have set the stage and can talk about results.  
The complex $S^{\lambda_d}$ comes equipped with canonical maps 
\[ \mZ \llra{a_d} S^{\lambda_d}, \qquad \Sigma^2 \mZ \llra{u_d} S^{\lambda_d}.
\]
As elements of the cohomology ring these are $a_d\in H^{\lambda_d}(\pt)$ and $u_d\in H^{\lambda_d-2}(\pt)$; we will also write $|a_d|=\lambda_d$ and $|u_d|=\lambda_d-2$ for the degrees.    It is easy to compute the homology of the complex $S^{\lambda_d}$ by hand, revealing that
\[ \uH^{\lambda_d-i}(\pt)=\cD(\mZ,S^{\lambda_d-i})=\cD(\Sigma^i\mZ,S^{\lambda_d})=
\uH_i(S^{\lambda_d})\iso \begin{cases}
\mZ & \text{$i=2$}, \\
0 & \text{$i=1$ or $i\geq 3$}, \\
\mZ/I_d & \text{$i=0$}.
\end{cases}
\]
Here $u_d$ and $a_d$ are the generators in their respective degrees.   [Warning: A topologist might expect to see the {\it reduced} homology of the sphere appear in the above formula, but we remind the reader that our $S^{\lambda_d}$ is not the geometric sphere but rather its {\it reduced} cellular chain complex.]

The Mackey functors for $\uH^{\star}(\pt)$ in the regular region are likewise
\[ \uH^{\lambda_{d_1}+\cdots+\lambda_{d_s}-i}(\pt)=
\uH_i(S^{\lambda_{d_1}}\bbox \cdots \bbox S^{\lambda_{d_s}})
\]
and
\[ 
\uH^{i-\lambda_{d_1}-\cdots-\lambda_{d_s}}(\pt)=
\uH_{(-i)}(S^{-\lambda_{d_1}-\cdots-\lambda_{d_s}})
\iso \uH^i(S^{\lambda_{d_1}}\bbox \cdots \bbox S^{\lambda_{d_s}}    ). 
\]
For a general degree in the irregular region we need to compute
\[ \uH^{i+\lambda_{\und{d}}-\lambda_{\und{c}}}(\pt)= 
\ucD( S^{\lambda_{\und{c}}}, \Sigma^i S^{\lambda_{\und{d}}}).
\]
It is clear from these descriptions that all of the $\mZ$-modules in $\uH^\star(\pt)$ are finitely-generated.  
Moreover, in all of these instances one has K\"unneth or Universal-Coefficient spectral sequences that can be used (see \cite{Z} or \cite[Section 6]{DHone}), but these tend to be replete with nontrivial differentials that need to be detected. The issues goes away after rationalization, and one readily finds the following (see Proposition~\ref{pr:rationalization}):

\begin{prop}
\label{pr:rat1}
The ring $\uH^\star(\pt)\tens \Q$ is $\mQ[u_d, u_d^{-1} \,:\, d|n, d\neq 1]$.
\end{prop}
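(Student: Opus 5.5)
The plan is to show that after rationalization every $u_d$ becomes an isomorphism $\Sigma^2\mZ\tens\Q\to S^{\lambda_d}\tens\Q$ in $\cD(\mZ)$, so that every sphere $S^v$ rationally becomes $\Sigma^{\dim v}\mQ$. From this the Mackey-functor-valued ring $\uH^\star\tens\Q$ is read off as $\mQ$ in each degree $v$ with $\dim v$ even, generated by a monomial in the $u_d^{\pm1}$.

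\emph{Step 1.} We compute the homology of $S^{\lambda_d}\tens\Q$. Rationalization is exact, so by the computation above the homology is $\mQ$ in degree $2$ (generated by $u_d$) and $(\mZ/I_d)\tens\Q$ in degree $0$. We then need $(\mZ/I_d)\tens\Q=0$. On $\Theta_e$ the value of $\mZ/I_d$ is $\Z/(\text{index})$: the image of transfer from $\Theta_d$ lands in $\Theta_e$ with value multiplication by an integer dividing $d$-related indices, and it is $\Z$ at spots not below $\Theta_d$? This needs checking. I expect that $I_d(\Theta_e)$ is the ideal generated by transfers/restrictions of $1_d$. Since restriction of $1_d$ to $\Theta_1$ is $1$ and transfers multiply by indices, each value of $\mZ/I_d$ is a finite cyclic group, hence torsion. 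So $u_d\tens\Q$ is a quasi-isomorphism $\Sigma^2\mQ\to S^{\lambda_d}_\Q$.

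\emph{Step 2.} Since $S^{-\lambda_d}$ is the inverse of $S^{\lambda_d}$ (Proposition~\ref{pr:sphere-invertible}), dualizing gives $S^{-\lambda_d}_\Q\simeq\Sigma^{-2}\mQ$. Box products of quasi-isomorphisms between these complexes of frees remain quasi-isomorphisms, and $\mQ\bbox\mQ=\mQ$. Hence $S^v_\Q\simeq\Sigma^{\dim v}\mQ$ via the product of the $u_d^{\pm1}$.

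\emph{Step 3.} We identify $\uH^v\tens\Q$ with $\ucD(\mQ,S^v_\Q)$. This holds because $\mZ$ and the $S^v$ are bounded complexes of finitely generated frees, so $\uHom$ commutes with $-\tens\Q$. It then remains to compute $\ucD(\mQ,\Sigma^k\mQ)$ over $\mQ$-modules. This is the main obstacle: one needs $\underline{\Ext}^i_{\mQ}(\mQ,\mQ)=0$ for $i>0$. Since $\mZ$ is itself the free module $F_1$, $\uHom(\mZ,M)=M$ with no higher Ext, and after rationalizing $\ucD(\mZ,\Sigma^k\mZ)\tens\Q=\mQ$ for $k=0$ and $0$ otherwise. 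So $\uH^v\tens\Q=\mQ$ if $\dim v=0$ after shifting by $u$'s, and $0$ otherwise. Every degree with nonzero rational group is therefore $\sum_d m_d(\lambda_d-2)$, and the monomial $\prod u_d^{m_d}$ generates it. Here $u_d^{-1}$ denotes the rational inverse, which exists by Step 1.

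\emph{Step 4.} The ring structure matches polynomial multiplication because products correspond to box products of the $u$-maps. By \eqref{eq:skew-comm}, the $u_d$ have fixed dimension $-2$, which is even, so they commute. Finally, the $\lambda_d$ for $d\ne1$ together with $1$ are independent in $\D_n$, so the monomials are distinct and the ring is the Laurent polynomial ring claimed.
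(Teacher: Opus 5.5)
Your proposal is correct and follows the paper's route: the paper's whole argument is that rationalization kills the cofiber $\mZ/I_d$ of $u_d\colon \Sigma^2\mZ\to S^{\lambda_d}$, so each $u_d$ is a rational equivalence, and your Steps 2--4 are the bookkeeping the paper leaves implicit.

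Two small slips need fixing. First, the nonzero rational degrees are those with $\dim v=0$, as your Step 3 correctly says, not those with $\dim v$ even as your overview claims. Second, the clean reason $(\mZ/I_d)\tens\Q=0$ is that $\mZ/I_d$ is generated by $1\in \mZ(\Theta_1)$, while $d\cdot 1=\pi_*(1_d)$ lies in $I_d(\Theta_1)$; hence $d$ annihilates $\mZ/I_d$. Your phrase about restricting $1_d$ to $\Theta_1$ has the direction backwards, and the hedging in that step is unnecessary.
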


Since $|u_d|=\lambda_d-2$ and therefore $\dim |u_d|=0$ for all $d$,
the following result is immediate from Proposition~\ref{pr:rat1}:

\begin{cor}
\label{co:rat1}
$\uH^\beta(\pt)$ is torsion whenever $\dim \beta\neq 0$.
\end{cor}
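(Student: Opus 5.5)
The plan is to read the corollary off from Proposition~\ref{pr:rat1} by a grading argument, and then pass from ``rationally zero'' to ``torsion'' using finite generation.

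First I would pin down what $\uH^\star(\pt)\tens\Q$ looks like in a single degree. By Proposition~\ref{pr:rat1} it is the Laurent ring $\mQ[u_d,u_d^{-1}]$, so as a graded object it is spanned by the monomials $\prod_d u_d^{m_d}$ with $m_d\in\Z$. The monomial $\prod_d u_d^{m_d}$ has degree $\sum_d m_d(\lambda_d-2)$. Since $\dim$ is additive on $\D_n$ and $\dim(\lambda_d-2)=2-2=0$, every monomial lies in a degree of geometric dimension $0$. Hence for $\dim\beta\neq 0$ the degree-$\beta$ part of the rationalized ring vanishes, i.e.\ $\uH^\beta(\pt)\tens\Q=0$. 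Here I am using that rationalization is applied levelwise and is compatible with the $\D_n$-grading, which is how Proposition~\ref{pr:rat1} is to be read.

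Next I would convert this into the torsion statement. The introduction already observes that every Mackey functor $\uH^\beta(\pt)$ is finitely generated. Evaluating at each orbit $\Theta_e$ therefore gives a finitely generated abelian group $\uH^\beta(\pt)(\Theta_e)$, and its rationalization is the value of $\uH^\beta(\pt)\tens\Q$ at $\Theta_e$, which is $0$. A finitely generated abelian group $A$ with $A\tens\Q=0$ is finite, and in particular torsion. So every level of $\uH^\beta(\pt)$ is a finite abelian group, and $\uH^\beta(\pt)$ is torsion.

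The only delicate point is that finite generation is really needed only to upgrade ``torsion'' to ``finite''. Rational vanishing of an abelian group already forces it to be torsion, so the argument goes through even without that input. Beyond this, and beyond matching the grading conventions for $u_d$ correctly, I expect no obstacle.
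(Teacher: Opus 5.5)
Your proposal is correct and matches the paper's argument: every Laurent monomial in the $u_d$ lies in a degree of geometric dimension zero, so $\uH^\beta(\pt)\tens\Q=0$ when $\dim\beta\neq 0$, and this forces $\uH^\beta(\pt)$ to be torsion. The finite-generation step only upgrades this to finiteness of each level, and as you note, it is not needed for the torsion conclusion itself.
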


In degrees where the rationalization map $\uH^\star(\pt)\ra \uH^\star(\pt)\tens \Q$ is injective, we can choose to name elements by their image.  This will be useful on multiple occasions.  In a moment we will give a complete description of the image of this map, but it will be helpful to first address other matters.
We also mention that both Proposition~\ref{pr:rat1} and Corollary~\ref{co:rat1} are well-known, but they are useful starting-off points for our story.

It turns out that there are some extra periodicities in $\uH^\star(\pt)$ that are not immediately obvious: for any divisors $c$ and $d$ of $n$ one has 
\[ S^{\lambda_c} \bbox S^{\lambda_d} \he S^{\lambda_{(c,d)}}\bbox S^{\lambda_{[c,d]}}
\]
where $(c,d)$ is the gcd and $[c,d]$ is the lcm.  (This can be compared, metaphysically, to the non-canonical isomorphism of abelian groups $\Z/c\oplus \Z/d \iso \Z/(c,d)\oplus \Z/[c,d]$.)  See Corollary~\ref{co:basic_equiv}.  Said slightly differently, the ring $H^\star(\pt)$ has a system of units
\[ \chi_{c,d}\in H^{\lambda_{(c,d)}+\lambda_{[c,d]}-\lambda_c-\lambda_d}
\]
for every pair of divisors $c$ and $d$.  The $\mZ$-module $\uH^{\lambda_{(c,d)}+\lambda_{[c,d]}-\lambda_c-\lambda_d}$ is isomorphic to $\mZ$, so this is a degree where the rationalization map is injective and we can adopt the notation
\[ \chi_{c,d}=\tfrac{u_{(c,d)}u_{[c,d]}}{u_cu_d}.
\]
The $\chi_{c,d}$ and their inverses, together with $\pm 1$, turn out to be the only homogeneous units in $H^\star(\pt)$. See Proposition~\ref{pr:units}.  

\begin{remark}
These extra periodicities allow one to reduce the grading group from $\D_n$ down to something much smaller.  For example, if $\ell_1$ and $\ell_2$ are primes then any $\lambda_{\ell_1^{e_1}\ell_2^{e_2}}$ that appears in a degree can be replaced by $\lambda_{\ell_1^{e_1}}+\lambda_{\ell_2^{e_2}}-\lambda_1$ to obtain an isomorphic group.  So up to isomorphism all computations are reduced to degrees that are formal sums of $\lambda_d$ where $d$ is a prime power.  For $n=\prod \ell_i^{e_i}$ this reduces the usual $\Z^{\varphi(n)}$-grading (where $\varphi(n)$ the Euler totient function) to a $\Z^{(1+
\sum e_i)}$-grading, which is typically substantially smaller.  (One can accomplish something similar via $\ell$-localization, which we discuss in a moment.)
\end{remark}

Note that the ``move'' $x,y\mapsto (x,y),[x,y]$ replaces an arbitrary pair of integers with a pair where the first divides the second.  If $x|y$ in the original pair, the move doesn't change anything.  Define a \dfn{divisor string} to be  a sequence of positive integers where each one divides the next.
Starting with any sequence of positive integers $\und{c}=(c_1,\ldots,c_s)$, applying the above move iteratively (it doesn't matter what order) eventually stabilizes at a new sequence $(\und{c};1),(\und{c};2),\ldots,(\und{c};s)$ that is a divisor string.  We call this the \mdfn{divisor string associated to $\und{c}$}.  One can show that $(\und{c};r)$ is the gcd of the $r$-at-a-time lcms of elements of the original sequence.  So at the two ends we have $(\und{c};1)=(c_1,\ldots,c_s)$ and $(\und{c};s)=[c_1,\ldots,c_s]$.  Alternatively, $(\und{c};r)$ is the product over all primes $\ell$ of the $r$th smallest power of $\ell$ that appears in the prime factorizations of $c_1,\ldots,c_s$.

Iterative multiplication by the $\chi$-units allows us to transform any group $H^{\lambda_{\und{c}}-\lambda_{\und{d}}+k}$ into the isomorphic group $H^{\lambda_{\und{c}'}-\lambda_{\und{d}'}+k}$ where $\und{c}'$ and $\und{d}'$ are the divisor strings associated to $\und{c}$ and $\und{d}$.  Consequently, the problem of understanding the groups $H^{\star}(\pt)$ reduces to the case where the positive and negative components of $\star$ are each a divisor string.\bigskip

The complex $S^{\lambda_d}$ is concentrated in degrees $0$ through $2$ and has three free modules appearing.  The box product $S^{\lambda_{d_1}}\bbox \cdots \bbox S^{\lambda_{d_s}}$ is concentrated in degrees $0$ through $2s$ and involves $3^s$ box products of free modules.  It turns out---and this is not at all obvious---that there
is a much smaller model: $S^{\lambda_{d_1}+\cdots+\lambda_{d_s}}$ is quasi-isomorphic to the complex
\begin{equation}
\label{eq:linear-sphere}
 0 \ra F_{D_s} \llra{\id-Rt} F_{D_s} \llra{I\pi R\pi} F_{D_{s-2}} \llra{\id-Rt} F_{D_{s-2}} \llra{I\pi R\pi} \cdots \lra F_{D_1} \llra{\id-Rt} F_{D_1} \llra{R\pi} \mZ \ra 0
\end{equation}
with the terms in homological degrees $0$ through $2s$ (the differentials are described in Section~\ref{se:smallmodels}).
Here $D_i=(\und{d};i)$ coming from the associated divisor string to the original sequence.  See Proposition~\ref{pr:spheres-small-model} for this quasi-isomorphism.
These ``linear'' models for the spheres are much easier to work with than the large box product complexes.
\bigskip

It is easy to compute the homology groups of the above linear complexes, yielding the following:

\begin{prop}
\label{pr:positive-cone-MF}
The homology modules in the positive cone are given by:
\[ \uH^{\lambda_{d_1}+\dots+\lambda_{d_s}-k}=\uH_k(S^{\lambda_{d_1}+\cdots+\lambda_{d_s}})\iso \begin{cases}
\mZ & \text{$k=2s$},\\
0 & \text{$k$ odd or $k\geq 2s+1$},\\
\mZ/I_{D_{1+\frac{k}{2}}} & \text{$k$ even and $0\leq k\leq 2s-2$.}
\end{cases}
\]
\end{prop}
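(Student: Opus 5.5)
Since Proposition~\ref{pr:spheres-small-model} identifies $S^{\lambda_{d_1}+\cdots+\lambda_{d_s}}$ up to quasi-isomorphism with the linear complex (\ref{eq:linear-sphere}), I would compute the homology of that complex directly, one degree at a time. The differentials are maps between free modules $F_{D_i}$. Because we work with Mackey functors, I would evaluate everything levelwise at each orbit $\Theta_e$, $e\mid n$. This turns the computation into honest abelian-group homology, and the answers are then reassembled as $\mZ$-modules.

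First I would record the values involved: $F_d(\Theta_e)$ is the free abelian group on the double cosets, namely $\Z[C_n/(t^d)]$ restricted/fixed appropriately. The map $\id-Rt\colon F_d\to F_d$ is the norm-type map from the cellular structure of $S^{\lambda_d}$, whose kernel is the invariants and whose cokernel is the coinvariants. At each level the kernel of $\id-Rt$ is a copy of $\Z$ generated by the orbit sum, and the image is the augmentation-zero part.

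Next I would handle the degrees.
\begin{itemize}
\item In odd degree $2i-1$, the kernel of $I\pi R\pi$ (or of $R\pi$ at the bottom) restricted to the coinvariants should coincide with the image of $\id-Rt$. This works because $I\pi R\pi$ maps the coinvariant class $\Z$ injectively (by multiplication by a nonzero integer) into the next module, so odd homology vanishes.
\item At the top, degree $2s$, the kernel of $\id-Rt$ on $F_{D_s}$ is the invariant submodule. Its levelwise values assemble to the constant functor $\mZ$, via the orbit-sum generator compatible with restriction and transfer.
\item In even degree $k=2j$ with $0\le k\le 2s-2$, the cycles form the invariant part of $F_{D_{j+1}}$ (or $\mZ$ when $k=0$), which is a copy of $\mZ$. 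The boundaries are the image of the invariant classes of $F_{D_{j+1}}$ under $I\pi R\pi$ (for $k=0$, under $R\pi$ from $F_{D_1}$). I would check that this image is exactly the ideal generated by $1_{D_{j+1}}$, i.e.\ $I_{D_{j+1}}$, so the homology is $\mZ/I_{D_{1+k/2}}$.
\item Vanishing above degree $2s$ is automatic.
\end{itemize}

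The main obstacle is the even-degree identification: showing that the composite $I\pi R\pi$ carries the generator of invariants precisely onto the generator $1_{D}$ of $I_D$, rather than onto some multiple, at every level $\Theta_e$ simultaneously. I would attack this by computing $I\pi R\pi$ on orbit sums explicitly using the double-coset formula for $R\pi$ followed by $I\pi$, level by level. I would compare the resulting integers with those of $I_D(\Theta_e)\subseteq \Z$, which equals $\Z$ if $D\mid e$ and $\tfrac{e}{(e,D)}$-type multiples otherwise. As a sanity check, the case $s=1$ must reproduce the known $\uH_*(S^{\lambda_d})$ computed in the introduction.
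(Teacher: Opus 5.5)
Your route is the one sketched in the introduction, but as a proof it has two problems.

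\textbf{The first problem is logical.} In the body of the paper, Proposition~\ref{pr:spheres-small-model} is proved \emph{after} this computation and \emph{uses} it. There the attaching map $f\colon\Sigma\tilde{B}\to S^{\lambda_{b_1}}$ is only known to be $w\cdot I\pi R\pi$ for some $w\in\Z$. The value $w=\pm1$ is forced by comparing $\uH_2(Cf)(\Theta_1)\iso\Z/(wb_2)$ with the previously computed $\uH_2(S^{\lambda_{b_1}+\cdots+\lambda_{b_s}})\iso\mZ/I_{b_2}$, which is a case of the statement you are proving. So citing the linear model is circular unless you pin down $w$ some other way, and the proposal offers none.

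The paper's argument bypasses the small model entirely. For a divisor string $b_1\mid\cdots\mid b_s$, box $\Sigma^2\mZ\llra{u_{b_1}}S^{\lambda_{b_1}}\lra\mZ/I_{b_1}$ with $S^{\lambda_{b_2}+\cdots+\lambda_{b_s}}$. Since $F_{b_i}\bbox\mZ/I_{b_1}=0$ by (\ref{eq:box-zero}), this gives a cofiber sequence $\Sigma^2S^{\lambda_{b_2}+\cdots+\lambda_{b_s}}\lra S^{\lambda_{b_1}+\cdots+\lambda_{b_s}}\lra\mZ/I_{b_1}$. By induction everything is concentrated in even degrees, so the long exact sequence breaks up into exactly the claimed answer (Proposition~\ref{pr:l-case}). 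Arbitrary sequences are then reduced to divisor strings $\ell$-locally via the equivalences $u_{[b:b(\ell)]}$ of Proposition~\ref{pr:ell-local-1}, and reassembled using (\ref{eq:ell-local-fact}) (Corollary~\ref{co:positive-cone-computation1}).

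\textbf{The second problem is in your even-degree step}, even granting the linear model; your odd-degree and top-degree reasoning is fine.
In degree $2j$ with $j\geq 1$ the chain module is $F_{D_j}$, not $F_{D_{j+1}}$. Its cycles are $\ker(\id-Rt)=\operatorname{im}(I\pi\colon\mZ\to F_{D_j})\iso\mZ$ by exactness of (\ref{eq:std-free}). The boundaries are the image under $I\pi R\pi$ of the \emph{whole} degree-$(2j+1)$ module $F_{D_{j+1}}$, not of its invariant classes. The invariants of $F_{D_{j+1}}$ are $I\pi(\mZ)$, and $R\pi\circ I\pi$ is multiplication by $D_{j+1}$. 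So your recipe would produce $\mZ/D_{j+1}\mZ$, which is nonzero at $\Theta_{D_{j+1}}$ as soon as $D_{j+1}>1$, whereas $\mZ/I_{D_{j+1}}(\Theta_{D_{j+1}})=0$.

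Likewise the ``main obstacle'' you formulate is false: at $\Theta_D$ the invariant generator $\pi^*\pi_*g_D$ is sent by $R\pi$ to $D\cdot 1_D$, not to $1_D$.

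Once this is corrected there is nothing to check level by level. The map $I\pi$ is injective, and the image of $R\pi\colon F_D\to\mZ$ is $I_D$ by the very definition of $I_D$ (as $g_D\mapsto 1_D$). So the homology in degree $2j$ is $I\pi(\mZ)/I\pi(I_{D_{j+1}})\iso\mZ/I_{D_{j+1}}$, and in degree $0$ it is $\mZ/I_{D_1}$ directly. (Also, $I_D(\Theta_e)=\tfrac{D}{(D,e)}\Z$, not $\tfrac{e}{(e,D)}\Z$.)
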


The indexing on the $D$'s is a little unfortunate, but the point is that one gets precisely the $D$-sequence, in order, in the even degrees from $0$ up through $2s-2$.   

One can also completely describe the positive cone as a ring:

\begin{prop}
\label{pr:positive-cone-intro}
The positive cone is the quotient of the graded Mackey ring $\mZ[a_d,u_d: d|n, 1<d]$ by the relations 
\begin{itemize}
\item $(p_{\Theta_d\ra \Theta_1})^*(a_d)=0$ (Euler class relation),
\item $\tfrac{d}{(d,e)}a_du_e=\tfrac{e}{(d,e)}a_eu_d$ for all $d,e$ (gold relations).
\end{itemize}
\end{prop}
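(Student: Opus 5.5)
We prove the proposition by building the obvious map from the presented ring to the positive cone and checking, degree by degree, that it is surjective and injective. Injectivity is handled in two pieces: the top degree is detected by rationalization, and the lower degrees by comparing with the known homology modules.

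\textbf{The ring map and the relations.} Let $R$ be the quotient of $\mZ[a_d,u_d]$ by the two families of relations, and let $\phi\colon \mZ[a_d,u_d]\ra \uH^{\lambda_{\und{b}}+*}$ be the map sending $a_d,u_d$ to the canonical classes. Both relations hold in the target:
\begin{itemize}
\item The Euler class relation is the statement that $a_d$ generates $\uH^{\lambda_d}=\mZ/I_d$. In this module the restriction to $\Theta_d$ vanishes, because $I_d$ contains $1_d$.
\item For the gold relation, both sides lie in $\uH^{\lambda_d+\lambda_e-2}$. By Proposition~\ref{pr:positive-cone-MF} with $k=2$ and $s=2$, this module is $\mZ/I_{D_2}$ with $D_2=[d,e]$; in fact only the $k=0$ slot $\mZ/I_{(d,e)}$ is torsion-like here, and $k=2\le 2s-2$ gives $\mZ/I_{[d,e]}$. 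Its value at $\Theta_1$ is $\Z/[d,e]$ (using the description of $\mZ/I_m$ from Section~\ref{se:Zmod}), so the relation can be checked at $\Theta_1$ alone.
\item To do that check, use the linear model \eqref{eq:linear-sphere} for $S^{\lambda_d+\lambda_e}$ and the quasi-isomorphism of Proposition~\ref{pr:spheres-small-model}. Write the images of $a_du_e$ and $a_eu_d$ explicitly as cycles, and compare the coefficients $d/(d,e)$ and $e/(d,e)$ that arise from the transfer/restriction maps $I\pi R\pi$.
\end{itemize}
So $\phi$ factors through $R$.

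\textbf{Surjectivity.} We show that every degree of the positive cone is hit, working in the linear model.
\begin{itemize}
\item By Proposition~\ref{pr:positive-cone-MF}, the group $\uH^{\lambda_{\und{b}}-k}$ is cyclic as a $\mZ$-module: it is $\mZ$ in top degree $k=2s$ and $\mZ/I_{D_{1+k/2}}$ for even $k\le 2s-2$.
\item The top class is hit by $u_{b_1}\cdots u_{b_s}$.
\item In degree $k=2j$, the product $a_{b_{i_1}}\cdots a_{b_{i_{s-j}}}\,u_{\text{rest}}$ has annihilator $I_{c}$, with $c$ the lcm of the chosen $b$'s. The generator $\mZ/I_{D_{j+1}}$ is a $\Z$-linear combination of such monomials. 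The reason is that $D_{j+1}$ is the gcd over $(s-j)$-element subsets of their lcms, by the gcd-of-lcms description of the divisor string.
\item Combining monomials via Bezout then produces a generator. Equivalently, using the $\chi$-units and the gold relations we can first reduce to $\und{b}$ a divisor string, where a single monomial suffices.
\end{itemize}

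\textbf{Injectivity.} We bound $R$ in each degree from above by the target.
\begin{itemize}
\item \emph{Top degree $k=2s$.} After rationalization, $u$-monomials map injectively into $\mQ[u^{\pm1}]$ by Proposition~\ref{pr:rat1}, so the target there is torsion-free and rank one. The product $u_{b_1}\cdots u_{b_s}$ is the only monomial in this degree.
\item \emph{Degrees $k\le 2s-2$.} Each monomial involving some $a$ is killed by restriction to $\Theta_c$, where $c$ is the lcm of the indices of its $a$'s. This follows from the Euler relation together with Frobenius reciprocity (restriction is a ring map).
\item The gold relations let us exchange $a_du_e$ against $a_eu_d$. Using them, every monomial in a fixed degree becomes a multiple of a single normal-form monomial: after passing to divisor strings, the one with $a$'s on the largest $D$'s.
\item Hence $R$ in each degree is a quotient of the cyclic module $\mZ/I_{D_{j+1}}$, and a surjection onto the target forces an isomorphism.
\end{itemize}

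The main obstacle is this normal-form step. One has to show that the gold relations with integer coefficients genuinely identify all the monomials up to the right units, with no leftover $\Z/m$ ambiguity. We would do this first for two prime-power divisors of the same prime, where divisibility makes the coefficient $d/(d,e)$ equal to $1$ or a power of $\ell$. Then we would reduce the general case to this one by $\ell$-localization, one prime at a time.
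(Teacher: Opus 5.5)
Your architecture is the paper's: Proposition~\ref{pr:Q-Mackey} together with the corollary after it. You map the presented ring $R$ to the positive cone, get surjectivity from generation by $au$-classes, and get injectivity by showing each graded piece of $R$ is a quotient of $\mZ/I_{D_{j+1}}$. \textbf{The gap is the step that should deliver that upper bound.} You bound each monomial by restriction to $\Theta_c$ with $c$ the lcm of its $a$-indices, and that is too weak.

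Take the divisor string $(\ell,\ell^2,\ell^3)$ in degree $\lambda_\ell+\lambda_{\ell^2}+\lambda_{\ell^3}-2$, where the target is $\mZ/I_{\ell^2}$. Your normal-form generator $a_{\ell^2}a_{\ell^3}u_\ell$ has lcm of $a$-indices equal to $\ell^3$. So you only learn that $R$ is a quotient of $\mZ/I_{\ell^3}$, and surjections $\mZ/I_{\ell^3}\to R\to \mZ/I_{\ell^2}$ force nothing.

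What is needed is vanishing at $\Theta_{D_{j+1}}$, where $D_{j+1}=(\und{b};j+1)$. The paper gets this as follows. With only $j$ factors $u$, every $(j+1)$-fold lcm $\Lambda$ of the indices is divisible by some $a$-index, so the Euler relation kills the restriction to $\Theta_\Lambda$. Lemma~\ref{le:gcd-vanishing} then passes to the gcd of these $\Lambda$'s. Equivalently, use the Euler relation at the \emph{smallest} $a$-index of the normal-form monomial.

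After $\ell$-localization you must also pass from $\Theta_e$ to $\Theta_{e(\ell)}$. This works because restriction $M(\Theta_{e(\ell)})\to M(\Theta_e)$ followed by transfer back is multiplication by $e/e(\ell)$, an $\ell$-local unit. Your plan to reduce to ``prime-power divisors of the same prime'' skips this. Inside $R$ the Euler relations still refer to $\Theta_b$, not $\Theta_{b(\ell)}$, and you cannot pass to divisor strings there, because the $\chi$-units are not in $R$. With these fixes, $R_{(\ell)}$ is in each degree a quotient of $(\mZ/I_{D_{j+1}(\ell)})_{(\ell)}$. Surjectivity then gives an $\ell$-local isomorphism, and injectivity follows prime by prime. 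That endgame is slightly more economical than computing $Q^{\und{d}}$ exactly.

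Your surjectivity paragraph contains false claims, though you do not need it: Corollary~\ref{co:positive-cone-computation1} already says each degree is generated by $au$-classes.
\begin{itemize}
\item A monomial's annihilator is not $I_c$ with $c$ the lcm of its $a$-indices. In degree $2\lambda_\ell+\lambda_{\ell^2}-2$, where the group is $\Z/\ell$, the gold relation $a_\ell u_{\ell^2}=\ell\, a_{\ell^2}u_\ell$ gives $a_\ell^2u_{\ell^2}=0$.
\item The gcd of lcms over $(s-j)$-element subsets is $(\und{b};s-j)$, not $D_{j+1}$. The correct dual description is the lcm of the $(s-j)$-fold gcds.
\end{itemize}

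The gold relation is also not actually verified. The quasi-isomorphism to the linear model in Proposition~\ref{pr:spheres-small-model} goes through the abstract equivalence of Corollary~\ref{co:basic_equiv} and an inductive step with an undetermined sign. So you have no formula for the images of $a_du_e$ and $a_eu_d$ there, and the paper's formulas for $\chi$ acting on $au$-classes are themselves derived from the gold relation. The paper's route is Proposition~\ref{pr:u-props}: $u_{[c:b]}a_b=\tfrac{c}{(b,c)}a_c$ and $u_{[c:b]}u_b=\tfrac{b}{(b,c)}u_c$. Hence $\tfrac{c}{(b,c)}a_cu_b=u_{[c:b]}a_bu_b=\tfrac{b}{(b,c)}a_bu_c$, which is Corollary~\ref{co:gold}. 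Alternatively, exhibit a bounding chain directly in $S^{\lambda_d}\bbox S^{\lambda_e}$.
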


A similar description of the positive cone was first obtained by Basu-Dey \cite{BD}.  

Note that the information in the above two propositions is complementary.  Just given the information in Proposition~\ref{pr:positive-cone-intro} it is not at all immediately clear what the $\mZ$-modules actually are in each degree, but Proposition~\ref{pr:positive-cone-MF} identifies them.

Every homogeneous component of the positive cone is generated by products of $a$-classes and $u$-classes (abbreviated as ``$au$-classes'' from now on).  Although the groups are cyclic, one cannot typically find a single $au$-class that is a generator.  When $n$ is a prime power this is always possible, but not in general.

\begin{remark}
The order of the above exposition is convenient in terms of motivating the ideas and the results of the computations, but in the body of the paper it sometimes turns out to be easier to prove things via a different path.  So for example, in the paper we first prove Proposition~\ref{pr:positive-cone-intro} and then later deduce the linear models for the spheres from that, which is the opposite of the way we have told the story here.  
\end{remark}

To pass to the negative cone one uses the functor $\uHom(\blank,\mZ)$.  Applying this to our linear models for the spheres $S^{\lambda_{\und{d}}}$ gives linear models for $S^{-\lambda_{\und{d}}}$.  One can compute the homology by hand, but it also immediately comes out of the universal coefficient spectral sequence (which collapses here):

\begin{prop} 
\label{pr:neg-cone-intro}
The homology modules in the negative cone are given by
\[ \uH^{k-\sum_i\lambda_{d_i}}=\uH_{-k}(
S^{-\lambda_{d_1}-\cdots-\lambda_{d_s}})\iso
\begin{cases}
I_{D_s} & \text{$k=2s$}, \\
0 & \text{$k\neq 2s$ even, or $k>2s$, or $k<0$} \\
\mZ/I_{D_{\frac{k-1}{2}}} & \text{$3\leq k\leq 2s-1$}.
\end{cases}
\]
\end{prop}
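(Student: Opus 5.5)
We prove this by dualizing the linear model \eqref{eq:linear-sphere}: since $S^{-\lambda_{\und{d}}}=\uHom(S^{\lambda_{\und{d}}},\mZ)$ and the linear model is a bounded complex of free (hence projective) $\mZ$-modules quasi-isomorphic to $S^{\lambda_{\und{d}}}$ (Proposition~\ref{pr:spheres-small-model}), applying $\uHom(\blank,\mZ)$ termwise computes $S^{-\lambda_{\und{d}}}$ in $\cD(\mZ)$. We then compute the homology of the dual complex directly. The universal coefficient spectral sequence gives an alternative route, and we use it as a cross-check.

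First, record the duals of the pieces. We have $\uHom(F_d,\mZ)\iso F_d$, since $\uHom(F_d,M)$ is the induced/restricted module and $\mZ$ restricted to $C_d$-orbits and induced back is $F_d$. Under this identification, $\id-Rt$ dualizes to $\id-Rt^{-1}$, which up to an automorphism of $F_d$ is the same map. The transfer-type maps $I\pi R\pi\colon F_{D_{i}}\ra F_{D_{i-1}}$ dualize to maps $F_{D_{i-1}}\ra F_{D_i}$ of the same shape, and $R\pi\colon F_{D_1}\ra\mZ$ dualizes to the map $\mZ\ra F_{D_1}$ picking out the norm element. So the dual is the cochain complex
\[ \mZ \ra F_{D_1}\ra F_{D_1}\ra F_{D_2}\ra F_{D_2}\ra\cdots\ra F_{D_s}\ra F_{D_s}, \]
placed in homological degrees $0$ down to $-2s$. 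Evaluating at each orbit $\Theta_e$ turns it into a complex of permutation modules $\Z[C_n/(t^{D_i})]^{(t^e)}$, and we compute level by level.

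The key computation: in the pair $F_{D_i}\xrightarrow{\id-Rt}F_{D_i}$, the kernel is the fixed (norm) part and the cokernel is the coinvariants. The map $F_{D_{i-1}}\ra F_{D_i}$ between consecutive pairs sends the coinvariants of the earlier copy isomorphically onto the invariants of the next copy exactly where $D_{i-1}$ and $D_i$ agree, and with index controlled by $D_{i}$ otherwise. The resulting subquotients are as follows.
\begin{itemize}
\item The left end: $\mZ\ra F_{D_1}$ is injective, so there is no homology in degree $0$ (the case $k=0$).
\item Kernel of $\id-Rt$ modulo the image of the previous map: this vanishes, giving zero homology in even $k<2s$.
\item Cokernel of one pair mapping into the next: this yields a quotient $\mZ/I_{D_j}$ with $j=(k-1)/2$, in odd degree $-k$. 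The first such class occurs at $k=3$, because the initial map $\mZ\ra F_{D_1}$ kills the would-be class at $k=1$.
\item The right end: the final cokernel of $\id-Rt$ on $F_{D_s}$ is the coinvariant module, identified with the ideal $I_{D_s}\subseteq\mZ$ generated by $1_{D_s}$. This is the case $k=2s$.
\end{itemize}

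As a consistency check, the UCSS $\uExt^p(\uH_q(S^{\lambda_{\und{d}}}),\mZ)\Rightarrow \uH^{p+q}$ combined with Proposition~\ref{pr:positive-cone-MF} gives the same answer, using $\uExt^0(\mZ/I_D,\mZ)=0$, $\uExt^1(\mZ/I_D,\mZ)\iso\mZ/I_D$ up to a shift, $\uExt^{\geq2}(\mZ/I_D,\mZ)=0$, and $\uHom(\mZ,\mZ)=\mZ$. The homological-degree bookkeeping there should reproduce the shift $j=(k-1)/2$.

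The main obstacle is the odd-degree step: identifying the homology precisely as $\mZ/I_{D_{(k-1)/2}}$ rather than some other extension or ideal quotient. This requires tracking the dual of $I\pi R\pi$ at every level $\Theta_e$, where the relevant orders are $\gcd(e,D_i)$. We would handle it by treating the restriction maps as computed levelwise and checking the transfer compatibility only on generators. The identification of the top module as $I_{D_s}$ rather than $\mZ$ must be checked via the transfer structure in the same way.
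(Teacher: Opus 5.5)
Your strategy of dualizing the linear model and computing the homology of $L(\und{D})^*$ by hand is legitimate, since Proposition~\ref{pr:spheres-small-model} does not depend on the negative cone. It is also a different route from the paper's. The paper feeds the positive-cone homology into the universal coefficient spectral sequence, then uses the fact that the complex lives in degrees $-2s$ through $0$ to force the single possible differential $\mZ\ra\mZ/I_{D_s}$ to be surjective.

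However, your core computation is wrong in its specifics. In $L(\und{D})^*$ the copy of $F_{D_i}$ in degree $-(2i-1)$ has outgoing map $\id-It$, and (for $i<s$) the copy in degree $-2i$ has outgoing map $I\pi R\pi$. So ``kernel of $\id-Rt$ modulo the image of the previous map'' is the \emph{odd} homology $\uH_{-(2i-1)}$. That is the nonzero group you want, not the vanishing even homology; the even homology is $\ker(I\pi R\pi)/\im(\id-It)$. Your description of the connecting map is also false: you call it an isomorphism from coinvariants to invariants ``exactly where $D_{i-1}$ and $D_i$ agree,'' with index controlled by $D_i$. This contradicts the statement. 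For $\und{D}=(b,b)$ it would give $\uH^{3-2\lambda_b}=0$, whereas this group is $\mZ/I_b$, generated by $\gamma_b$. So the identification you flag as the ``main obstacle'' is not actually established.

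The missing ingredient, which makes that obstacle disappear, is the exactness of the standard resolution (\ref{eq:std-free}). As $\mZ$-modules it gives $\ker(\id-Rt)=\im(I\pi)\iso\mZ$ and $\ker(R\pi)=\im(\id-Rt)$, hence $\operatorname{coker}(\id-Rt)\iso\im(R\pi)=I_d$. Since $It=R(t^{-1})$, the map $\id-It=-R(t^{-1})(\id-Rt)$ has the same kernel and image. The map induced by $I\pi R\pi$, from the cokernel of $\id-It$ on $F_{D_{i-1}}$ to the kernel of $\id-It$ on $F_{D_i}$, is then just the inclusion $I_{D_{i-1}}\subseteq\mZ$. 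The pieces now fall out directly:
\begin{itemize}
\item injectivity of this inclusion kills degrees $-2,\ldots,-2(s-1)$;
\item its cokernel gives $\uH_{-(2i-1)}\iso\mZ/I_{D_{i-1}}$ for $2\leq i\leq s$;
\item injectivity of $I\pi$ kills degrees $0$ and $-1$;
\item the top degree is $F_{D_s}/\im(\id-It)\iso I_{D_s}$.
\end{itemize}
No levelwise work at the $\Theta_e$ and no extension problem is involved.

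Your UCSS cross-check also needs repair. $\uExt^i(\mZ/I_d,\mZ)$ is $\mZ/I_d$ for $i=3$ and zero otherwise, not $i=1$; this shift by $3$ is what produces the index $\tfrac{k-1}{2}$. The check also only closes once you account for the differential from $\uHom(\mZ,\mZ)$ to $\uExt^3(\mZ/I_{D_s},\mZ)$. Its surjectivity, forced by vanishing of homology in degree $-(2s+1)$, turns $\mZ$ into $I_{D_s}$. With those corrections the cross-check is exactly the paper's proof.
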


Again, note that the main point is that one gets the associated divisor string $D_*$, this time appearing in odd degrees from $3$ up to $2s-1$, with the final term $D_s$ appearing in a slightly different guise in degree $2s$.\bigskip

Just as every homogeneous component of the positive cone was generated by $au$-classes,  one can give a similar (but more complex) description of the homogeneous elements of the negative cone.  The catch is that this now requires {\it division\/} by $au$-classes instead of multiplication.  

First, for every sequence of divisors $d_1,\ldots,d_s$ we have the generator
\[ \frac{[d_1,\ldots,d_s]}{u_{d_1}\cdots u_{d_s}} \in H^{2s-\lambda_{d_1}-\cdots-\lambda_{d_s}}(\pt)
\]
(recall that $[d_1,\ldots,d_s]$ denotes the lcm).
This generates the $\Z$ in the $\Theta_1$-component of the Mackey functor which by Proposition~\ref{pr:neg-cone-intro} is isomorphic to $I_{[d_1,\ldots,d_s]}$.  Once again, this is a degree where the rationalization map is injective and so we can name classes by their image.  Note that the
$u_i$ are not invertible,  but the notation says that there is a  unique class in this degree with the property that multiplying it by $u_{d_1}\cdots u_{d_s}$ gives $[d_1,\ldots,d_s]$. We will call these ``$\frac{?}{u}$-classes''. 
They give a complete description for the non-torsion part of the negative cone.  

Next, for every $b|n$ there is a special class $\gamma_b\in H^{3-2\lambda_b}(\pt)\iso \Z/b$ that generates this group.  This is obtained as a kind of ``equivariant Bockstein'', in the following sense.  As explained in \cite{DHone}, $\Ext^3(\mZ/I_b,\mZ)\iso \Z/b$ and one can write down a canonical generator $B_b$ using the standard resolution of $\mZ/I_b$.  In our derived category this is a map $B_b\colon \mZ/I_b \ra \Sigma^3 \mZ$.  We also have a canonical map $\bbone_b\colon S^{\lambda_b}\ra \mZ/I_b$, yielding $S^{\lambda_b+\lambda_b}\ra \mZ/I_b\bbox \mZ/I_b =\mZ/I_b$; this is an element $\bbone_b\bbox \bbone_b\in [S^{2b},\mZ/I_b]$.   The element $\gamma_b$ is the composition $B_b\circ (\bbone_b\bbox \bbone_b)$.  This description via the Bockstein is useful for uncovering some of the properties of these classes; see Propositions~\ref{pr:Gamma-ident} and \ref{pr:u-gamma}. 

Finally, for every divisor string $b_1,\ldots,b_s$ we have elements
\begin{equation}
\label{eq:intro-gamma-class}
 \frac{a_{b_j}\gamma_{b_j}}{u_{b_1}^{i_1}\cdots u_{b_{j-1}}^{i_{j-1}} a_{b_{j}}^{i_{j}}\cdots a_{b_s}^{i_s}  }
 \in 
 H^{(3+2(i_1+\cdots+i_{j-1})-\lambda_{b_j}-\sum_k i_k\lambda_{b_k})}\iso \Z/b_j
\end{equation}
where the exponents in the denominator are non-negative and at least one of the exponents on the $a$-classes is nonzero.
Note that the notation here is very dicey---these are torsion elements and so map to zero in the rationalization,
and the product $a_b\gamma_b$ is itself actually zero.
So the fraction only makes sense as a piece of notation, not as a true fraction.  It does have the property  that products involving these classes are internally consistent with the notation, as long as one is careful about what the allowable rules of manipulation are.  We show this in Proposition~\ref{pr:neg-cone-products}.

We will refer to any expression as in (\ref{eq:intro-gamma-class}) as a ``negative $\gamma$-class''. 
Note that $\gamma_b=\tfrac{a_b\gamma_b}{a_b}$ and so $\gamma_b$ is itself a ``negative $\gamma$-class'' according to this definition.

While (\ref{eq:intro-gamma-class}) looks intimidating, what it says in the end is that we can take an expression $a_b\gamma_b$ and divide it by any number of $u$-classes with indices forming a divisor string ending in $b$, as well as any number of $a$-classes with indices forming a divisor string beginning with $b$, as long as we are dividing by at least one $a$-class.    

\begin{prop}
The negative cone is additively generated by the negative $\gamma$-classes and the $\frac{?}{u}$-classes in degrees involving a divisor string, and in other degrees by the products of those with $\chi$-classes.  
\end{prop}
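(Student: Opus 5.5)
First reduce to degrees in which the negative part is a divisor string. Multiplication by a $\chi$-unit is an isomorphism between homogeneous components, and iterating these moves takes $H^{k-\lambda_{\und{d}}}$ isomorphically onto $H^{k-\lambda_{\und{D}}}$, where $\und{D}=(D_1,\ldots,D_s)$ is the divisor string associated to $\und{d}$. So it is enough to prove generation when $\und{d}=\und{b}$ is a divisor string $b_1|b_2|\cdots|b_s$; the general case then follows by multiplying the generators by the appropriate product of $\chi$-classes and their inverses. In the divisor-string case, Proposition~\ref{pr:neg-cone-intro} says which groups are nonzero: the $\Theta_1$-value in degree $2s-\lambda_{\und{b}}$, which is $I_{b_s}(\Theta_1)\iso\Z$, and the groups $(\mZ/I_{b_j})(\Theta_1)\iso \Z/b_j$ in degrees $3+2(j-1)-\lambda_{\und{b}}$ for $2\le j\le s$ (with $D_j=b_j$). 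In each nonzero degree we must exhibit one generator.

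\emph{The free part.} The class $\frac{[b_1,\ldots,b_s]}{u_{b_1}\cdots u_{b_s}}=\frac{b_s}{u_{b_1}\cdots u_{b_s}}$ lies in degree $2s-\lambda_{\und{b}}$. Rationalization is injective there, since the group is $\Z$. By Proposition~\ref{pr:rat1}, a generator of the $\Z$ maps to some multiple of $u_{b_1}^{-1}\cdots u_{b_s}^{-1}$. To identify that multiple as $b_s$, multiply by $u_{b_1}\cdots u_{b_s}$, which lands in $H^0=\Z$. Then compare the top differential of the linear model (\ref{eq:linear-sphere}) for $S^{-\lambda_{\und b}}=\uHom(S^{\lambda_{\und b}},\mZ)$ with the inclusion $I_{b_s}\subseteq \mZ$: the $\Theta_1$-index of $I_{b_s}$ in $\mZ$ is $b_s$. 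This gives the first family of generators.

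\emph{The torsion part.} For $2\le j\le s$ the relevant group is $\Z/b_j$ in degree $3+2(j-1)-\lambda_{\und{b}}$. It is hit by the negative $\gamma$-class of (\ref{eq:intro-gamma-class}) with $i_1=\cdots=i_{j-1}=1$, $i_j=2$ and $i_k=1$ for $k>j$. Indeed its degree is $3+2(j-1)-\lambda_{b_j}-\sum_{k\ne j}\lambda_{b_k}-2\lambda_{b_j}+\lambda_{b_j}\cdot 0$, which rearranges to $3+2(j-1)-\lambda_{\und b}$ once we note that $\lambda_{b_j}$ appears twice in the denominator and once in the numerator via $\gamma_{b_j}$. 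To show it generates, map it by multiplication with $a$- and $u$-classes back toward $\gamma_{b_j}\in H^{3-2\lambda_{b_j}}\iso\Z/b_j$. Multiplying by $u_{b_1}\cdots u_{b_{j-1}}a_{b_{j+1}}\cdots a_{b_s}$ returns $\gamma_{b_j}$, using the internal consistency of the fraction notation from Proposition~\ref{pr:neg-cone-products}. Since $\gamma_{b_j}$ generates a group of order $b_j$ and the source also has order $b_j$, the class is a generator. The remaining negative $\gamma$-classes with other exponents live in non-divisor-string degrees or in degrees related to these by units, so they are consistent with, but not needed for, the generation claim.

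The main obstacle is the torsion step: showing that multiplication by the $u$- and $a$-classes really carries the fraction class to $\gamma_{b_j}$ isomorphically, i.e.\ that these multiplication maps $\Z/b_j\to\Z/b_j$ are isomorphisms rather than multiplication by some non-unit. I would check this directly on the linear models. Multiplication by $a_{b_k}$ ($k>j$) and by $u_{b_k}$ ($k<j$) should correspond to the evident inclusions or projections of linear complexes for $b_k$ dividing, respectively divisible by, $b_j$. On the relevant homology $\mZ/I_{b_j}$, such a map is induced by the identity of $F_{b_j}$-type terms and hence is an isomorphism. This is exactly where the divisor-string hypothesis enters, and where the Bockstein description of $\gamma_b$ (Propositions~\ref{pr:Gamma-ident} and \ref{pr:u-gamma}) would be used to pin down the identification in the bottom case $j$.
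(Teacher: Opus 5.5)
Your reduction to divisor strings via the $\chi$-units is fine. Your treatment of the integral edge is also fine: it is essentially the dual form of Lemma~\ref{lem:uinj}, since on $L(\und{b})^*$ the dual of $u_{\und{b}}$ is $R\pi\colon F_{b_s}\ra\mZ$ in degree $-2s$, which induces the inclusion $I_{b_s}\subseteq\mZ$ on homology.

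The torsion part, however, does not work.

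\emph{Bookkeeping.} By Proposition~\ref{pr:negative-cone} the torsion groups are $\Z/b_j$ in degrees $2j+1-\lambda_{\und{b}}$ for $1\leq j\leq s-1$. Your range $2\leq j\leq s$ therefore includes the zero group $H^{2s+1-\lambda_{\und{b}}}$ and omits $H^{3-\lambda_{\und{b}}}\iso\Z/b_1$. Your class with $i_j=2$ also lies in the wrong degree, namely $2j+1-\lambda_{\und{b}}-2\lambda_{b_j}$: the coefficient of $\lambda_{b_j}$ is $-3$, not $-1$. The correct generator has no $a_{b_j}$ in the denominator: $\tfrac{a_{b_j}\gamma_{b_j}}{u_{b_1}\cdots u_{b_{j-1}}a_{b_{j+1}}\cdots a_{b_s}}$.

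\emph{The generation argument.} This is the more serious problem. For the correct class, the product with $u_{b_1}\cdots u_{b_{j-1}}a_{b_{j+1}}\cdots a_{b_s}$ is $a_{b_j}\gamma_{b_j}\in H^{3-\lambda_{b_j}}=0$. So the map you propose to use is zero, not an isomorphism onto the group generated by $\gamma_{b_j}$. In general the degree $3-2\lambda_{b_j}$ of $\gamma_{b_j}$ cannot be reached by multiplications alone; one has to \emph{invert} a multiplication by $a_{b_j}$. This is what the zigzag (\ref{eq:torsioncone}) does:
\begin{enumerate}[(i)]
\item multiply by $u_{b_1}\cdots u_{b_{j-1}}$ and $a_{b_{j+2}}\cdots a_{b_s}$ to reach $H^{3-\lambda_{b_j}-\lambda_{b_{j+1}}}$;
\item pull back along the isomorphism $\cdot a_{b_j}$ from $H^{3-2\lambda_{b_j}-\lambda_{b_{j+1}}}$;
\item apply $\cdot a_{b_{j+1}}$ to land in $H^{3-2\lambda_{b_j}}$.
\end{enumerate}
The fraction is \emph{defined} as the preimage of $\gamma_{b_j}$ along this chain. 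That is also why citing the ``internal consistency'' of Proposition~\ref{pr:neg-cone-products} is circular.

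What is actually missing is a proof that each arrow in that zigzag is an isomorphism, i.e.\ Proposition~\ref{pr:iso-regions}, which the paper proves via Propositions~\ref{pr:umulttorcone} and~\ref{pr:amulttorcone}. Your chain-level plan does not supply this as stated, for two reasons.
\begin{enumerate}[(i)]
\item You have not produced chain maps between linear models that represent these products. Deleting a $b_k$ from the middle of the string shifts all higher $F$-terms, so nothing is an evident inclusion or projection, and matching a candidate chain map with the product would itself need proof.
\item The torsion of $L(\und{b})^*$ in degree $-(2j+1)$ is not carried by an $F_{b_j}$-term on which a map could be the identity. It is the cycles $I\pi(\mZ)\subseteq F_{b_{j+1}}$ modulo the image of $I\pi R\pi$ from $F_{b_j}$.
\end{enumerate}
The paper never models the products on chains. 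It uses the cofiber sequences $\Sigma^2\mZ\llra{u_c}S^{\lambda_c}\ra\mZ/I_c$ and $\mZ\llra{a_c}S^{\lambda_c}\ra\Sigma X_c$, and shows the third terms contribute nothing:
\begin{enumerate}[(i)]
\item chain maps from the linear model of $S^{\lambda_{\und{b}\setminus b_i}}$ to $\Sigma^{2j-1}\mZ/I_{b_i}$ vanish because $\mZ/I_{b_i}(\Theta_{b_{j+1}})=0$;
\item after Lemma~\ref{le:suspxb}, one only needs chain maps from $X_{b_i}$ into a dual linear complex whose first differential is the injection $\mZ\llra{I\pi}F_c$, and these vanish.
\end{enumerate}
The resulting surjectivity (for $u$) or injectivity (for $a$) is then an isomorphism, because source and target are both $\mZ/I_{b_j}$.
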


Multiplying two $\gamma$-classes is easy: the product of two such classes always lands in a degree where the torsion group is zero, and hence the product must be zero.  There are partial rules for multiplying $\gamma$-classes with certain $au$-classes and with the $\frac{?}{u}$-classes.  The rules are too lengthy to recount here in the introduction, but see Propositions~\ref{pr:integral-edge-products} and \ref{pr:neg-cone-products}.

For the interested reader we mention the existence of Section~\ref{se:magic}, entitled  ``Black magic for the negative cone'', which has some tricks for remembering how to multiply elements in this region.  But as introductions should be kept civil and genteel, we say no more on this here. 
\medskip

Next we turn to groups in the irregular region.  These have the form $H^{\lambda_{\und{d}}-\lambda_{\und{c}}+k}(\pt)$ where neither $\und{d}$ nor $\und{c}$ consists entirely of $1$s.  This group is isomorphic to the set of chain homotopy classes
 $[S^{\lambda_{\und{c}}},\Sigma^k S^{\lambda_{\und{d}}}]$.
Given the simple description of the spheres from (\ref{eq:linear-sphere}) one might expect that these groups could be calculated directly, with ease.  In some sense that is the case---for any particular values of $\und{c}$ and $\und{d}$, the group can be calculated algorithmically.  Unfortunately, the answers don't seem to admit a simple formula when the length of the sequences is at least three, even when working locally at a prime.  We investigate these matters in Section~\ref{se:irregular} and give some partial information, but the state of our knowledge is unsatisfying here.  

The following proposition gives a starting point for reducing certain degrees to simpler ones:

\begin{prop} 
\label{pr:irregular}
Let $\und{c}=(c_1,\ldots,c_q)$ and $\und{d}=(d_1,\ldots,d_s)$ be divisor strings (possibly empty).
\begin{enumerate}[(a)]
\item Suppose there is at least one $c_i$ and that $k\geq 4$. Then
\[ \uH^{\lamu{d}-\lamu{c}+k} \llra{u_{c_1}} \uH^{\lamu{d}-\lamu{c'}+(k-2)}
\]
is an isomorphism, where $c'$ denotes the sequence $\und{c}$ with $c_1$ removed.  
\item Suppose there is at least one $d_i$ and that $k<0$.  Then
\[ \uH^{\lamu{d'}-\lamu{c}+k+2} \llra{u_{d_1}} \uH^{\lamu{d}-\lamu{c}+k}
\]
is an isomorphism, where $d'$ denotes the sequence $\und{d}$ with $d_1$ removed.
\item If $k$ is odd then $H^{\lamu{d}-\lamu{c}+k}$ is generated by products of $(au)_{\und{d}}$-classes and negative $\gamma_{\und{c}}$-classes.  
\item Suppose that $q>s$ (so there are more $c$'s than $d$'s).   Then there is an isomorphism
\[ \xymatrixcolsep{4.6pc}\xymatrix{
H^{\lamu{d}-\lamu{c}}\ar[r]^-{\cdot a_{c_{s+1}}\cdots a_{c_q}}  & \tors H^{\lamu{d}-\lamu{c'}} 
}
\]
where $\und{c}'=(c_1,\ldots,c_s)$ and $\tors$ denotes the torsion subgroup. 
\item Suppose that $q<s$ (so there are more $d$'s than $c$'s).  There there is an exact sequence
\[ \xymatrixcolsep{3.5pc}\xymatrix{
0 \ar[r] & \Z \ar[r] & H^{\lamu{d'}-\lamu{c}} \ar[r]^-{\cdot a_{d_{q+1}\cdots a_{d_s}}} & H^{\lamu{d}-\lamu{c}} \ar[r] & 0
}
\]
where $\und{d}'=(d_1,\ldots,d_q)$.  
\end{enumerate}
\end{prop}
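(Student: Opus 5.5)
The plan is to work throughout with the cofiber sequences obtained by smashing the defining triangles of $a_b$ and $u_b$ with a sphere. The map $u_b\colon \Sigma^2\mZ\ra S^{\lambda_b}$ has cofiber $\mZ/I_b$ placed in degree $0$ (this follows from the homology computation of $S^{\lambda_b}$). So for any $X$ there is a triangle
\[ \Sigma^2 X\lra S^{\lambda_b}\bbox X\lra \mZ/I_b\bbox X .\]
Similarly $a_b\colon \mZ\ra S^{\lambda_b}$ has cofiber $\Sigma^2 F_b$ modulo a shift, coming from the top two cells $F_b\ra F_b$. Applying $\ucD(-,\Sigma^k S^{\lambdamu{d}})$ to these gives long exact sequences whose third terms involve $\uHom$ out of $\mZ/I_b$ or out of $F_b$. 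We will use the linear models (\ref{eq:linear-sphere}) and Propositions~\ref{pr:positive-cone-MF} and \ref{pr:neg-cone-intro}.

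For (a), take $X=S^{\lamu{c'}}$ and $b=c_1$. The third term is $\ucD(\mZ/I_{c_1}\bbox S^{\lamu{c'}},\Sigma^k S^{\lamu{d}})$. Since $c_1$ divides every $c_i$, we expect $\mZ/I_{c_1}\bbox S^{\lambda_{c_i}}\simeq \mZ/I_{c_1}\bbox(\mZ\oplus\Sigma^2\mZ)$, since $F_{c_i}\bbox\mZ/I_{c_1}=0$. This makes $\mZ/I_{c_1}\bbox S^{\lamu{c'}}$ a complex of copies of $\mZ/I_{c_1}$ in even degrees $0,\dots,2(q-1)$. Maps from $\mZ/I_{c_1}$ into $\Sigma^j S^{\lamu{d}}$ are computed by the standard resolution of $\mZ/I_{c_1}$ from \cite{DHone}, which has length $3$. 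This gives vanishing of the relevant $\Ext$ once $k\ge 4$ in the necessary range, and the five lemma then makes $u_{c_1}$ an isomorphism. Part (b) is dual: take $X=S^{\lamu{d'}}$ on the target side. We need $\ucD(S^{\lamu{c}},\Sigma^{k+j}\mZ/I_{d_1}\bbox S^{\lamu{d'}})=0$ for $k<0$, which holds because $S^{\lamu{c}}$ is connective and the target is concentrated in nonpositive degrees shifted down.

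For (c), use the $a$-cofiber sequences to filter $S^{\lamu{d}}$. When $k$ is odd, Propositions~\ref{pr:positive-cone-MF} and \ref{pr:neg-cone-intro} show that the odd-degree contributions come only from the torsion $\mZ/I$ pieces. A diagram chase then shows every class is a product of a positive-cone $au$-class with a negative-cone $\gamma$-class.

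Parts (d) and (e) come from iterating the $a_b$ triangle. Its cofiber is built from $F_b$, and $\ucD(F_b,-)$ or $\ucD(-, F_b\text{-terms})$ reduces to nonequivariant (restricted) groups. These are torsion-free and concentrated where the dimension count allows. In (d), with $q>s$, the restricted terms are $\Z$'s in degrees that force $\cdot a$ to be injective with image exactly the torsion, using Corollary~\ref{co:rat1} to identify torsion. In (e) the single restricted $\Z$ supplies the kernel.

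The main obstacle will be the Ext-vanishing bookkeeping in (a), together with identifying precisely the images in (d) and (e). For (a) I would first check the case $q=1$ by hand using the linear models.
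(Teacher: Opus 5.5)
Your plan follows the paper's skeleton: the $u$- and $a$-cofiber sequences plus degree counting on the linear models. Parts (b), (d) and (e) are fine in outline. However, there is a concrete error in (a), and the key idea for (c) is missing.

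\textbf{Part (a).} You misidentify the third term of the $u_{c_1}$-triangle. Boxing with $\mZ/I_{c_1}$ kills exactly the cells $F_{c_i}$ of $S^{\lambda_{c_i}}=(F_{c_i}\to F_{c_i}\to\mZ)$, which sit in degrees $1$ and $2$. There is no $\mZ$-cell in degree $2$; the ``$\Sigma^2\mZ$'' is only the homology class $u_{c_i}$. So $S^{\lambda_{c_i}}\bbox\mZ/I_{c_1}\simeq\mZ/I_{c_1}$ in degree $0$, which is (\ref{eq:sphere-box}). Hence $S^{\lamu{c'}}\bbox\mZ/I_{c_1}\simeq\mZ/I_{c_1}$ in degree $0$, not copies of $\mZ/I_{c_1}$ in degrees $0,2,\ldots,2(q-1)$.

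This is not cosmetic. In your model the outer terms include $\ucD(\Sigma^{2j}\mZ/I_{c_1},\Sigma^kS^{\lamu{d}})$ with $j\geq 1$. For instance, with $k=5$, $j=1$ and $s\geq 1$ this is $\ucD(\mZ/I_{c_1},\Sigma^3S^{\lamu{d}})\cong\uExt^3(\mZ/I_{c_1},\mZ/I_{d_1})\cong\mZ/I_{(c_1,d_1)}$, which is generally nonzero. So your argument would not close once $q\geq 2$.

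With the correct third term, the outer groups are $\ucD(\mZ/I_{c_1},\Sigma^kS^{\lamu{d}})$ and $\ucD(\Sigma^{-1}\mZ/I_{c_1},\Sigma^kS^{\lamu{d}})$. These vanish for $k\geq 4$ immediately: the standard resolution of $\mZ/I_{c_1}$ lives in degrees $0$ through $3$, and $\Sigma^kS^{\lamu{d}}$ lives in degrees $\geq k$. No Ext bookkeeping is needed. Note also that your (b) silently uses the correct identification $S^{\lamu{d'}}\bbox\mZ/I_{d_1}\simeq\mZ/I_{d_1}$; otherwise the target would not sit in negative degrees. As written, (a) and (b) are inconsistent.

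\textbf{Part (c).} Filtering $S^{\lamu{d}}$ by $a$-cofiber sequences and doing ``a diagram chase'' does not produce products. The connecting terms are groups $[S^{\lamu{c}},\Sigma^m X_{d_j}]$ whose elements are not a priori products, and the $u$-factors of the $au$-classes never enter. What is needed is the K\"unneth spectral sequence for $H^{\lamu{d}-\lamu{c}+k}=H_{-k}(S^{-\lamu{c}}\bbox S^{\lamu{d}})$:
\begin{enumerate}
\item $\uH_*(S^{\lamu{d}})$ is concentrated in even degrees.
\item $\uH_*(S^{-\lamu{c}})$ is $I_{c_q}$ in degree $-2q$ plus modules $\mZ/I_{c_i}$ in odd degrees.
\item The only nonvanishing higher terms, $\uTor_2(I_{c_q},\mZ/I_{d_j})$ and $\uTor_3(\mZ/I_{c_i},\mZ/I_{d_j})$, land in even total degree.
\end{enumerate}
So everything in odd total degree sits in the $\uTor_0$ column. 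For $k$ odd, $\uH^{\lamu{d}-\lamu{c}+k}$ is therefore a quotient, via the external product, of the relevant-degree part of $\bigoplus\uH_{\mathrm{odd}}(S^{-\lamu{c}})\bbox\uH_{\mathrm{even}}(S^{\lamu{d}})$. That is precisely generation by products of negative $\gamma_{\und{c}}$-classes and $(au)_{\und{d}}$-classes.

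\textbf{Parts (d) and (e).} The paper treats the whole product $a_{c_{s+1}}\cdots a_{c_q}$ at once, using the inclusion of linear models whose quotient lives in degrees $2s+1$ through $2q$. Iterating one $a$-class at a time also works, since all but one step is an isomorphism for degree reasons. In either version you should say explicitly that the far-left group in the exact sequence is torsion, by rationalization. In (d) this is what forces the map from $H^{\lamu{d}-\lamu{c'}}$ to the cofiber term $\Z$ to be nonzero, so that its kernel is exactly the torsion subgroup. In (e) it is what makes that $\Z$ inject.
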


Part (c) of the above result actually leads to the following (see Corollary~\ref{co:comm} for the proof):

\begin{cor}
\label{co:comm-intro}
The ring $H^\star(\pt)$ is commutative (not just graded-commutative): that is, $xy=yx$ for all $x$ and $y$.    
\end{cor}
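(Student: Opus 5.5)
The plan is to reduce the claim to a vanishing statement using the skew-commutativity rule (\ref{eq:skew-comm}) and Proposition~\ref{pr:irregular}(c). By (\ref{eq:skew-comm}), $xy=yx$ holds automatically unless both $|x|$ and $|y|$ have odd fixed dimension. By bilinearity we may also assume $x$ and $y$ are homogeneous. So the whole task is to show that $xy=0$ whenever $x$ and $y$ are homogeneous of odd fixed dimension; then $xy=0=-yx=yx$.

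First I will locate the odd fixed dimension degrees. Write a degree as $\lamu{d}-\lamu{c}+k$. Each $\lambda_1=2$ contributes an even amount to the fixed dimension, and each $\lambda_b$ with $b>1$ contributes $0$. Hence the parity of the fixed dimension equals the parity of $k$. Multiplying by the $\chi$-units, whose degrees have even fixed dimension, I may move $x$ into a degree in which $\und{c}$ and $\und{d}$ are divisor strings, still with $k$ odd. Since $\chi$-units have even fixed dimension, they commute with everything by (\ref{eq:skew-comm}), so this reduction is harmless.

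Now Proposition~\ref{pr:irregular}(c) writes $x=\chi\cdot\sum_\alpha m_\alpha g_\alpha$, where $\chi$ is a $\chi$-unit and each $g_\alpha$ is a product of $au$-classes and negative $\gamma$-classes. The $au$-classes have even fixed dimension ($|a_d|=\lambda_d$, $|u_d|=\lambda_d-2$). A negative $\gamma$-class has fixed dimension $3+2(\cdots)$, which is odd. Since $g_\alpha$ has odd fixed dimension, it contains an odd number of negative $\gamma$-factors. If it contained two or more, it would vanish by the already-noted fact that a product of two $\gamma$-classes is zero (its degree has zero torsion group); here the even fixed dimension factors can be commuted freely past the others. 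So each surviving $g_\alpha$ has the form $e_\alpha\Gamma_\alpha$, with $e_\alpha$ of even fixed dimension and $\Gamma_\alpha$ a single negative $\gamma$-class. Decompose $y$ in the same way, as a unit times a sum of terms $e'_\beta\Gamma'_\beta$.

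Then $xy$ is a sum of terms $\pm(\text{even-dimensional stuff})\cdot\Gamma_\alpha\Gamma'_\beta$, using that the even pieces commute past everything. Each $\Gamma_\alpha\Gamma'_\beta$ is zero, so $xy=0$. The main obstacle is making sure the vanishing of products of negative $\gamma$-classes really holds in the needed generality, including $\gamma$-classes with denominators built on different divisor strings. I expect this to follow from a degree check using Propositions~\ref{pr:neg-cone-intro} and \ref{pr:irregular}: the product degree has dimension making the group torsion, and I would verify its torsion part vanishes.

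As a backup, I would argue that $xy$ is torsion of odd order. Both $x$ and $y$ are torsion, since their degrees have odd dimension (Corollary~\ref{co:rat1}). I would then try to show that the torsion in $H^\star$ is annihilated by a power of $n$; for example, $\gamma_b$ has order $b$. Since $n$ is odd, $2xy=0$ then forces $xy=0$.
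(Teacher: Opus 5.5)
Your argument is correct and is essentially the paper's proof: skew-commutativity (\ref{eq:skew-comm}) reduces to both fixed dimensions odd, Proposition~\ref{pr:irregular}(c) writes each element in terms of $au$-classes times negative $\gamma$-classes, and products of two negative $\gamma$-classes vanish by Proposition~\ref{pr:neg-cone-products}(1), which the paper proves by exactly the degree check you describe. Your explicit $\chi$-unit reduction to divisor strings, which the paper leaves implicit, is a welcome addition. You should drop the backup, however: since $yx=-xy$, the identity $2xy=0$ is equivalent to the statement you are proving, so knowing that $xy$ has odd order gives nothing on its own.
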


\begin{remark}
Starting in parts (c)--(e) of Proposition~\ref{pr:irregular} the reader will notice a shift as we stop talking about the $\mZ$-modules $\uH^\star$ and instead talk mostly about the groups $H^\star$.  There is a reason for this: unlike the positive and negative cones, the Mackey functors in the irregular region can be quite complicated and there is no known simple description of them in terms of basic building blocks.  This issue was probably first observed in \cite{BG3}.    
\end{remark}

Parts (a) and (b) of Proposition~\ref{pr:irregular} 
allow us to reduce the actual calculations to $H^{\lamu{d}-\lamu{c}+k}$ where $0\leq k\leq 3$.  Any degree with a $k$-value outside this range can be reduced to one inside this range.  Parts (c), (d), and (e) say that in terms of naming elements we can further reduce to the groups $H^{\lamu{d}-\lamu{c}}$ 
where $\und{c}$ and $\und{d}$ have the same length.
 
To demonstrate some of this material let us consider the following extended example.
Let $(c_1,c_2)$ and $(d_1,d_2)$ be divisor strings.  Figure~\ref{fig:gens} lists generators for all of the irregular groups in $H^\star(\pt)$ where the degrees involve at most two $\lambda$-classes in each of the positive and negative parts.

To read this table, first recall that $S^{\lamu{c}}$ is modeled by a complex concentrated in degrees $0$ through $2\cdot \len(c)$ (and likewise for $S^{\lamu{d}}$).  From this it follows that
$H^{\lamu{d}-\lamu{c}+k}=[S^{\lamu{c}},\Sigma^k S^{\lamu{d}}]$ automatically vanishes when $k<-2\len(d)$ or when  $k>2\len(c)$. In fact, one can strengthen the first inequality to $k<2-2\len(d)$ using Proposition~\ref{pr:irregular}(b) and induction on $\len(d)$. The blank spaces in the table are these regions (and the regions outside the scope of the table are blank for the same reason).  

In the cases where $k$ is odd, we simply wrote down all possible products of $au_{\und{d}}$-classes and negative $\gamma_{\und{c}}$-classes, keeping in mind what we know about the positive and negative cones. There was always just at most one generator except for $k=1$ in the last column, where we used the gold relation to recognize that one of the classes was a multiple of the other.

For the $k=4$ row one uses Proposition~\ref{pr:irregular}(a) to reduce (via multiplying by $u_{c_1}$) to a group of the general type in row $k=2$. This is almost the group that is a knight's move two steps up and one left in the table, but with $c_2$ replacing $c_1$.  For $k=-2$ one uses Proposition~\ref{pr:irregular}(b) to make a similar reduction that involves division by $u_{d_1}$.

\begin{figure}[ht]
\!\!\!\!\!\begin{tabular}{|c||c|c|c|c|}
\hline
$k$ & $\!H^{\lambda_{d_1}-\lambda_{c_1}+k}$ & $\!H^{\lambda_{d_1}-\lambda_{c_1}-\lambda_{c_2}+k}$ & $\!H^{\lambda_{d_1}+\lambda_{d_2}-\lambda_{c_1}+k}$ & $\!H^{\lambda_{d_1}+\lambda_{d_2}-\lambda_{c_1}-\lambda_{c_2}+k}$ \\[0.1in]
\hline\hline
$-2$ &  &  & {$u_{d_1} \tfrac{c_1}{(c_1,d_2)}\tfrac{u_{d_2}}{u_{c_1}}$}  & $0$   \\[0.1in] \hline
$-1$ &  & & 0 & $u_{d_1}u_{d_2}\tfrac{a_{c_1}\gamma_{c_1}}{a_{c_2}}$ \\[0.1in] \hline
$0$ & \fbox{$\tfrac{c_1}{(c_1,d_1)} \tfrac{u_{d_1}}{u_{c_1}}$} & $0$ & $\tfrac{c_1}{(c_1,d_1)}\tfrac{u_{d_1}}{u_{c_1}}\cdot a_{d_2}$ & \fbox{[H]} \\[0.1in] \hline
$1$ & 0 & $u_{d_1} \tfrac{a_{c_1}\gamma_{c_1}}{a_{c_2}}$ & $0$ & $ u_{d_1}a_{d_2} \tfrac{a_{c_1}\gamma_{c_1}}{a_{c_2}}$ \\[0.1in] 
\hline
$2$ & $a_{d_1}\tfrac{c_1}{u_{c_1}}$ & \fbox{$\tfrac{c_1c_2}{(c_1c_2,d_1)}\cdot \tfrac{u_{d_1}}{u_{c_1}u_{c_2}}$} & $a_{d_1}a_{d_2}\tfrac{c_1}{u_{c_1}}$ & $a_{d_2}\tfrac{c_1c_2}{(c_1c_2,d_1)}\tfrac{u_{d_1}}{u_{c_1}u_{c_2}}$ \\[0.1in]
\hline
$3$ &  & $a_{d_1}\tfrac{a_{c_1}\gamma_{c_1}}{a_{c_2}}$
&  & $a_{d_1}a_{d_2}\tfrac{a_{c_1}\gamma_{c_1}}{a_{c_2}}$ \\[0.1in]
\hline
$4$ &  & $a_{d_1}\tfrac{c_2}{u_{c_1}u_{c_2}}$ &  & $a_{d_1}a_{d_2}\tfrac{c_2}{u_{c_1}u_{c_2}}$\\[0.1in]
\hline
\end{tabular}
\caption{Generators for low-degree cohomology groups in the irregular region}
\label{fig:gens}
\end{figure}

For $k=2$ we can interpret the $+2$ as a $\lambda_1$, essentially adding a $1$ to the start of the $d$-string. Then Proposition~\ref{pr:irregular}(e) takes care of the first, third, and fourth columns. (Note here this reduces from column $x$ to column $x-2$, e.g. we move from column $4$ to column $2$ via division by $a_{d_2}$. Also note $H^{2-\lambda_{c_1}}$ is in the regular region---in fact, in the negative cone---and is generated by $c_1/u_{c_1}$, but this can also be seen in the $d_1=1$, $k=0$ entry of the first column.)

The boxed terms are the ones where the number of $c$'s is equal to the number of $d$'s (keeping in mind that $\lambda_1=2$ and therefore we can regard a $+2$ as a $d$-term when convenient) and where the group cannot be reduced to another one via the techniques of Proposition~\ref{pr:irregular}.  These groups have to be computed directly, as is done in Section~\ref{se:L-maps}.  The two boxes in the first two columns turn out to be copies of $\Z$, whereas the one in the last column---filled in only with an H for ``Hard''---is a direct sum of $\Z$ and a cyclic torsion term.  See Examples~\ref{ex:H-onefold} and \ref{ex:H-twofold}, as well as  well as Proposition~\ref{pr:Sb_to_Sc}.

Notice that all of the boxes in the table with identified entries are actually cyclic groups.  This is somewhat by luck, though.  As the length of the $\und{c}$ and $\und{d}$ sequences grows, the number of summands tends to increase.  In fact as mentioned above, the group in the box labelled [H] is not always cyclic, depending on the values of $\und{c}$ and $\und{d}$.  
As soon as one non-cyclic group appears, it propagates into more complicated degrees via the reductions outlined in Proposition~\ref{pr:irregular}.

Note as well that we have not given the orders for the cyclic groups in the table.  They can be computed, but the formulas get more and more complex.  See Examples~\ref{ex:H-onefold} and \ref{ex:H-twofold} for specifics.\vspace{0.2in}

As one final piece of information we return to the rationalization map 
\[ H^\star(\pt)\ra H^\star(\pt)\tens \Q=\Q[u_d,u_d^{-1}\,:\, d|n, d\neq 1].
\]
What is the image?  We can completely calculate it, but the answer is not easy to state.  First note that any homogeneous element in the image will be a multiple of a class $\frac{u_{d_1}\cdots u_{d_s}}{u_{c_1}\cdots u_{c_k}}$.  
Given that $u_1=1$ we can add ones to either the $\und{c}$ or $\und{d}$ sequence as needed in order to make them have the same length.  Further, the multiples in question will not change as we multiply by the $\chi$-units, and so we might as well assume that $\und{c}$ and $\und{d}$ are divisor strings.

Given divisor strings $\und{c}$ and $\und{d}$ of the same length, form the array
\[ \xymatrixrowsep{0.7pc}\xymatrix{ & d_1 & d_2 & \cdots & d_s \\
1 & c_1 & c_2 & \cdots & c_s.}
\]
Define an \dfn{allowable path} to be one that moves along the entries of the above array, starting at $1$ and having the property that each step is either right-one (R), up-right (UR), or down (D).
For each allowable path define the associated integer to be the product of all the terms encountered along the path.  

\begin{prop}
\label{pr:integral-intro}
Let $\und{c}$ and $\und{d}$ be divisor strings of the same length $s$.  Then the image of $H^{\lambda_{\und{d}}-\lambda_{\und{c}}}(\pt)$ in $H^\star(\pt)\tens \Q$ consists of all integral multiples of $M\cdot \tfrac{u_{\und{d}}}{u_{\und{c}}}$ where $M$ is given by
\[ M= \frac{(\text{gcd of all integers associated to allowable paths that end at $c_s$})}{(\text{gcd of all integers associated to allowable paths that end at either $c_s$ or $d_s$} )}.
\]
When $\und{c}$ and $\und{d}$ are not divisor strings, the answer is the same as one gets by replacing each with their associated divisor string.  
\end{prop}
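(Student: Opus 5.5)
We plan to compute the image directly from the linear models (\ref{eq:linear-sphere}). By Proposition~\ref{pr:rat1}, the rationalization of $H^{\lamu{d}-\lamu{c}}=[S^{\lamu{c}},S^{\lamu{d}}]$ is one-dimensional on the $\Theta_1$ spot, generated by $\tfrac{u_{\und{d}}}{u_{\und{c}}}$. So the image is $M\Z\cdot\tfrac{u_{\und d}}{u_{\und c}}$ for a unique positive rational $M$, and the job is to identify $M$. The statement about non-divisor strings follows at once from the $\chi$-units. Each $\chi_{c,d}=\tfrac{u_{(c,d)}u_{[c,d]}}{u_cu_d}$ is a unit whose rational image is exactly that ratio, so multiplying by $\chi$'s replaces $\und c,\und d$ by their associated divisor strings without changing $M$.

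For divisor strings, we use the linear models for $S^{\lamu{c}}$ and $S^{\lamu{d}}$ (Proposition~\ref{pr:spheres-small-model}). Both are bounded complexes of free $\mZ$-modules $F_{c_i}$ and $F_{d_i}$, so chain homotopy classes compute the derived-category maps. A chain map $f\colon S^{\lamu c}\ra S^{\lamu d}$ of degree $0$ is determined by its components $F_{c_i}\ra F_{d_i}$ in each degree $2i$ and $2i-1$. By Yoneda, each component is an element of $\mZ(\Theta_{c_i})$ restricted or induced to $\Theta_{d_i}$-type data. Concretely, such a component is a $\Z[C_n/c_i]$-linear map $\Z[C_n/c_i]\ra\Z[C_n/d_i]$ compatible with the structure maps.

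The rational class of $f$ is read off from the top degree $2s$. Both complexes have top homology $\mZ$ (Proposition~\ref{pr:positive-cone-MF}), generated by norm-type elements. If the top component is multiplication by an element $x$, then $f$ induces multiplication by some integer $m(f)$ on $\uH_{2s}(\blank)(\Theta_1)=\Z$. Comparing with the known normalizations of $u_{\und c},u_{\und d}$, we identify $f\otimes\Q=\tfrac{m(f)}{N}\tfrac{u_{\und d}}{u_{\und c}}$ for an explicit normalizing integer $N$. We expect $N$ to be the gcd over all paths (ending at $c_s$ or $d_s$), arising from $\uH_0$ and the bottom $R\pi$ map.

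The heart of the argument is to determine which integers $m(f)$ occur. We plan to build $f$ inductively from degree $0$ upward, which amounts to solving the commuting-square equations level by level. At each level the constraint that the square with $\id-Rt$ commutes forces the component to be a multiple of a transfer or norm element. The constraint with $I\pi R\pi$ then relates the coefficient at level $i$ to those at level $i-1$, multiplied by either $c_i$, $d_i$, or an index $[\,\cdot\,]$. Tracking the accumulated multiplicative factors gives exactly the allowable-path combinatorics:
\begin{itemize}
\item a step R or UR corresponds to passing from level $i-1$ to level $i$ through a $c$- or $d$-entry;
\item a step D corresponds to using the $\id-Rt$ square inside one level.
\end{itemize}
The achievable top coefficients should therefore form the ideal generated by the path products ending at $c_s$, and dividing by $N$ gives the formula.

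We expect this identification of the lattice of achievable coefficients to be the main obstacle. The difficulty is that the freedom at each stage is not independent: homotopies can also shift coefficients. To control this, we would first do the cases $s=1,2$ explicitly, matching Examples~\ref{ex:H-onefold} and \ref{ex:H-twofold} and the boxed entries of Figure~\ref{fig:gens}. We would then run an induction on $s$. Removing the last pair $(c_s,d_s)$ should reduce the problem to the $(s-1)$ case via a cofiber sequence $S^{\lamu{c'}}\ra S^{\lamu c}\ra$ (cells involving $F_{c_s}$), with the path gcds transforming by the recursion $g_{c_s}=\gcd(c_s g_{c_{s-1}},\, c_s d_s g_{c_{s-1}},\ \ldots)$ that is built into the path definition.
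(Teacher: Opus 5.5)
Your reduction to divisor strings via the $\chi$-units, and the observation that the image is a cyclic group $M\Z\cdot\frac{u_{\und{d}}}{u_{\und{c}}}$, are fine. The core step, however, has a genuine gap, and two of the intermediate predictions you make are false.

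\emph{The normalization.} There is no normalizing integer $N$. In the linear models, $u_{\und{c}}$ and $u_{\und{d}}$ are the maps $I\pi$ into the top free summand, and they generate $\uH_{2s}(\blank)(\Theta_1)\iso\Z$ for the two spheres. So a chain map $f$ rationalizes to exactly $\pm m(f)\cdot\frac{u_{\und{d}}}{u_{\und{c}}}$, and $M$ is just the positive generator of the set of values of $m(f)$.

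\emph{The predicted ideal.} It follows that your expectation cannot hold: the achievable $m(f)$ do not form the ideal generated by path products ending at $c_s$, with the denominator supplied separately. For $\und{c}=\und{d}=(b)$ with $b>1$, every path ending at $c_1$ has product divisible by $b$, yet $m(\id)=1$. The denominator $(E,F)$ has to emerge from the determination of which $m(f)$ occur.

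\emph{Homotopies.} The obstacle you single out is not real. $m(f)$ is homotopy invariant, so you only need its set of values over all chain maps, and you never have to work modulo homotopy.

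\emph{What is missing.} The content lies in actually solving the chain-map equations, and your proposal does not do this. In the paper, a chain map $L(\und{c})\to L(\und{d})$ is described by data $(M_i,w_i)$:
\begin{itemize}
\item the odd-degree component $F_{c_i}\to F_{d_i}$ is arbitrary, and only its normity $M_i$ enters;
\item the even-degree component is forced to be that map plus $w_iI\pi R\pi$ (Proposition~\ref{pr:lifting-lems-2}(b));
\item the $I\pi R\pi$-squares commute if and only if $M_i(d_i\cln c_i)=M_{i-1}(c_{i-1}\cln d_{i-1})+w_{i-1}c_{i-1}$ (Proposition~\ref{pr:lifting-lems-1}(b));
\item the map on top homology is $m(f)=(c_s\cln d_s)M_s+c_sw_s$ (Proposition~\ref{pr:L-homology}).
\end{itemize}
An induction using Proposition~\ref{pr:colon}(g) then shows two things. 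The admissible $M_j$ form the ideal generated by $Y_{j-1}\cln(d_j\cln c_j)$. The admissible values of $(c_j\cln d_j)M_j+c_jw_j$ form the ideal generated by $Y_j=\bigl((c_jY_{j-1})\cln d_j,\,c_j\bigr)$. Every step is reversible: choose $M_s$, then the remaining data in descending order. So the image is exactly $Y_s\Z$.

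\emph{Where the paths come from.} The path formula is not read off at the chain level. It is a closed form for this recursion, proved by a separate arithmetic induction (Proposition~\ref{pr:M-direct}). That induction uses $E_j=c_j\gcd(E_{j-1},d_jF_{j-1})$ and $F_j=d_j\gcd(E_{j-1},F_{j-1})$ for the path gcds ending at $c_j$ and $d_j$. Your dictionary between path steps and squares is unsubstantiated. Your recursion for $g_{c_s}$ also has to track the gcd $F_{s-1}$ of paths ending at $d_{s-1}$; otherwise it does not close.
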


See Proposition~\ref{pr:integral-classes} for this result.  We again warn that knowing a certain multiple $M\frac{u_{\und{d}}}{u_{\und{c}}}$ is in the image of the rationalization map does not necessarily give us a canonical element in $H^\star(\pt)$ that deserves to be called by this fraction.  If $H^\star(\pt)$ has torsion in this degree then multiple elements can map to the same fraction.\\

We complete this introductory survey by saying a little about $\ell$-adic localization, where $\ell$ is a prime.  To the extent possible we have tried in this introduction to describe $H^\star_{C_n}$ without working one prime at a time, and that is a reasonable first goal; as a related phenomenon, the classical group cohomology $H^*(C_n)$ is easier to describe as a global object than by patching together the $\ell$-local pieces.  But at the end of the day $\ell$-localization does help simplify matters, though perhaps not drastically so.  

The first important phenomenon one encounters is that for each $d$ there is a class $\frac{u_d}{u_{d(\ell)}}\in H^{\lambda_d-\lambda_{d(\ell)}}$ 
that becomes a unit after tensoring with $\Z_{(\ell)}$, where $d(\ell)$ is the largest power of $\ell$ that divides $d$. So after $\ell$-localization one picks up extra periodicities in the $\uH^\star$ that allow one to restrict the grading to
$\D_{d(\ell)}\subseteq \D_n$.  Note that the underlying objects are still chain complexes of $\mZ_{C_n}$-modules, though.

The second reduction involves a passage from $\mZ_{C_n}$-modules to $\mZ_{C_{n(\ell)}}$-modules.  This is most conveniently done via the change-of-group functor $\mRes_f$ where $f\colon C_n\ra C_{n(\ell)}$ is the quotient map.  This functor takes the complex $S^{\lambda_{d(\ell)}}$ as defined over $\mZ_{C_{n(\ell)}}$ to the similarly-named-but-different complex $S^{\lambda_{d(\ell)}}$ of $\mZ_{C_n}$-modules, and it also preserves box products.  Using standard properties of $\mRes_f$ and its right adjoint $\mInd_f$ one obtains canonical isomorphisms
\[ H^{\lamu{d}-\lamu{c}+k}_{C(n(\ell))} \iso H^{\lamu{d}-\lamu{c}+k}_{C_n}
\]
when all of the $c_i$ and $d_i$ are powers of $\ell$.  Putting everything together, we have that the ring $H^\star_{C_n}\tens \Z_{(\ell)}$ is essentially just $H^\star_{C_{n(\ell)}}\tens \Z_{(\ell)}$, the ``essentially'' because the former is $\D_n$-graded and the latter is $\D_{n(\ell)}$-graded.  This reduction does not, for the most part, solve the underlying problem; the computations over $C_{n(\ell)}$ are still difficult in the irregular region.  But what this reduction {\it does\/} do is reduce to a setting where everything is a divisor string, so that the $\chi$-classes are out of the story.  

For discussion of these and related matters, see Section~\ref{se:change}.\bigskip

To close this introduction, let us just say that $H^\star_{C_n}$ is a complicated structure that appears to have not yet given up all of its secrets.  One could wonder, for example, whether there is a combinatorial approach such as in Proposition~\ref{pr:integral-intro} that would give the isomorphism type of $H^{\lamu{d}-\lamu{c}}$.  There is more to be figured out here.  

\subsection{Notation and terminology}
Our results require some detailed work with the category of $\mZ_{C_n}$-modules, and unfortunately there is no universal set of conventions for describing objects and maps in this setting.  The prequel paper \cite{DHone} attempts to carefully set up the language and foundational results we need, and the reader is referred there for this basic material.  However, we give a very brief summary in Section~\ref{se:Zmod} below.

We fix once and for all an odd positive integer $n$, and $C_n$ is the cyclic group of order $n$ with a specified generator $t$.  The symbol $\ell$ always denotes a prime, and $n(\ell)$ is the largest power of $\ell$ that divides $n$.  Also $n(\hat{\ell})=\frac{n}{n(\ell)}$.  Given two integers $a$ and $b$, the gcd and lcm are denoted $(a,b)$ and $[a,b]$, respectively.

\subsection{Acknowledgments}  
Some of the work on this paper was done while the first author was a visitor at the Institute for Computational and Experimental Research in Mathematics, which is supported by NSF grant DMS-1929284.  Said author is grateful to ICERM and Brown University for providing an excellent working environment.  


\section{$\mZ$-modules and their derived category}
\label{se:derived}

In this section we briefly review the foundations of $\mZ$-modules and then discuss some basic constructions in the associated derived category, setting up the objects of study for the rest of the paper.    

\subsection{Review of $\mZ$-modules}
\label{se:Zmod}
Our preferred language and tools for working with $\mZ$-modules are described in detail in \cite{DHone}.  Here we provide an ultra-quick review.

The orbit category of $C_n$ has objects $\Theta_d=C_n/C_{(n/d)}$.  The maps are generated by the projection maps $p=p_{\Theta_d \ra \Theta_e}$ when $e|d$, as well as the action maps $t=t_d\colon \Theta_d\ra\Theta_d$. Note that we will typically drop subscripts and denote any projection by $p$ and any action map by $t$.  One always has $tp=pt$. Maps $\Theta_d\ra \Theta_e$ exist only when $e|d$, in which case the maps are precisely $t^ip$ for $i=0,1,\ldots,e-1$.  
The special projection maps $\Theta_d\ra \Theta_1$ will usually be denoted $\pi$.  

Recall $\mZ$-modules are contravariant, additive functors on a category $\cB\Z$ made from spans. See \cite[Section 2.8]{DHone} for a construction of $\cB\Z$ and discussion on how it can be viewed as a quotient of the usual Burnside category of spans. Every map $f$ in the orbit category gives maps $Rf$ and $If$ in $\cB\Z$, inducing maps $(Rf)^*=f^*$ and $(If)^*=f_*$ on a $\mZ$-module.  In this language, a span $[S \llla{f} X \llra{g} T]$ corresponds to $Rf\circ Ig$.  

For every orbit $\Theta_d$ there is a corresponding free $\mZ$-module $F_{\Theta_d}$, which we will abbreviate to just $F_d$.  We typically assume this free module comes with a chosen basis, which is a specified generator $g_d\in F_d(\Theta_d)$.  Note that $F_{\Theta_1}=F_1=\mZ$ and we will usually write it as the latter.
Maps $F_{d}\ra F_{e}$ are in bijective correspondence with $\cB\Z(\Theta_d,\Theta_e)$.  In particular, a map $f\colon \Theta_d\ra \Theta_e$ gives a map $Rf\colon F_d\ra F_e$ and a map $If\colon F_e\ra F_d$.  On basis elements these maps send $g_d\mapsto f^*(g_e)$ and $g_e\mapsto f_*(g_d)$, respectively.

For each divisor $d$ the ideal $I_d\subseteq \mZ$ is the one that is generated by the element $1_d\in \mZ(\Theta_d)$.  The module $\mZ/I_d$ has a standard free resolution of the form
\begin{equation}
\label{eq:std-free}
\xymatrixcolsep{2.5pc}\xymatrix{
0 \ar[r] & \mZ \ar[r]^{I\pi} & F_d \ar[r]^{\id-Rt} & F_d \ar[r]^{R\pi} & \mZ \ar[r] & \mZ/I_d \ar[r] & 0.
}
\end{equation}
This is described in \cite[Proposition 4.18]{DHone}.

An important property that will be used over and over is that if $e|n$ and $M$ is a $\mZ$-module that is generated in spots $\Theta_d$ for $e|d$ then $M\bbox \mZ/I_e=0$.  See \cite[Proposition 4.4]{DHone}.
In particular:
\begin{equation}
\label{eq:box-zero}
\text{If $e|d$ then $F_d\bbox \,\mZ/I_e=0$ and $I_d\bbox \mZ/I_e=0$.}  
\end{equation}

The category of $\mZ$-modules has an internal hom object $\uHom(\blank,\blank)$ that is right adjoint to the box product.  The functor $\uHom(\blank,\mZ)$ sends $F_d$ to $F_d$, but sends the map $Rf$ to $If$ and vice versa.  

We will sometimes make use of $\uTor$ and $\uExt$ calculations such as
\[ \uTor_i(\mZ/I_d,\mZ/I_e)\iso \uExt^i(\mZ/I_d,\mZ/I_e)\iso 
\begin{cases}
\mZ/I_{(d,e)} & \text{if $i=0$ or $i=3$,}\\
0 & \text{otherwise.}
\end{cases}
\]
For these and others, see \cite[Section 4]{DHone}.  Also note that there is a canonical decomposition $\mZ/I_d=\bigoplus_{\ell|d} \tors_{\ell^\bdot} (\mZ/I_{d})$ into $\ell$-power torsion submodules (where $\ell$ always denotes a prime), and non-canonical isomorphisms $\tors_{\ell^\bdot} (\mZ/I_d)\iso \mZ/I_{d(\ell)}$.  

The following is proven in \cite[Proposition 4.15(f)]{DHone}:
\begin{equation}
\label{eq:ell-local-fact}
\text{\parbox{4.5in}{Let $M$ be a finitely-generated $\mZ$-module, and let $S\subseteq M(\Theta_1)$.  Fix $d\geq 1$.  Suppose for every prime $\ell$ that $M\tens \Z_{(\ell)}\iso \mZ/I_{d(\ell)}$ and that $M\tens \Z_{(\ell)}$ is generated by $S$.  Then $M\iso \mZ/I_d$ and $M$ is generated by $S$.}}
\end{equation}

One final miscellaneous fact: 
\begin{equation}
\label{eq:Z(e;d)}
\text{\parbox{4.5in}{There is (up to isomorphism) at most one $\mZ$-module $M$ that sits in a short exact sequence $0\ra I_e\ra M \ra \mZ/I_d \ra 0$ and has $M(\Theta_1)$ non-torsion.  This module exists precisely when $d|e$, and is denoted $\mZ(e;d)$.}  }
\end{equation}
For a proof of this fact and more detailed descriptions of the module, see \cite[Section 5.4]{DHone}.

\subsection{The derived category}
Let $\cD=\cD(\mZ)$ denote the derived category of $\mZ$-modules, i.e. the localization of the category of chain complexes where one inverts all quasi-isomorphisms.  
We let $\cD_{cpt}$ be the full subcategory consisting of the bounded, levelwise-free complexes (the compact objects).  Note that these derived categories are naturally enriched over $\mZ$-modules: for objects $X$ and $Y$ we write $\ucD(X,Y)$ for the $\mZ$-module
\[ \Theta\mapsto \cD(F_\Theta\bbox X,Y).
\]

If $M$ is a $\mZ$-module and $X$ is in $\cD$ we (as usual) write
\[ \uH_i(X;M)=\ucD(\Sigma^i\mZ,X\dbox M), \qquad
\uH^i(X;M)=\ucD(X,\Sigma^i M)
\]
where $\dbox$ is the derived box product.  When $M=\mZ$ we often drop it from the coefficients.

If $X$ is a bounded, levelwise-free chain complex, let $X^*=\uHom(X,\mZ)$.  The functor $X\mapsto X^*$ preserves quasi-isomorphisms between bounded, levelwise-free chain complexes, and so may be regarded as a functor on $\cD_{cpt}$.   Observe that we have canonical isomorphisms $X\ra (X^*)^*$, and so we also have the resulting duality isomorphisms
\[ \cD(X,Y)\iso \cD(Y^*,X^*).
\]
Routine adjunction arguments show that this is the $\Theta_1$-component of a canonical isomorphism of $\mZ$-modules
\begin{equation}
\label{eq:duality}
\ucD(X,Y)\iso \ucD(Y^*,X^*).
\end{equation}

There are certain special objects in $\cD_{cpt}(\mZ)$ that come to us from topology.  For positive integers $b|n$  let $S^{\lambda_b}$ denote the complex
\[ \xymatrixcolsep{3.2pc}\xymatrix{
0 \ar[r] & F_{b} \ar[r]^{\id-Rt} & F_{b}\ar[r]^{R\pi} & \mZ \ar[r] & 0
}
\] 
where $\mZ$ is in degree $0$.  This is the reduced cellular chain complex for the 1-point compactification of the representation $\lambda_b$ (see Section~\ref{se:intro}).
That geometric sphere is also usually denoted by $S^{\lambda_b}$, but using the same symbol for both objects will usually not lead to any confusion.  

\begin{remark}
\label{re:cells}
We will never need the following, but let us briefly explain the underlying geometry here.  In the representation $\lambda_b$ draw the ray $r=\overrightarrow{01}$ as well as its conjugates $t^i r$ for $0\leq i < b$.  The geometric $S^{\lambda_b}$ has a cell structure consisting of the $0$-cells $0$ and $\infty$ (both of which are fixed and so have orbit type $\Theta_1$), the above rays as an equivariant $1$-cell of orbit type $\Theta_b$, and the sectors between the rays as an equivariant $2$-cell of type $\Theta_b$.  The boundary of the $2$-cell is $r-tr$, which is the $\id-Rt$ in the above complex.    The complex has only one $\Z$ instead of two in degree $0$ because it is the {\it reduced\/} cellular complex (the $0$-cell $\infty$ is quotiented to zero).

For $1\leq k< n$ one can similarly write down the reduced cellular chain complex for $S^{\lambda(k)}$.  The rays $t^i r$ correspond to the subgroup of $\Z/n$ generated by $k$, which is $(k,n)$.  As a $C_n$-set this is $\Theta_{\frac{n}{(k,n)}}$.  The set of two-cells that interlace between these rays also form the set $\Theta_{\frac{n}{(k,n)}}$.  
If we write $(k,n)=Ak+Bn$ with $A\geq 0$ then the boundary of the `first' $2$-cell is $r-t^Ar$, which leads to the associated reduced chain complex being the one along the top row of the following diagram:
\[ \xymatrixcolsep{4.9pc}\xymatrix{0 \ar[r] & F_{\frac{n}{(k,n)}} \ar[r]^{\id-Rt^A}  & F_{\frac{n}{(k,n)}} \ar[r]^{R\pi} & \mZ \ar[r] 
& 0 \\
0 \ar[r] & F_{\frac{n}{(k,n)}}\ar[u]^{\id} \ar[r]^{\id-Rt} & F_{\frac{n}{(k,n)}} \ar[r]^{R\pi}\ar[u]_{\id+Rt+\cdots+Rt^{A-1}} & \mZ \ar[u]_{\cdot A}\ar[r] & 0.
}
\]

We claim that the indicated map of chain complexes is a quasi-isomorphism, which explains why the $\lambda(k)$ parts of the $RO(C_n)$-grading are extraneous for $k\nmid n$.  To see this, observe that in the $F_{\frac{n}{(k,n)}}$ Mackey functor every element is fixed by $t^{\frac{n}{(k,n)}}$.  Since $1=A\cdot \frac{k}{(k,n)}+B\cdot \frac{n}{(k,n)}$ it follows that $t$ and $t^{A\cdot \frac{k}{(k,n)}}$ act the same on the Mackey functor.  It follows formally that the maps $\id-Rt$ and $\id-Rt^A$ have the same kernel and image---for the latter, use that \[\id-Rt=\id-(Rt^A)^{\frac{k}{(k,n)}}=(\id-Rt^A)\cdot (\id+Rt^A+\cdots Rt^{A\cdot (\frac{k}{(k,n)}-1)}).\]
So the homology modules of the above two complexes are the same in all degrees, and since $\uH_1=0$ the induced map is clearly an isomorphism in degrees $1$ and $2$.  In degree $0$ it is the multiplication-by-$A$ map on $\mZ/I_{\frac{n}{(k,n)}}$, but since $A$ is a unit modulo $\frac{n}{(k,n)}$ this is an isomorphism as well.

\end{remark}

Returning to our discussion of the derived category of $\mZ$,
note that $S^{\lambda_1}\he \Sigma^2\mZ$ because
the inclusion of the degree two piece gives a map $\Sigma^2\mZ\ra S^{\lambda_1}$ which is a quasi-isomorphism.
While $S^{\lambda_1}$ is not technically equal to $\Sigma^2\mZ$ as complexes, when working in the derived category we can identify them via this explicit quasi-isomorphism.  Also, let us define $S^{\lambda_0}$ to be $\mZ$ (concentrated in degree $0$).

Given positive integers $b_1,\ldots,b_s$ dividing $n$ we write
\[ S^{\lambda_{b_1}+\cdots+\lambda_{b_s}}=S^{\lambda_{b_1}}\bbox \cdots \bbox S^{\lambda_{b_s}}.
\]
Note that changing the order of the $b$'s results in a different object, though it will be the same up to isomorphism.  

We also define $S^{-\lambda_b}=(S^{\lambda_b})^*$ and
\[ S^{-\lambda_{b_1}-\cdots - \lambda_{b_s}}=S^{-\lambda_{b_1}}\bbox \cdots \bbox S^{-\lambda_{b_s}}.
\]
Observe that $S^{\lambda_{b_1}+\cdots+\lambda_{b_s}}$ is concentrated in degrees $0$ through $2s$, whereas $S^{-\lambda_{b_1}-\cdots-\lambda_{b_s}}$ is concentrated in degrees $-2s$ through $0$.  

The complex $S^{-\lambda_b}$ is given by 
\[
\xymatrixcolsep{3.2pc}\xymatrix{
0 \ar[r] &\mZ \ar[r]^{I\pi}& F_{b} \ar[r]^{\id-Rt} & F_{b}  \ar[r] & 0.
}
\]
Both of the complexes $S^{\lambda_b}$ and $S^{-\lambda_b}$ can be found inside our standard free resolution 
(\ref{eq:std-free}) of $\mZ/I_b$, a fact that can be encapsulated as the existence of the following 
cofiber sequences in $\cD(\mZ)$:

\[ S^{\lambda_b} \ra \mZ/I_b \ra \Sigma^3\mZ, \qquad
\mZ \ra \mZ/I_b \ra \Sigma^3 S^{-\lambda_b}. 
\]
These will be very useful to us.  Related to this, note the following easy computations:
\[ \uH_i(S^{\lambda_b})\iso \begin{cases}
\mZ & \text{if $i=2$}, \\
\mZ/I_b & \text{if $i=0$}, \\
0 & \text{otherwise}
\end{cases}
\qquad\text{and}\qquad
\uH_i(S^{-\lambda_{b}})\iso \begin{cases}
I_b & \text{if $i=-2$}, \\
0 & \text{otherwise.}
\end{cases}
\]
These follow immediately from the above cofiber sequences.  
In particular, observe that $S^{-\lambda_b}\he \Sigma^{-2}I_b$.  

Also note that if $a|b$ then (\ref{eq:box-zero}) says 
$F_b\bbox \mZ/I_a\iso 0$, and therefore
\begin{equation}
\label{eq:sphere-box}
S^{\lambda_b}\bbox \mZ/I_a\he \mZ/I_a \qquad\text{and}\qquad
S^{-\lambda_b}\bbox \mZ/I_a\he \mZ/I_a.
\end{equation}

Even without any motivation from topology, one would eventually discover the complexes $S^{\lambda_b}$ because they are invertible objects of $\cD(\mZ)$. In particular we have:

\begin{prop} 
\label{pr:sphere-invertible}
$S^{\lambda_b}\bbox S^{-\lambda_b}\he \mZ$.  
\end{prop}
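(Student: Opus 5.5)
We use the two cofiber sequences
\[ S^{\lambda_b}\ra \mZ/I_b\ra \Sigma^3\mZ, \qquad \mZ\ra \mZ/I_b\ra \Sigma^3 S^{-\lambda_b}, \]
together with the quasi-isomorphism $S^{-\lambda_b}\he \Sigma^{-2}I_b$. All complexes involved are bounded and levelwise free, so $\bbox$ with them is already derived. Hence box products of these cofiber sequences with $S^{\pm\lambda_b}$ remain cofiber sequences, and we may compute homology modules directly.

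Box the first cofiber sequence with $S^{-\lambda_b}$. This gives a cofiber sequence
\[ S^{\lambda_b}\bbox S^{-\lambda_b}\ra \mZ/I_b\bbox S^{-\lambda_b}\ra \Sigma^3 S^{-\lambda_b}. \]
By (\ref{eq:sphere-box}) with $a=b$, the middle term is $\he \mZ/I_b$. The right term is $\Sigma^3 S^{-\lambda_b}\he \Sigma I_b$, whose only homology is $I_b$ in degree $1$. The long exact sequence in $\uH_*$ then gives two things. First, $\uH_i(S^{\lambda_b}\bbox S^{-\lambda_b})=0$ for $i\neq 0$. Second, there is an exact sequence
\[ 0\ra I_b\ra \uH_0(S^{\lambda_b}\bbox S^{-\lambda_b})\ra \mZ/I_b\ra 0, \]
obtained from the connecting map $\uH_1(\Sigma I_b)=I_b\ra \uH_0$ followed by $\uH_0\ra \mZ/I_b$, with $\uH_{0}$ of the right-hand term equal to zero. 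So $X=S^{\lambda_b}\bbox S^{-\lambda_b}$ has homology concentrated in degree $0$, and $X\he M$ for a module $M$ that is an extension of $\mZ/I_b$ by $I_b$.

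Next we identify $M\iso\mZ$. We invoke (\ref{eq:Z(e;d)}) with $e=d=b$: there is at most one such extension with $M(\Theta_1)$ non-torsion. The module $\mZ$ itself is such an extension, via $0\ra I_b\ra\mZ\ra\mZ/I_b\ra 0$, so it suffices to check that $M(\Theta_1)$ is non-torsion. For that we compute $X(\Theta_1)$ rationally, or on the underlying level. The underlying homology of $S^{\lambda_b}$ at $\Theta_1$ is $\Z$ in degree $2$ plus torsion, and that of $S^{-\lambda_b}$ is $\Z$ in degree $-2$. Since $\uH_0(M)(\Theta_1)$ sits in an extension of $\Z/b$ by $I_b(\Theta_1)\iso\Z$, it automatically contains $\Z$, so it is non-torsion. (Indeed $I_b(\Theta_1)=b\Z\iso\Z$.) Thus $M\iso \mZ$ and $X\he\mZ$.

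The main obstacle is making sure the extension uniqueness fact (\ref{eq:Z(e;d)}) applies in the form needed, namely that $\mZ(b;b)\iso\mZ$. As the text states, this module exists exactly when $d|e$, which holds here since $b|b$. The canonical short exact sequence $I_b\ra\mZ\ra\mZ/I_b$ exhibits $\mZ$ as that module. If one prefers a canonical equivalence rather than an abstract one, an alternative is to use the evaluation map $S^{\lambda_b}\bbox \uHom(S^{\lambda_b},\mZ)\ra\mZ$. One would check that it is a quasi-isomorphism by comparing it with the extension above: it induces the identity on $\mZ/I_b$ after boxing with $\mZ/I_b$, and it is an isomorphism rationally. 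Either route suffices for the proposition as stated.
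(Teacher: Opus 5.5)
\paragraph{There is a gap in identifying $\uH_0(X)$.} Your long exact sequence computation is correct. Boxing $S^{\lambda_b}\ra \mZ/I_b\ra \Sigma^3\mZ$ with $S^{-\lambda_b}$ does show that $X=S^{\lambda_b}\bbox S^{-\lambda_b}$ has homology only in degree $0$, and that $M=\uH_0(X)$ sits in an extension $0\ra I_b\ra M\ra \mZ/I_b\ra 0$. The gap is in the identification of $M$.

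You read the hypothesis of (\ref{eq:Z(e;d)}) as ``$M(\Theta_1)$ contains an element of infinite order.'' As you note yourself, that is automatic, since $M(\Theta_1)\supseteq I_b(\Theta_1)=b\Z$. Under that reading the uniqueness statement is false: the split extension $I_b\oplus \mZ/I_b$ has $\Theta_1$-value $\Z\oplus\Z/b$, so it is not isomorphic to $\mZ$ when $b>1$. The hypothesis has to mean that $M(\Theta_1)$ is torsion-free, i.e.\ $M(\Theta_1)\iso\Z$. This is exactly how the paper applies the fact in Proposition~\ref{pr:lambda_c-lambda_b}, where it first uses Proposition~\ref{pr:Sb_to_Sc} to know that the $\Theta_1$-value is $\Z$.

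Nothing in your argument shows $M(\Theta_1)$ is torsion-free; rationalization only tells you it has rank one. So the extension problem is left open. To close it along your route you would have to compute the connecting map $\mZ/I_b\ra \Sigma I_b$, i.e.\ its class in $\Ext^1(\mZ/I_b,I_b)$.

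\paragraph{How the paper avoids this, and your fallback.} The paper boxes the \emph{other} cofiber sequence, $\Sigma^{-3}\mZ\ra \Sigma^{-3}\mZ/I_b\ra S^{-\lambda_b}$, with $S^{\lambda_b}$. This gives $\Sigma^{-3}S^{\lambda_b}\ra \Sigma^{-3}\mZ/I_b\ra X$. Since $S^{\lambda_b}\ra \mZ/I_b$ is an isomorphism on $\uH_0$, the long exact sequence yields $\uH_i(X)=0$ for $i\neq 0$ and $\uH_0(X)\iso \uH_{2}(S^{\lambda_b})\iso \mZ$ directly, as a module rather than as an unresolved extension.

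Your fallback via the evaluation map is only sketched, and the criterion you propose does not work as stated. A map that is an isomorphism rationally and after $\bbox\,\mZ/I_b$ need not be a quasi-isomorphism. For example, multiplication by a prime $\ell$ on $F_b$ passes both tests, since both sides vanish after $\bbox\,\mZ/I_b$ by (\ref{eq:box-zero}), but it is not an isomorphism. So that route would also need a real argument.
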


\begin{proof}
Start with the cofiber sequence $\Sigma^{-3}\mZ \ra \Sigma^{-3} \mZ/I_b \ra S^{-\lambda_b}$ and box with $S^{\lambda_b}$ to get
\[ \Sigma^{-3}S^{\lambda_b} \ra \Sigma^{-3}\mZ/I_b \ra S^{\lambda_b}\bbox S^{-\lambda_b}
\]
(we have used (\ref{eq:sphere-box}) to simplify the middle term).
The map $S^{\lambda_b}\ra S^{\lambda_b}\bbox \mZ/I_b=\mZ/I_b$ is readily seen to be an isomorphism on $\uH_0$, and then it follows immediately from the above cofiber sequence that $\uH_i(S^{\lambda_b}\bbox S^{-\lambda_b})=0$ for $i\neq 0$ and
$\uH_0(S^{\lambda_b}\bbox S^{-\lambda_b})\iso \uH_{-1}(\Sigma^{-3}S^{\lambda_b})\iso \uH_2(S^{\lambda_b})\iso \mZ$.
This proves $S^{\lambda_b}\bbox S^{-\lambda_b}\he \mZ$, as desired.  
\end{proof}

The following result is often useful.  It says that as seen through the lens of boxing with a `large enough' free module, every sphere looks like an ordinary suspension of $\mZ$.

\begin{prop}
\label{pr:sphere-box-free}
Let $b_1,\ldots,b_s$ be divisors of $n$, set $b_0=0$, and let $m_0,\ldots,m_s\in \Z$.  Let $\beta=\sum_{i=0}^s m_i \lambda_{b_i}$.  
If $c|n$ and $b_i|c$ for all $i$, then $S^\beta \bbox F_c \he \Sigma^{\dim \beta}\mZ\,\bbox\, F_c=\Sigma^{\dim\beta}F_c$.
\end{prop}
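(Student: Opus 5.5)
The plan is to reduce to the case of a single sphere $S^{\pm\lambda_b}$ with $b|c$, and then use associativity of the box product. Since $S^\beta$ is a box product of factors $S^{\pm 1}$ (integer suspensions of $\mZ$) and $S^{\pm\lambda_{b_i}}$, and each such factor is bounded and levelwise free, the derived box product agrees with the ordinary one. So it suffices to show that $S^{\lambda_b}\bbox F_c\he \Sigma^2F_c$ and $S^{-\lambda_b}\bbox F_c\he\Sigma^{-2}F_c$ whenever $b|c$. Given these, we peel off one factor at a time: $X\bbox S^{\pm\lambda_b}\bbox F_c\he X\bbox \Sigma^{\pm2}F_c=\Sigma^{\pm 2}(X\bbox F_c)$, and induct on the number of factors. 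Each step shifts the degree by the geometric dimension of that factor, which produces $\Sigma^{\dim\beta}F_c$.

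For the single-sphere case I would use the cofiber sequences from the standard resolution, $S^{\lambda_b}\ra \mZ/I_b\ra\Sigma^3\mZ$ and $\mZ\ra\mZ/I_b\ra\Sigma^3S^{-\lambda_b}$. Boxing with $F_c$ is exact, since $F_c$ is free and hence flat, so these remain cofiber sequences. Because $b|c$, (\ref{eq:box-zero}) gives $F_c\bbox\mZ/I_b=0$. The first sequence then yields $S^{\lambda_b}\bbox F_c\he \Sigma^{2}(\Sigma^3\mZ\bbox F_c)[-3]$, i.e.\ $\Sigma^2F_c$, since the fiber of $0\ra\Sigma^3F_c$ is $\Sigma^2F_c$. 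Likewise the second sequence gives $\Sigma^3S^{-\lambda_b}\bbox F_c\he\Sigma F_c$, so $S^{-\lambda_b}\bbox F_c\he\Sigma^{-2}F_c$. Note that $b=1$ is also consistent here, since $S^{\lambda_1}\he\Sigma^2\mZ$.

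The only mild obstacle is justifying that boxing with $F_c$ preserves the relevant cofiber sequences and quasi-isomorphisms, which is the flatness of $F_c$. I would cite \cite{DHone} for the fact that $F_c\bbox-$ is exact, or note that $F_c\bbox M(\Theta)$ is a finite sum of values of $M$. With that, everything else is formal bookkeeping of suspensions.
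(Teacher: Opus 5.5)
Your proposal is correct and is essentially the paper's argument: reduce to a single factor $S^{\pm\lambda_b}$ with $b|c$, then use flatness of $F_c$ together with $\mZ/I_b\bbox F_c=0$ from (\ref{eq:box-zero}). The only cosmetic difference is that the paper computes $\uH_*(S^{\lambda_b}\bbox F_c)\iso \uH_*(S^{\lambda_b})\bbox F_c$ directly and gets the $S^{-\lambda_b}$ case by boxing with the inverse spheres, whereas you read both cases off the two cofiber sequences. Your expression $\Sigma^{2}(\Sigma^3\mZ\bbox F_c)[-3]$ is garbled, but the conclusion $\Sigma^2F_c$ is correct.
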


\begin{proof}
It suffices to prove this when $\beta=\pm \lambda_{d}$ and $d|c$, and then to use the evident induction.  But $F_c$ is flat  (see \cite[Corollary 2.25]{DHone}) and so
\[ \uH_i(S^{\lambda_d}\bbox F_c)\iso \uH_i(S^{\lambda_d})\bbox F_c \iso \begin{cases}
\mZ\bbox F_c & \text{if $i=2$}, \\
\mZ/I_d\bbox F_c & \text{if $i=0$}, \\
0 & \text{otherwise.}
\end{cases}
\]
By (\ref{eq:box-zero}) the assumption $d|c$ implies that $\mZ/I_d\bbox F_c=0$, hence $S^{\lambda_d}\bbox F_c\he \Sigma^2 F_c$.  

The equivalence $S^{\lambda_d}\bbox F_c\he \Sigma^2 F_c$ immediately yields $\Sigma^{-2}F_c\he S^{-\lambda_d}\bbox F_c$, just by boxing with the inverses of the two spheres.
\end{proof}

The following result is similar to the previous one, but with a slightly different context.  Using the notation of Proposition~\ref{pr:sphere-box-free}, if $\ell$ is a prime define
\begin{equation}
\label{eq:dim_ell}
 \dim_\ell \beta=\sum_{\ell|b_i} m_i \dim \lambda_{b_i}, \qquad
\dim_{\hat{\ell}}\beta=\sum_{(\ell,b_i)=1} m_i \dim \lambda_{b_i}.
\end{equation}
Note that $\dim\beta=\dim_\ell \beta + \dim_{\hat{\ell}}\beta$.

\begin{prop}
\label{pr:sphere-box-Z/I}
Let $b_1,\ldots,b_s$ be divisors of $n$, set $b_0=0$, and let $m_0,\ldots,m_s\in \Z$.  Set $\beta=\sum_{i=0}^s m_i\lambda_{b_i}$.  For any prime $\ell$ we have $S^\beta \bbox \mZ/I_\ell\he \Sigma^{\dim_{\hat{\ell}}\beta} \mZ/I_\ell$.  
\end{prop}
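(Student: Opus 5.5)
Following the pattern of Proposition~\ref{pr:sphere-box-free}, I reduce to the case $\beta=\pm\lambda_d$ for a single divisor $d$ (or $\beta=\pm 1$) and then induct. The reduction works because $S^\beta$ is a box product of copies of $S^{\pm 1}$ and $S^{\pm\lambda_{b_i}}$, and both $\dim_{\hat\ell}$ and $\dim_\ell$ are additive. Suppose we know $S^{\pm\lambda_d}\bbox\mZ/I_\ell\he \Sigma^{\epsilon}\mZ/I_\ell$ for each single sphere, with the appropriate shift $\epsilon$. Then we peel off one factor at a time:
\[
S^{\beta}\bbox\mZ/I_\ell \;\he\; S^{\beta'}\bbox\bigl(S^{\pm\lambda_d}\bbox\mZ/I_\ell\bigr) \;\he\; \Sigma^{\epsilon}\,S^{\beta'}\bbox\mZ/I_\ell,
\]
and induction finishes the argument. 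Here all objects are bounded levelwise-free complexes, or we use $\dbox$, so box products are derived and associativity holds up to canonical isomorphism. For the trivial summand $\lambda_0=1$, the factor $S^{m_0}=\Sigma^{m_0}\mZ$ contributes the shift $m_0$, consistent with $(\ell,b_0)=(\ell,0)=\ell\neq1$ being excluded from $\dim_{\hat\ell}$. I note that the conventions here need care: the fixed summand must contribute to the shift only as intended. Since $\dim$ counts $m_0$ and the formula $\dim=\dim_\ell+\dim_{\hat\ell}$ is asserted, I will treat $b_0$ with whatever convention makes the statement consistent and simply track suspensions.

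There are two cases for a single positive sphere $S^{\lambda_d}$. If $\ell\mid d$, then (\ref{eq:box-zero}) gives $F_d\bbox\mZ/I_\ell=0$. Hence $S^{\lambda_d}\bbox\mZ/I_\ell$ collapses to $\mZ\bbox\mZ/I_\ell=\mZ/I_\ell$ in degree $0$, which is exactly (\ref{eq:sphere-box}) with $a=\ell$, and the shift is $0$, as required. If $(\ell,d)=1$, I claim $S^{\lambda_d}\bbox\mZ/I_\ell\he\Sigma^2\mZ/I_\ell$. To prove this, box the cofiber sequence $S^{\lambda_d}\ra\mZ/I_d\ra\Sigma^3\mZ$ with $\mZ/I_\ell$, using derived box products. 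This gives a triangle
\[
S^{\lambda_d}\dbox\mZ/I_\ell\ra \mZ/I_d\dbox\mZ/I_\ell\ra\Sigma^3\mZ/I_\ell .
\]
By the $\uTor$ computation quoted in Section~\ref{se:Zmod}, $\uTor_i(\mZ/I_d,\mZ/I_\ell)\iso\mZ/I_{(d,\ell)}=\mZ/I_1=0$, using that $\mZ/I_1=0$ because $1_1$ generates $\mZ$. So the middle term vanishes and $S^{\lambda_d}\dbox\mZ/I_\ell\he\Sigma^2\mZ/I_\ell$. Since $S^{\lambda_d}$ is levelwise free, hence flat by \cite[Corollary 2.25]{DHone}, the derived and underived box products agree.

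For negative spheres I avoid a separate computation and use invertibility, exactly as at the end of the proof of Proposition~\ref{pr:sphere-box-free}. Boxing the equivalence $S^{\lambda_d}\bbox\mZ/I_\ell\he\Sigma^{\epsilon}\mZ/I_\ell$ with $S^{-\lambda_d}$ and applying Proposition~\ref{pr:sphere-invertible} gives $S^{-\lambda_d}\bbox\mZ/I_\ell\he\Sigma^{-\epsilon}\mZ/I_\ell$.

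I expect the main obstacle to be the coprime case, specifically justifying that $\mZ/I_d\dbox\mZ/I_\ell\he0$. This needs the full $\uTor$ vanishing in all degrees, not just $\uTor_0$. If the quoted formula is only valid as stated, I would instead verify it directly. The plan is to box the standard resolution (\ref{eq:std-free}) of $\mZ/I_d$ with $\mZ/I_\ell$ and check exactness using that $F_d\bbox\mZ/I_\ell$ is computable via \cite[Section 4]{DHone}. In that computation, $\id-Rt$ acts invertibly after inverting the relevant torsion, since $d$ is a unit mod $\ell$.
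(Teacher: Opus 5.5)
Your proposal is correct and is essentially the paper's proof: you reduce to a single $S^{\lambda_d}$ and use $F_d\bbox \mZ/I_\ell=0$ when $\ell\mid d$. In the coprime case you box a rotation of the same cofiber sequence with $\mZ/I_\ell$ (the paper uses $\Sigma^2\mZ\ra S^{\lambda_d}\ra \mZ/I_d$), relying on $\uTor_i(\mZ/I_d,\mZ/I_\ell)=0$ for all $i$, which the quoted formula already gives, so your fallback is not needed; negative spheres are then handled by invertibility.

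One correction to your bookkeeping remark about $b_0$. Since $S^{m_0}\bbox\mZ/I_\ell=\Sigma^{m_0}\mZ/I_\ell$, the statement requires the trivial summand to be \emph{counted} in $\dim_{\hat{\ell}}$, as the paper tacitly does (e.g.\ in Proposition~\ref{pr:u-iso-range}, where $\dim_{\hat{\ell}}(\beta-2)=\dim_{\hat{\ell}}\beta-2$). So your claim that the shift $m_0$ is ``consistent with'' excluding $b_0$ because $(\ell,0)=\ell$ is backwards: under that literal reading the statement would already fail for $\beta=1$. Your actual suspension bookkeeping is right.
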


\begin{proof}
As in Proposition~\ref{pr:sphere-box-free}, it suffices to prove $S^{\lambda_d}\bbox \mZ/I_\ell\he \Sigma^{\dim_{\hat{\ell}}\lambda_d} \mZ/I_\ell$ for any $d|n$.  If $\ell|d$ then $F_d\bbox \mZ/I_\ell=0$ and $\dim_{\hat{\ell}}\lambda_d=0$, therefore $S^{\lambda_d}\bbox \mZ/I_\ell \iso \mZ/I_\ell\iso \Sigma^{\dim_{\hat{\ell}}\lambda_d}\mZ/I_\ell$.
For the case $(\ell,d)=1$ start with the cofiber sequence \[\Sigma^2 \mZ \ra S^{\lambda_d}\ra \mZ/I_d.\]  Boxing with $\mZ/I_\ell$ and using that $\mZ/I_d\bbox \mZ/I_\ell=\mZ/I_{(d,\ell)}=0$ gives a quasi-isomorphism $\Sigma^2 \mZ/I_\ell \llra{\he} S^{\lambda_d}\bbox \mZ/I_\ell$, which is what we want since $\dim_{\hat{\ell}}\lambda_d=2$.
\end{proof}

\subsection{The stable homotopy ring for \mdfn{$\cD(\mZ)$}}
\label{se:D_stable}
Recall from the introduction the grading group $\D_n$ and the spheres $S^v$ for $v\in \D^n$, as well as the basic object of study
\[ H^\star=H^\star(\mZ)=\bigoplus_{v\in \D_n} \cD(\mZ,S^v).
\]
Let us briefly recall how this is a ring.
Any bounded complex of free $\mZ$-modules $A$ has canonical unit and co-unit maps \[\mZ\llra{\eta} \uHom(A,\mZ)\bbox A \quad \text{and} \quad A\bbox \uHom(A,\,Z)\llra{\hat{\eta}} \mZ\] expressing the fact that $A$ is dualizable in $\cD(\mZ)$ (these are the standard maps one has for chain complexes of dualizable objects in an abelian closed tensor category).  In particular, we have these maps for $A=S^{\lambda_d}$.  
As explained in \cite{D}, given $v,w\in \D_n$ one then gets a canonical isomorphism  $S^v\bbox S^w\iso S^{v+w}$ in $\cD(\mZ)$ by freely interchanging  factors except when they have the form $X$ and $X^*$, in which case we use the unit or co-unit to cancel them (depending on the order).  
Using these isomorphisms, the box product of maps $\mZ\ra S^v$ and $\mZ\ra S^w$ becomes a map $\mZ\ra S^{v+w}$, and it is shown in \cite{D} that this makes $H^\star$
into a $\D_n$-graded ring.   Additionally, by \cite[Proposition 1.2(3)]{D} there is a skew-commutativity rule that involves the elements $\tau_d=\tr(\id_{S^{\lambda_d}})$ and $\tau_0=\tr(\id_{S^1})$, there $\tr$ is the graded trace and takes values in $\Hom(\mZ,\mZ)=\Z$.  But observe that for each $d|n$ our chain complex model for $S^{\lambda_d}$ yields that 
\[ \tau_d=\tr(\id_{\mZ})-\tr(\id_{F_d})+\tr(\id_{F_d}) = \tr(\id_{\mZ})=1
\]
(one doesn't even need to know $\tr(\id_{F_d})$).  So the skew-commutativity rule only involves $\tau_0$ and therefore reduces to (\ref{eq:skew-comm}) as given in the introduction.  

\begin{remark}
\label{re:maps->H}
Given $v,w\in \D_n$ and a map $f\colon S^v\ra S^w$, there are two ways to get an associated element of $H^{w-v}$: one could apply $S^{-v}\bbox(\blank)$ or $(\blank)\bbox S^{-v}$.  In general one must be careful about this, and we refer to \cite[Section 6.2]{D} for a detailed discussion of the issues.  But the fact that all $\tau_d$ are equal to one implies by \cite[Proposition 6.11(a)]{D} that as long as $v$ and $w$ have fixed dimensions of the same parity the two methods give the same element of $H^\star$.  This fact, together with (\ref{eq:skew-comm}), allows us to be from now on very cavalier about most of these issues related to signs.
\end{remark}

\begin{remark}[The connections to topology]
Let $H\mZ$ denote the equivariant Eilenberg-MacLane spectrum. From the theory developed by Schwede and Shipley \cite{SS} there is a zig-zag of Quillen equivalences between Ch$(\mZ)$ and $H\mZ\MMod$, as explained in \cite[Corollary 5.2]{Z}. 
Using this equivalence, our $\uH^\star(\mZ)$ is a portion of the $RO(C_n)$-graded Bredon cohomology $\uH^\star_{C_n}(\pt;\mZ)$, where we regard $\D_n\subseteq RO(C_n)$.  By the observations in Remark~\ref{re:cells}, the two gradings have essentially the same information when $n$ is odd.   
 Thus, we often will refer to $\uH^\star(\mZ)$ as ``the cohomology of a point'', and will sometimes write it as $\uH^\star(\pt)$ or $\uH^\star(\pt;\mZ)$.  Note that when
$n$ is even there is also the sign representation in $RO(C_n)$, and we have not included that in our present development.
\end{remark}

We would like to compute the $\D_n$-graded Mackey ring $\uH^\star(\mZ)$, however this is a very complicated endeavor.  There is no known simple description of what the complete answer is.  
Instead we focus on the \dfn{regular region}, which we define to be the indices $m_0(1)+\sum_{b>1}m_b\lambda_b$ where all $m_b$ are nonnegative---called the \dfn{positive cone}---as well as the indices $m_0(1)-\sum_{b>1}m_b\lambda_b$ with all $m_b$ nonnegative and at least one nonzero---called the \dfn{negative cone}.   
The word `cone' is appropriate because of the following vanishing regions: when all $m_b\geq 0$ we have 
\begin{align*}
\uH^{m_0+\sum_{b>1}m_b\lambda_b}=0 &\quad\text{if $m_0>0$ or $m_0<-2\sum_{b>1}m_b$},\\
\uH^{m_0-\sum_{b>1}m_b\lambda_b}=0 &\quad \text{if $m_0>2\sum_{b>1}m_b$ or if $m_0<0$.}
\end{align*}
These follow at once from the fact that $S^{\sum_{b>1}m_b\lambda_b}$ is modeled by a complex concentrated in degrees $0$ through $2\sum_{b>1}m_b$.  

Let $a_{\lambda_b}$ (often just $a_b$ for short) denote the inclusion $\mZ\inc S^{\lambda_b}$ of the degree $0$ component.  As an element of $H^{\lambda_b}$ this is called the \mdfn{Euler class} of $\lambda_b$.  Since $\uH^{\lambda_b}=\uH_0(S^{\lambda_b})$ and the class $a_b$ is a generator, we have 
\begin{equation}
\label{eq:Euler}
(p_{\Theta_b\ra \Theta_1})^*(a_b)=0 \qquad\text{and}\qquad b\cdot a_b=0.
\end{equation}
These are called the \mdfn{Euler class relations}.  Note that the second relation actually follows from the first by applying $(p_{\Theta_b\ra \Theta_1})_*$.

Let $u_{\lambda_b}$ (or just $u_b$) denote the map $\Sigma^2 \mZ \ra S^{\lambda_b}$ that is $I\pi$ in degree $2$ (and zero elsewhere).  As an element of $H^{\lambda_b-2}$ this is often called an \dfn{orientation class}, as in \cite{HHR2}.
Note the cofiber sequence
\[ \Sigma^2\mZ \llra{u_b} S^{\lambda_b} \ra \mZ/I_b.
\]

\subsection{Rationalization}

Before exploring the integral story in detail, let us make some trivial observations about the rational story.  
Tensoring with $\Q$ annihilates the module $\mZ/I_b$, and therefore the map $u_b\colon \Sigma^2\mZ\ra S^{\lambda_b}$ is a rational equivalence.  Thus, we have the following:

\begin{prop}
\label{pr:rationalization}
$\uH^\star(\pt;\mQ)=\mQ[u_d, u_d^{-1}: d|n, d\neq 1]$.  Consequently, $\uH^\beta(\pt;\mZ)$ is torsion whenever $\dim \beta\neq 0$.
\end{prop}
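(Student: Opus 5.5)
The plan has three steps: identify the rationalized spheres with ordinary suspensions of $\mQ$, read off the ring structure from that identification, and then deduce the torsion statement by comparing with $\mZ$-coefficients.

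\textbf{Step 1: rational spheres.} I would first show that after rationalization every sphere $S^\beta$ becomes an ordinary suspension $\Sigma^{\dim\beta}\mQ$, compatibly with box products. The rational story is obtained by applying $(\blank)\tens\Q$. This means working in $\cD(\mQ)$, or equivalently computing $\uH^\star(\pt)\tens\Q$; since $\Q$ is flat and everything is finitely generated and compact, these agree.

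For each $d|n$, consider the cofiber sequence
\[ \Sigma^2\mZ\llra{u_d}S^{\lambda_d}\ra\mZ/I_d. \]
The module $\mZ/I_d$ is torsion: it is killed by $d$, since $d\cdot 1=\pi_*\pi^*1\in I_d$. Hence $\mZ/I_d\tens\Q=0$, and $u_d$ becomes an isomorphism $\Sigma^2\mQ\to S^{\lambda_d}_\Q$ in the rational derived category. Dualizing with $\uHom(\blank,\mZ)$ shows that $S^{-\lambda_d}_\Q\simeq\Sigma^{-2}\mQ$, and the inverse of $u_d$ is realized by the dual class.

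Boxing these identifications together gives $S^\beta_\Q\simeq\Sigma^{\dim\beta}\mQ$ for every $\beta\in\D_n$. The isomorphism is implemented by the monomial $\prod_d u_d^{m_d}$, using the canonical isomorphisms $S^v\bbox S^w\iso S^{v+w}$ of Section~\ref{se:D_stable}.

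\textbf{Step 2: the ring structure.} From Step 1,
\[ \uH^\beta_\Q=\ucD(\mQ,\Sigma^{\dim\beta}\mQ), \]
which is $\mQ$ when $\dim\beta=0$ and $0$ otherwise. Here I use that $\uExt^i_{\mQ}(\mQ,\mQ)=0$ for $i\neq0$, because $\mQ$ is projective as a module over itself.

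In each degree $\beta$ with $\dim\beta=0$, the monomial $\prod u_d^{m_d}$ is an invertible element that generates $\mQ$. Since $u_1=1$ under $S^{\lambda_1}\simeq\Sigma^2\mZ$, the grading is effectively by $\{\lambda_d: d\neq 1\}$ together with the integer part. Combining these facts, the $\D_n$-graded Mackey ring is the Laurent polynomial ring $\mQ[u_d^{\pm1}: d\neq 1]$.

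For commutativity, the skew-commutativity sign in (\ref{eq:skew-comm}) is trivial here, because each $u_d$ has even fixed dimension $-2$. Algebraic independence is automatic, because distinct monomials lie in distinct degrees.

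\textbf{Step 3: the torsion consequence.} Since $\uH^\beta(\pt;\mZ)$ is finitely generated and $\uH^\beta\tens\Q=\uH^\beta_\Q=0$ when $\dim\beta\neq0$, each value $\uH^\beta(\Theta)$ is a finitely generated abelian group with vanishing rationalization, hence torsion.

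\textbf{Main obstacle.} The only delicate point is justifying that rationalization commutes with $\ucD(\mZ,S^\beta)$. I would handle this by taking $S^\beta$ to be a bounded levelwise-free model, so that maps out of $\mZ$ are computed by homology of the complex itself, and homology commutes with the exact functor $\tens\Q$.
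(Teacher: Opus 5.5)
Your proposal is correct and follows the paper's argument. The paper likewise observes that $\mZ/I_d\tens\Q=0$ makes $u_d\colon\Sigma^2\mZ\ra S^{\lambda_d}$ a rational equivalence, which gives the Laurent ring on the $u_d$. It then gets the torsion statement from the fact that every monomial in the $u_d^{\pm1}$ lies in a degree of geometric dimension zero. Your version only adds details the paper leaves implicit: the degree bookkeeping, algebraic independence of the monomials, and why $\tens\Q$ commutes with $\ucD(\mZ,\blank)$.
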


\begin{proof}
Only the second statement requires comment.  The point is that $u_d$ lives in degree $\lambda_d-2$, which has dimension zero.  Therefore all the monomials in $u_d$ and $u_d^{-1}$ also live in degrees of dimension zero.  The groups $\uH^\beta(\pt;\mQ)$ vanish in all other degrees.
\end{proof}

Note that by \cite[Corollary 2.33]{DHone} we have
$\mQ_{C_n}\MMod \he \Q[C_n]\MMod$.
Recall that $\Q[C_n]\iso\Q[t]/(t^n-1)\iso \prod_{e|n, e<n} \Q[t]/(\Phi_e(t))$ where $\Phi_e$ is the $e$th cyclotomic polynomial and each $\Q[t]/(\Phi_e(t))$ is a field.  So the derived category $\cD(\mQ_{C_n})$ breaks up as a product of categories of graded vector spaces (over different fields), one factor for each nonmaximal divisor of $n$.  The objects $S^{\lambda_d}_\Q \he \Sigma^2 \mQ$ all live in the $e=1$ piece of this decomposition.  


\section{Initial computations}

We will spend the bulk of Section~\ref{se:point} computing the regular portion of $\uH^\star(\mZ)$.  But a basic calculation that needs to be done right off the bat is $[S^{\lambda_b},S^{\lambda_c}]$, which is the $\Theta_1$-component of $\uH^{\lambda_c-\lambda_b}$.  Although this is not in the regular region, understanding some key elements here will be a crucial part of our approach.    

The complexes are small enough that maps and chain homotopies can be constructed by hand.  Maps $S^{\lambda_b}\ra S^{\lambda_c}$ all have the form
\begin{equation}
\label{eq:chain-maps}
\xymatrix{
0 \ar[r] & F_{b} \ar[d]\ar[r]^{\id-Rt} & F_{b}\ar[r]^{R\pi}\ar[d] & \mZ \ar[r]\ar[d] & 0 \\
0 \ar[r] & F_{c} \ar[r]^{\id-Rt} & F_{c}\ar[r]^{R\pi} & \und{\Z} \ar[r] & 0 \\
}
\end{equation}
and as a precursor to constructing them we need to understand all maps $F_{b}\ra F_{c}$.  

\subsection{Constructing chain maps and homotopies}
Recall from Section~\ref{se:Zmod} that maps $F_b\ra F_c$ are in bijective correspondence with $\cB\Z(\Theta_b,\Theta_c)$.  The category $\cB\Z$ is described in detail in \cite{DHone}, and such maps are described in terms of spans $[\Theta_c \llla{f} S \llra{h} \Theta_b]$ in the orbit category of $C_n$ (and note, in particular, the right-to-left convention).  This span induces the map $F_b\ra F_c$ that sends $g_b\mapsto h_*(f^*g_c)$.
The following lemma identifies this set of maps:

\begin{lemma}
\label{le:BZ-lemma}\mbox{}\par
\begin{enumerate}[(1)]
\item If $S$ is a transitive $C_n$-set of size $d$, then any span $[\Theta_c \lla S \lra \Theta_b]$ is isomorphic to $[\Theta_c \llla{p}\Theta_d \llra{t^i\circ p} \Theta_b]$ for some $i$.
\item A span $[\Theta_c \llla{p} \Theta_d \llra{t^i\circ p} \Theta_b]$ is equal to $\tfrac{d}{[b,c]} [\Theta_c \llla{p} \Theta_{[b,c]} \llra{t^i\circ p}\Theta_b]$ in $\cB\Z(\Theta_b,\Theta_c)$.
\item 
The set $\cB\Z(\Theta_b,\Theta_c)$ is the free abelian group on the spans $[\Theta_c \llla{p} \Theta_{[b,c]} \llra{t^i\circ p} \Theta_b]$ for $i=0,1,\ldots,(b,c)-1$.  
\end{enumerate}
\end{lemma}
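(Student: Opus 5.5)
The approach is to treat the three parts in order, using only the description of $\cB\Z$ from \cite{DHone}: it is the Burnside category of spans modulo the relation that makes Mackey functors cohomological. Concretely, for a projection $q\colon \Theta_d\ra\Theta_e$ the span $[\Theta_e \llla{q}\Theta_d\llra{q}\Theta_e]$, that is $Rq\circ Iq$, equals $\tfrac{d}{e}\cdot\id$. This is the relation that forces $q_*q^*$ to be multiplication by the index on $\mZ$.

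For (1), a transitive $C_n$-set of size $d$ is isomorphic to $\Theta_d$, since subgroups of a cyclic group are determined by their order. The left leg is then some $t^jp$, and precomposing with the automorphism $t^{-j}$ of $\Theta_d$ gives an isomorphic span whose left leg is $p$. The right leg becomes some $t^ip$. Note that maps out of $\Theta_d$ force $b|d$ and $c|d$, hence $[b,c]\,|\,d$.

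For (2), set $m=[b,c]$. Both legs $p$ and $t^ip$ factor through the projection $q\colon\Theta_d\ra\Theta_m$, because $t$ commutes with $p$. So the span $[\Theta_c\llla{p}\Theta_d\llra{t^ip}\Theta_b]$ is the composite of $[\Theta_c\llla{p}\Theta_m\llra{t^ip}\Theta_b]$ with $[\Theta_m\llla{q}\Theta_d\llra{q}\Theta_m]$. Composition of spans is by pullback, and since one factor has an identity leg there is no real pullback to compute. By the defining relation of $\cB\Z$ the second factor is $\tfrac{d}{m}\id$, which gives (2). The only care needed is with the right-to-left convention for spans.

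For (3), parts (1) and (2) together with additivity (disjoint unions of $S$ give sums) show that the spans $[\Theta_c\llla{p}\Theta_m\llra{t^ip}\Theta_b]$ generate. The next step is to identify which of these coincide. An automorphism $t^k$ of $\Theta_m$ preserves the left leg $p$ exactly when $c|k$, and it changes the right leg to $t^{i+k}p$, where $i$ only matters mod $b$. So $i$ is determined modulo the subgroup generated by $c$ and $b$, i.e.\ modulo $(b,c)$, which leaves the representatives $i=0,\dots,(b,c)-1$. The main obstacle is linear independence: one must check that the cohomological relation introduces no further identifications. My first attempt would be to evaluate on a faithful target, either via Yoshida's theorem identifying $\cB\Z(\Theta_b,\Theta_c)$ with $\Hom_{\Z[C_n]}(\Z[\Theta_b],\Z[\Theta_c])$, or via the identification $\cB\Z(\Theta_b,\Theta_c)\iso F_c(\Theta_b)$ together with the structure of free modules from \cite{DHone}. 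That Hom-group is free of rank equal to the number of double cosets, which is $|\Theta_b\times\Theta_c|/[b,c]=(b,c)$. One then checks that the $(b,c)$ spans map to the distinct orbit-sum basis elements, corresponding to the $(b,c)$ distinct $C_n$-orbits in $\Theta_b\times\Theta_c$. This forces independence.
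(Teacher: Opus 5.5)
Your proposal follows the paper's proof essentially step for step: (1) by rotating the apex $\Theta_d$ with $t^{-j}$; (2) by factoring both legs through $\Theta_{[b,c]}$ and invoking the defining relation $[\Theta_m\llla{q}\Theta_d\llra{q}\Theta_m]=\tfrac{d}{m}\,\id$ (strictly, this span must be inserted between $[\Theta_c\llla{p}\Theta_m=\Theta_m]$ and $[\Theta_m=\Theta_m\llra{t^ip}\Theta_b]$ rather than composed with the whole span, which is what the paper's four-span factorization does); and (3) via the identification with $\Hom_{\Z[C_n]}(\Z\langle\Theta_b\rangle,\Z\langle\Theta_c\rangle)$, checking that the $(b,c)$ spans land on its orbit-sum basis. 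Once that isomorphism is invoked, your separate generation and coincidence analysis in (3) is redundant, though harmless.
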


\begin{proof}
In (1) we can assume $S=\Theta_d$.  The maps $\Theta_d\ra \Theta_b$ and $\Theta_d\ra \Theta_c$ can both be written as $t^vp$ and $t^wp$ respectively, for some $v$ and $w$. The isomorphism $t^{-w}\colon \Theta_d\ra\Theta_d$ 
then allows us to rewrite the span into the desired form with $i=v-w$.

For (2) first note that such a span exists only if $b|d$ and $c|d$, or equivalently $[b,c]\bigl \lvert d$.  So the projection map from $\Theta_d$ to $\Theta_b$ (respectively, to $\Theta_c$) factors through $\Theta_{[b,c]}$.
One rewrites the span as the composite (right to left) of
\[ \xymatrixcolsep{1pc}\xymatrix{
& \Theta_{[b,c]} \ar[dr]^-=\ar[dl]_-{p} && \Theta_d \ar[dr]^=\ar[dl]_-p && \Theta_{d}\ar[dl]_-=\ar[dr]^-p && \Theta_{[b,c]}\ar[dr]^-{t^ip}\ar[dl]_-=\\
\Theta_c && \Theta_{[b,c]} && \Theta_d && \Theta_{[b,c]}&& \Theta_b.
}
\]
Then use that the composite of the two spans in the middle is $\tfrac{d}{[b,c]}\cdot \id$, which is a defining relation for the category $\cB\Z$.  

Moving on to (3), we can use the isomorphisms
\[ \cB\Z(\Theta_b,\Theta_c)\llra{\iso} \Hom_{\Z[C_n]}(\Z\langle \Theta_b\rangle,\Z\langle \Theta_c\rangle)\llra{\iso} \{x\in \Z\langle\Theta_c\rangle\,|\, t^bx=x\}
\]
where the second map sends a morphism to its value at the basepoint of $ \Theta_b$.  The group on the right is free (being a subgroup of $\Z\langle \Theta_c\rangle$). Write $g$ for the basepoint of $\Theta_c$, and observe if we let $A$ be the minimal sum  $g+t^bg+t^{2b}g+\cdots$ (stopping before the terms wrap around) then a basis is readily found to be
$A,tA,t^2A,\ldots$, where we again stop before the terms repeat. We just need to figure out the stopping points.
The subgroup $\langle b\rangle\subseteq \Z/c$ is equal to $\langle(b,c)\rangle$ so $A=\sum_{i=0}^{\frac{c}{(b,c)}-1} t^{(b,c)i}g$ and the generators are $A,tA,\ldots,t^{(b,c)-1}A$.  So $\cB\Z(\Theta_b,\Theta_c)\iso \Z^{(b,c)}$.   
The spans given in (3) are readily verified to map to these basis elements: precisely, $[\Theta_c\llla{p} \Theta_{[b,c]}\llra{t^ip}\Theta_b]$ maps to $t^{-i}A=t^{(b,c)-i}A$.  
\end{proof}

\begin{remark}
In the above result, there is some flexibility to where one puts the $t$ factors.  Note that $t^i\circ p=p\circ t^i$, and one can move the $t$ factors (suitably adjusted) from the left to the right map in the span.  But most of the time we will put them on the right map, as in the above lemma.   
\end{remark}

Lemma~\ref{le:BZ-lemma} tells us that any map $F_{b}\ra F_{c}$ has the form
\begin{equation}
\label{eq:free-map}
g_{b}\mapsto \sum_{i=0}^{(b,c)-1} m_i (t_*)^i p_*p^*(g_{c})
\end{equation}
for unique $m_i\in \Z$, where the first $p$ is $p_{(\Theta_{[b,c]}\ra \Theta_b)}$ and the second is $p_{(\Theta_{[b,c]}\ra \Theta_c)}$.  
Denote this map by $\lur{m}$ where $\und{m}$ is the tuple of integers $m_i$.

The map $\pi\colon \Theta_b\ra \Theta_1$ gives $R\pi\colon F_b\ra \mZ$, and the map $\pi\colon \Theta_c\ra \Theta_1$ gives $I\pi\colon \mZ\ra F_c$.  The composite $I\pi R\pi\colon F_b\ra F_c$  will appear often in our work.  In the above notation, it equals the map $\langle (1,1,\ldots,1)\rangle$.   This follows from the pullback diagram
\[ \xymatrixcolsep{3pc}\xymatrix{
\coprod_{i=0}^{(b,c)-1} \Theta_{[b,c]} \ar[d]_{\coprod_i p}\ar[r]^-{\coprod_i t^i p} & \Theta_b \ar[d]^\pi \\
\Theta_c\ar[r]_\pi & \Theta_1.
}
\]

\begin{prop}
\label{pr:lifting-lems-2}
\mbox{}\par
\begin{enumerate}[(a)]
\item The diagram
\[ \xymatrix{
& F_{d} \ar[d]^{\id-Rt} \\
F_{c}\ar[r]_{\langle \und{m}\rangle} \ar@{.>}[ur] & F_{d}
}
\]
admits a lifting as shown if and only if $\sum_i m_i=0$. 
\item A square
\[ \xymatrix{
F_{c}\ar[r]^f \ar[d]_{\id-Rt} & F_d \ar[d]^{\id-Rt} \\
F_c \ar[r]_{\langle{\und{m}}\rangle} & F_d
}
\]
commutes if and only if $f=\langle\und{m}\rangle+w \cdot I\pi R\pi$ for some $w\in \Z$.  
\end{enumerate}
\end{prop}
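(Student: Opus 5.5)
The plan is to transport everything to $\Z[C_n]$-modules using the identification from the proof of Lemma~\ref{le:BZ-lemma}(3): a map $F_b\ra F_c$ is determined by the image $x\in\Z\langle\Theta_c\rangle^{t^b}$ of the basepoint of $\Theta_b$, and composition of maps between free modules corresponds to composition of $\Z[C_n]$-linear maps between permutation modules. Under this dictionary $\langle\und m\rangle$ corresponds to $x=\sum_i m_i t^{-i}A$, where $A=\sum_{j} t^{(c,d)j}g$ is the orbit sum of the basepoint $g$ of $\Theta_d$ under $t^c$. Composing with $\id-Rt$ (on either side) amounts to multiplying $x$ by $1-t^{\pm1}$; the sign of the exponent only depends on conventions and does not affect the argument. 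Also $I\pi R\pi=\langle(1,\ldots,1)\rangle$ corresponds to $N=\sum_i t^iA$, the full orbit sum of $\Theta_d$.

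For (a) I need to decide when $x=\sum m_i t^{-i}A$ lies in $(1-t)\cdot\Z\langle\Theta_d\rangle^{t^c}$. The key observation is that the augmentation $\varepsilon\colon\Z\langle\Theta_d\rangle\ra\Z$ kills the image of $1-t$. Since $\varepsilon(t^{-i}A)=d/(c,d)$, we get $\varepsilon(x)=\tfrac{d}{(c,d)}\sum m_i$, which forces $\sum m_i=0$ and proves necessity. For sufficiency, the group $\Z\langle\Theta_d\rangle^{t^c}$ has basis $A,tA,\ldots,t^{(c,d)-1}A$, and $t$ permutes these cyclically because $t^{(c,d)}A=A$. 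So $\Z\langle\Theta_d\rangle^{t^c}\iso\Z[\Z/(c,d)]$ as a module over $\Z[t]$, and in the group ring of a cyclic group the kernel of the augmentation is exactly the image of $1-t$. Hence if $\sum m_i=0$ then $x=(1-t)y$ for some $t^c$-invariant $y$, and $y$ gives the desired lift $F_c\ra F_d$.

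For (b), commutativity of the square says $(1-t)(f-\langle\und m\rangle)=0$, using that $Rt$ commutes with all maps between free modules (everything corresponds to a $\Z[C_n]$-linear map, and $t$ is central). Thus $f-\langle\und m\rangle$ corresponds to an element $z$ of $\Z[\Z/(c,d)]$ killed by $1-t$, so $z$ is $t$-invariant, and the $t$-invariants of $\Z[\Z/(c,d)]$ are exactly $\Z\cdot N$. This gives $f=\langle\und m\rangle+w\cdot I\pi R\pi$. The converse holds because $(1-t)N=0$.

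The main obstacle is bookkeeping rather than anything conceptual. I have to check that $\id-Rt$ on the target (and on the source, in (b)) really corresponds to multiplication by $1-t$ or $1-t^{-1}$ on the element $x$, and that $Rt$ commutes with $\langle\und m\rangle$. Both facts should follow from $tp=pt$ and the explicit formula (\ref{eq:free-map}), and the sign or exponent conventions cannot change the kernel or image statements.
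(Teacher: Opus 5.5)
Your proof is correct, but it follows a different route from the paper's.

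\textbf{The paper's route.} The paper works entirely with $\mZ$-modules. It uses the exactness of the standard resolution $0\lra\mZ\llra{I\pi}F_d\llra{\id-Rt}F_d\llra{R\pi}\mZ$ from (\ref{eq:std-free}), together with the freeness of $F_c$. Part (a) then reduces to the vanishing of $R\pi\circ\lur{m}$. Part (b) reduces to $f-\lur{m}$ factoring through $I\pi$, which makes it a multiple of $I\pi R\pi$ because every map $F_c\to\mZ$ is a multiple of $R\pi$.

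\textbf{Your route.} You evaluate at $\Theta_c$ explicitly. You identify $F_d(\Theta_c)$ with $\Z\langle\Theta_d\rangle^{t^c}\iso\Z[\Z/(c,d)]$ as a $\Z[t]$-module. You then apply two group-ring facts: the augmentation ideal equals the image of $1-t$, and the $t$-invariants are $\Z\cdot N$. In effect you are reproving the exactness of that resolution at the $\Theta_c$ spot.

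\textbf{Trade-offs.} Your version is self-contained, with no appeal to \cite[Proposition 4.18]{DHone}, and it produces the lifts explicitly. The cost is that it needs the linearization from $\cB\Z$ to permutation modules to be a functor compatible with composition. Lemma~\ref{le:BZ-lemma} only records a bijection of hom-sets, so this is the one point you should state rather than assume. It is standard: the span $[S\llla{f}X\llra{g}T]$ goes to $x\mapsto\sum_{y\in g^{-1}(x)}f(y)$, which respects the defining relation of $\cB\Z$. Once you have this, $Rt$ becomes multiplication by $t$ on either side, since everything is $\Z[C_n]$-linear, so the sign ambiguity you worried about does not arise. The paper's version is shorter and reuses the same exact sequence for the later lifting lemmas.

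\textbf{A small point in your favor.} Your augmentation computation $\varepsilon(x)=\tfrac{d}{(c,d)}\sum_i m_i$ keeps the transfer factor. The paper's proof drops it when it writes $R\pi\circ\lur{m}$ as $g_c\mapsto(\sum_i m_i)\cdot 1_c$. This is harmless for the vanishing criterion.
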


\begin{proof}
Both parts follow from the exact sequence 
\[ 0 \lra \mZ \llra{I\pi} F_d \llra{\id-Rt} F_d \llra{R\pi} \mZ
\]
from (\ref{eq:std-free}).
For part (a), a map $\langle\und{m}\rangle$ from $F_c$ into the degree $1$ piece of this complex lifts across $\id-Rt$ if and only if $Rp\circ \langle\und{m}\rangle$ is zero.  But this composite sends $g_c\mapsto (\sum_i m_i)\cdot 1_c$.

For (b), the commutativity is equivalent to the condition that
\[ (\id-Rt)(f-\langle \und{m}\rangle)=0 
\]
(this uses that $Rt$ commutes with everything).  Regarding $f-\langle \und{m}\rangle$ as a map into the degree two piece of the above complex, the above condition implies that it lifts across $I\pi$.  The lifting $F_c\ra \mZ$ will be a multiple of $R\pi$, so $f-\langle\und{m}\rangle$ is a multiple of $I\pi R\pi$.
\end{proof}

Now let us reconsider the construction of chain maps $S^{\lambda_b}\ra S^{\lambda_c}$ as in (\ref{eq:chain-maps}).  In degree one the map will be some $\lur{m}$, and then in degree two it must by Proposition~\ref{pr:lifting-lems-2}(b) be $\lur{m}+wI\pi R\pi$ for some $w\in \Z$.  The composite $R\pi\circ \lur{m}$ will send
\[ g_b\mapsto \sum_i m_i (t_*)^ip_*p^* g_c \mapsto \sum_i m_i (t_*)^i p_*p^* 1_c=\sum_i m_i\cdot \tfrac{[b,c]}{b}=\tfrac{[b,c]}{b}\cdot \sum_i m_i,
\]
where in the first equality we have used our knowledge of the pushforward, pullback, and action maps in $\mZ$.
So in order for the diagram to commute in degrees $0$ and $1$, the degree $0$ piece of the chain map must be multiplication by $\frac{[b,c]}{b}\sum_i m_i=\frac{c}{(b,c)}\sum_i m_i$.
Denote this map of complexes by $[\und{m},w]$.

\begin{prop}
\label{pr:Sb_to_Sc}
The map $[\und{m},w]$ is chain homotopic to zero if and only if $\sum_i m_i + (b,c)w=0$.  Consequently, we obtain an isomorphism $[S^{\lambda_b},S^{\lambda_c}]\ra \Z$ that sends $[\und{m},w]$ to $\sum_i m_i + (b,c)w$.  
\end{prop}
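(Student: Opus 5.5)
We analyze chain homotopies directly. A chain homotopy $H$ from $[\und{m},w]$ to $0$ consists of maps $H_0\colon \mZ\ra F_c$ and $H_1\colon F_b\ra F_c$ (raising degree by one); there is no room for a map out of degree $2$ since $S^{\lambda_c}$ vanishes in degree $3$. By Lemma~\ref{le:BZ-lemma}(3), $H_0=x\cdot I\pi$ for some $x\in\Z$, and $H_1=\lur{k}$ for some tuple $\und{k}$. We then write out the three homotopy equations. In degree $0$ the equation is $R\pi\circ H_0$ equal to multiplication by $\tfrac{c}{(b,c)}\sum_i m_i$. In degree $1$ it is $(\id-Rt)H_1+H_0R\pi=\lur{m}$. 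In degree $2$ it is $H_1(\id-Rt)=\lur{m}+wI\pi R\pi$.

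We compute each piece in the $\lur{\cdot}$-notation. The map $H_0R\pi=x\,I\pi R\pi=\lur{(x,\ldots,x)}$, and $R\pi I\pi$ on $\mZ$ is multiplication by $c$. Composition with $Rt$ on either side cyclically shifts the tuple $\und{k}$, so $(\id-Rt)\lur{k}=\lur{k}\circ(\id-Rt)=\lur{k-\sigma k}$, where $\sigma$ is the cyclic shift on $\Z^{(b,c)}$; this is the one identity we need to check carefully from the basis description in Lemma~\ref{le:BZ-lemma}. With these, the degree $1$ equation reads $k_i-k_{i-1}+x=m_i$ for all $i$. Summing over $i$ gives $(b,c)x=\sum_i m_i$. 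The degree $2$ equation reads $k_i-k_{i-1}=m_i+w$, hence $x=-w$. The degree $0$ equation, $cx=\tfrac{c}{(b,c)}\sum m_i$, then holds automatically.

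So a null-homotopy exists if and only if there are $x$ and $\und{k}$ with $x=-w$ and $k_i-k_{i-1}=m_i+w$. Such $\und{k}$ exists exactly when $\sum_i(m_i+w)=0$, that is, when $\sum_i m_i+(b,c)w=0$; this also forces $(b,c)x=\sum m_i$. This proves the first statement. One could instead phrase the lifting through Proposition~\ref{pr:lifting-lems-2}(a) (lift $\lur{m}-x I\pi R\pi$ across $\id-Rt$ when its coordinate sum vanishes), but the explicit coordinates are cleaner.

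For the isomorphism, the assignment $[\und{m},w]\mapsto \sum m_i+(b,c)w$ is additive, because $[\und{m},w]$ depends linearly on $(\und{m},w)$. By the preceding discussion every chain map has the form $[\und{m},w]$, so the assignment is defined on all chain maps. Its kernel is exactly the null-homotopic maps, so it descends to an injection on $[S^{\lambda_b},S^{\lambda_c}]$. It is surjective since $\und{m}=(1,0,\ldots,0)$, $w=0$ maps to $1$. The main obstacle is verifying the shift identity for $(\id-Rt)\circ\lur{k}$ and $\lur{k}\circ(\id-Rt)$ with consistent sign and index conventions; everything else is bookkeeping.
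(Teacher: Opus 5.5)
Your proposal is correct and is essentially the paper's argument. Both take the same null-homotopy ansatz $s_0=vI\pi$, $s_1=\lur{k}$, and both use that $Rt$ commutes with $\lur{k}$ to eliminate the degree-one homotopy and get $v=-w$. Where you differ is only presentation: you make the lifting criterion of Proposition~\ref{pr:lifting-lems-2}(a) explicit in cyclic-shift coordinates, and you obtain $(b,c)x=\sum_i m_i$ by summing the degree-one equation rather than from the degree-zero equation.
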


\begin{proof}
A null homotopy consists of maps $s_0\colon \mZ\ra F_c$ and $\lur{s_1}\colon F_b\ra F_c$.  The $s_0$ map will be $v I\pi$ for some $v\in \Z$.  The conditions for being a null homotopy are then found to be \[\tfrac{c}{(b,c)}\sum_i m_i=cv,~~ \lur{m}=vI\pi R\pi+(\id-Rt)\lur{s_1},~~ \text{and}~~\lur{m}+wI\pi R\pi=\lur{s_1}(\id-Rt).\]  Using that $\lur{s_1}(\id-Rt)=(\id-Rt)\lur{s_1}$, those terms can be eliminated from the last two equations to give $v=-w$. The first equation can also be simplified to $\sum_im_i=v(b,c)$.  This gives the implication in one direction.  For the converse, if $\sum_i m_i+(b,c)w=0$ then set $v=-w$ to get $s_0$. The map $\lur{m}-vI\pi R\pi$ then lifts through $\id-Rt$ by Proposition~\ref{pr:lifting-lems-2}(a) to give $s_1$.  
\end{proof}

Note that we will pursue more complicated analyses of this sort in Section~\ref{se:L-maps}.

\subsection{More \mdfn{$u$}-classes}
\label{se:more-u}
Let $u_{[c:b]}$ denote any map $S^{\lambda_b}\ra S^{\lambda_c}$ 
that is sent to $1$ under the isomorphism from Proposition~\ref{pr:Sb_to_Sc}.  A convenient choice is often the map $[(1,0,0,\ldots,0),0]$. This is shown below.
\[
\xymatrixcolsep{3.2pc}\xymatrix{S^{\lambda_b}:\ar[d]^{u_{[c:b]}}&
0 \ar[r] & F_{b} \ar[d]_{g_b\mapsto p_*p^*g_c} \ar[r]^{\id-Rt} & F_{b}\ar[r]^{R\pi}\ar[d]^{g_b\mapsto p_*p^*g_c} & \mZ \ar[r]\ar[d]^{\cdot \frac{[b,c]}{b}} & 0 \\
S^{\lambda_c}:&0 \ar[r] & F_{c} \ar[r]^{\id-Rt} & F_{c}\ar[r]^{R\pi} & \und{\Z} \ar[r] & 0. \\
}
\]
Observe $u_{[c:c]}$ is the identity. When $b=1$ a more convenient choice is $[(0,\ldots,0),1]$, and so then $u_{[c:1]}$ is readily seen to be the map $u_c$ defined previously.

We will freely regard $u_{[c\cln b]}$ both as a map in $\cD(S^{\lambda_b},S^{\lambda_c})$ and as an element of $H^{\lambda_c-\lambda_b}$.  See Remark~\ref{re:maps->H} for why this is unambiguous.

Here are some useful properties of these $u$-classes:

\begin{prop}\label{pr:u-props}
Let $b,c,d$ be divisors of $n$.  Then
\begin{enumerate}[(1)]
\item $u_{[c:b]}\cdot a_b = \tfrac{c}{(b,c)} a_c$,
\item $u_{[d:c]}\cdot u_{[c:b]}= \tfrac{[b,c,d]}{[b,d]}\cdot \tfrac{(b,d)}{(b,c,d)}\cdot u_{[d:b]}$,
\item $u_{[d:c]}\cdot u_c = \tfrac{c}{(c,d)}\cdot u_d$,
\item $u_{[d:c]}u_{[c:d]}=\tfrac{[c,d]}{(c,d)}$.
\end{enumerate}
\end{prop}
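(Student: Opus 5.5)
Each of the four identities concerns maps into a sphere $S^{\lambda_d}$ (or $S^{\lambda_c}$) whose target group is detected by an explicit invariant. The plan is therefore to compute the relevant chain-level composites with the explicit representatives and read off that invariant.

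\textbf{Part (1).} Both sides lie in $[\mZ,S^{\lambda_c}]=H_0(S^{\lambda_c})(\Theta_1)=\Z/c$, generated by $a_c$. So I will compute the degree-$0$ component of $u_{[c:b]}\circ a_b$. Using the representative $[(1,0,\dots,0),0]$, this is multiplication by $\frac{[b,c]}{b}=\frac{c}{(b,c)}$ on $\mZ$, which gives $\frac{c}{(b,c)}a_c$ immediately.

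\textbf{Parts (2) and (3).} These take place in $[S^{\lambda_b},S^{\lambda_d}]\iso\Z$, which is detected by the invariant $\sum_i m_i+(b,d)w$ from Proposition~\ref{pr:Sb_to_Sc}. Part (3) is the special case $b=1$ of (2), since $u_{[c:1]}=u_c$. So the main work is (2): compose $[(1,0,\ldots),0]\colon S^{\lambda_b}\to S^{\lambda_c}$ with $[(1,0,\ldots),0]\colon S^{\lambda_c}\to S^{\lambda_d}$. The composite in degree one is the map $F_b\to F_d$ given by the span composite
\[[\Theta_d\leftarrow\Theta_{[c,d]}\to\Theta_c]\circ[\Theta_c\leftarrow\Theta_{[b,c]}\to\Theta_b].\]
I will compute this pullback with the double coset formula: $\Theta_{[c,d]}\times_{\Theta_c}\Theta_{[b,c]}$ is a disjoint union of orbits $\Theta_{[b,c,d]}$, with $\frac{[c,d][b,c]}{c[b,c,d]}$ copies. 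By Lemma~\ref{le:BZ-lemma}(2), each copy equals $\frac{[b,c,d]}{[b,d]}$ times a basis span. Summing the coefficients gives
\[\sum m_i=\frac{[c,d][b,c]}{c\,[b,d]}.\]
The degree-two component has $w=0$, since the degree-two maps are the same spans. It remains to verify the arithmetic identity
\[\frac{[c,d][b,c]}{c\,[b,d]}=\frac{[b,c,d]}{[b,d]}\cdot\frac{(b,d)}{(b,c,d)}.\]
This can be checked one prime at a time with exponents $\beta,\gamma,\delta$, where it reduces to the identity $\max(\gamma,\delta)+\max(\beta,\gamma)-\gamma=\max(\beta,\gamma,\delta)+\min(\beta,\delta)-\min(\beta,\gamma,\delta)$; this can be verified by cases on the position of $\gamma$.

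A cleaner alternative for the count is to use the invariant $\sum_i m_i$ read off after applying $R\pi$. The degree-$0$ component of a composite multiplies, so the degree-$0$ part of the composite is multiplication by $\frac{c}{(b,c)}\cdot\frac{d}{(c,d)}$. Since this equals $\frac{d}{(b,d)}\sum m_i$, we get
\[\sum m_i=\frac{c\,(b,d)}{(b,c)(c,d)},\]
which avoids the pullback count entirely. This is the route I would actually take, though the degree-$0$ shortcut needs $w=0$, which holds here.

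\textbf{Part (4).} Take $b=d$ in (2). Since $u_{[d:d]}=\id$, the right side is a scalar times the identity, and the formula simplifies to
\[\frac{[c,d]}{d}\cdot\frac{d}{(c,d)}=\frac{[c,d]}{(c,d)}.\]

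\textbf{Main obstacle.} The delicate point is making sure the invariant of the composite is computed correctly, in particular that the degree-two component contributes no $I\pi R\pi$ term. I would double-check this by the explicit span composition in degree two as well.
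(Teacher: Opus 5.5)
Your proposal is correct. Parts (1), (3) and (4) are handled exactly as in the paper. Your first route for (2) is also precisely the paper's argument: you compose the $[(1,0,\ldots,0),0]$ representatives, split the pullback $\Theta_{[c,d]}\times_{\Theta_c}\Theta_{[b,c]}$ into copies of $\Theta_{[b,c,d]}$, and apply Lemma~\ref{le:BZ-lemma}(2) to each copy. Your count $\tfrac{[c,d][b,c]}{c\,[b,c,d]}$ is the same as the paper's $\tfrac{([b,c],[c,d])}{c}=\tfrac{(b,d)}{(b,c,d)}$.

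The degree-$0$ shortcut you say you would actually use is genuinely different.
\begin{itemize}
\item For any chain map $S^{\lambda_b}\to S^{\lambda_d}$ with degree-one part $\langle\und m\rangle$, the degree-$0$ component is forced to be $\tfrac{d}{(b,d)}\sum_i m_i$.
\item The composite visibly has $w=0$, because the degree-one and degree-two components agree for both factors. You correctly flag that this is needed.
\item So the homotopy class of the composite can be read off from the product $\tfrac{c}{(b,c)}\cdot\tfrac{d}{(c,d)}$ of the degree-$0$ multipliers, with no orbit counting.
\end{itemize}
This gives a one-line proof and the constant in the transparent form $\tfrac{c\,(b,d)}{(b,c)(c,d)}$. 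That expression equals your pullback expression $\tfrac{[c,d][b,c]}{c\,[b,d]}$ simply by $[x,y]=xy/(x,y)$. Hence your prime-by-prime identity still links it to the stated form.

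The paper's pullback computation identifies the composite span by span: which translates $t^i$ occur and with what multiplicity. That explains why the constant is naturally written as a multiplicity $\tfrac{[b,c,d]}{[b,d]}$ times a number of orbits $\tfrac{(b,d)}{(b,c,d)}$. However, this finer information is not needed for the identity, since only $\sum_i m_i$ matters.
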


\begin{proof}
For (1) recall $a_b$ is just the inclusion of the degree $0$ part of the complex $S^{\lambda_b}$. The map $u_{[c:b]}$ is multiplication by $\tfrac{c}{(b,c)}$ on the degree $0$ piece, so this completes this part.  

Next observe that (3) and (4) are just the special cases of (2) when $b=1$ and $b=d$, respectively. So all that remains is to prove (2).

We consider the composite $u_{[d:c]}\circ u_{[c:b]}$, where we choose representatives for both maps that are of the form $[(1,0,\ldots,0),0]$.  The composite will then be of the form $[\und{m},0]$ for a certain $\und{m}$ that we must compute. By Proposition~\ref{pr:Sb_to_Sc} the map $[\und{m},0]$ is chain homotopic to $(\sum_i m_i)\cdot u_{[d:b]}$.  So it comes down to computing the sum of the $m_i$ integers. This will require us to carefully analyze certain pullbacks in the orbit category.

The degree $1$ part of the map $u_{[c:b]}$ sends $g_{\Theta_b}$ to $p_*p^*(g_{\Theta_c})$, where we pull back and push forward along the maps
\[ \Theta_b \llla{p} \Theta_{[b,c]} \llra{p} \Theta_{c}
\]
(note that the two $p$ maps are slightly different).  Likewise, in degree $1$ the map $u_{[d:c]}$ sends $g_{\Theta_c}$ to $p_*p^*(g_{\Theta_d})$, where the $p$ maps are different from each other and also different from the previous ones.  Consequently, the composite $u_{[d:c]}\circ u_{[c:b]}$ sends
\[ g_{\Theta_b} \mapsto p_*p^*p_*p^*(g_{\Theta_d})
\]
where the pushings and pullings are done along the bottom-to-right edge of the diagram (so not using the upper left corner)
\[\xymatrix{
\coprod_i \sigma^i \Theta_{[b,c,d]} \ar[r]^-{\amalg_i t^i p}\ar[d]_-{p} & \Theta_{[b,c]} \ar[r]^p\ar[d]^p & \Theta_b  \\
\Theta_{[c,d]} \ar[r]^p\ar[d]^p & \Theta_c \\
\Theta_d.  
}
\]
In the upper left corner we have filled in the pullback of the square, where the $\sigma^i$ are just placeholders for the different summands. To understand this pullback, observe the product $\Theta_{[c,d]}\times \Theta_{[b,c]}$ decomposes into $\Theta_{[b,c,d]}$-orbits. There are $([b,c], [c,d])/c$ such orbits, corresponding to the embeddings $\Theta_{[b,c,d]}\inc \Theta_{[c,d]}\times \Theta_{[b,c]}$ given by $x\mapsto (px,t^ipx)$ where $i$ ranges over the elements of the quotient group $c\Z/([b,c],[c,d])\Z$ (the exact values of $i$ will not matter for what we need below).

By Lemma~\ref{le:BZ-lemma}, as an element of $\cB\Z(\Theta_b,\Theta_d)$ this composite is equal to
\[ \sum_{i} \tfrac{[b,c,d]}{[b,d]} [\Theta_d \llla{p} \Theta_{[b,d]} \llra{t^ip} \Theta_b],
\]
with the $i$ values as above.
The number of terms in the sum is $([b,c],[c,d])/c$, which also equals $(b,d)/(b,c,d)$.  
From this we can directly read off the desired $m_i$, and we find that
\[ \sum_i m_i = \tfrac{[b,c,d]}{[b,d]} \cdot \tfrac{(b,d)}{(b,c,d)}.
\]
\end{proof}

\begin{remark}
Proposition~\ref{pr:u-props}(3) implies that 
under the rationalization map $H^\star(\pt;\mZ)\ra H^\star(\pt;\mQ)$ the element $u_{[b:c]}$ maps to $\tfrac{c}{(b,c)}\frac{u_b}{u_c}$.  Because $H^{\lambda_b-\lambda_c}(\pt;\mZ)$ is torsion-free by Proposition~\ref{pr:Sb_to_Sc}, rationalization is injective in this degree and we can write $u_{[b\cln c]}=\tfrac{c}{(b,c)}\tfrac{u_b}{u_c}$.  But this notation can lead to possible confusion when one starts multiplying elements, as the products might take one into degrees where rationalization is no longer injective.  For this reason we often stick with the $u_{[b\cln c]}$ notation.  
\end{remark}

The so-called ``gold relation'' in the following result was first noted (for $n$ a prime power) in papers of Hill, Hopkins, and Ravenel; see for example \cite[Section 4.2]{HHR2} and \cite{HHR3}.  For general $n$ it appears in \cite{BD}.

\begin{cor}[The gold relation]
\label{co:gold}
Let $b$ and $c$ be divisors of $n$.  Then
\[ \tfrac{b}{(b,c)} a_bu_c=\tfrac{c}{(b,c)}a_cu_b.
\]
\end{cor}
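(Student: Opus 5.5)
The gold relation should follow by computing the single element $u_{[c:b]}\cdot a_b u_b\cdots$ in two ways; concretely, I will use the product $u_{[c:b]}\cdot a_b\cdot u_b$ in degree $\lambda_c+\lambda_b-2$. By Proposition~\ref{pr:u-props}(1), $u_{[c:b]}a_b=\tfrac{c}{(b,c)}a_c$, so
\[ u_{[c:b]}\,a_b\,u_b=\tfrac{c}{(b,c)}\,a_cu_b. \]
On the other hand, the same product can be evaluated as $a_b\cdot(u_{[c:b]}u_b)$. Proposition~\ref{pr:u-props}(3), with $d=c$ and $c=b$, gives $u_{[c:b]}u_b=\tfrac{b}{(b,c)}u_c$. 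Hence the product also equals $\tfrac{b}{(b,c)}a_bu_c$.

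Equating the two evaluations yields $\tfrac{b}{(b,c)}a_bu_c=\tfrac{c}{(b,c)}a_cu_b$, which is the claim.

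The only point needing care is that the ring $H^\star$ is associative and commutative enough to regroup $u_{[c:b]}\cdot a_b\cdot u_b$. Associativity holds by \cite{D}, since $\D_n$ is free. For commutativity, every degree involved ($\lambda_b$, $\lambda_c-\lambda_b$, $\lambda_b-2$) has even fixed dimension, so the skew-commutativity rule (\ref{eq:skew-comm}) introduces no sign.

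One must also check that the products in Proposition~\ref{pr:u-props}, which were computed as composites of maps, agree with the ring products. This is the content of Remark~\ref{re:maps->H}: all fixed dimensions here are even, so the two ways of turning maps into elements coincide. With these points secured, the argument is a two-line computation, and I expect no real obstacle beyond this sign and convention bookkeeping.
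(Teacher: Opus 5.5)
Your proposal is correct and is essentially the paper's proof: multiply Proposition~\ref{pr:u-props}(1) by $u_b$, then regroup $u_{[c:b]}a_bu_b$ as $a_b\cdot(u_{[c:b]}u_b)$ and apply Proposition~\ref{pr:u-props}(3) to get $\tfrac{b}{(b,c)}a_bu_c$. Your remarks on associativity, on the absence of signs (all fixed dimensions involved are even), and on Remark~\ref{re:maps->H} supply the bookkeeping that the paper leaves implicit.
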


\begin{proof}
Multiply equation (1) from Proposition~\ref{pr:u-props} by $u_b$ to get $\tfrac{c}{(b,c)} a_cu_b = u_{[c:b]} a_bu_b$. Then observe
\begin{align*}
u_{[c:b]} a_bu_b = u_{[c:b]}u_ba_b 
= \tfrac{b}{(b,c)} u_ca_b =\tfrac{b}{(b,c)} a_bu_c.
\end{align*}
The third equality uses Proposition~\ref{pr:u-props}(3).
\end{proof}

Proposition~\ref{pr:Sb_to_Sc} computes the value at $\Theta_1$ of $\und{H}^{\lambda_c-\lambda_b}$, but we can actually identify the entire Mackey functor.  Recall the $\mZ$-modules $\mZ(e;d)$ from (\ref{eq:Z(e;d)}).  

\begin{prop}
\label{pr:lambda_c-lambda_b}
The $\mZ$-module $\uH^{\lambda_c-\lambda_b}$ sits in a short exact sequence
\[ 0 \lra I_b \lra \uH^{\lambda_c-\lambda_b}\lra \mZ/I_{(b,c)}\lra 0
\]
and thus is isomorphic to $\mZ(b;(b,c))$.  
\end{prop}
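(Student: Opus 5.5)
One natural approach is to compute $\uH^{\lambda_c-\lambda_b}=\ucD(S^{\lambda_b},S^{\lambda_c})=\uH_0(S^{-\lambda_b}\bbox S^{\lambda_c})$ using a cofiber sequence in which only one factor is non-invertible. Box the cofiber sequence $\Sigma^2\mZ\llra{u_c} S^{\lambda_c}\ra \mZ/I_c$ with $S^{-\lambda_b}$. This gives
\[ \Sigma^2 S^{-\lambda_b}\lra S^{-\lambda_b}\bbox S^{\lambda_c}\lra S^{-\lambda_b}\bbox \mZ/I_c. \]
We know $S^{-\lambda_b}\he\Sigma^{-2}I_b$, so the first term is $I_b$ concentrated in degree $0$. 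For the third term, use the cofiber sequence $\mZ\ra \mZ/I_b\ra \Sigma^3 S^{-\lambda_b}$ boxed with $\mZ/I_c$; the derived box product of $\mZ/I_b$ with $\mZ/I_c$ is governed by $\uTor_i(\mZ/I_b,\mZ/I_c)$, which is $\mZ/I_{(b,c)}$ for $i=0,3$ and zero otherwise. (Note that $\mZ/I_c$ is not flat, so I would compute $S^{-\lambda_b}\bbox\mZ/I_c$ directly as the levelwise-free complex $S^{-\lambda_b}$ boxed with $\mZ/I_c$. Then $\uH_*$ of it is $\uTor$ shifted: $S^{-\lambda_b}\he \Sigma^{-3}$ of the fiber of $\mZ\ra\mZ/I_b$.) The expected outcome is $\uH_0(S^{-\lambda_b}\bbox\mZ/I_c)\iso \mZ/I_{(b,c)}$ (coming from $\uTor_3$, shifted by $-3$). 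Its only other homology is a copy of $\mZ/I_{(b,c)}$ from $\uTor_0$ sitting in degree $-3$, together with a possible $\mZ/I_c$ contribution in degree $-3$ or $-2$ that cancels.

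I would make this precise by a direct computation rather than bookkeeping. The complex $S^{-\lambda_b}\bbox \mZ/I_c$ is
\[ \mZ/I_c \llra{I\pi} F_b\bbox\mZ/I_c\llra{\id-Rt} F_b\bbox \mZ/I_c \]
in degrees $-2,-1,0$. By the free-resolution description of $\mZ/I_b$, its homology in degree $0$ is the cokernel of $\id-Rt$ on $F_b\bbox \mZ/I_c$. Equivalently, $\uH_0=\uH_{-3}(\Sigma^{-3}\mZ/I_b\dbox \mZ/I_c)$ modulo the image of $\mZ\bbox \mZ/I_c$, which reduces to the $\uTor_3$ term $\mZ/I_{(b,c)}$.

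With the three-term long exact sequence in hand, the relevant part is
\[ \uH_1(S^{-\lambda_b}\bbox\mZ/I_c)\ra I_b\ra \uH^{\lambda_c-\lambda_b}\ra \mZ/I_{(b,c)}\ra \uH_{-1}(\Sigma^2S^{-\lambda_b})=0. \]
The left term is $0$ because the complex is concentrated in nonpositive degrees. This gives the short exact sequence.

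For the identification with $\mZ(b;(b,c))$, I would invoke (\ref{eq:Z(e;d)}). We need $(b,c)\,|\,b$, which holds, and we need $\uH^{\lambda_c-\lambda_b}(\Theta_1)$ to be non-torsion. The latter is exactly Proposition~\ref{pr:Sb_to_Sc}, which gives $[S^{\lambda_b},S^{\lambda_c}]\iso\Z$. Uniqueness then forces the isomorphism.

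The main obstacle is the honest identification of $\uH_0(S^{-\lambda_b}\bbox\mZ/I_c)$ as $\mZ/I_{(b,c)}$ and the vanishing of $\uH_1$. I would handle this via the $\uTor$ computation from \cite{DHone}, checking that the map $\mZ\bbox\mZ/I_c\ra \mZ/I_b\dbox \mZ/I_c$ induces an isomorphism on $\uH_0$ (both sides are quotients of $\mZ/I_c$, and the map is the surjection to $\mZ/I_{(b,c)}$). As a result, only the $\uTor_3$ piece survives in degree $0$ after desuspension.
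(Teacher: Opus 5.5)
Your overall structure matches the paper's. Your triangle $\Sigma^2S^{-\lambda_b}\to S^{-\lambda_b}\bbox S^{\lambda_c}\to S^{-\lambda_b}\bbox\mZ/I_c$ is a rotation of the paper's $S^{\lambda_c-\lambda_b}\to \mZ/I_c\bbox S^{-\lambda_b}\to\Sigma^3 S^{-\lambda_b}$. The long exact sequence is right, the final appeal to Proposition~\ref{pr:Sb_to_Sc} and (\ref{eq:Z(e;d)}) is right, and the value you \emph{expect} for $\uH_0(S^{-\lambda_b}\bbox\mZ/I_c)$ is correct.

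The gap is in the argument you actually propose for that value, which is the step you flag as the main obstacle. In $S^{-\lambda_b}\bbox\mZ/I_c$ the term $\mZ/I_c$ sits in degree $0$, and the two copies of $F_b\bbox\mZ/I_c$ sit in degrees $-1$ and $-2$. This mirrors the fact that $\uH_{-2}(S^{-\lambda_b})\iso I_b$ is the cokernel of $\id-Rt$.

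So $\uH_0$ is the kernel of $I\pi\bbox\id$. The cokernel of $\id-Rt$ is instead $\uH_{-2}\iso I_b\bbox\mZ/I_c$, by right exactness. These genuinely differ. If $c\mid b$ with $c>1$, then $F_b\bbox\mZ/I_c=0$ and $I_b\bbox\mZ/I_c=0$ by (\ref{eq:box-zero}). Your ``direct computation'' would then give $\uH_0=0$ and hence $\uH^{\lambda_c-\lambda_b}\iso I_b$. That contradicts the statement being proved, since the correct quotient is $\mZ/I_c\neq 0$.

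Likewise, $\mZ\bbox\mZ/I_c\to\mZ/I_b\dbox\mZ/I_c$ is not an isomorphism on $\uH_0$. It is the surjection $\mZ/I_c\to\mZ/I_{(b,c)}$, whose kernel is $I_b\bbox\mZ/I_c$; at $\Theta_1$ this kernel is $\Z/(c/b)$ when $b\mid c$. Your remark about homology in degree $-3$ is also off. The complex lives in degrees $-2$ through $0$, and its only other homology is $I_b\bbox\mZ/I_c$ in degree $-2$.

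The repair is short and is exactly what the paper does. Since $S^{-\lambda_b}$ is levelwise free and $S^{-\lambda_b}\he\Sigma^{-2}I_b$, one has $\uH_i(S^{-\lambda_b}\bbox\mZ/I_c)\iso\uTor_{i+2}(I_b,\mZ/I_c)$. Dimension shifting along $0\to I_b\to\mZ\to\mZ/I_b\to 0$ then gives $\uTor_2(I_b,\mZ/I_c)\iso\uTor_3(\mZ/I_b,\mZ/I_c)\iso\mZ/I_{(b,c)}$.

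Equivalently, use your triangle $\Sigma^{-3}\mZ/I_c\to\Sigma^{-3}(\mZ/I_b\dbox\mZ/I_c)\to S^{-\lambda_b}\bbox\mZ/I_c$. The first term has no homology in degrees $0$ or $-1$, so the $\uTor_3$ term maps isomorphically onto $\uH_0$, and nothing in degree $-3$ matters. With that replacement your argument is complete.
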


\begin{proof}
Take the cofiber sequence $S^{\lambda_c}\ra \mZ/I_c\ra \Sigma^3\mZ$ and box with $S^{-\lambda_b}$ to get
\[ S^{\lambda_c-\lambda_b} \ra \mZ/I_c\bbox S^{-\lambda_b} \ra \Sigma^3 S^{-\lambda_b}.
\]
Recall that $S^{-\lambda_b}\he \Sigma^{-2} I_b$, and so $\uH_i$ of the middle term is $\uTor_{i+2}(\mZ/I_c,I_b)$.  By \cite[Proposition 4.27]{DHone} this $\uTor$-module vanishes except when $i=-2$ and $0$, and for $i=0$ it is isomorphic to $\mZ/I_{(b,c)}$.  So
the long exact homology sequence gives
\[ 0 \ra I_b \ra \uH_0(S^{\lambda_c-\lambda_b}) \ra \mZ/I_{(b,c)} \ra 0.
\]
Now use
$\uH^{\lambda_c-\lambda_b}\iso \uH_0(S^{\lambda_c-\lambda_b})$.

Proposition~\ref{pr:Sb_to_Sc} says that $\uH^{\lambda_c-\lambda_b}(\Theta_1)\iso \Z$.  So (\ref{eq:Z(e;d)}) yields that $\uH^{\lambda_c-\lambda_b}\iso \mZ(b;(b,c))$.
\end{proof}

\begin{remark}
It will turn out that $\uH^{\lambda_{c_1}+\cdots+\lambda_{c_s}-\lambda_{b_1}-\cdots-\lambda_{b_s}}$ always contains $I_{[b_1,\ldots,b_s]}$ as a sub-object.  It is tempting to speculate that this $\mZ$-module is always a form of $\Z$, but this is not so.  In particular, $H^{\lambda_{c_1}+\lambda_{c_2}-\lambda_{b_1}-\lambda_{b_2}}$ can contain torsion.  See Example~\ref{ex:H-twofold}.
\end{remark}

\begin{cor}
\label{co:lambda_b-lambda_c_2}
Suppose that $b$ and $c$ are divisors of $n$ and $c|b$.  Then $S^{\lambda_c-\lambda_b}\he \mZ(b;c)$.  
\end{cor}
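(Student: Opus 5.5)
The plan is to show that, when $c|b$, the complex $S^{\lambda_c-\lambda_b}$ has homology concentrated in degree $0$. A complex whose homology lives in a single degree is quasi-isomorphic to that homology module placed in that degree. Once the vanishing is known, Proposition~\ref{pr:lambda_c-lambda_b} identifies $\uH_0(S^{\lambda_c-\lambda_b})=\uH^{\lambda_c-\lambda_b}$ with $\mZ(b;(b,c))=\mZ(b;c)$, since $(b,c)=c$.

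To get the vanishing I would rerun the cofiber-sequence argument from the proof of Proposition~\ref{pr:lambda_c-lambda_b}. Boxing $S^{\lambda_c}\ra \mZ/I_c\ra \Sigma^3\mZ$ with $S^{-\lambda_b}$ gives
\[ S^{\lambda_c-\lambda_b}\lra \mZ/I_c\bbox S^{-\lambda_b}\lra \Sigma^3 S^{-\lambda_b}. \]
Because $c|b$, the sphere-box formula (\ref{eq:sphere-box}) simplifies the middle term to $\mZ/I_c$, which sits in degree $0$. The right-hand term is $\Sigma^3 S^{-\lambda_b}\he \Sigma I_b$, concentrated in degree $1$.

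The long exact sequence in homology then forces $\uH_i(S^{\lambda_c-\lambda_b})=0$ for $i\neq 0$. The only degrees that need checking are $i=1$ and $i=-1$. In degree $1$, the group $\uH_1(S^{\lambda_c-\lambda_b})$ sits between $\uH_2$ of the third term, which is $0$, and $\uH_1$ of the middle term, which is also $0$. In degree $-1$, the group $\uH_{-1}(S^{\lambda_c-\lambda_b})$ sits between $\uH_0$ of the third term and $\uH_{-1}$ of the middle term, and both of these vanish. Degree $0$ then gives the short exact sequence $0\ra I_b\ra \uH_0\ra \mZ/I_c\ra 0$, consistent with the earlier result.

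The one point needing care is the convention that (\ref{eq:sphere-box}) is stated for $a|b$, with the module index dividing the sphere index. Here that is exactly our hypothesis $c|b$, so the simplification of the middle term applies. Finally, any complex $X$ with homology only in degree $0$ admits zig-zag quasi-isomorphisms $X\leftarrow \tau_{\geq 0}X\ra \uH_0(X)$ through the truncation. This gives $S^{\lambda_c-\lambda_b}\he \mZ(b;c)$.
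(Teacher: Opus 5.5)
Your proposal is correct and matches the paper's own proof essentially step for step. The paper also boxes $S^{\lambda_c}\ra \mZ/I_c\ra \Sigma^3\mZ$ with $S^{-\lambda_b}$, uses $c|b$ to collapse the middle term to $\mZ/I_c$, and notes that $\Sigma^3 S^{-\lambda_b}\he \Sigma I_b$ has homology only in degree $1$. It concludes that the homology is concentrated in degree $0$ and identifies it via Proposition~\ref{pr:lambda_c-lambda_b}; your explicit degree bookkeeping and truncation zig-zag just spell out steps the paper leaves implicit.
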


\begin{proof}
Start with the cofiber sequence $S^{\lambda_c} \ra \mZ/I_c \ra \Sigma^3\mZ$ and box with $S^{-\lambda_b}$.  Since $F_{b}\bbox \mZ/I_c=0$ by (\ref{eq:box-zero}), we have $\mZ/I_c\bbox S^{-\lambda_b}\he \mZ/I_c$. Hence we have the cofiber sequence
\[ S^{\lambda_c-\lambda_b} \ra \mZ/I_c \ra \Sigma^3 S^{-\lambda_b}.
\]
Since $S^{-\lambda_b}\he \Sigma^{-2}I_b$,
the right term only has homology in degree $1$. The middle term only has homology in degree $0$, and so $S^{\lambda_c-\lambda_b}$ only has homology in degree $0$. That homology is $\mZ(b;c)$ by Proposition~\ref{pr:lambda_c-lambda_b}. 
\end{proof}

\begin{cor}
\label{co:basic_equiv}
Let $b$ and $c$ be divisors of $n$.  Then 
\[ S^{\lambda_b}\bbox S^{\lambda_c} \he S^{\lambda_{(b,c)}}\bbox S^{\lambda_{[b,c]}}.
\]
\end{cor}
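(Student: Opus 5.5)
Since spheres are invertible, the equivalence $S^{\lambda_b}\bbox S^{\lambda_c}\he S^{\lambda_{(b,c)}}\bbox S^{\lambda_{[b,c]}}$ is equivalent to
\[ S^{\lambda_b-\lambda_{[b,c]}} \he S^{\lambda_{(b,c)}-\lambda_c}, \]
obtained by boxing both sides with $S^{-\lambda_c}\bbox S^{-\lambda_{[b,c]}}$ and using Proposition~\ref{pr:sphere-invertible} together with commutativity of $\bbox$ up to isomorphism. I will prove the equivalence in this rewritten form.

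\emph{Identifying both sides.} Since $(b,c)\mid c$, Corollary~\ref{co:lambda_b-lambda_c_2} identifies the right side: $S^{\lambda_{(b,c)}-\lambda_c}$ has homology only in degree $0$, and that homology is $\mZ(c;(b,c))$. Likewise $b\mid [b,c]$, so the same corollary identifies the left side: $S^{\lambda_b-\lambda_{[b,c]}}\he \mZ([b,c];b)$. A complex with homology concentrated in degree $0$ is quasi-isomorphic to that homology module, so it remains to show $\mZ([b,c];b)\iso \mZ(c;(b,c))$.

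\emph{Comparing the modules.} This is the main obstacle, because the uniqueness statement (\ref{eq:Z(e;d)}) only applies to modules sitting in the same short exact sequence, and the two sequences here differ:
\[ 0\ra I_{[b,c]}\ra M\ra \mZ/I_b\ra 0 \quad\text{versus}\quad 0\ra I_c\ra M'\ra \mZ/I_{(b,c)}\ra 0. \]
I plan two routes.

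\begin{enumerate}
\item \emph{Explicit description.} Use the explicit description of $\mZ(e;d)$ from \cite[Section 5.4]{DHone}, presumably as a lattice in $\Q$ at each orbit $\Theta_k$ whose index depends on $(k,e)$ and $(k,d)$. Then check that the values and the restriction/transfer data agree, reducing via (\ref{eq:ell-local-fact})-style arguments to the $\ell$-local case.
\item \emph{Local reduction.} Work one prime at a time. After $\ell$-localization the numbers $b,c$ are totally ordered in $\ell$-adic valuation. If $b(\ell)\mid c(\ell)$, then $[b,c](\ell)=c(\ell)$ and $(b,c)(\ell)=b(\ell)$, so both modules become $\mZ(c;b)$ locally. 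If $c(\ell)\mid b(\ell)$, both become $\mZ(b;c)$ locally, with the $I$-part locally $I_{b}$ and the quotient trivial up to local units.
\end{enumerate}

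\emph{Gluing.} For the local reduction I still need a global map to glue the local isomorphisms. Rather than gluing modules, I would construct a map of spheres directly: the composite
\[ u_{[(b,c):c]}\bbox u_{[[b,c]:b]}\colon S^{\lambda_c}\bbox S^{\lambda_b}\ra S^{\lambda_{(b,c)}}\bbox S^{\lambda_{[b,c]}}, \]
or a suitable integer combination of such maps, to be checked to be a quasi-isomorphism after $\ell$-localization for every $\ell$. Each local check reduces via the $\chi$-style ordering above to a product of maps $u_{[e:e']}$ with $e,e'$ equal up to $\ell$-local units, which are local equivalences by Proposition~\ref{pr:u-props}(4). A map that is an $\ell$-local quasi-isomorphism for all $\ell$ is an integral quasi-isomorphism, which would finish the proof.
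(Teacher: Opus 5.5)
\textbf{Assessment: there is a genuine gap, at the gluing step.}

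Your reduction to $S^{\lambda_b-\lambda_{[b,c]}}\he S^{\lambda_{(b,c)}-\lambda_c}$, and the identification of the two sides with $\mZ([b,c];b)$ and $\mZ(c;(b,c))$ via Corollary~\ref{co:lambda_b-lambda_c_2}, are exactly the paper's argument. The paper then finishes by citing \cite[Proposition 5.7]{DHone} for $\mZ([b,c];b)\iso\mZ(c;(b,c))$. That middle isomorphism is the whole content, and your proposal does not establish it.

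Route 1 is a guess about what $\mZ(e;d)$ looks like, not an argument. In Route 2 the case $c(\ell)\mid b(\ell)$ is misidentified: there $b(\ell)=[b,c](\ell)$ and $c(\ell)=(b,c)(\ell)$, so both modules are $\ell$-locally just $\mZ$, not $\mZ(b;c)$. That becomes moot once you pass to maps of spheres, as you rightly propose.

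\textbf{The global map you name is not a quasi-isomorphism.} By Proposition~\ref{pr:u-props}(3), $u_{[(b,c):c]}u_c=\tfrac{c}{(b,c)}u_{(b,c)}$ and $u_{[[b,c]:b]}u_b=u_{[b,c]}$. So $X=u_{[(b,c):c]}\bbox u_{[[b,c]:b]}$ multiplies the non-torsion class $u_cu_b$ by $\tfrac{c}{(b,c)}$.
\begin{itemize}
\item $X$ therefore fails to be an $\ell$-local equivalence at every prime with $b(\ell)<c(\ell)$.
\item These are exactly the primes where the pair $((b,c),c)$ has different $\ell$-adic parts, so your reduction to ``$u_{[e:e']}$ with $e,e'$ equal up to local units'' breaks down there.
\item Already for $b=1$, $c=\ell$, where the statement is trivial, $X$ is $\ell$ times an equivalence.
\item The other pairing $Y$ (that is, $u_{[(b,c):b]}\bbox u_{[[b,c]:c]}$ precomposed with the symmetry isomorphism) fails at the complementary primes.
\end{itemize}

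\textbf{The fix.} The combination $f=AX+BY$ with $A\tfrac{c}{(b,c)}+B\tfrac{b}{(b,c)}=1$ is the right map. However, a sum is not a product of locally invertible $u$'s, so your stated local check says nothing about it. The missing step is as follows.
\begin{itemize}
\item At a prime where $X$ is a local equivalence, $X^{-1}Y$ is a self-map of an invertible object, hence a scalar in $[\mZ,\mZ]_{(\ell)}=\Z_{(\ell)}$.
\item Evaluating on $u_cu_b$, which $Y$ sends to $\tfrac{b}{(b,c)}u_{(b,c)}u_{[b,c]}$, shows this scalar is $b/c$.
\item Hence locally $f=\tfrac{(b,c)}{c}\,X$, a local unit times a local equivalence. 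The argument is symmetric at the other primes.
\end{itemize}
With this step added, your route becomes a valid, self-contained alternative. It bypasses both Corollary~\ref{co:lambda_b-lambda_c_2} and \cite[Proposition 5.7]{DHone}, and the map $f$ it produces is the unit $\chi_{b,c}$ defined right after this corollary.
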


\begin{proof}
We have
$S^{\lambda_{b}-\lambda_{[b,c]}}\he \mZ([b,c];b)\iso \mZ(c;(b,c))\he S^{\lambda_{(b,c)}-\lambda_{c}}$
using Corollary~\ref{co:lambda_b-lambda_c_2} (twice) and \cite[Proposition 5.7]{DHone} for the middle isomorphism.
\end{proof}

\subsection{The \mdfn{$\chi$}-classes}
Let $b$ and $c$ be divisors of $n$.  Corollary~\ref{co:basic_equiv} implies that
\[ \cD(S^{\lambda_b}\bbox S^{\lambda_c},S^{\lambda_{(b,c)}}\bbox S^{\lambda_{[b,c]}})\iso \Z,
\]
and it will be useful to fix a specific generator.  
The group $\uH_4(\blank)(\Theta_1)$ of the domain and target are isomorphic to $\Z$, with generators $u_bu_c$ and $u_{(b,c)}u_{[b,c]}$ respectively, so a given equivalence will send the first to plus or minus the second.  Let us fix $\ts_{b,c}$ to be the map such that
\begin{equation} 
\label{eq:t-u}
\ts_{b,c} u_bu_c=u_{(b,c)}u_{[b,c]}.
\end{equation}
Note that $\ts_{b,c}\in H^{\lambda_{(b,c)}+\lambda_{[b,c]}-\lambda_b-\lambda_c}$ and is a unit (because the associated map is a quasi-isomorphism).  Also $\ts_{b,c}=\ts_{c,b}$ as elements of $H^\star$---one way to see this is to use that the rationalization map is injective in this degree and that the $u$-classes all commute.

The element $u_{[[b,c]:b]}u_{[(b,c):c]}$ lies in $H^{\lambda_{(b,c)}+\lambda_{[b,c]}-\lambda_b-\lambda_c}$ and is therefore an integral multiple of $\ts_{b,c}$.  The exact multiple, as well as similar formulas, are in the next result.

\begin{prop} Let $b|n$ and $c|n$. Then  
\begin{enumerate}[(1)]
\item $u_{[[b,c]:b]}u_{[(b,c):c]}=\tfrac{c}{(b,c)}\cdot \ts_{b,c}$, and
\item $u_{[b:[b,c]]}u_{[c:(b,c)]}=\tfrac{c}{(b,c)}\cdot \ts_{b,c}^{-1}$.
\end{enumerate}
\end{prop}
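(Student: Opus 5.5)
Both identities live in degrees whose $\Theta_1$-component is $\Z$. By Corollary~\ref{co:basic_equiv} the $\chi$-class $\ts_{b,c}$ is an isomorphism in $\cD$. Hence the degree $\lambda_{(b,c)}+\lambda_{[b,c]}-\lambda_b-\lambda_c$ is $\ts_{b,c}\cdot H^0\iso\Z$, and the inverse degree is $\ts_{b,c}^{-1}\cdot H^0\iso \Z$. In each case the element in question is therefore some integer multiple $k\ts_{b,c}^{\pm1}$. Rationalization is injective on $\Z$, and the $u$-classes become units rationally, so I would determine $k$ by multiplying by suitable $u$-classes and comparing in the torsion-free degree $H^{\lambda_{(b,c)}+\lambda_{[b,c]}-4}$. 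That degree is $\uH_4(S^{\lambda_{(b,c)}}\bbox S^{\lambda_{[b,c]}})(\Theta_1)\iso \Z$, generated by $u_{(b,c)}u_{[b,c]}$. Multiplication by $u_bu_c$ is injective on the relevant groups because everything is torsion-free there and $u_bu_c$ is a rational unit.

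For (1), multiply $u_{[[b,c]:b]}u_{[(b,c):c]}$ by $u_bu_c$. Using commutativity of these particular classes (the fixed dimensions are all even, so (\ref{eq:skew-comm}) gives no sign), I would regroup as $(u_{[[b,c]:b]}u_b)(u_{[(b,c):c]}u_c)$. Proposition~\ref{pr:u-props}(3) then gives $u_{[[b,c]:b]}u_b=\tfrac{b}{(b,[b,c])}u_{[b,c]}=u_{[b,c]}$ and $u_{[(b,c):c]}u_c=\tfrac{c}{(c,(b,c))}u_{(b,c)}=\tfrac{c}{(b,c)}u_{(b,c)}$. So the product equals $\tfrac{c}{(b,c)}u_{(b,c)}u_{[b,c]}=\tfrac{c}{(b,c)}\ts_{b,c}u_bu_c$ by (\ref{eq:t-u}). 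Cancelling $u_bu_c$, which is legitimate by the injectivity just noted, yields (1).

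For (2), I would multiply $u_{[b:[b,c]]}u_{[c:(b,c)]}$ by $u_{[b,c]}u_{(b,c)}$, regrouped as $(u_{[b:[b,c]]}u_{[b,c]})(u_{[c:(b,c)]}u_{(b,c)})$. Proposition~\ref{pr:u-props}(3) gives $\tfrac{[b,c]}{b}u_b\cdot 1\cdot u_c$, using $([b,c],b)=b$ and $((b,c),c)=(b,c)$. Since $[b,c]/b=c/(b,c)$, the product is $\tfrac{c}{(b,c)}u_bu_c$. Meanwhile $\ts_{b,c}^{-1}u_{(b,c)}u_{[b,c]}=u_bu_c$ by (\ref{eq:t-u}), so the two sides of (2) agree after multiplying by $u_{(b,c)}u_{[b,c]}$. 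Again injectivity in the torsion-free target degree gives (2).

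The only real point of care is justifying the cancellation and the rearrangement of factors. The rearrangement is covered by (\ref{eq:skew-comm}) and Remark~\ref{re:maps->H}, since all the degrees involved have even fixed dimension. For the cancellation, the source degree of each equation is $\Z$ by Corollary~\ref{co:basic_equiv}, and multiplication by the $u$-product is injective rationally, hence injective on these torsion-free groups.
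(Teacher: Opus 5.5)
Your proof is correct and is essentially the paper's argument: multiply by $u_bu_c$ (resp.\ $u_{(b,c)}u_{[b,c]}$), evaluate with Proposition~\ref{pr:u-props}(3), compare against the defining relation (\ref{eq:t-u}), and cancel using that the source group is $\Z$ and the $u$-product is a rational unit. One small slip: in (2) the comparison happens in $H^{\lambda_b+\lambda_c-4}$, not $H^{\lambda_{(b,c)}+\lambda_{[b,c]}-4}$ as your first paragraph says, but your final injectivity argument only needs the source group to be torsion-free, so nothing breaks.
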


\begin{proof}
For (1) it suffices to compute that
$u_{[[b,c]:b]}u_{[(b,c):c]}\cdot u_bu_c=\tfrac{c}{(b,c)}u_{[b,c]}u_{(b,c)}$ using Proposition~\ref{pr:u-props}(3).  The proof of (2) is similar.  
\end{proof}

It will be helpful to also know how $\ts_{b,c}$ interacts with the other $au$-classes.

\begin{prop}Let $b|n$ and $c|n$. Then  
\begin{enumerate}[(1)]
\item $\ts_{b,c}a_bu_c=\tfrac{c}{(b,c)} a_{[b,c]}u_{(b,c)}$,
\item $\ts_{b,c}a_ba_c=a_{[b,c]}a_{(b,c)}$.
\end{enumerate}
\end{prop}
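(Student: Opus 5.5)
The plan is to identify both sides as elements of a single group that is small enough to be detected by multiplication with $u$-classes. Then each identity follows from the relations already established: Proposition~\ref{pr:u-props}, the gold relation (Corollary~\ref{co:gold}), and the defining equation (\ref{eq:t-u}) for $\ts_{b,c}$.

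For (1), both sides lie in $H^{\lambda_{(b,c)}+\lambda_{[b,c]}-2}$, which sits in the positive cone. Since $\ts_{b,c}$ is a unit realized by a quasi-isomorphism, multiplication by $\ts_{b,c}$ identifies this group with $H^{\lambda_b+\lambda_c-2}$. By Proposition~\ref{pr:positive-cone-MF} the group is the $\Theta_1$-value of $\mZ/I_{D_1}$ with $D_1=(b,c)$, so it is the cyclic group $\Z/(b,c)$.

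Rationalization is useless here because the group is torsion. Instead I would compute directly with the $\ts$-equation. First write $\tfrac{c}{(b,c)}a_{[b,c]}=u_{[[b,c]:b]}a_b$ using Proposition~\ref{pr:u-props}(1) with the pair $([b,c],b)$; note $\tfrac{[b,c]}{(b,[b,c])}=\tfrac{[b,c]}{b}=\tfrac{c}{(b,c)}$. Next express $u_c$ in terms of $u_{(b,c)}$ on the correct side. Proposition~\ref{pr:u-props}(3) gives $u_{[(b,c):c]}u_c=\tfrac{c}{(b,c)}u_{(b,c)}$, which has the wrong coefficient, so instead I use $u_{[c:(b,c)]}u_{(b,c)}=u_c$, with coefficient $\tfrac{(b,c)}{((b,c),c)}=1$. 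This rewrites the left side of (1) as $\ts_{b,c}a_bu_{[c:(b,c)]}u_{(b,c)}$. It is cleaner, though, to use the preceding proposition: $u_{[[b,c]:b]}u_{[(b,c):c]}=\tfrac{c}{(b,c)}\ts_{b,c}$. Then
\[ \tfrac{c}{(b,c)}\ts_{b,c}\,a_bu_c = u_{[[b,c]:b]}a_b\cdot u_{[(b,c):c]}u_c=\tfrac{c}{(b,c)}a_{[b,c]}\cdot\tfrac{c}{(b,c)}u_{(b,c)}. \]
This only yields the identity multiplied by $\tfrac{c}{(b,c)}$, and we cannot cancel in a torsion group.

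The main obstacle is therefore this cancellation. My plan to overcome it is to multiply instead by $u$-classes, which reduces the question to a torsion-free degree. Multiplying both sides of (1) by $u_b$ and using the gold relation lets me trade $a$'s for $u$'s. However, the degree still contains an $a$-class and so remains torsion, so this does not escape the problem.

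A better alternative is to argue at the level of maps. The class $a_b u_c$ is the map $\Sigma^2\mZ\to S^{\lambda_b}\bbox S^{\lambda_c}$ given by the inclusion of the summand $\mZ\bbox F_c$. Both sides of (1) are determined by their images in $\uH_2$ of $S^{\lambda_b+\lambda_c}$, where the homology is $\mZ/I_{(b,c)}$, a quotient of the degree-$0$ term. So it suffices to check that $\ts_{b,c}$ induces on $\uH_0\cong\mZ/I_{(b,c)}$ the identity, compatibly with $a_ba_c\mapsto a_{[b,c]}a_{(b,c)}$; this is exactly statement (2). Hence I would prove (2) first, by checking it $\ell$-locally using (\ref{eq:ell-local-fact}). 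Locally $b$ and $c$ form a divisor string up to order, so $\ts_{b,c}$ is the identity or the symmetry map, and (2) holds on the nose. I would then deduce (1) from (2) by multiplying (2) by $u$-classes: the gold relation gives $a_cu_{(b,c)}$ versus $a_{(b,c)}u_c$ with coefficient $\tfrac{c}{(b,c)}$. Concretely, (2) times $u_{[(b,c):c]}$ turns $a_c$ into $\tfrac{(b,c)}{(b,c)}a_{(b,c)}$, so this route needs care with the coefficients. Still, the $\ell$-local identification of $\ts$ is the step I expect to carry the real content.
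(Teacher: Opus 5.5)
Your proposal has a genuine gap. Your first computation matches the paper exactly, and you correctly identify the obstacle. That computation only shows that $X=\ts_{b,c}a_bu_c-\tfrac{c}{(b,c)}a_{[b,c]}u_{(b,c)}$ satisfies $\tfrac{c}{(b,c)}X=0$, and this factor cannot be cancelled. None of the routes you then propose closes the gap.

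\begin{itemize}
\item \emph{Reduction to $\uH_0$.} This reduction is invalid. Both sides of (1) live in $\uH_2$. By Proposition~\ref{pr:positive-cone-MF} with $k=2$, the index there is $D_2=[b,c]$, so the group is $\Z/[b,c]$, not $\Z/(b,c)$, and it is not a quotient of the degree-$0$ term. Knowing what $\ts_{b,c}$ does on $\uH_0$ tells you nothing about $\uH_2$.
\item \emph{Deducing (1) from (2).} Multiplying (2) by $u$-classes moves you into other degrees and can never turn $a_c$ into $u_c$. The gold relation only introduces further coefficients that, as you note yourself, cannot be cancelled.
\item \emph{The $\ell$-local slogan.} The claim that $\ts_{b,c}$ ``is the identity or the symmetry map'' is not literally meaningful, because $S^{\lambda_b}\bbox S^{\lambda_c}$ and $S^{\lambda_{(b,c)}}\bbox S^{\lambda_{[b,c]}}$ are different complexes. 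Also, (\ref{eq:ell-local-fact}) is a criterion for recognizing modules, not for checking identities between elements.
\end{itemize}

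The missing idea is to run your computation a second time with the roles of $b$ and $c$ exchanged.
\begin{itemize}
\item Since $\ts_{b,c}=\ts_{c,b}$, the preceding proposition also gives $u_{[[b,c]:c]}u_{[(b,c):b]}=\tfrac{b}{(b,c)}\ts_{b,c}$.
\item Proposition~\ref{pr:u-props}(1),(3) give $u_{[(b,c):b]}a_b=a_{(b,c)}$ and $u_{[[b,c]:c]}u_c=u_{[b,c]}$. Hence $\tfrac{b}{(b,c)}\ts_{b,c}a_bu_c=a_{(b,c)}u_{[b,c]}$.
\item The gold relation for the pair $(b,c),[b,c]$ rewrites this as $\tfrac{[b,c]}{(b,c)}a_{[b,c]}u_{(b,c)}=\tfrac{b}{(b,c)}\cdot\tfrac{c}{(b,c)}a_{[b,c]}u_{(b,c)}$. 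So $\tfrac{b}{(b,c)}X=0$ as well.
\end{itemize}
Since $\tfrac{b}{(b,c)}$ and $\tfrac{c}{(b,c)}$ are coprime, $X=0$. The same pair of computations, now without the gold relation, proves (2) directly. This is the paper's proof.

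It is also the honest content of your $\ell$-local idea, applied to (1) itself rather than routed through (2). At each prime $\ell$, one of the two coefficients is an $\ell$-local unit, so one of the two factorizations expresses $\ts_{b,c}$, up to a unit, as a product of $u_{[\cdot:\cdot]}$-classes. An element of a finitely generated abelian group that vanishes after every $\ell$-localization is zero.
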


\begin{proof}
We will give the argument for (1), and the one for (2) follows the same technique.  We start with
\begin{align*}
\tfrac{c}{(b,c)}\ts_{b,c}a_bu_c=u_{[[b,c]:b]}u_{[(b,c):c]}a_bu_c = \tfrac{[b,c]}{b}a_{[b,c]}\cdot \tfrac{c}{(b,c)}u_{(b,c)} = \tfrac{c^2}{(b,c)^2}a_{[b,c]}u_{(b,c)}
\end{align*}
where we are using Proposition~\ref{pr:u-props} in the second equality.   Likewise
\begin{align*}
\tfrac{b}{(b,c)}\ts_{b,c}a_bu_c=u_{[[b,c]:c]}u_{[(b,c):b]}a_bu_c = a_{(b,c)}\cdot u_{[b,c]}= \tfrac{[b,c]}{(b,c)}a_{[b,c]}u_{(b,c)} = \tfrac{bc}{(b,c)^2} a_{[b,c]}u_{(b,c)}
\end{align*}
where here we have used Proposition~\ref{pr:u-props} and the gold relation.  So these two equations show that the expression
\[ \ts_{b,c}a_bu_c-\tfrac{c}{(b,c)}a_{[b,c]}u_{(b,c)}
\] is annihilated by both $\tfrac{c}{(b,c)}$ and $\tfrac{b}{(b,c)}$.  Since these two integers are relatively prime, the expression must be zero.  
\end{proof}


\section{The cohomology of a point}
\label{se:point}

Our goal in this section is to show how the computations for $\uH^\star(\pt)$ in the regular region follow very simply from the homological foundations that have been previously developed.  There are two separate issues: computing the underlying $\mZ$-modules, and computing the ring structure.  The former is very easy, and the latter is reasonably simple for the positive cone. Computing the ring structure in the region of the negative cone is more dicey, as we will see.

\medskip

\subsection{Prelude: The lcm-gcd sequence}

The results of our computations will involve a slightly strange collection of numbers, that we introduce now:

\begin{defn} Let $b_1,\dots, b_s$ be a list of divisors of $n$. Write $\und{b}$ for the tuple $(b_1,\dots, b_s)$. For $1\leq j \leq s$ define $(\und{b};j)$ to be the greatest common divisor of all possible $j$-fold least common multiples of $b_1,\dots, b_s$. That is,
\[(\und{b};j)=\gcd\bigl \{\text{\emph{lcm}}(b_{i_1},\dots, b_{i_j})\mid 1\leq i_1<\cdots <i_j \leq s\bigr \}.\] 
\end{defn}
Observe $(\und{b};1)=(b_1,\dots,b_s)$ and $(\und{b};s)=[b_1,\dots, b_s]$. We also have $(\und{b};j)\mid (\und{b};j+1)$ for all $j$. Thus for a tuple of divisors $\und{b}=(b_1,\dots, b_s)$, the numbers $(\und{b};j)$ give a sequence of $s$ divisors of $n$ that starts with the gcd, ends with the lcm, and has the property that $(\und{b};j)\mid (\und{b};j+1)$.

For divisor tuples $\und{b}$ and $\und{b}'$ write $\und{b}\subseteq \und{b}'$ if each number that appears in $\und{b}$ also appears somewhere in $\und{b}'$. We have that if $\und{b}\subseteq \und{b}'$ then $(\und{b}';j)\mid (\und{b};j)$.

\begin{example}
As a concrete example take $n=180$ and
$b_1=15$, $b_2=9$, $b_3=18$. Then
\[
(\und{b};1)=(15,9,18)=3, \quad (\und{b};2)=([15,9],[15,18],[9,18])=(45,90,18)=9, \text{ and}\]
\[
(\und{b};3)=[15,9,18]=90.
\]
\end{example}

Define a \dfn{divisor string} of $n$ to be a sequence of positive integers $b_1,\ldots,b_s$ such that $b_i|b_{i+1}$ for $1\leq i\leq s-1$ and $b_s|n$.  
Note that for a divisor string one has
$(\und{b};j)=b_j$, for all $j$. This simple case will appear often in our computations.

Recall that when $\ell$ is prime $b(\ell)$ denotes the $\ell$-adic part of a positive integer $b$. For a tuple $\und{b}$ we will similarly write $\und{b}(\ell)$ for the tuple given by replacing each entry with its $\ell$-adic part. Taking the $\ell$-adic part commutes with gcd and lcm, and so one has 
\begin{equation}
\label{eq:ell-adic}
(\und{b};j)(\ell)=(\und{b}(\ell);j)
\end{equation}
for any collection $\und{b}$.  
Note that $\und{b}(\ell)$ will always be a divisor string after a suitable re-ordering.

\begin{remark}
In terms of prime factorizations, the gcd is given by taking the minimal exponent that appears on each prime while the lcm is given by taking the maximum exponent that appears on each prime. From (\ref{eq:ell-adic}) it readily follows that $(\und{b};j)$ is found by taking the $j$th smallest exponent for each prime.  This leads to the following interpretation:
\end{remark}

\begin{prop}
\label{pr:lcm-gcd-distill}
Consider the operation that replaces two integers with their gcd and lcm: $x,y\mapsto (x,y),[x,y]$.  Note that this replaces the original pair with a divisor string.  Applying this operation repeatedly (pairwise) to a sequence of positive integers $b_1,\ldots,b_s$, it is always possible to eventually obtain a divisor string; and the divisor string one obtains is always the same, namely $(\und{b};1),(\und{b};2),\ldots,(\und{b};s)$.
\end{prop}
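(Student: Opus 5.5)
The plan is to work one prime at a time, using the remark just above: for each prime $\ell$, the $\ell$-adic part of $(\und{b};j)$ is the $j$th smallest of the exponents $v_\ell(b_1),\ldots,v_\ell(b_s)$, by (\ref{eq:ell-adic}). The key observation is that the move $x,y\mapsto (x,y),[x,y]$ acts on $\ell$-exponents as $(\alpha,\beta)\mapsto(\min(\alpha,\beta),\max(\alpha,\beta))$, simultaneously for all primes. So for each prime, the move preserves the multiset of $\ell$-exponents of the sequence. The whole proof then reduces to statements about multisets of exponents.

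For the uniqueness claim, suppose some sequence of moves yields a divisor string $c_1|c_2|\cdots|c_s$. For each $\ell$ the exponents $v_\ell(c_1)\le\cdots\le v_\ell(c_s)$ are weakly increasing. Since the multiset of $\ell$-exponents is invariant under the moves, $v_\ell(c_j)$ must be the $j$th smallest exponent of the original multiset, which is $v_\ell((\und{b};j))$. As this holds for every prime, $c_j=(\und{b};j)$.

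For existence, I will run a bubble-sort procedure. As long as the current sequence is not a divisor string, there is some $i$ with $c_i\nmid c_{i+1}$, and I apply the move to the pair $(c_i,c_{i+1})$. To see that this terminates, I use a potential function: for each prime $\ell$, count the inversions of the exponent sequence $(v_\ell(c_1),\ldots,v_\ell(c_s))$, meaning the pairs $i<k$ with $v_\ell(c_i)>v_\ell(c_k)$, and sum this count over the finitely many primes dividing the $b_i$.

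The main point to check is that this potential strictly decreases. Applying the move to adjacent positions $i,i+1$ sorts that pair of exponents for each prime. For a fixed prime, sorting an adjacent pair either leaves the inversion count unchanged (if the pair was already in order) or lowers it by exactly one, since swapping adjacent entries only affects their mutual relation. Because $c_i\nmid c_{i+1}$, at least one prime has $v_\ell(c_i)>v_\ell(c_{i+1})$, so the total count drops by at least one. The process therefore ends at a divisor string.

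Finally, to justify the parenthetical ``it doesn't matter what order'' in the introduction, I observe that any move on a non-divisible pair, adjacent or not, keeps the result in the same place and reduces the same potential. The non-adjacent case needs a short check that min/max on positions $i<k$ does not increase inversions involving intermediate entries; this holds because it is a standard comparator in a sorting network. Together with uniqueness, this shows that every maximal sequence of moves stabilizes at $(\und{b};1),\ldots,(\und{b};s)$.
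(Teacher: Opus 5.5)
Your argument is correct and follows the route the paper intends. The paper leaves this proof to the reader immediately after remarking that the $\ell$-exponent of $(\und{b};j)$ is the $j$th smallest $\ell$-exponent among the $b_i$. Your observation that each move preserves the multiset of $\ell$-exponents for every prime, together with the inversion-count argument for termination, supplies exactly the missing details.
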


\begin{proof}
Left to the reader.  
\end{proof}

It is sometimes convenient to know the following alternative characterization:

\begin{prop}
Let $b_1,\ldots,b_s$ be divisors of $n$.  Then $(\und{b};j)$ is the lcm of all-possible $(s+1-j)$-fold gcds of the elements of the sequence; that is,
\[ (\und{b};j)=\lcm \bigl\{ \gcd(b_{i_1},\ldots,b_{i_{s+1-j}}) \,|\, 1\leq i_1 < \cdots <i_{s+1-j}\leq s   \bigr \}
\]
\end{prop}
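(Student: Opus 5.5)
The plan is to reduce to a single prime and then compare order statistics of exponents. Both sides of the claimed identity are positive integers built from gcds and lcms of divisors of $n$. Taking $\ell$-adic parts commutes with gcd and lcm, so it suffices to check the identity after replacing every number by its $\ell$-adic part, for each prime $\ell$ separately. By (\ref{eq:ell-adic}) the left side becomes $(\und{b}(\ell);j)$. The right side becomes the lcm of the $(s+1-j)$-fold gcds of $b_1(\ell),\ldots,b_s(\ell)$, by the same commutation. So we may assume every $b_i=\ell^{e_i}$ is a power of a single prime $\ell$.

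In this situation gcd means minimum of exponents and lcm means maximum, so everything is a question about the multiset $\{e_1,\ldots,e_s\}$. Reorder it as $e_{(1)}\leq e_{(2)}\leq\cdots\leq e_{(s)}$. By the Remark preceding Proposition~\ref{pr:lcm-gcd-distill}, the left side $(\und{b};j)$ is $\ell^{e_{(j)}}$, the $j$th smallest exponent. If one wants to verify this directly: every $j$-element subset has maximum at least $e_{(j)}$, and the subset consisting of the $j$ smallest has maximum exactly $e_{(j)}$. So the minimum over $j$-subsets of the maximum is $e_{(j)}$.

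The right side is the maximum over all $(s+1-j)$-element subsets of the minimum of the exponents in the subset. Any subset of size $s+1-j$ must meet the set of indices carrying the $j$ smallest exponents, since $(s+1-j)+j>s$. Hence its minimum is at most $e_{(j)}$. The subset consisting of the $s+1-j$ largest exponents, namely $e_{(j)},\ldots,e_{(s)}$, has minimum exactly $e_{(j)}$. So the max-min also equals $e_{(j)}$, and the two sides agree.

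There is no real obstacle here. The only point needing care is bookkeeping when exponents repeat (ties). The order-statistic argument handles this because it works with positions in the sorted list rather than with distinct values, and the pigeonhole count $(s+1-j)+j=s+1>s$ is what makes the two extremal subsets match.
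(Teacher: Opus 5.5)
Your proof is correct and is essentially the argument the paper intends. The paper leaves it to the reader, but the remark preceding Proposition~\ref{pr:lcm-gcd-distill} (that $(\und{b};j)$ is obtained by taking the $j$th smallest exponent at each prime, via (\ref{eq:ell-adic})) points to exactly your route: reduce prime by prime, then check that the max-of-mins over $(s+1-j)$-subsets is also the $j$th order statistic. The pigeonhole count $j+(s+1-j)>s$ handles ties correctly.
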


\begin{proof}
Left to the reader.
\end{proof}

These lcm-gcd sequences enter our work due to the following fundamental result:

\begin{prop}
\label{pr:lcm-gcd-spheres}
Let $b_1,\ldots,b_s$ be divisors of $n$.  Then there is a quasi-isomorphism
\[ S^{\lambda_{b_1}+\cdots+\lambda_{b_s}}\he S^{\lambda_{(\und{b};1)}+\cdots+\lambda_{(\und{b};s)}}.
\]
\end{prop}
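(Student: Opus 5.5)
The plan is to reduce the statement to the two-sphere case, Corollary~\ref{co:basic_equiv}, by a combinatorial induction built on Proposition~\ref{pr:lcm-gcd-distill}. The key input is that the box product is symmetric monoidal up to isomorphism in $\cD(\mZ)$. Concretely, the symmetry isomorphism $S^{\lambda_{b_i}}\bbox S^{\lambda_{b_j}}\iso S^{\lambda_{b_j}}\bbox S^{\lambda_{b_i}}$ of chain complexes lets us permute the factors of $S^{\lambda_{b_1}}\bbox\cdots\bbox S^{\lambda_{b_s}}$ freely, up to isomorphism. Also, since each $S^{\lambda_b}$ is levelwise free, and hence flat by \cite[Corollary 2.25]{DHone}, boxing a quasi-isomorphism between bounded levelwise-free complexes with any other sphere factor again gives a quasi-isomorphism. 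Thus quasi-isomorphisms of two adjacent factors propagate to the whole product.

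First I will perform a single ``move''. Suppose $x=b_i$ and $y=b_j$ are two entries. Permute the factors so that $S^{\lambda_x}\bbox S^{\lambda_y}$ sits adjacent, apply Corollary~\ref{co:basic_equiv} to replace it by $S^{\lambda_{(x,y)}}\bbox S^{\lambda_{[x,y]}}$, and permute back. The result is a quasi-isomorphism (in fact an isomorphism in $\cD$) between $S^{\lambda_{\und{b}}}$ and $S^{\lambda_{\und{b}'}}$, where $\und{b}'$ is obtained from $\und{b}$ by the move $x,y\mapsto(x,y),[x,y]$.

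Next I need the sequence of moves to terminate at the divisor string $(\und{b};1),\ldots,(\und{b};s)$, which is exactly what Proposition~\ref{pr:lcm-gcd-distill} guarantees. Since that proposition was left to the reader, I would supply a short argument working one prime $\ell$ at a time. A move replaces the $\ell$-exponents $(e_x,e_y)$ by $(\min,\max)$. So the multiset of $\ell$-exponents is unchanged, and for each prime the move sorts that pair. Performing a bubble sort on positions, always applying the move to adjacent positions $i<i+1$, sorts the exponents of every prime simultaneously. The reason is that the move on a pair sorts all primes at once, and bubble sort only ever compares a pair and puts it in order. When no adjacent pair changes, every prime's exponents are nondecreasing, so $b_i\mid b_{i+1}$ for all $i$. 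The $j$th entry then has $\ell$-exponent equal to the $j$th smallest exponent, which equals $(\und{b};j)(\ell)$ by (\ref{eq:ell-adic}) and the Remark following it. Termination follows because each nontrivial move strictly decreases the number of inversions $\sum_\ell \#\{i<k : e_{\ell}(b_i)>e_{\ell}(b_k)\}$.

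Composing the finitely many quasi-isomorphisms then gives the proposition, where ``$\he$'' is read as isomorphism in $\cD(\mZ)$, possibly via a zig-zag. The main obstacle is the bookkeeping that box-ing preserves the relevant equivalences. Corollary~\ref{co:basic_equiv} only gives an isomorphism in the derived category, so I must check that $X\bbox(\blank)$ preserves such isomorphisms for bounded levelwise-free $X$. This is the flatness argument above: the derived and underived box product agree for these complexes. Everything else is formal.
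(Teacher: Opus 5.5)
Your proposal is correct and takes the same route as the paper, whose proof is the one-line remark that the statement is immediate from Proposition~\ref{pr:lcm-gcd-distill} and Corollary~\ref{co:basic_equiv}. You additionally fill in two details the paper leaves implicit: the flatness argument showing that replacing an adjacent pair of factors by a derived-equivalent pair propagates through the box product, and an explicit bubble-sort proof of the part of Proposition~\ref{pr:lcm-gcd-distill} you need, namely that some sequence of moves terminates at the divisor string $(\und{b};1),\ldots,(\und{b};s)$.
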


\begin{proof}
Immediate from Proposition~\ref{pr:lcm-gcd-distill} and Corollary~\ref{co:basic_equiv}.
\end{proof}

\begin{remark}
For another (and in some ways more direct) explanation of why the numbers $(\und{b};j)$ arise in our calculations, see the proof of Proposition~\ref{pr:Q-Mackey} below.  These integers arise as a purely algebraic consequence of the gold relations together with the Euler class torsion relations.  
\end{remark}

\subsection{The positive cone}
Computing the positive cone of $\uH^\star(\pt)$ amounts to computing the homology modules $\uH_*(S^{\lambda_{b_1}}\bbox \cdots \bbox S^{\lambda_{b_k}})$ for any choice of divisors $b_1,\ldots,b_k$ of $n$.  One can tackle this inductively via the K\"unneth spectral sequence, but there are a host of nontrivial differentials that need to be detected.  We will
instead adopt a totally different approach using carefully chosen cofiber sequences, but this only works when the $b_i$'s form a divisor string.  The final twist is to use $\ell$-localization to reduce to that case.  \smallskip

To start suppose that $b$ and $\ell$ are divisors of $n$, where $\ell$ is prime.  
Recall the elements $u_{[b:b(\ell)]}\in H^{\lambda_b-\lambda_{b(\ell)}}$
and $u_{ [b(\ell):b] }\in H^{\lambda_{b(\ell)}-\lambda_b}$ constructed in Section~\ref{se:derived}.  Proposition~\ref{pr:u-props}(4) gives that their product is $\tfrac{b}{b(\ell)}$, which is a unit after $\ell$-localization.  The following result is immediate:

\begin{prop}
\label{pr:ell-local-1}
Let $b_1,\ldots,b_k$ be divisors of $n$.
Then the map $\bbox_i u_{[b_i:b_i(\ell)]}$ gives an $\ell$-local quasi-isomorphism 
$S^{\lambda_{b_1{(\ell)}}+\cdots + \lambda_{b_k{(\ell)}}} \ra
S^{\lambda_{b_1}+\cdots + \lambda_{b_k}}$.
Consequently, 
multiplication by $\prod_i u_{[b_i:b_i(\ell)]}$ gives an
$\ell$-isomorphism
\[ \uH^{\lambda_{b_1{(\ell)}}+\cdots + \lambda_{b_k{(\ell)}}} \ra
\uH^{\lambda_{b_1}+\cdots + \lambda_{b_k}}.
\]
\end{prop}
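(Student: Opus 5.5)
We want to show that the box product $\bbox_i u_{[b_i:b_i(\ell)]}$ is an $\ell$-local quasi-isomorphism. The plan is to first treat a single factor and then assemble the box products. For one divisor $b$, set $x=u_{[b:b(\ell)]}\colon S^{\lambda_{b(\ell)}}\ra S^{\lambda_b}$ and $y=u_{[b(\ell):b]}\colon S^{\lambda_b}\ra S^{\lambda_{b(\ell)}}$. By Proposition~\ref{pr:u-props}(4), with $c=b$ and $d=b(\ell)$, we have $(c,d)=b(\ell)$ and $[c,d]=b$. Hence $y\circ x$ and $x\circ y$ both equal $\tfrac{b}{b(\ell)}$ in $H^0$. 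Here I would note that the statement is symmetric in $c,d$, so it applies to both composites; alternatively one can read it through Remark~\ref{re:maps->H}, because both spheres have fixed dimension $0$, so composites of maps agree with products in $H^\star$.

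An element of $H^0=\cD(\mZ,\mZ)=\Z$ that is an integer $m$ acts as multiplication by $m$ on any object. Therefore $x\circ y$ and $y\circ x$ are multiplication by $\tfrac{b}{b(\ell)}$ in $\cD(\mZ)$. Since $\ell\nmid \tfrac{b}{b(\ell)}$, this integer is invertible after tensoring with $\Z_{(\ell)}$. So $x$ becomes an isomorphism in $\cD(\mZ_{(\ell)})$; concretely, it induces isomorphisms on $\uH_*(\blank)\tens\Z_{(\ell)}$, because $\Z_{(\ell)}$ is flat and homology commutes with this localization. That gives the single-factor $\ell$-local quasi-isomorphism.

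For several factors, write $x_i=u_{[b_i:b_i(\ell)]}$ and $y_i=u_{[b_i(\ell):b_i]}$. Then $(\bbox_i y_i)\circ(\bbox_i x_i)=\bbox_i(y_ix_i)$ is multiplication by $\prod_i \tfrac{b_i}{b_i(\ell)}$, and the same holds in the other order. This uses functoriality of $\bbox$ on the derived category; the spheres are levelwise free, so $\bbox$ agrees with the derived box product. The integer is again prime to $\ell$, so $\bbox_i x_i$ is an $\ell$-local equivalence. Equivalently, one can localize first and note that boxing with a quasi-isomorphism between bounded free complexes is a quasi-isomorphism.

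The cohomology statement then follows formally. Multiplication by $\prod_i u_{[b_i:b_i(\ell)]}$ on $\uH^{\star}=\ucD(\mZ,S^\star)$ is post-composition with $\bbox_i x_i$, up to the canonical identifications $S^v\bbox S^w\iso S^{v+w}$ of Section~\ref{se:D_stable}, and is therefore an isomorphism after $\tens\Z_{(\ell)}$. The only point needing care is checking that multiplication in $H^\star$ matches composition and box product of maps. Here I would rely on Remark~\ref{re:maps->H}: all degrees involved have fixed dimension $0$, so the matching holds without sign issues, and in any case a sign would not affect invertibility.
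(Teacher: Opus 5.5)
Your proposal is correct and follows the paper's argument. The paper notes via Proposition~\ref{pr:u-props}(4) that $u_{[b:b(\ell)]}$ and $u_{[b(\ell):b]}$ multiply to $\tfrac{b}{b(\ell)}$, a unit after $\ell$-localization, and calls the result immediate; your write-up makes explicit the box-product assembly over several factors and the translation between composition of maps and multiplication in $H^\star$ that the paper leaves implicit.
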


In this way we can reduce many computations to the case where the chosen  divisors of $n$  are all powers of a fixed prime $\ell$.  \smallskip

Let us say that a complex is \dfn{even} if its homology is concentrated in even degrees.  As a demonstration of the above methods, we now prove the following:

\begin{prop}
For any divisors $b_1,\ldots,b_k$ of $n$, the complex $S^{\lambda_{b_1}}\bbox \cdots \bbox S^{\lambda_{b_k}}$ is even.   
\end{prop}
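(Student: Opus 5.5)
The plan is to reduce to a divisor string of prime powers by $\ell$-localization, and then induct on $k$ using cofiber sequences. Evenness means the homology modules vanish in odd degrees. A finitely generated $\mZ$-module vanishes once all of its $\ell$-localizations vanish, so it suffices to show $S^{\lambda_{b_1}}\bbox\cdots\bbox S^{\lambda_{b_k}}$ is $\ell$-locally even for every prime $\ell$. By Proposition~\ref{pr:ell-local-1} this complex is $\ell$-locally quasi-isomorphic to $S^{\lambda_{b_1(\ell)}+\cdots+\lambda_{b_k(\ell)}}$. Since reordering box factors does not change the quasi-isomorphism type, we may assume the $b_i(\ell)$ form a divisor string. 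So the task reduces to proving: if $b_1|b_2|\cdots|b_k$ is a divisor string, then $S^{\lambda_{b_1}+\cdots+\lambda_{b_k}}$ is even. This can be done integrally; no localization is needed at that stage.

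The evenness of a divisor string will come from induction on $k$, starting from the case $k=0$ (the complex $\mZ$) or $k=1$ (the computation $\uH_*(S^{\lambda_b})=\mZ,0,\mZ/I_b$ in degrees $2,1,0$). For the inductive step, put $X=S^{\lambda_{b_2}+\cdots+\lambda_{b_k}}$, which is even by induction. Box the cofiber sequence $\Sigma^2\mZ\xrightarrow{u_{b_1}} S^{\lambda_{b_1}}\to \mZ/I_{b_1}$ with $X$ to obtain
\[ \Sigma^2 X \lra S^{\lambda_{b_1}}\bbox X \lra \mZ/I_{b_1}\bbox X. \]
Each $b_i$ with $i\geq 2$ is a multiple of $b_1$, so (\ref{eq:sphere-box}) gives $\mZ/I_{b_1}\bbox S^{\lambda_{b_i}}\he \mZ/I_{b_1}$. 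Iterating, the third term is quasi-isomorphic to $\mZ/I_{b_1}$, concentrated in degree $0$. The first term $\Sigma^2X$ is even because $X$ is.

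To finish, I would read off evenness from the long exact sequence in homology. In degree $i\geq 2$ the third term has no homology, so $\uH_i(S^{\lambda_{b_1}}\bbox X)\iso \uH_{i-2}(X)$ for $i\geq 2$, and these vanish in odd degrees. In low degrees we have $\uH_1(\Sigma^2X)=\uH_0(\Sigma^2 X)=0$ and $\uH_{-1}(\Sigma^2X)=0$. The connecting map goes from $\uH_1$ of the third term, which is $0$, into $\uH_0(\Sigma^2X)=0$. This gives $\uH_1(S^{\lambda_{b_1}}\bbox X)=0$ and $\uH_0(S^{\lambda_{b_1}}\bbox X)\iso\mZ/I_{b_1}$. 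So the box product is even.

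The main obstacle is the bookkeeping in the first step. One needs that a finitely generated $\mZ$-module is zero when all its $\ell$-localizations are zero, and that reordering box factors preserves the quasi-isomorphism type; both are standard. The divisor-string induction itself is mechanical. As an alternative to the reordering argument, Proposition~\ref{pr:lcm-gcd-spheres} can be used to replace an arbitrary sequence by its associated divisor string directly.
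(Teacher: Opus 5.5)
Your proposal is correct and is essentially the paper's proof: you prove evenness for a divisor string by induction, boxing the cofiber sequence $\Sigma^2\mZ\to S^{\lambda_{c}}\to \mZ/I_{c}$ with the factors indexed by multiples of $c$ and using (\ref{eq:box-zero}) to collapse the third term to $\mZ/I_c$, and you reduce the general case to prime-power divisor strings via $\ell$-localization and Proposition~\ref{pr:ell-local-1}. Your remark that Proposition~\ref{pr:lcm-gcd-spheres} could replace the localization step is also valid, since that result (via Corollary~\ref{co:basic_equiv}) does not depend on this proposition.
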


\begin{proof} Fix a pair of divisors $b,c$.
Recall the cofiber sequence $\Sigma^2 \mZ \ra S^{\lambda_c}\ra \mZ/I_c$.  Boxing with $S^{\lambda_b}$ gives a cofiber sequence
\[ \Sigma^2 S^{\lambda_b} \ra S^{\lambda_b}\bbox S^{\lambda_c} \ra S^{\lambda_b}\bbox \mZ/I_c.
\]
If $c\mid b$ then $F_b\bbox \mZ/I_c=0$ by (\ref{eq:box-zero}) and therefore $S^{\lambda_b}\bbox \mZ/I_c\he \mZ/I_c$. Since $S^{\lambda_b}$ is even, it follows that $S^{\lambda_b}\bbox S^{\lambda_c}$ is also even in the case $c\mid b$.  The same argument shows by induction that if $\und{b}$ is a divisor string then $S^{\lambda_{b_1}}\bbox \cdots \bbox S^{\lambda_{b_k}}$ is even.  

In particular, if $\ell$ is a prime and we fix exponents $1\leq e_1\leq \cdots \leq e_k$, then the preceding paragraph shows that  $S^{\lambda_{\ell^{e_1}}}\bbox \cdots \bbox S^{\lambda_{\ell^{e_k}}}$ is even.

For arbitrary $b_1,\ldots,b_k$, it suffices to prove that $S^{\lambda_{b_1}}\bbox \cdots \bbox S^{\lambda_{b_k}}$ is even after $\ell$-localization, for every prime $\ell$.  But after $\ell$-localization we can replace each $b_i$ by $b_i{(\ell)}$ (Proposition~\ref{pr:ell-local-1}), and here we already know the result.  
\end{proof}

Recall the classes $a_{b_i}\in H_0(S^{\lambda_{b_i}})$ and $u_{b_i}\in H_2(S^{\lambda_{b_i}})$, for each $i$.  Taking products of such classes gives elements in $H_*(S^{\lambda_{b_1}}\bbox \cdots \bbox S^{\lambda_{b_k}})$.  Any such product will be referred to as an \mdfn{$au$-class}.  
Note that different $au$-classes can live in the same degree, e.g. $a_{b_1}u_{b_2}$ and $u_{b_1}a_{b_2}$. The first paragraph of the above proof gives much more than what was recorded.  It immediately yields the following:

\begin{prop}
\label{pr:l-case}
Suppose that $b_1,\ldots,b_{k}$ is a divisor string.  
Then for $0\leq i\leq k$ one has
\[ \uH_{2i}(S^{\lambda_{b_1}+\cdots+\lambda_{b_{k}}}) \iso \begin{cases} \mZ/I_{b_{i+1}} & \text{if $i\leq k-1$},\\
\mZ & \text{if $i=k$}
\end{cases}
\]
and the homology vanishes in all other degrees.  
Moreover, in each degree the given Mackey functor is generated by the collection of $au$-classes (lying in the $\Theta_1$ spot).  
\end{prop}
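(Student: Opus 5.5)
We argue by induction on $k$, using the cofiber sequence from the proof of evenness. The case $k=0$ is trivial ($S^0=\mZ$, generated by $1$), and $k=1$ is the known computation of $\uH_*(S^{\lambda_b})$. For the inductive step write $X=S^{\lambda_{b_1}+\cdots+\lambda_{b_{k-1}}}$ and $c=b_k$. Boxing the cofiber sequence $\Sigma^2\mZ\llra{u_{c}} S^{\lambda_{c}}\ra \mZ/I_{c}$ on the left with $X$ gives
\[ \Sigma^2 X \ra X\bbox S^{\lambda_c} \ra X\bbox \mZ/I_c .\]
Since each $b_i$ divides $c$, (\ref{eq:box-zero}) gives $F_{b_i}\bbox\mZ/I_c=0$, and hence by (\ref{eq:sphere-box}) we get $X\bbox \mZ/I_c\he \mZ/I_c$, concentrated in degree $0$. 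By induction $\Sigma^2X$ has homology only in even degrees $2,4,\dots,2k$. The long exact sequence therefore splits into short pieces. For $i\geq 1$ it gives
\[ \uH_{2i}(X\bbox S^{\lambda_c})\iso \uH_{2i-2}(X). \]
In degree $0$ it gives $\uH_0(X\bbox S^{\lambda_c})\iso \mZ/I_c$, because $\uH_{-1}(\Sigma^2X)=0$ and $\uH_0(\Sigma^2 X)=0$, so the connecting map causes no trouble. Re-indexing, $\uH_{2i}$ of the $k$-fold sphere is $\mZ/I_{b_k}$ for $i=0$, is $\uH_{2i-2}$ of the $(k-1)$-fold sphere for $1\le i\le k$, and vanishes elsewhere.

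This re-indexing is where the statement needs care. The naive recursion gives $\mZ/I_{b_k}$ in degree $0$ and then the old answers shifted up by $2$, which produces $\mZ/I_{b_k},\mZ/I_{b_1},\dots$ rather than $b_1,b_2,\dots$. To get the stated order I instead box on the other side, putting the \emph{smallest} divisor first. Take $X=S^{\lambda_{b_2}+\cdots+\lambda_{b_k}}$ and peel off $c=b_1$, using the cofiber sequence $\Sigma^2X\ra S^{\lambda_{b_1}}\bbox X\ra \mZ/I_{b_1}\bbox X$. Now $b_1$ divides every $b_i$, so $F_{b_i}\bbox \mZ/I_{b_1}=0$ and again $\mZ/I_{b_1}\bbox X\he\mZ/I_{b_1}$. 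The same argument then yields $\uH_0\iso\mZ/I_{b_1}$ and $\uH_{2i}\iso \uH_{2i-2}(X)$, which by induction is $\mZ/I_{b_{i+1}}$ for $i\le k-1$ and $\mZ$ for $i=k$. This matches the claim. The reordering of box factors is harmless up to isomorphism.

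For generation by $au$-classes, note that the inclusion $\Sigma^2X\ra S^{\lambda_{b_1}}\bbox X$ is $u_{b_1}\bbox\id$. The isomorphisms in degrees $2i\ge 2$ are therefore multiplication by $u_{b_1}$, so they carry the $au$-class generators of $\uH_{2i-2}(X)$ (inductive hypothesis) to $au$-class generators. In degree $0$ I must check that $a_{b_1}a_{b_2}\cdots a_{b_k}$ maps to a generator of $\mZ/I_{b_1}$. Under $S^{\lambda_{b_1}}\bbox X\ra \mZ/I_{b_1}\bbox X\he \mZ/I_{b_1}$, the degree-$0$ part is $\mZ\bbox\mZ\ra \mZ/I_{b_1}$, the canonical projection. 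The element $a_{b_1}\cdots a_{b_k}$ is the inclusion of $1$ in degree $0$, and its image is the class of $1$, which generates $\mZ/I_{b_1}$ at $\Theta_1$.

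I expect the main obstacle to be this last bookkeeping. One has to verify that the equivalence $\mZ/I_{b_1}\bbox X\he\mZ/I_{b_1}$ from (\ref{eq:sphere-box}) really is induced by the degree-$0$ augmentation, so that $1$ goes to $1$. One also has to check that the identifications respect the Mackey structure, so that the $\Theta_1$ elements generate the whole module. Both should follow because the equivalence is just the projection of the complex $X$ onto its degree-$0$ term $\mZ$, after killing the $F_{b_i}$ terms.
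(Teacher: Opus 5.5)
Your final argument is correct and is exactly the argument the paper intends. You peel off the \emph{smallest} divisor $b_1$ via $\Sigma^2X\ra S^{\lambda_{b_1}}\bbox X\ra \mZ/I_{b_1}\bbox X\he\mZ/I_{b_1}$. This gives $\uH_0\iso\mZ/I_{b_1}$, and multiplication by $u_{b_1}$ gives $\uH_{2i-2}(X)\iso\uH_{2i}$, carrying $au$-generators to $au$-generators. The paper's version is the first paragraph of the evenness proof, where the factor split off is the $c$ with $c\mid b$.

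You should discard your first attempt, though. It is not a valid recursion that merely lists the divisors in the wrong order. (\ref{eq:box-zero}) kills $F_d\bbox\mZ/I_e$ only when $e\mid d$, so $F_{b_i}\bbox\mZ/I_{b_k}$ is generally nonzero and $X\bbox\mZ/I_{b_k}$ need not be $\mZ/I_{b_k}$. In fact its conclusion $\uH_0\iso\mZ/I_{b_k}$ is false: computed directly, $\uH_0$ is $\mZ$ modulo $\sum_j I_{b_j}=I_{b_1}$. So splitting off the smallest divisor is forced, not a choice of indexing.

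The degree-$0$ bookkeeping you flag as the main obstacle is also immediate. The complex has $\mZ$ in degree $0$ and nothing below, so $\uH_0$ is a quotient of $\mZ$. It is therefore generated by the class of $1$, which is $a_{b_1}\cdots a_{b_k}$.
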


\begin{proof}
Left to the reader.
\end{proof}

\begin{cor}
\label{co:positive-cone-computation1}
Let $b_1,\ldots,b_k$ be divisors of $n$.  Then 
\[ \uH_{2i}(S^{\lambda_{b_1}}\bbox \cdots \bbox S^{\lambda_{b_k}})\iso
\begin{cases}
\mZ/I_{(\und{b};i+1)} & \text{if $i<k$}, \\
\mZ & \text{if $i=k$},
\end{cases}
\]
and all other homology Mackey functors are zero.  Moreover, in each degree the Mackey functor is generated by the $au$-classes (lying in the $\Theta_1$-spot).
\end{cor}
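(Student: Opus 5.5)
The plan is to reduce to Proposition~\ref{pr:l-case} one prime at a time, and then reassemble the global answer using the $\ell$-local recognition principle (\ref{eq:ell-local-fact}).

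\emph{Isomorphism type.} Put $X=S^{\lambda_{b_1}}\bbox\cdots\bbox S^{\lambda_{b_k}}$. By Proposition~\ref{pr:lcm-gcd-spheres} there is a quasi-isomorphism $X\he S^{\lambda_{(\und{b};1)}+\cdots+\lambda_{(\und{b};k)}}$, and the target is a box product of spheres indexed by a divisor string. Proposition~\ref{pr:l-case} then gives the homology at once: $\mZ/I_{(\und{b};i+1)}$ in degree $2i$ for $i<k$, $\mZ$ in degree $2k$, and zero in all other degrees. This step is purely formal.

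\emph{Generation by $au$-classes.} This is the step that needs care, because the equivalence of Proposition~\ref{pr:lcm-gcd-spheres} is built from $\chi$-units. It therefore need not send $au$-classes of the original sequence to $au$-classes of the divisor string. I would argue prime by prime instead.

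1. Fix $i$ and let $M=\uH_{2i}(X)$, which is finitely generated. Let $S\subseteq M(\Theta_1)$ be the set of $au$-classes in this degree.
2. For $i=k$, the group $M(\Theta_1)\iso\Z$ is detected rationally, and $u_{b_1}\cdots u_{b_k}$ maps to the generator of $H_4(\blank)$-type top classes. So $u_{b_1}\cdots u_{b_k}$ generates $\mZ$ by restriction to $\Theta_1$ and transfers, exactly as in Proposition~\ref{pr:l-case}. I do not expect any difficulty here.
3. For $i<k$, fix a prime $\ell$ and use Proposition~\ref{pr:ell-local-1}. The map $\bbox_j u_{[b_j:b_j(\ell)]}$ is an $\ell$-local quasi-isomorphism $S^{\lambda_{\und{b}(\ell)}}\ra X$. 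After reordering, $\und{b}(\ell)$ is a divisor string (reordering changes the complex only up to the symmetry isomorphism, which respects $au$-classes up to sign). So by Proposition~\ref{pr:l-case}, $M\tens\Z_{(\ell)}\iso\mZ/I_{b_{\sigma(i+1)}(\ell)}$, and using (\ref{eq:ell-adic}) this is $\mZ/I_{(\und{b};i+1)(\ell)}$. Moreover it is generated by the images of the $au$-classes $\prod a_{b_j(\ell)}^{\epsilon_j}u_{b_j(\ell)}^{1-\epsilon_j}$.
4. Compute the image of such a class under $\prod_j u_{[b_j:b_j(\ell)]}$ using Proposition~\ref{pr:u-props}(1),(3):
\[ u_{[b:b(\ell)]}a_{b(\ell)}=\tfrac{b}{b(\ell)}a_b,\qquad u_{[b:b(\ell)]}u_{b(\ell)}=u_b.\]
   Each image is therefore a prime-to-$\ell$ multiple of the corresponding $au$-class for $\und{b}$. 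Since these multiples are units in $\Z_{(\ell)}$, the set $S$ generates $M\tens\Z_{(\ell)}$.
5. Apply (\ref{eq:ell-local-fact}) with $d=(\und{b};i+1)$. This gives $M\iso\mZ/I_{(\und{b};i+1)}$ and shows that $M$ is generated by $S$.

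The main obstacle is bookkeeping in step 3. One has to check that reordering the factors, and identifying $S^{\lambda_{b_j(\ell)}}$ with $S^{\lambda_1}\he\Sigma^2\mZ$ when $b_j(\ell)=1$, still sends $au$-classes to $au$-classes up to units. For the second point, when $b_j(\ell)=1$ the factor $a_1$ is zero under $\Sigma^2\mZ\simeq S^{\lambda_1}$ (here $a_1=0$ by the Euler relation with $b=1$), and $u_1$ is the identity. The degree count for that factor is unchanged, so generators of the divisor-string homology in Proposition~\ref{pr:l-case} still come from $au$-monomials.
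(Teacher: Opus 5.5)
Your proposal is correct and is essentially the paper's proof. As in the paper, you localize at each $\ell$, use the $\ell$-local equivalence $\bbox_j u_{[b_j:b_j(\ell)]}$ of Proposition~\ref{pr:ell-local-1} to reduce to the $\ell$-adic divisor string and Proposition~\ref{pr:l-case}, use Proposition~\ref{pr:u-props} to see that the $a$-classes only pick up prime-to-$\ell$ factors, and reassemble with (\ref{eq:ell-local-fact}). Your preliminary computation of the isomorphism type via Proposition~\ref{pr:lcm-gcd-spheres} is valid but redundant given step 5, and your separate treatment of the top degree $i=k$ covers a case that the $\mZ/I_d$ recognition criterion does not literally handle. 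That case can also be done by running steps 3--4 there, since $u_{[b:b(\ell)]}u_{b(\ell)}=u_b$ exactly.
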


\begin{proof}
\cite[Proposition 4.15]{DHone} says that in order to prove a $\mZ$-module $M$ is isomorphic to $\mZ/I_d$ with set of generators $S\subseteq M(\Theta_1)$, it is sufficient to prove this after $\ell$-localization for every prime $\ell$ (with $d$ replaced by $d(\ell)$).  We will take this approach, considering the set of all $au$-classes in a given degree and making use of
(\ref{eq:ell-adic}).

Write $c_i=b_i(\ell)$.  Multiplication by $\prod_i u_{[b_i:c_i]}$ is an 
$\ell$-isomorphism, so it suffices to understand the homology of $S^{\lambda_{c_1}}\bbox \cdots \bbox S^{\lambda_{c_k}}$.  But here the $c_i$ are all powers of $\ell$, and so can be arranged in an order where each divides the next.  The result for this case is by Proposition~\ref{pr:l-case}.

There is one subtlety, which is that multiplication by $u_{[b_i:c_i]}$ sends $u_{c_i}$ to $u_{b_i}$ but  sends $a_{c_i}$ to an integer multiple of $a_{b_i}$ 
(see Proposition~\ref{pr:u-props}).  This integer factor is prime to $\ell$, though.  So the fact that the  homology of $S^{\lambda_{c_1}}\bbox\cdots \bbox S^{\lambda_{c_{k}}}$ is generated by the $au$-classes immediately implies the same for $S^{\lambda_{b_1}}\bbox \cdots \bbox S^{\lambda_{b_k}}$, after $\ell$-localization.  
\end{proof}

\begin{remark}
Note that the above result proves that $H_{2i}(S^{\lambda_{b_1}+\cdots+\lambda_{b_s}})$ is a cyclic group, but does not identify a specific generator.  The subset of monomials in the $a$'s and $u$'s collectively generate the group, but identifying a specific generator can be a little tricky.  In particular, it can require a linear combination of the $au$-monomials.
\end{remark}

\subsection{Computations for the negative cone}

The negative cone corresponds to the $\mZ$-modules 
\[ \uH_*(S^{-\lambda_{b_1}}\bbox\cdots \bbox S^{-\lambda_{b_k}}) \iso \uH_*( \uHom(S^{\lambda_{b_1}}\bbox \cdots \bbox S^{\lambda_{b_k}},\mZ)).
\]
This turns out to be an easy computation via the universal coefficient spectral sequence.

\begin{prop}
\label{pr:negative-cone}
\[ \uH^{i-\lambda_{b_1}-\cdots-\lambda_{b_k}}\iso\uH_{-i}(S^{-\lambda_{b_1}-\cdots-\lambda_{b_k}})\iso 
\begin{cases}
\mZ/I_{(\und{b};j)} &\text{if $i=2j+1$, $1\leq j \leq k-1$,}\\
I_{[\und{b}]} & \text{if $i=2k$},\\
0 & \text{otherwise.}
\end{cases}
\]
\end{prop}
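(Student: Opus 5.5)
Write $X=S^{\lambda_{b_1}}\bbox\cdots\bbox S^{\lambda_{b_k}}$. The plan is to compute the homology of $X^*$ using the universal coefficient spectral sequence
\[ E_2^{p,q}=\uExt^p(\uH_q(X),\mZ)\Longrightarrow \uH^{p+q}(X)=\uH_{-(p+q)}(X^*). \]
The first identification in the statement comes from duality (\ref{eq:duality}): since $X^*\he S^{-\lambda_{b_1}-\cdots-\lambda_{b_k}}$ (the box product of duals is the dual of the box product for bounded free complexes), we get
\[ \uH^{i-\lamu{b}}=\ucD(\mZ,\Sigma^{i}X^*)=\uH_{-i}(X^*)=\ucD(X,\Sigma^i\mZ)=\uH^i(X). \]
So the task is to compute $\uH^i(X)$.

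The inputs for the spectral sequence come from Corollary~\ref{co:positive-cone-computation1}. It gives $\uH_{2j}(X)\iso\mZ/I_{(\und{b};j+1)}$ for $0\le j\le k-1$, $\uH_{2k}(X)\iso\mZ$, and zero in all other degrees. The needed Ext groups come from the standard resolution (\ref{eq:std-free}), where the maps $I\pi$ and $R\pi$ swap under dualization. One finds $\uExt^p(\mZ/I_d,\mZ)$ is $\mZ/I_d$ for $p=3$ and $0$ otherwise; this is the $\uExt$ computation quoted in Section~\ref{se:Zmod}, using $\mZ=\mZ/I_1$ and $(d,1)=1$. We also have $\uExt^0(\mZ,\mZ)=\mZ$ and no higher Ext.

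With these inputs the $E_2$-page has only the following nonzero entries:
\begin{itemize}
\item $E_2^{3,-2j}\iso\mZ/I_{(\und{b};j+1)}$ for $0\le j\le k-1$, contributing to total degree $3-2j$ in the cohomological indexing;
\item $E_2^{0,-2k}=\mZ$.
\end{itemize}
I need to be careful with indexing and reconcile it with the statement. Since $\uH^i(X)=\ucD(X,\Sigma^i\mZ)$, a class in $\uH_m(X)$ contributes via $\uExt^{i-m}(\uH_m(X),\mZ)$. Hence $\mZ/I_{(\und{b};j+1)}$ sitting in degree $m=2j$ contributes to $\uH^{2j+3}$, while $\uH_{2k}=\mZ$ contributes $\mZ$ to $\uH^{2k}$. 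Rewriting with $j+1\mapsto j$ gives $\mZ/I_{(\und{b};j)}$ in degree $i=2j+1$ for $1\le j\le k$.

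The top case $j=k$ would put $\mZ/I_{[\und{b}]}$ in degree $2k+1$, which conflicts with the statement: there the $2k$ slot is $I_{[\und{b}]}$, not $\mZ$, and nothing appears in degree $2k+1$. This is resolved by the only possible differential, $d_3\colon E^{0,-2k}\to E^{3,-2(k-1)}$. In the spectral sequence it runs from $\uHom(\uH_{2k}X,\mZ)=\mZ$ to $\uExt^3(\uH_{2k-2}X,\mZ)$, and with the usual shift conventions it is equivalently a map $\mZ\to\mZ/I_{[\und{b}]}$. I will show this differential is the surjection $\mZ\to\mZ/I_{[\und{b}]}$. Its kernel is then $I_{[\und{b}]}$ in degree $2k$, and the degree-$2k+1$ term dies. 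All other differentials vanish for degree reasons (the spectral sequence has only two nonzero columns, $p=0$ and $p=3$), and there are no extension problems since each total degree has a single surviving term.

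Surjectivity of this $d_3$ is the main obstacle. I would check it directly rather than through the spectral sequence. The complex $X^*$ is concentrated in degrees $-2k$ through $0$, so $\uH_{-(2k+1)}(X^*)=0$ automatically. This forces $d_3$ to kill all of $\mZ/I_{[\und{b}]}$, so $d_3$ is onto and its kernel is an index-$[\und{b}]$ submodule of $\mZ$. To identify that kernel as $I_{[\und{b}]}$, I would $\ell$-localize and use Proposition~\ref{pr:ell-local-1} to reduce to a divisor string. In that case the cofiber sequence $\mZ\to\mZ/I_b\to\Sigma^3S^{-\lambda_b}$, together with $S^{-\lambda_b}\he\Sigma^{-2}I_b$ and induction on $k$, gives the bottom homology directly. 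Alternatively, the kernel of any surjection $\mZ\to\mZ/I_d$ is $I_d$, because $\mZ/I_d$ is cyclic on $1$, so any such surjection is a unit multiple of the canonical quotient map. Finally, everything not covered above vanishes, giving the remaining ``$0$ otherwise'' cases.
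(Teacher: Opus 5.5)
Your proposal is correct and follows the paper's proof essentially step for step: the universal coefficient spectral sequence with inputs from Corollary~\ref{co:positive-cone-computation1} and $\uExt^*(\mZ/I_d,\mZ)$, a single possible $d_3\colon \mZ\to\mZ/I_{[\und{b}]}$ that is forced to be surjective because $S^{-\lamu{b}}$ has no homology in degree $-(2k+1)$, and the fact that any surjection $\mZ\to\mZ/I_d$ has kernel $I_d$ (your $\ell$-local alternative is unnecessary). There are two harmless slips: the bullet list writes $E_2^{3,-2j}$ with total degree $3-2j$ before you fix the indexing, and the parenthetical ``using $\mZ=\mZ/I_1$'' is wrong, since $I_1=\mZ$ and hence $\mZ/I_1=0$; the Ext computation should come from dualizing the standard resolution (\ref{eq:std-free}), as you also say, or from \cite[Proposition 4.23]{DHone}.
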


\begin{proof}

We start by observing that our standard model for $S^{-\lambda_{b_i}}$ is concentrated in degrees $-2$ through $0$, and so $S^{-\lambda_{b_1}}\bbox\cdots \bbox S^{-\lambda_{b_k}}$ will be concentrated in degrees $-2k$ through $0$.  In particular, the homology outside of this range must vanish. 

If $A_*$ is a chain complex of $\mZ$-modules then the universal coefficient spectral sequence for $\uH_*(\uHom(A,\mZ))$ has $E_2$-term equal to $\uExt^{-p}(\uH_{-q}(A),\mZ)$ in the $(p,q)$ spot with differentials $d_r$ mapping $(p,q)$ to $(p-r,q+r-1)$ (using this indexing we have a left half-plane spectral sequence). See \cite[Section 6]{DHone}. 
Apply this to $A=S^{\lambda_{b_1}+\cdots+\lambda_{b_k}}$, using the homology computation from 
Corollary~\ref{co:positive-cone-computation1}.  The relevant $\uExt$-computations are from \cite[Proposition 4.23]{DHone}: $\uExt^i(\mZ/I_d,\mZ)$ is isomorphic to $\mZ/I_d$ when $i=3$, and zero otherwise. So
the $E_2$-term looks as in Figure~\ref{fig:Ext_spseq}.

\begin{figure}[ht]
\begin{tikzpicture}[scale=0.9]
\draw[->, thick] (0,4)--(9,4);
\draw[->, thick] (8,-1)--(8,4.5);
\foreach \y in {-1,0,1,2,3,4}
	{\draw (0,\y)--(9,\y);}
\foreach \y in {2,4,6,8}
	{
	\draw (\y,-1)--(\y,4);
	};
\draw (3.1,3.5) node{ $\mZ/I_{(\und{b};1)}$ };
\draw (3.1,1.5) node{ $\mZ/I_{(\und{b};2)}$ };
\draw (3.1,0.5) node{$\vdots$};
\draw (9.1,1.5) node{ $\scriptstyle{-2}$};
\draw (9.1,2.5) node{ $\scriptstyle{-1}$};
\draw (9.2,3.5) node{ $\scriptstyle{0}$};
\draw (9.2,0.59) node {$\scriptstyle{\vdots}$};
\draw (9.3,4) node {$\scriptstyle{p}$};
\draw (7.7,4.4) node {$\scriptstyle{q}$};
\draw (9,-0.6) node { $\mZ$ $\scriptstyle{(-2k)}$};
\end{tikzpicture}
\caption{The $E_2$-term $\uExt^{-p}(\uH_{-q}(S^{\lamu{b}}),\mZ)$}
\label{fig:Ext_spseq}
\end{figure}

There is a single possible differential, namely a $d_3$ from the $\mZ$ term to the $\mZ/I_{(b;k)}$ term.  Note that the $\mZ/I_{(\und{b};k)}$ term contributes to $\uH_{*}$ for $*=3+(2k-2)=2k+1$, which we know is zero by the first paragraph.  The only way this contribution can disappear is for the differential $\mZ\ra \mZ/I_{(\und{b};k)}$ to be a surjection, in which case the kernel is precisely $I_{(\und{b};k)}$.  This completes the calculation after noting that $(\und{b};k)$ is just the lcm $[\und{b}]$.
\end{proof}

\subsection{The ring structure on the positive cone}

At this point we know the $\mZ$-modules that make up the positive cone, and we know explicit generators---namely, the $au$-classes.  We also know some basic relations between the $au$-classes, namely the gold relation and the torsion relations satisfied by the $a$'s.  It essentially follows by algebra that these must be the only relations---if there were any others, we would get $\mZ$-modules smaller than what we actually have.  In this section we trace through the details of this argument.

\begin{defn}
\label{de:Q}
Let $b_1,\ldots,b_t$ be fixed, distinct divisors of $n$ with $b_i>1$ for all $i$. Let ${N}$ be the free abelian group on the ordered set $\{1,\lambda_{b_1},\ldots,\lambda_{b_t}\}$, so that elements of ${N}$ are identified with $(t+1)$-tuples of integers. 
Let $Q_n(\und{b})$ be the ${N}$-graded $\mZ$-algebra described as follows: it is the quotient of the polynomial ring $\mZ[a_1,\ldots,a_t,u_1,\ldots,u_t]$, where the $a$'s and $u$'s are generators in the $\Theta_1$ spot with $|a_j|=\lambda_{b_j}$ and $|u_j|=\lambda_{b_j}-2$, subject to the relations
\[ \xymatrixrowsep{0.4pc}\xymatrix{
\bigr (p_{\Theta_{b_j}\ra \Theta_1} \bigl)^*(a_j)=0 & (\text{Euler class relation}),\\
 \tfrac{b_i}{(b_i,b_j)} a_iu_j = \tfrac{b_j}{(b_i,b_j)} a_ju_i & (\text{gold relation})
}
\]
for all $i,j$.  
\end{defn}

Note that applying the transfer map $(p_{\Theta_{b_j}\ra \Theta_{1}})_*$ to the first relation yields $b_ja_j=0$.  In fact, observe that the Euler class relation is the only relation that affects degree $\lambda_{b_j}$.  So the component of $Q_n(\und{b})$ in degree $\lambda_{b_j}$ is the quotient of $\mZ$ (with generator $a_j$) by the relation $(p_{\Theta_j\ra \Theta_1})^*(a_j)=0$, which is precisely $\mZ/I_{b_j}$.

\begin{prop}
\label{pr:Q-Mackey}
Fix $n$ and $\und{b}$ as above, and write $Q=Q_n(\und{b})$.  Then for $\und{d}\in N$ the $\mZ$-module $Q^{\und{d}}$ is generated by the products of $a$'s and $u$'s that lie in degree $\und{d}$.  Moreover $Q^{\und{d}}$ is nonzero unless $\und{d}=-2r(1)+\sum_{i>0} e_i\lambda_{b_i}$ where $e_i\geq 0$ for all $i$, and $0\leq r\leq \sum_i e_i$.  
In this case if we let $B$ denote the sequence consisting of $b_1$ repeated $e_1$ times, followed by $b_2$ repeated $e_2$ times, and so forth, then one has
\[ Q^{\und{d}}\iso \begin{cases}
\mZ & \text{if $r=\sum_{i>0}e_i$},\\
\mZ/I_{(B;r+1)} & \text{otherwise}.
\end{cases}
\]
\end{prop}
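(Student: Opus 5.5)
The plan is to compute each graded piece of $Q=Q_n(\und{b})$ directly from the presentation, working one prime at a time. The first claim is immediate from the definition: $Q$ is a quotient of the polynomial $\mZ$-algebra on $a$'s and $u$'s, both concentrated in the $\Theta_1$ spot. Hence every homogeneous piece $Q^{\und{d}}$ is generated, as a $\mZ$-module, by monomials of degree $\und{d}$.

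For the vanishing statement, a monomial $\prod a_i^{\alpha_i}u_i^{\beta_i}$ has degree $-2\sum\beta_i+\sum(\alpha_i+\beta_i)\lambda_{b_i}$. So $e_i=\alpha_i+\beta_i\ge 0$ and $r=\sum\beta_i$ lies in $[0,\sum e_i]$. No monomials exist outside this range, so $Q^{\und{d}}=0$ there. Inside the range I must identify $Q^{\und{d}}$. Set $e=\sum e_i$ and let the monomials in degree $\und{d}$ be indexed by the choices of which $r$ of the $e$ factors are $u$'s.

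In the case $r=e$ there is a single monomial $\prod u_i^{e_i}$. No relation touches it, because each relation involves an $a$. So $Q^{\und{d}}$ is the free $\mZ$-module on one $\Theta_1$-generator, namely $\mZ$.

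In the case $r<e$, every monomial contains some $a_j$. By the Euler relation, each monomial is then killed by restriction to $\Theta_{b_j}$, so it is a quotient of some $\mZ/I_{b_j}$. Thus $Q^{\und{d}}$ is a quotient of $\bigoplus_{\text{monomials}}\mZ/I_{b_{j(m)}}$ by the gold relations, and it is finitely generated and torsion.

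I will then use (\ref{eq:ell-local-fact}). It suffices to show that, for each prime $\ell$, $Q^{\und{d}}\tens\Z_{(\ell)}\iso\mZ/I_{(B;r+1)(\ell)}$, generated by the monomials. After $\ell$-localization, the gold relation $\tfrac{b_i}{(b_i,b_j)}a_iu_j=\tfrac{b_j}{(b_i,b_j)}a_ju_i$ has one coefficient a unit. Specifically, if $b_i(\ell)\le b_j(\ell)$ then $\tfrac{b_i}{(b_i,b_j)}$ is prime to $\ell$. So $a_iu_j$ is a unit multiple of $\tfrac{b_j(\ell)}{b_i(\ell)}a_ju_i$, up to an $\ell$-unit.

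Repeatedly applying these rewrites pushes all $a$'s onto the factors with the largest $\ell$-adic parts, and every monomial becomes a unit times an $\ell$-power multiple of one canonical monomial $m_0$. Order $B(\ell)$ increasingly; then $m_0$ has $u$'s on the $r$ smallest entries and $a$'s on the rest. Its $a$-factor with smallest $\ell$-part has index $(B;r+1)(\ell)=(B(\ell);r+1)$, by (\ref{eq:ell-adic}). Hence $Q^{\und{d}}_{(\ell)}$ is cyclic, generated by $m_0$, and is a quotient of $\mZ/I_{(B;r+1)(\ell)}$, since $m_0$ has $a_{j}$ with $\ell$-part at least that. Indeed, one checks via the Euler relation, and the fact that restriction to $\Theta_{b}$ for larger $b$ factors through the smaller one, that $m_0$ is killed by the relevant restriction.

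The main obstacle is the reverse inequality: showing no further collapse occurs, so that the quotient is exactly $\mZ/I_{(B;r+1)(\ell)}$ and not smaller. My plan is to construct a model. I would define a map from $Q_{(\ell)}$ to the positive-cone ring, via $a\mapsto a_{b_i}$ and $u\mapsto u_{b_i}$. This is well defined because the Euler relation (\ref{eq:Euler}) and the gold relation (Corollary~\ref{co:gold}) hold there. The target in this degree is $\uH_{2r}(S^{\lambda_B})\iso\mZ/I_{(B;r+1)}$ by Corollary~\ref{co:positive-cone-computation1}, and it is generated by $au$-classes. The map is therefore surjective onto a module of the expected size, which forces the cyclic module $Q^{\und{d}}_{(\ell)}$ to be at least $\mZ/I_{(B;r+1)(\ell)}$. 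Combined with the upper bound, this gives the isomorphism. Finally, (\ref{eq:ell-local-fact}) assembles the primes and shows that the monomials generate.
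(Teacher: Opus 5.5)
Your proof is correct. It differs from the paper's in two places.

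\emph{Upper bound.} The paper shows that every $au$-monomial in degree $\und{d}$ restricts to zero at $\Theta_e$, where $e=(B;r+1)$. It does this by observing that every $(r+1)$-fold lcm of the entries of $B$ is a multiple of some $a$-index, and then applying Lemma~\ref{le:gcd-vanishing}; it passes to $\Theta_{e(\ell)}$ only afterwards, $\ell$-locally. You instead first reduce $\ell$-locally to the single canonical monomial $m_0$, and then use the Euler relation for its $a$-factor $a_j$ of smallest $\ell$-part. That works, but the factorization of restrictions is not by itself what you need. To get from $p^*(m_0)=0$ at $\Theta_{b_j}$ to vanishing at $\Theta_{e(\ell)}$, use the transfer identity: $p_*p^*$ is multiplication by $b_j/e(\ell)$, which is an $\ell$-local unit.

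\emph{Lower bound.} The paper reads off from the presentation that $m_0$ has additive order exactly $e(\ell)$. It then invokes the recognition criterion \cite[Proposition 4.8]{DHone}: surjective restrictions, value $\Z/e(\ell)$ at $\Theta_1$, and vanishing at $\Theta_{e(\ell)}$. You instead use the comparison map $Q\to\uH^\star$ together with Corollary~\ref{co:positive-cone-computation1}. This sandwiches $Q^{\und{d}}_{(\ell)}$ between surjections
$\mZ/I_{e(\ell)}\twoheadrightarrow Q^{\und{d}}_{(\ell)}\twoheadrightarrow \mZ/I_{e(\ell)}$,
and the composite is a surjective self-map of a module with finite values, hence an isomorphism. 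This is not circular, because that corollary was proved earlier and independently via cofiber sequences.

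\emph{Comparison.} Your route makes the ``no further collapse'' step fully explicit, whereas the paper leaves it implicit in the presentation, and it avoids the recognition criterion. The price is that a statement about the abstractly presented algebra $Q$ now depends on the derived-category computation. The paper's subsequent corollary identifying $Q$ with the positive cone then becomes part of this proof, rather than a consequence of two independent calculations.
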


\begin{remark}
The  above result is hard to parse, but we can rephrase it as follows.  The components $Q^{\und{d}}$ are nonzero only in degrees that contain a monomial $a_{i_1}\cdots a_{i_s} u_{i_{s+1}}\cdots u_{i_{s+r}}$ (where the indices are not required to be distinct).  The $\mZ$-module in this degree is isomorphic to $\mZ$ if $s=0$, and otherwise is isomorphic to $\mZ/I_{(B;r+1)}$ where $B=(b_{i_1},b_{i_2},\ldots,b_{i_{s+r}})$.  
\end{remark}

For the proof we will need the following easy lemma:

\begin{lemma}
\label{le:gcd-vanishing}
Let $M$ be a $\mZ$-module, and let $e_1,\ldots,e_s$ be divisors of $n$.  Assume $x\in M(\Theta_1)$ has the property that the restriction of $x$ to each $M(\Theta_{e_i})$ equals zero.  Then the restriction of $x$ to $M(\Theta_{(e_1,\ldots,e_s)})$ is also equal to zero.
\end{lemma}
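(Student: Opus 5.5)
Let $g=(e_1,\ldots,e_s)$. The plan is to reduce to the case $s=2$ by induction, using $(e_1,\ldots,e_s)=((e_1,\ldots,e_{s-1}),e_s)$, and then in the two-element case to write the restriction to $\Theta_g$ as a combination of restrictions that factor through $\Theta_{e_1}$ and $\Theta_{e_2}$. Concretely, I write $g=\alpha e_1+\beta e_2$ with $\alpha,\beta\in\Z$ (Bezout). Let $y=p^*(x)\in M(\Theta_g)$, where $p=p_{\Theta_g\ra\Theta_1}$. Since $g|e_i$, the restriction from $\Theta_1$ to $\Theta_{e_i}$ factors as $\Theta_{e_i}\ra\Theta_g\ra\Theta_1$, so the hypothesis says that $y$ restricts to zero in $M(\Theta_{e_i})$ for $i=1,2$.

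The key identity in $\cB\Z$ that I use is the double coset formula for $p^*p_*$ along $q\colon \Theta_{e_i}\ra\Theta_g$. Since $x$ comes from $\Theta_1$, the element $y$ is $t$-invariant, and so
\[ q_*q^*(y)=\tfrac{e_i}{g}\,y.\]
This follows from the fact that the composite span $\Theta_g\llla{q}\Theta_{e_i}\llra{q}\Theta_g$ is $\sum_j t^j$ over the cosets, combined with invariance; equivalently, it is the relation used in Lemma~\ref{le:BZ-lemma}(2). Because $q^*(y)=0$, I get $\tfrac{e_i}{g}y=0$ for $i=1,2$. The integers $e_1/g$ and $e_2/g$ are coprime, so Bezout gives $y=0$.

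The only step needing care is this transfer–restriction identity. I would verify it directly from the pullback of $q$ along itself, which is $\frac{e_i}{g}$ copies of $\Theta_{e_i}$ twisted by powers of $t$. The $t$-twists then act trivially on $y$ because $y$ lies in the image of the restriction from $\Theta_1$, and $t$ acts trivially on $\Theta_1$. Once $s=2$ is done, the induction on $s$ is immediate.
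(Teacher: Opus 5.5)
Your core argument is the paper's: pass to $y=p^*(x)\in M(\Theta_g)$, restrict to each $\Theta_{e_i}$ to get zero, transfer back to get $\tfrac{e_i}{g}\,y=0$, and conclude by coprimality.

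The induction on $s$ and the B\'ezout expression $g=\alpha e_1+\beta e_2$ are unnecessary. The integers $e_1/g,\ldots,e_s/g$ are jointly coprime, so the same argument handles all $s$ at once, as the paper does.

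More importantly, your proposed verification of $q_*q^*(y)=\tfrac{e_i}{g}y$ via ``the pullback of $q$ along itself'' and $t$-invariance is mistaken.
\begin{itemize}
\item The pullback $\Theta_{e_i}\times_{\Theta_g}\Theta_{e_i}$, which is $e_i/g$ twisted copies of $\Theta_{e_i}$, computes the other composite: $q^*q_*$ on $M(\Theta_{e_i})$, i.e.\ the double coset formula.
\item The composite $q_*q^*$ on $M(\Theta_g)$ is the single span $[\Theta_g\llla{q}\Theta_{e_i}\llra{q}\Theta_g]$, and no pullback is involved.
\item That this span equals $\tfrac{e_i}{g}\cdot\id$ is a defining relation of $\cB\Z$ (the cohomological condition invoked in Lemma~\ref{le:BZ-lemma}(2)). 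It therefore holds for every $y\in M(\Theta_g)$, invariant or not.
\item Invariance alone could never give this identity. In a general Mackey functor (e.g.\ the Burnside ring functor), transfer after restriction is not multiplication by the index, even on classes restricted from $\Theta_1$.
\end{itemize}

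You do also cite that relation, so the proof is complete once you drop the pullback/invariance discussion and appeal to the relation directly, exactly as the paper does.
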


\begin{proof}
Let $y=(p_{\Theta_{(e_1,\ldots,e_s)}\ra \Theta_1})^*(x)$ and set $z_i=(p_{\Theta_{e_i}\ra \Theta_{(e_1,\ldots,e_s)}})^*(y)$.  Our assumption is that $z_i=0$, and therefore
\[ 0 = (p_{\Theta_{e_i}\ra \Theta_{(e_1,\ldots,e_s)}})_*(z_i)=\tfrac{e_i}{(e_1,\ldots,e_s)}\cdot y.
\]
But the integers $\tfrac{e_i}{(e_1,\ldots,e_s)}$ are relatively prime, and so there is a linear combination of them that equals $1$.  Hence $y=0$.  
\end{proof}

\begin{proof}[Proof of Proposition~\ref{pr:Q-Mackey}]
By an ``$au$-monomial'' we simply mean a monomial in the $a$'s and $u$'s. It is clear that $Q$ is zero except in degrees that contain an $au$-monomial.  It is also clear that the degree containing a $u$-monomial $u_{i_1}\cdots u_{i_t}$ does not contain any other $au$-monomials, and hence the component in this degree is equal to $\mZ$.  

Let $\und{d}$ be the degree containing the monomial $m=a_{i_1}\cdots a_{i_s}u_{i_{s+1}}\cdots u_{i_{s+r}}$.  Clearly $Q^{\und{d}}$ is generated (as a $\mZ$-module) by all of the $au$-monomials that lie in this degree---these are precisely the ones obtained from $m$ by permuting the indices.  Consequently, the restriction maps in $Q^{\und{d}}$ are all surjections.  

Let $e=(B;r+1)$. Our goal is to show $Q^{\und{d}}\cong \mZ/I_e$. Observe the monomial $m$ restricts to zero in each of the spots $\Theta_{b_{i_1}},\ldots,\Theta_{b_{i_s}}$, because the respective $a$-classes do.  Any $(r+1)$-fold lcm $\Lambda$ of the integers $b_{i_1},\ldots,b_{i_{s+r}}$ will be a multiple of at least one of $b_{i_1},\ldots,b_{i_s}$, and hence $m$ will restrict to zero in spot $\Theta_{\Lambda}$.  Letting $\Lambda$ range over all of the $(r+1)$-fold lcms, Lemma~\ref{le:gcd-vanishing} shows that $m$ restricts to zero in spot $\Theta_e$.  The same analysis applies to any monomial obtained from $m$ by permuting the indices, and so all of those $au$-monomials restrict to zero at $\Theta_e$.  This proves that $Q^{\und{d}}(\Theta_e)=0$.  

We will prove that $Q^{\und{d}}$ is of the desired form using (\ref{eq:ell-local-fact}), which reduces matters to the $\ell$-local case.  So fix a prime $\ell$.  
We use the recognition criteria from \cite[Proposition 4.8]{DHone}: this says that $\mZ/I_{e(\ell)}$ is (up to isomorphism) the unique $\mZ$-module $M$ having the properties that $M(\Theta_1)\iso \Z/e(\ell)$, $M(\Theta_{e(\ell)})=0$, and all of the restriction maps in $M$ are surjections.  The last part has already been verified above.  For the second part, observe that
$p_*p^*\colon Q^{\und{d}}(\Theta_{e(\ell)})\ra Q^{\und{d}}(\Theta_e) \ra Q^{\und{d}}(\Theta_{e(\ell)})$ is multiplication by $e/e(\ell)$ and therefore an $\ell$-isomorphism, whereas the middle group has been shown to vanish.  So $Q^{\und{d}}_{(\ell)}(\Theta_{e(\ell)})=0$.

It remains to prove the claim about $Q^{\und{d}}_{(\ell)}(\Theta_1)$, which we know is generated by the $au$-monomials.
After $\ell$-localization the gold relations become
\[ \tfrac{b_i(\ell)}{\min\{b_i(\ell),b_j(\ell)\}}\cdot (\text{a unit in $\Z_{(\ell)}$})\cdot a_iu_j =
\tfrac{b_j(\ell)}{\min\{b_i(\ell),b_j(\ell)\}}\cdot (\text{a unit in $\Z_{(\ell)}$})\cdot a_ju_i.
\]
The first term on either the left or the right is equal to one, and so the relation says that one of $a_iu_j$ and $a_ju_i$ is a multiple of the other (after $\ell$-localization): precisely, if $b_i(\ell)\leq b_j(\ell)$ then $a_iu_j$ is a multiple of $a_ju_i$ (and vice versa for the other direction).  So the more primitive $a_?\cdot u_?$ product is the one with the index on the $u$ corresponding to the $b$ with the smaller $\ell$-adic value.      From this it follows at once that all of the $au$-monomials in degree $\und{d}$ are multiples of a single one, namely any monomial where the indices on the $u$'s correspond to the $b$'s with the smallest $\ell$-adic values.  The additive order of this class is then equal to the $(r+1)$st smallest value of the $b_i(\ell)$'s, which is precisely $e(\ell)$.  
So $Q^{\und{d}}(\Theta_1)$ is cyclic of this order (recall that this is all after $\ell$-localization). 

\cite[Proposition 4.8]{DHone} now shows that $Q_{(\ell)}^{\und{d}}\iso \mZ/I_{e(\ell)}$, and the desired result then follows immediately from (\ref{eq:ell-local-fact}).
\end{proof}

\begin{cor}
Let $b_1,\ldots,b_t$ be the collection of all divisors of $n$ that are larger than $1$.  Let $N$ and $Q_n(\und{b})$ be as in Definition~\ref{de:Q}.  
There is a unique map of $N$-graded $\mZ$-algebras $Q_n(\und{b})\ra \uH^\star(\pt)$ sending the formal $a$- and $u$-classes to the analogs in $H^\star$, and this map is an isomorphism for all degrees in $N$ such that the multiplicity of each $\lambda_b$ is non-negative for $b>0$.
\end{cor}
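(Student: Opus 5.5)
The plan is to build the map from the presentation, then compare the two sides degree by degree using the computations already made for both. Since $Q_n(\und{b})$ is the quotient of a free graded-commutative polynomial $\mZ$-algebra on classes in the $\Theta_1$ spot, a map of $N$-graded $\mZ$-algebras out of it is determined by where the generators go. Uniqueness is therefore automatic. For existence we must check three things in $\uH^\star(\pt)$.

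\begin{itemize}
\item The target is commutative in the relevant degrees. The generators have even fixed dimension ($0$ for $a_b$, $-2$ for $u_b$), so (\ref{eq:skew-comm}) gives strict commutativity of the subring they generate.
\item The Euler class relation holds; this is (\ref{eq:Euler}).
\item The gold relation holds; this is Corollary~\ref{co:gold}.
\end{itemize}

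Hence the polynomial map descends to $Q_n(\und{b})\ra \uH^\star(\pt)$.

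Next fix a degree $\und{d}$ in which every $\lambda_b$ has non-negative multiplicity, say $\und{d}=m_0+\sum e_i\lambda_{b_i}$ with $e_i\geq 0$. Let $B$ be the associated sequence, of length $s=\sum e_i$. The target is $\uH^{\und{d}}=\uH_{-m_0}(S^{\lambda_B})$. By Corollary~\ref{co:positive-cone-computation1} this is:
\begin{itemize}
\item zero unless $m_0=-2r$ with $0\leq r\leq s$;
\item $\mZ$ when $r=s$;
\item $\mZ/I_{(B;r+1)}$ otherwise.
\end{itemize}
It is generated by the $au$-classes in the $\Theta_1$ spot. Proposition~\ref{pr:Q-Mackey} gives exactly the same list for $Q^{\und{d}}$, again generated by the $au$-monomials. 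The map sends monomials to the corresponding $au$-classes, so it is surjective in each such degree. In the vanishing degrees both sides are zero.

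It remains to get injectivity from a surjection between abstractly isomorphic modules. When the module is $\mZ$, a surjective endomorphism of $\mZ$ is determined by its value in $\Z=\mZ(\Theta_1)$, so it is $\pm1$ and hence an isomorphism. When the module is $\mZ/I_e$, its value at $\Theta_1$ is the finite group $\Z/e$, and its values at the other spots are finite quotients of it (the restrictions are surjective). A surjection $\mZ/I_e\ra\mZ/I_e$ is therefore levelwise a surjection of finite sets of equal size, hence a bijection.

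The step needing most care is this finiteness/Hopfian argument, since it only works because the values are finitely generated and each value of $\mZ/I_e$ is a finite group. The rest consists of cited results. A small point to double-check is that the statement's ``$b>0$'' should be read as ``$b>1$'', with the multiplicity of $\lambda_0=1$ allowed to be any integer, as in Definition~\ref{de:Q}.
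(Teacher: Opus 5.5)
Your proposal is correct and follows the paper's proof step for step. Existence and uniqueness come from the Euler and gold relations, surjectivity from generation by $au$-classes (Corollary~\ref{co:positive-cone-computation1}), and injectivity from the fact that a surjection between abstractly isomorphic modules of the form $\mZ$ or $\mZ/I_e$ (Proposition~\ref{pr:Q-Mackey}) is an isomorphism. You also spell out two points the paper leaves implicit: that the generators commute in the target, via (\ref{eq:skew-comm}), and the levelwise-finite Hopfian argument.
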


\begin{proof}
The existence and uniqueness of the map follows from the fact that the $a$- and $u$-classes in $H^\star$ satisfy the defining relations for $Q_n(\und{b})$: see (\ref{eq:Euler}) and Corollary~\ref{co:gold}.
The map is surjective in all termwise non-negative degrees because we have seen in Corollary~\ref{co:positive-cone-computation1} that in these degrees the positive cone is generated by the $au$-monomials.  

By Proposition~\ref{pr:Q-Mackey} and Corollary~\ref{co:positive-cone-computation1}, in each degree that we are considering the domain and codomain of our map belong to the set $\{\mZ,\mZ/I_d\}_{d|n}$ and are isomorphic.  For such modules, any surjective map between objects of the same isomorphism class must be an isomorphism (this is because the analogous property holds for cyclic abelian groups).
\end{proof}


\section{Small models for spheres}\label{se:smallmodels}

Fix $b_1,\ldots,b_s$, divisors of $n$.  The sphere $S^{\lambda_{b_1}+\cdots+\lambda_{b_k}}$ is modeled by the box product $S^{\lambda_{b_1}}\bbox \cdots \bbox S^{\lambda_{b_s}}$, which is a chain complex whose terms consist of sums of $3^s$ free $\mZ$-modules, ranging in homological degrees from $0$ through $2s$.  In this section we produce a much smaller model that consists of just $2s+1$ free summands, one in each of those degrees.
\vspace{0.2in}

Let $b_1,\ldots,b_s$ be a divisor string.
Define $L(\und{b})$ to be the following chain complex:
\[ \xymatrixcolsep{1.6pc}\xymatrix{
0 \ar[r] &
F_{b_s} \ar[r]^-{\id-Rt} & F_{b_s} \ar[r]^-{\Delta_s} &
F_{b_{s-1}} \ar[r]^-{\id-Rt} & F_{b_{s-1}} \ar[r]^-{\Delta_{s-1}} & \cdots \ar[r]^-{\id-Rt} & F_{b_1} \ar[r]^-{R\pi} & \mZ \ar[r] & 0.
}
\]
Here the nonzero terms are concentrated (reading left to right) in degrees $2s$ through $0$, and 
\[ \Delta_k=I\pi_{b_{k-1}} \circ R\pi_{b_k}
\]
where $\pi_{b_k}: \Theta_{b_k}\to pt$ is the usual projection.
One readily checks that $L(\und{b})$ is indeed a chain complex: use that $R\pi(\id-Rt)=R\pi-R(\pi t)=R\pi-R\pi=0$
and \[(\id-Rt)I\pi_{b_k}=(\id-It^{-1})I\pi_{b_k}=I\pi_{b_k}-I(t^{-1}\pi_{b_k})=I\pi_{b_k}-I\pi_{b_k}=0.\]  

When $b_1,\ldots,b_s$ is not a divisor string, we define $L(\und{b})$ to be the $L$-complex for the divisor string $(\und{b};1),(\und{b};2),\ldots,(\und{b};s)$.

\begin{prop}
\label{pr:spheres-small-model}
For any divisors $b_1,\ldots,b_s$ of $n$
there is a quasi-isomorphism
$S^{\lambda_{b_1}}\bbox \cdots \bbox S^{\lambda_{b_s}}\he L(\und{b})$.
\end{prop}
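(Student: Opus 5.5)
By Proposition~\ref{pr:lcm-gcd-spheres} we may replace $\und{b}$ by its associated divisor string, so assume $b_1|b_2|\cdots|b_s$. The plan is induction on $s$. We build an explicit chain map $L(\und{b})\ra S^{\lambda_{b_1}}\bbox\cdots\bbox S^{\lambda_{b_s}}$, or a zig-zag through an intermediate complex, and check that it induces an isomorphism on all $\uH_*$. For $s=1$ we have $L(b_1)=S^{\lambda_{b_1}}$ on the nose. For the inductive step, write $\und{b}'=(b_1,\ldots,b_{s-1})$ and observe that $L(\und{b})$ contains $L(\und{b}')$ as the subcomplex in degrees $0$ through $2s-2$. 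The quotient is the two-term complex $F_{b_s}\ra F_{b_s}$ in degrees $2s$ and $2s-1$, attached via $\Delta_s=I\pi R\pi$.

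The cleanest route is to compare homology using Proposition~\ref{pr:l-case}. That result already gives $\uH_*(S^{\lamu{b}})$: it is $\mZ/I_{b_{i+1}}$ in degree $2i$ for $i<s$, $\mZ$ in degree $2s$, and zero otherwise, with generators the $au$-classes. So we will:
\begin{enumerate}[(i)]
\item Compute $\uH_*(L(\und{b}))$ directly from the exact sequence (\ref{eq:std-free}).
\begin{itemize}
\item In degree $2k-1$, the kernel of $\Delta_k$ restricted to $\ker(R\pi)$-type elements coincides with $\im(\id-Rt)$, because $R\pi(\id-Rt)=0$ and $I\pi$ is injective. This forces the odd homology to vanish.
\item In degree $2k$, the kernel of $\id-Rt$ on $F_{b_{k+1}}$ is $\im(I\pi)\iso\mZ$. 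Modulo the image of $\Delta_{k+1}=I\pi R\pi$, this is $\mZ/\im(R\pi\colon F_{b_{k+1}}\ra\mZ)=\mZ/I_{b_{k+1}}$.
\item The top degree gives $\mZ$.
\end{itemize}
These groups match the ones above.
\item Produce a chain map realizing the identification. We construct $f\colon L(\und{b})\ra L(\und{b}')\bbox S^{\lambda_{b_s}}$, which is a model for $S^{\lamu{b}}$ by induction since $\bbox$ with a levelwise-free bounded complex preserves quasi-isomorphisms. On the subcomplex $L(\und{b}')=L(\und{b}')\bbox\mZ$ (degree $0$ of $S^{\lambda_{b_s}}$), take $f$ to be the inclusion. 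On the top two terms, send $F_{b_s}$ into $F_1\bbox F_{b_s}\subseteq \mZ\bbox F_{b_s}$ and into $F_{b_{s-1}}\bbox F_{b_s}$-type summands, solving for the components by the lifting criteria of Proposition~\ref{pr:lifting-lems-2}. The key point is that $b_{s-1}|b_s$, so $F_{b_{s-1}}\bbox F_{b_s}$ is a sum of copies of $F_{b_s}$.
\item Check that $f$ hits generators. By Proposition~\ref{pr:l-case} it suffices to see that the $au$-classes are in the image. Alternatively, we can compare long exact sequences: the subcomplex/quotient sequence for $L$ maps to the cofiber sequence $S^{\lamu{b'}}\ra S^{\lamu{b}}\ra \Sigma^2 S^{\lamu{b'}}\bbox(\text{cofiber of }a_{b_s})$. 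By the five lemma this reduces to the top two degrees, where both sides are $\mZ$ generated by $u_{b_1}\cdots u_{b_s}$ and by the generator $I\pi$.
\end{enumerate}

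I expect step (ii) to be the main obstacle: writing down the chain map into the $3^s$-term box product explicitly and verifying that it commutes with the differentials, which requires controlling $F_{b_{s-1}}\bbox F_{b_s}$ and the Mackey double-coset formulas. As a fallback, the paper's remark says the linear models can be deduced from the ring structure of the positive cone. So I would instead argue as follows. Both complexes are bounded and levelwise free, and both have homology $\mZ$ or cyclic $\mZ/I_{b_{i+1}}$ in each even degree. We build the map by cellular induction, attaching one pair of cells at a time. At each stage the obstruction to extending lies in an odd homology group of $S^{\lamu{b}}$, which vanishes because the complex is even. The induced map is then an isomorphism on homology, by a surjectivity argument onto the generating $au$-classes together with the fact that a surjection between isomorphic $\mZ/I_d$ is an isomorphism.
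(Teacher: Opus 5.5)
Your overall strategy is viable and differs from the paper's: build an explicit comparison map $g\colon L(\und{b})\ra L(\und{b}')\bbox S^{\lambda_{b_s}}$ that restricts to $\id\bbox a_{b_s}$ on the subcomplex $L(\und{b}')$, then check it on homology against Proposition~\ref{pr:l-case}. However, the step that carries the real content is missing. You never say what $g$ is on the two top copies of $F_{b_s}$, and ``solving by the lifting criteria'' does not pin it down. Any solution can be altered in degree $2s$ by a map into the top-degree cycles. Since these cycles form $\uH_{2s}(L(\und{b}')\bbox S^{\lambda_{b_s}})\iso\mZ$, such a map is $w\cdot R\pi$, and it changes the induced map on $\uH_{2s}\iso\mZ$ by $wb_s$. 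So a generic solution is \emph{not} a quasi-isomorphism. The statement ``$g$ hits the top generator $u_{b_1}\cdots u_{b_s}$'' is exactly what must be arranged; it cannot simply be read off. (The summand $\mZ\bbox F_{b_s}$ you mention lives in degrees $\le 2$, so it plays no role in degrees $2s-1,2s$ once $s\ge 2$.)

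Your two alternatives have the same hole.
In the five-lemma version, the quotients have homology $\mZ$ in degree $2s$ but $I_{b_s}$, not $\mZ$, in degree $2s-1$. An isomorphism on the former does not force one on the latter: a self-map of $X_b$ with components $\lur{m}+wI\pi R\pi$ and $\lur{m}$ in degrees $1$ and $0$ acts by $M+wb$ on $\uH_1$ and by $M$ on $\uH_0$, where $M=\sum_i m_i$.
In the cellular fallback, the obstruction to extending over the odd cell $F_{b_{j+1}}$ in degree $2j+1$ lies in the \emph{even} group $\uH_{2j}(S^{\lamu{b}})(\Theta_{b_{j+1}})$. It vanishes because $\mZ/I_{b_{j+1}}(\Theta_{b_{j+1}})=0$, not because the complex is even. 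Moreover extensions are not unique, so surjectivity onto the $au$-generators is again not automatic.

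The gap closes with the natural choice $g_{2s-1}=g_{2s}=I\pi_{b_{s-1}}\bbox\id_{F_{b_s}}\colon F_{b_s}=\mZ\bbox F_{b_s}\ra F_{b_{s-1}}\bbox F_{b_s}$. Here $g_{2s-1}$ lands in $L(\und{b}')_{2s-2}\bbox S^{\lambda_{b_s}}_1$ and $g_{2s}$ in $L(\und{b}')_{2s-2}\bbox S^{\lambda_{b_s}}_2$, where $S^{\lambda_{b_s}}_j$ denotes the degree-$j$ term.

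Bifunctoriality of $\bbox$ shows this is a chain map. We have $(\id\bbox R\pi)(I\pi\bbox\id)=I\pi R\pi=\Delta_s$ and $(\id\bbox(\id-Rt))(I\pi\bbox\id)=(I\pi\bbox\id)(\id-Rt)$. The components into $L(\und{b}')_{2s-3}\bbox(\blank)$ are $((\id-Rt)I\pi)\bbox\id=0$.

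Now check homology:
\begin{itemize}
\item The top generator $I\pi$ goes to $I\pi\bbox I\pi=\pm u_{\und{b}'}u_{b_s}$.
\item In degrees $2i\le 2s-2$, precomposing $g_*$ with the surjection $\uH_{2i}(L(\und{b}'))\ra\uH_{2i}(L(\und{b}))$ gives $\cdot a_{b_s}$. This is surjective because its cofiber $S^{\lamu{b'}}\bbox\Sigma X_{b_s}\he\Sigma^{2s-1}X_{b_s}$ has homology only in degrees $2s-1,2s$ (Lemma~\ref{le:suspxb}; this is where $b_i|b_s$ enters).
\item Odd homology vanishes on both sides.
\end{itemize}
A surjection between isomorphic modules of the form $\mZ$ or $\mZ/I_d$ is an isomorphism, so $g$ is a quasi-isomorphism.

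By contrast, the paper peels off the smallest divisor $b_1$ rather than the largest. It boxes $\mZ\ra B\ra\tilde{B}$ (with $B=L(b_2,\ldots,b_s)$) with $S^{\lambda_{b_1}}$ and uses $S^{\lambda_{b_1}}\bbox\tilde{B}\he\Sigma^2\tilde{B}$. This gives a cofiber sequence $\Sigma\tilde{B}\llra{f}S^{\lambda_{b_1}}\ra S^{\lamu{b}}$ in which $f$ has a single component $w\,I\pi R\pi$, and $w=\pm 1$ is forced by $\uH_2(S^{\lamu{b}})\iso\mZ/I_{b_2}$. That route never writes down a map into the $3^s$-term box product. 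Yours, once completed, produces an explicit quasi-isomorphism with the generators identified.
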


\begin{proof}
By Proposition~\ref{pr:lcm-gcd-spheres} we immediately reduce to the case where $\und{b}$ is a divisor string.  
The proof now proceeds by induction on $s$, the case $s=1$ being trivial.

Let $B=L(b_2,\ldots,b_s)$ and define the complex $\tilde{B}$ by the cofiber sequence
\[ \mZ \inc B \ra \tilde{B}.
\]
So the terms of $\tilde{B}$ are $F_{b_i}$ for $i\geq 1$, and consequently we have $\tilde{B}\bbox \mZ/I_{b_1}=0$ by (\ref{eq:box-zero}) (note that this uses that each $b_i$ is a multiple of $b_1$).  

Take the cofiber sequence $\Sigma^2 \mZ \ra S^{\lambda_{b_1}} \ra \mZ/I_{b_1}$ and box with $\tilde{B}$.  Since the third term is contractible, the map
\[ \tag{*} \Sigma^2 \tilde{B} \ra S^{\lambda_{b_1}}\bbox \tilde{B} 
\]
is  a quasi-isomorphism.

Now take the cofiber sequence $\mZ\inc B\ra \tilde{B}$ and box with $S^{\lambda_{b_1}}$ to give
\[ S^{\lambda_{b_1}} \ra S^{\lambda_{b_1}}\bbox B \ra S^{\lambda_{b_1}}\bbox \tilde{B}.
\]
Backtracking the cofiber sequence and using (*), together with the induction hypothesis that $B\he S^{\lambda_{b_2}+\cdots+\lambda_{b_s}}$, gives a cofiber sequence
\[ \Sigma \tilde{B} \llra{f} S^{\lambda_{b_1}}\ra S^{\lambda_1+\cdots+\lambda_{b_s}}.
\]
The map $f$ takes the form
\[ \xymatrix{
\cdots \ar[r] & F_{b_2}\ar[r] & F_{b_2} \ar[d]_{f}\ar[r] & 0 \\
\cdots \ar[r] & 0\ar[r] & F_{b_1} \ar[r]^{\id-Rt} & F_{b_1} \ar[r] & \mZ
}
\]
and so has a unique nonzero component.  Commutativity of the diagram and Proposition~\ref{pr:lifting-lems-2}(b) implies that  $f=w\cdot I\pi_{b_1} R\pi_{b_2}$ for some $w\in \Z$.
If we show that $w=\pm 1$ then the mapping cone of $f$ is clearly isomorphic to $L(b_1,\ldots,b_s)$, and we would be done.

Let $Cf$ be the mapping cone of $f$, and compute $\uH_2(Cf)(\Theta_1)$.  One readily sees that it is $\Z/(wb_2)$, generated by the class $p_*(g_{\Theta_{b_1}})$.  But we already know $Cf\he S^{\lambda_{b_1}+\cdots+\lambda_{b_s}}$ and we have previously calculated that $\uH_2$ of this sphere is $\mZ/I_{b_2}$.  The value of this $\mZ$-module at $\Theta_1$ is $\Z/b_2$, so we must have $w=\pm 1$. 
\end{proof}

\begin{remark}
When $\und{b}$ is a divisor string the complex $L(\und{b})$ is readily identified as a certain homotopy colimit.  So for example, Proposition~\ref{pr:spheres-small-model} implies that $S^{\lambda_{b_1}+\lambda_{b_2}+\lambda_{b_3}}$ is the homotopy colimit of the diagram
\[ \xymatrixcolsep{3.2pc}\xymatrix{\Sigma^4 S^{\lambda_{b_3}}
& \Sigma^4 \mZ \ar[l]_-{\Sigma^4 a_{b_3}} \ar[r]^-{\Sigma^2u_{b_2}} & \Sigma^2 S^{\lambda_{b_2}} & 
\Sigma^2\mZ \ar[l]_-{\Sigma^2a_{b_2}}\ar[r]^-{u_{b_1}} & S^{\lambda_{b_1}}.
}
\]
Or one can inductively describe $S^{\lambda_{b_1}+\cdots+\lambda_{b_k}}$ as the homotopy pushout
\[ \xymatrixcolsep{3.5pc}\xymatrix{ \Sigma^{2k-2}S^{\lambda_{b_k}} &
\Sigma^{2k-2}\mZ \ar[l]_-{\Sigma^{2k-2}a_{b_k}}\ar[r]^-{u_{b_1} \cdots u_{b_{k-1}} } & S^{\lambda_{b_1}+\cdots+\lambda_{b_{k-1}}.}
}
\]
These models are sometimes useful.  
\end{remark}

The dual $L(\und{b})^*$ is given by 
\[ \xymatrixcolsep{2pc}\xymatrix{
0 \ar[r] &\mZ \ar[r]^-{I\pi}& F_{b_1} \ar[r]^{\id-It}& F_{b_1} \ar[r]^{I\pi R\pi}&  F_{b_2} \ar[r]^{\id-It} & \cdots \ar[r]^{I\pi R\pi} & F_{b_s}\ar[r]^{\id-It} & F_{b_s} \ar[r] & 0.
}
\]
(Here we have dropped the subscripts from the $\pi$-maps, but they can be inferred from the (co)domain.) The terms are now concentrated (reading left to right) in degrees $0$ through $-2s$.

\begin{cor}
Given a list $b_1,\ldots,b_s$ of divisors of $n$, there is a quasi-isomorphism
$S^{-\lambda_{b_1}}\bbox \cdots \bbox S^{-\lambda_{b_s}}\he L(\und{b})^*$.
\end{cor}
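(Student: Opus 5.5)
The plan is to obtain this corollary formally from Proposition~\ref{pr:spheres-small-model} by applying the duality functor $X\mapsto X^*=\uHom(X,\mZ)$. As recalled in Section~\ref{se:derived}, this functor preserves quasi-isomorphisms between bounded, levelwise-free complexes. Both $S^{\lambda_{b_1}}\bbox\cdots\bbox S^{\lambda_{b_s}}$ and $L(\und{b})$ are bounded and levelwise free: box products of free modules are free, since $F_c\bbox F_d$ is a sum of free modules. The quasi-isomorphism of Proposition~\ref{pr:spheres-small-model} is a priori only an isomorphism in $\cD$, possibly given by a zig-zag. I would note that it can be taken to be a zig-zag through bounded levelwise-free complexes: the equivalences from Corollary~\ref{co:basic_equiv} and the mapping-cone construction in the proof both live among such complexes. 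Alternatively, I would simply use that $(-)^*$ is a well-defined functor on $\cD_{cpt}$. Either way, dualizing gives $(S^{\lambda_{b_1}}\bbox\cdots\bbox S^{\lambda_{b_s}})^*\he L(\und{b})^*$.

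The second step is to identify $(S^{\lambda_{b_1}}\bbox\cdots\bbox S^{\lambda_{b_s}})^*$ with $S^{-\lambda_{b_1}}\bbox\cdots\bbox S^{-\lambda_{b_s}}$, where by definition $S^{-\lambda_b}=(S^{\lambda_b})^*$. For finitely generated free modules there is a canonical map $\uHom(A,\mZ)\bbox\uHom(B,\mZ)\ra\uHom(A\bbox B,\mZ)$. It is an isomorphism when $A$ and $B$ are finitely generated free: using $\uHom(F_c,\mZ)\iso F_c$ and additivity, this reduces to $F_c\bbox F_d$ being dualizable with dual $F_c\bbox F_d$, which holds because each $F_c$ is dualizable in the closed symmetric monoidal category. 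Applying this degreewise, with the usual Koszul signs, gives an isomorphism of chain complexes $X^*\bbox Y^*\iso (X\bbox Y)^*$ for bounded levelwise-free $X,Y$. Inducting on $s$ then yields the desired identification.

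Finally, I would check that $L(\und{b})^*$ is indeed the displayed complex. This uses that $\uHom(-,\mZ)$ fixes each $F_d$ and interchanges $Rf$ with $If$. Hence $\id-Rt$ becomes $\id-It$, $R\pi$ becomes $I\pi$, and $\Delta_k=I\pi R\pi$ becomes $I\pi R\pi$ in the opposite direction; degrees are negated. The only point needing care is the reduction from an arbitrary list $\und{b}$ to its associated divisor string, but that is already built into the definition of $L(\und{b})$ and into Proposition~\ref{pr:spheres-small-model}. I expect the main obstacle, mild as it is, to be the compatibility $(X\bbox Y)^*\iso X^*\bbox Y^*$, which needs the dualizability of free $\mZ$-modules; everything else is formal.
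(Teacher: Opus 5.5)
Your argument is correct and matches the paper's: its proof is just ``immediate from Proposition~\ref{pr:spheres-small-model},'' meaning dualize that equivalence with $(-)^*$ on $\cD_{cpt}$ and use $(X\bbox Y)^*\iso X^*\bbox Y^*$ for bounded complexes of finitely generated free modules, a compatibility you correctly supply. One small correction: Corollary~\ref{co:basic_equiv} is proved through the non-free modules $\mZ(e;d)$, so your first justification for a zig-zag of free complexes does not hold as stated, but your fallback suffices (that $(-)^*$ is a functor on $\cD_{cpt}$, or equivalently that an isomorphism in $\cD$ between bounded levelwise-free complexes is a chain homotopy equivalence).
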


\begin{proof}
Immediate from Proposition~\ref{pr:spheres-small-model}.
\end{proof}


\section{Working with the negative cone}

The negative cone is more challenging than the positive cone because it is inherently non-Noetherian: its elements are typically infinitely-divisible by some subset of the $a$- and $u$-classes. In this section we will develop a system for writing down explicit generators for the homogeneous pieces of the negative cone indexed by divisor strings, and we will determine most of the products that stay within the regular region.  \medskip

The negative cone naturally divides into two regions: the ``integral edge'' which has homogenous pieces $\uH^{2k-\lambda_{b_1}-\cdots - \lambda_{b_k}}\cong I_{[\und{b}]}$ (everything here is non-torsion), and the ``torsion cone'' which has homogeneous pieces $\uH^{(2j+1)-\lambda_{b_1}-\cdots - \lambda_{b_k}}\cong \mZ/I_{(\und{b};j)}$.  It is good to keep in mind the picture in Figure~\ref{fig:negative-cone}, which shows a small slice.  Note that the classes $\frac{b}{u_b^i}$ and $\frac{\gamma_b}{u_b^ia_b^j}$ will be explained below in Sections~\ref{se:integral-edge} and \ref{se:gamma}, respectively.

\begin{figure}[ht]
\begin{tikzpicture}[scale=0.6]
\foreach \y in {-3.5,-2.5,-1.5,-0.5,0.5}
	{\draw[dotted] (-5,\y)--(1,\y);}
\foreach \y in {-4.5,-3.5,-2.5,-1.5,-0.5,0.5}
	{
	\draw[dotted] (\y,-4)--(\y,1.1);
	};
\draw(0,0) node{$\scriptstyle{\square}$};
\draw(0.8,0) node{$\scriptstyle{\frac{b}{u_b}}$};
\draw(0.8,-1) node{$\scriptstyle{\frac{b}{(u_b)^2}}$};
\draw(0.8,-2) node{$\scriptstyle{\frac{b}{(u_b)^3}}$};
\draw(0.8,-3) node{$\scriptstyle{\frac{b}{(u_b)^4}}$};
\draw(0,-1) node{$\scriptstyle{\square}$};
\draw(0,-2) node{$\scriptstyle{\square}$};
\draw(0,-3) node{$\scriptstyle{\square}$};
\foreach \y in {-1,-2,-3}
     {\filldraw(-1,\y) circle(0.1); 
     };
\foreach \y in {-2,-3}
     {\filldraw(-2,\y) circle(0.1); 
     };
\filldraw(-3,-3) circle(0.1);
\draw (-1,-1)--(-1,-3.7);
\draw (-2,-2)--(-2,-3.7);
\draw(-3,-3)--(-3,-3.7);
\draw(-1,-1)--(-3.7,-3.7);
\draw(-1,-2)--(-2.7,-3.7);
\draw(-1,-3)--(-1.7,-3.7);
\draw (-2,0) node{$\scriptstyle{\gamma_b}$};
\draw[->] (-1.8,-0.2)--(-1.2,-0.8);
\draw (-4,0) node{$\scriptstyle{\frac{\gamma_b}{u_b}}$};
\draw[->] (-3.7,-0.2)--(-1.2,-1.8);
\draw(-4,-2) node{$\scriptstyle{\frac{\gamma_b}{a_b}}$};
\draw[->] (-3.7,-2)--(-2.2,-2);
\draw(-5,-3) node{$\scriptstyle{\frac{\gamma_b}{(a_b)^2}}$};
\draw[->] (-4.7,-3)--(-3.2,-3);
\draw(7,-1) node{${\square=\Z,\ \ \bullet=\Z/b}$};
\draw(7,-2) node{${\lvert\ =\cdot u_b,\ \ \diagup\ = \cdot a_b}$};
\foreach \y in {-0.2,-1.2,-2.2}
   {  \draw(0,\y+0.05)--(0,\y-0.65);
   };
\draw (0,-3.15)--(0,-3.7);
\end{tikzpicture}
\label{fig:negative-cone}
\caption{The slice $H^{k-m\lambda_b}$ of the negative cone}
\end{figure}
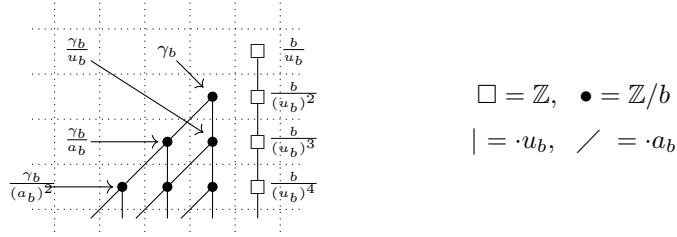

The picture of this small slice is misleading in one way that is worth mentioning: in general neither the negative cone nor the torsion cone will have a single `cone point'.
The top rim of the negative cone consists of the classes $\frac{b}{u_{b}}$ for all  $b|n$, and the top rim of the torsion cone consists of classes $\gamma_b$ and $\frac{a_b\gamma_b}{a_c}$ for $b|c|n$.  In both cases one should really picture a conical region descending from the class $1\in H^0$  but which has had a small bit of the top lopped off.

\subsection{The integral edge} 
\label{se:integral-edge}
We start by showing multiplication by $u$-classes gives an injection to $H^{0}$ and identify this image. 
\begin{lemma}\label{lem:uinj}
    Suppose $\und{b}=(b_1,\dots, b_k)$ is a sequence of divisors of $n$. Write $\lambda_{\und{b}}=\lambda_{b_1}+\dots+ \lambda_{b_k}$ and $u_{\und{b}}=u_{b_1}\cdots u_{b_k}$. Then multiplication by $u_{\und{b}}$ gives an injection
     \[
   \xymatrixrowsep{0.5pc}
    \xymatrix{
    \uH^{2k-\lambda_{\und{b}}}\ar@{^{(}->}[r]^-{u_{\und{b}}}\ar@{}[d]|*[@]{\rotatebox{270}{$\cong$}}& \uH^{0}\ar@{}[d]|*[@]{\rotatebox{270}{$\cong$}}\\
    I_{[\underline{b}]}& \underline{\Z}
    }
    \]
    and the image is $I_{[\und{b}]}$. In particular, in the $\Theta_1$-spot the image is generated by the least common multiple $[\und{b}]$.
\end{lemma}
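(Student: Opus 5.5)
Multiplication by $u_{\und{b}}$ is induced, after boxing with $S^{-\lambda_{\und{b}}}$, by the map $u_{\und{b}}\colon \Sigma^{2k}\mZ\ra S^{\lambda_{\und{b}}}$. The plan is to identify the induced map on Mackey functors with a map $I_{[\und{b}]}\ra \mZ$ that is injective at every orbit, and then read off its image. Concretely, $\uH^{2k-\lambda_{\und{b}}}=\ucD(\Sigma^{2k}\mZ,\Sigma^{2k}\ldots)$, which we can rewrite as $\ucD(S^{\lambda_{\und{b}}},\Sigma^{2k}\mZ)$ by boxing with $S^{\lambda_{\und{b}}}$. Under this identification the multiplication becomes precomposition with $u_{\und{b}}$, giving a map
\[ \ucD(S^{\lambda_{\und{b}}},\Sigma^{2k}\mZ)\lra \ucD(\Sigma^{2k}\mZ,\Sigma^{2k}\mZ)=\mZ. \]
By Remark~\ref{re:maps->H} we need not worry about sides and signs here.

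For injectivity, I would use the universal coefficient spectral sequence exactly as in the proof of Proposition~\ref{pr:negative-cone}. There the group $\ucD(S^{\lambda_{\und{b}}},\Sigma^{2k}\mZ)$ is computed as the kernel of a surjective $d_3\colon \uHom(\uH_{2k}(S^{\lambda_{\und{b}}}),\mZ)=\mZ\ra \mZ/I_{(\und{b};k)}$. In particular it is the edge subobject of $\uHom(\uH_{2k}(S^{\lambda_{\und{b}}}),\mZ)$, and the edge map is evaluation on top homology. By Corollary~\ref{co:positive-cone-computation1}, $\uH_{2k}(S^{\lambda_{\und{b}}})\iso\mZ$ is generated by $u_{\und{b}}$ (the only $au$-class in that degree). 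Precomposition with $u_{\und{b}}$ is therefore exactly this edge map, i.e.\ the inclusion of the kernel of $d_3$ into $\mZ$. This shows that multiplication by $u_{\und{b}}$ is injective with image $\ker(d_3)$. Since $d_3\colon\mZ\ra\mZ/I_{[\und{b}]}$ is surjective, and any surjection $\mZ\ra\mZ/I_{e}$ sends $1$ to a generator, its kernel is $I_{[\und{b}]}\subseteq\mZ$. Hence the image is exactly $I_{[\und{b}]}$ and not merely an abstract copy.

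The main obstacle is the identification of the edge map with precomposition by $u_{\und{b}}$, together with the claim that the kernel of a surjection $\mZ\ra\mZ/I_e$ is literally $I_e$. For the latter, I would note that surjective endomorphisms of $\mZ/I_e$ are automorphisms (cyclic groups at $\Theta_1$), so $d_3$ differs from the canonical quotient by an automorphism and has the same kernel.

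If the spectral sequence bookkeeping is unpleasant, there is a fallback using the small model $L(\und{b})^*$. Its degree $-2k$ homology is the kernel of $\id-It$ on $F_{b_k}$ modulo the image of $I\pi R\pi$ from $F_{b_{k-1}}$. Evaluating against $u_{\und{b}}$ then amounts to a direct chain-level computation at $\Theta_1$ producing $[\und{b}]=b_k$. Finally, the statement in the $\Theta_1$ spot follows because $I_{[\und{b}]}(\Theta_1)=[\und{b}]\Z\subseteq\Z$.
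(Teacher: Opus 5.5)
Your argument is correct, but it takes a different route from the paper's. The paper forms the cofiber $Q$ of $u_{\und{b}}\colon\Sigma^{2k}\mZ\ra S^{\lambda_{\und{b}}}$ and models it with the linear complex $L(\und{B})$, so that the top three terms of $Q$ are literally the top of the standard resolution (\ref{eq:std-free}) of $\mZ/I_{[\und{b}]}$. Self-duality of that resolution then gives $\uH^{2k+1}(Q)\iso\mZ/I_{[\und{b}]}$ and $\uH^{2k}(Q)=0$. The long exact sequence therefore presents $\cdot u_{\und{b}}$ directly as the kernel of a surjection $\mZ\ra\mZ/I_{[\und{b}]}$.

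You instead reuse the universal coefficient spectral sequence of Proposition~\ref{pr:negative-cone}, where $\uH^{2k-\lambda_{\und{b}}}$ already appears as $\ker(d_3)\subseteq\mZ$. That saves a new computation, but the price is exactly the step you flagged. You need the edge homomorphism $\ucD(S^{\lambda_{\und{b}}},\Sigma^{2k}\mZ)\ra\uHom(\uH_{2k}(S^{\lambda_{\und{b}}}),\mZ)$ to be the ``induced map on homology'' map. This is a standard property of such spectral sequences, but the paper does not establish it. The cofiber argument needs nothing about the spectral sequence. The final step, that a surjection $\mZ\ra\mZ/I_e$ has kernel $I_e$, is common to both arguments, and your justification of it is fine.

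There is one slip in your fallback. Replace $\und{b}$ by its associated divisor string, so $b_k=[\und{b}]$. In $L(\und{b})^*$ the degree $-2k$ term is the last $F_{b_k}$, so $\uH_{-2k}(L(\und{b})^*)=\mathrm{coker}(\id-It\colon F_{b_k}\ra F_{b_k})$. What you wrote (the kernel of $\id-It$ modulo the image of $I\pi R\pi$) is the degree $-2k+1$ homology instead.

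Once corrected, the fallback is a complete short proof. The dual of $u_{\und{b}}$ is $R\pi$ in degree $-2k$. Exactness of the dualized standard resolution $F_{b_k}\llra{\id-It}F_{b_k}\llra{R\pi}\mZ$ shows that $R\pi$ maps this cokernel isomorphically onto its image $I_{b_k}=I_{[\und{b}]}$.
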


\begin{proof}
Consider the map $\Sigma^{2k}\mZ \llra{u_{\und{b}}} S^{\lamu{b}}$ and let
$Q$ be the mapping cone. We get a long exact sequence
\[ \xymatrix{
0 & \uH^{2k+1}(S^{\lambda_{\und{b}}}) \ar@{=} [l]& \uH^{2k+1}(Q)\ar[l] & \mZ \ar[l] & \uH^{2k}(S^{\lamu{b}})\ar[l]_-{\cdot u_{\und{b}}} & \uH^{2k}(Q).\ar[l]
}
\]
We can model $S^{\lamu{b}}$ by the complex $L(\und{B})$ from Section~\ref{se:smallmodels}, where $\und{B}$ is the associated divisor string to $\und{b}$.  In this model the map $u_{\und{b}}$ (which is the generator for $\uH_{2k}(\blank)$) is the map $\Sigma^{2k}\mZ \ra L(\und{B})$ that equals $I\pi$ in degree $2k$.  The top three terms in the cofiber $Q$ then visibly coincide with the top of the standard resolution for $\mZ/I_{[\und{b}]}$.  Using that this resolution is self-dual, one immediately gets that $\uH^{2k+1}(Q)\iso \mZ/I_{[\und{b}]}$ and $\uH^{2k}(Q)=0$.  The proof is completed via the fact that any surjective map of $\mZ$-modules $\mZ\ra \mZ/I_{j}$ must have kernel exactly equal to $I_j$. 
\end{proof}

Suppose $g\in \uH^{2k-\lambda_{\und{b}}}(\Theta_1)$ is the generator mapping to $[\und{b}]$ from the above proof. It follows that this is a degree where the rationalization map $H^{\star}(pt)\to H^{\star}(pt)\otimes \Q$ is injective, and the generator $g$ maps to $\frac{[\und{b}]}{u_{\und{b}}}$. Thus we name this element by its image, writing
\[
g=\frac{[\und{b}]}{u_{\und{b}}}\in \uH^{2k-\lambda_{\und{b}}}(\Theta_1).
\]
This is the unique element of $H^\star$ having the property that multiplying it by $u_{\und{b}}$ gives $[\und{b}]$.
The next result immediately follows:

\begin{prop}
\label{pr:integral-edge-products}
Let $\und{b}=(b_1,\dots, b_k)$ be a sequence of divisors of $n$. For any $1\leq i \leq k$, write $\und{b}\setminus b_i$ for the tuple with $b_i$ removed. The following describes how to multiply any class from the regular portion with a class from the integral edge, assuming the product still lands in the regular portion:
\begin{enumerate}
    \item For any $1\leq i \leq k$, we have that
    \[
    u_{b_i}\cdot \frac{[\und{b}]}{u_{\und{b}}} = \frac{[\und{b}]}{[\und{b}\setminus b_i]} \frac{[\und{b}\setminus b_i]}{u_{\und{b}\setminus b_i}}.
    \]
    Thus multiplication by $u_{b_i}$ on $H^{2k-\lambda_{b_1}-\cdots-\lambda_{b_k}}$ is the map $\Z\to \Z$ given by multiplication by $\frac{[\und{b}]}{[\und{b}\setminus b_i]}$.
    \item  $a_{b_i}\cdot \tfrac{[\und{b}]}{u_{\und{b}}}=0$ for $1\leq i\leq k$.
    \item If $\und{c}=(c_1,\dots, c_j)$ is another sequence of divisors of $n$, then
    \[
    \dfrac{[\und{b}]}{u_{\und{b}}}\cdot \dfrac{[\und{c}]}{u_{\und{c}}} = \dfrac{[\und{b}][\und{c}]}{[\und{b}, \und{c}]}\cdot \dfrac{[\und{b}, \und{c}]}{u_{\und{b}}u_{\und{c}}}.
    \]
    \item If $\alpha$ is a homogeneous element of the torsion cone from a degree where the $\mZ$-module is $\mZ/I_c$ for some $c\lvert [\und{b}]$, then $\alpha \cdot \frac{[\und{b}]}{u_{\und{b}}}=0$.
\end{enumerate}
\end{prop}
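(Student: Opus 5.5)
The plan is to check each part by pushing it into a degree where the relevant classes are detected either by rationalization or by multiplication by a product of $u$-classes. The main tool is Lemma~\ref{lem:uinj}: multiplication by $u_{\und{b}}$ embeds $H^{2k-\lamu{b}}$ into $H^0=\Z$. Throughout we use that $H^\star$ is graded-commutative, and every degree involved here has even fixed dimension, so the $u$-classes commute with everything in sight.

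For (1), set $x=u_{b_i}\cdot \tfrac{[\und{b}]}{u_{\und{b}}}$. It lies in $H^{2(k-1)-\lambda_{\und{b}\setminus b_i}}$, which by Lemma~\ref{lem:uinj} (applied to $\und{b}\setminus b_i$) is $\Z$ and injects into $H^0$ under multiplication by $u_{\und{b}\setminus b_i}$. Then $u_{\und{b}\setminus b_i}\,x=u_{\und{b}}\tfrac{[\und{b}]}{u_{\und{b}}}=[\und{b}]$. Likewise $u_{\und{b}\setminus b_i}$ times the right-hand side equals $\tfrac{[\und{b}]}{[\und{b}\setminus b_i]}[\und{b}\setminus b_i]=[\und{b}]$. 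Injectivity gives equality, and the statement about the map $\Z\to\Z$ follows immediately.

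Part (3) is handled the same way. Both sides lie in $H^{2(k+j)-\lamu{b}-\lamu{c}}$, which injects into $H^0$ under multiplication by $u_{\und{b}}u_{\und{c}}$, and both sides map to $[\und{b}][\und{c}]$.

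Part (2) needs a different argument, because $a_{b_i}\tfrac{[\und{b}]}{u_{\und{b}}}$ lies in degree $2k-\lambda_{\und{b}\setminus b_i}$. This is a torsion-cone degree, where neither rationalization nor $u$-multiplication is injective. I would write $\tfrac{[\und{b}]}{u_{\und{b}}}=\tfrac{[\und{b}]}{[\und{b}\setminus b_i]}\cdot u_{b_i}^{-1}$-free form as follows. First, by the cofiber sequence $\Sigma^2\mZ\xrightarrow{u_{b_i}}S^{\lambda_{b_i}}\to \mZ/I_{b_i}$ boxed with $S^{-\lambda_{\und{b}}}$, classes killed by $a_{b_i}$ are exactly those coming from ... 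Rather than that, the cleanest route is a direct degree-vanishing check. We have $H^{2k-\lambda_{\und{b}\setminus b_i}}=H^{2(k-1)+2-\lambda_{\und{b}\setminus b_i}}$. This is an even degree above the integral edge of a negative cone on $k-1$ classes, that is, $i=2(k-1)+2>2(k-1)$. By Proposition~\ref{pr:negative-cone} (or the vanishing region for the negative cone) this group is zero, so the product vanishes. I should double-check the edge case $k=1$: there the degree is $H^{2}$, which is $0$.

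For (4), let $\alpha$ lie in a degree $2j+1-\lamu{c}$ with module $\mZ/I_c$ and $c\mid[\und{b}]$. The product lands in a negative-cone degree of odd type, $(2j+1+2k)-\lamu{c}-\lamu{b}$. Its module is $\mZ/I_{(\und{c}\cup\und{b};\,j+k)}$, or zero if $j+k$ is out of range, so a direct vanishing argument is not available in general. Instead I would use that $\tfrac{[\und{b}]}{u_{\und{b}}}$ is the image of $1_{[\und{b}]}$-type element. By Lemma~\ref{lem:uinj}, $\uH^{2k-\lamu{b}}\iso I_{[\und{b}]}$, whose $\Theta_1$-generator is $p_*(1)$ for $p\colon\Theta_{[\und{b}]}\to\Theta_1$. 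So $\tfrac{[\und{b}]}{u_{\und{b}}}=p_*(y)$ for some $y$ at $\Theta_{[\und{b}]}$, and Frobenius reciprocity gives $\alpha\cdot p_*(y)=p_*(p^*(\alpha)\,y)$. Since $c\mid[\und{b}]$, the restriction $p^*\alpha\in\mZ/I_c(\Theta_{[\und{b}]})=0$, so the product is zero.

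The main obstacle is making this Frobenius identity precise for the Mackey-ring structure on $\uH^\star$. This should follow from the standard fact that the box-product pairing is a Green-functor pairing, so I expect it to be the only nonformal step. I would also use the same Frobenius argument as an alternative proof of (2), since $p^*a_{b_i}=0$ at $\Theta_{[\und{b}]}$ by the Euler class relation.
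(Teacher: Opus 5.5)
Your proof is correct. Parts (1) and (3) match the paper: you multiply both sides by $u$-classes into $H^0=\Z$ and use the injectivity from Lemma~\ref{lem:uinj}. Part (2) also matches, since $H^{2k-\lambda_{\und{b}\setminus b_i}}=0$. One correction there: this degree lies beyond the integral edge for a string of length $k-1$, i.e.\ in the vanishing range, not in the torsion cone as you first say. The abandoned opening of your argument for (2) should simply be cut.

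For (4) you take a different route, arguing elementwise. You write $\tfrac{[\und{b}]}{u_{\und{b}}}=p_*(y)$ with $y$ at $\Theta_{[\und{b}]}$. Frobenius reciprocity then gives $\alpha\cdot p_*(y)=p_*(p^*(\alpha)\,y)=0$, since $\mZ/I_c$ vanishes at $\Theta_{[\und{b}]}$.

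The paper instead observes that the product lies in the image of the multiplication map $\uH^\beta\bbox\uH^{2k-\lamu{b}}\to\uH^{\beta+2k-\lamu{b}}$. Its domain $\mZ/I_c\bbox I_{[\und{b}]}$ vanishes by (\ref{eq:box-zero}).

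The two arguments are the same mechanism in different packaging. A pairing out of a box product is exactly a family of levelwise pairings satisfying Frobenius reciprocity. So the Frobenius step you flag as the only nonformal one is what the paper uses implicitly when it treats multiplication as a map of $\mZ$-modules out of the box product.

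The paper's version is shorter, given the box-product computations from the prequel. Yours is more hands-on and needs only that $\alpha$ restricts to zero at $\Theta_{[\und{b}]}$. As you note, it also gives (2) directly from the Euler class relation.
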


\begin{proof}
Parts (1) and (3) are immediate from the defining properties of the $\frac{[\und{b}]}{u_{\und{b}}}$ classes.  Part (2) follows for degree reasons, as the group $H^{2k-\lambda_{\und{b}\setminus b_i}}$
equals zero by Proposition~\ref{pr:negative-cone}.
For (4), let $\alpha\in H^\beta(\pt)$.  Then $\alpha\cdot \tfrac{[\und{b}]}{u_{\und{b}}}$ is in the image of  the multiplication map
\[ \uH^\beta \bbox \uH^{2k-\lamu{b}} \llra{\mu} \uH^{2k+\beta-\lamu{b}}.
\]
But the domain is $\mZ/I_c \bbox I_{[\und{b}]}$, which is zero if $c$ divides $[\und{b}]$ by (\ref{eq:box-zero}).  
\end{proof}

\subsection{The gamma classes}
\label{se:gamma}
Our next goal is to understand as much as we can about
the torsion cone. Recall from Proposition~\ref{pr:negative-cone} that for each divisor $b$ we have $H^{3-2\lambda_b}\cong \Z/b$. We first introduce special generators that live in these various ``subtips'' of the negative cone, and then we show these generators are infinitely divisible by certain $a$- and $u$-classes.  This will allow us to conveniently describe many of the products.

Let $Y_b\ra \mZ/I_b$ be our standard free resolution from (\ref{eq:std-free}).  
Define $\Gamma_b$ to be the map in $\cD(\mZ/I_b,\Sigma^3\mZ)=\Ext^3(\mZ/I_b,\mZ)\iso \Z$ represented by the zig-zag
\[ \mZ/I_b \llla{\he} Y_b \lra \Sigma^3 \mZ
\]
where the right arrow is projection onto the component in degree $3$ (this map was called $B_b$ in the introduction).  Composition with $\Gamma_b$ gives a natural operation
\[ \uH^i(X;\mZ/I_b) \ra \uH^{i+3}(X;\mZ),
\]
which we will also call $\Gamma_b$ by abuse.
This can be thought of as a generalized Bockstein operator.  Note that following this procedure in the category $\Ab$ of abelian groups, using a chosen generator of $\Ext^1_{\Ab}(\Z/b,\Z)$, gives the usual integral Bockstein operator $H^i(X;\Z/b)\ra H^{i+1}(X;\Z)$.  

Every map $\mZ/I_c\ra \mZ/I_b$ is multiplication by $\tfrac{b}{(b,c)}\cdot e$ for some $e$.  The following proposition explains how these maps related to the $\Gamma$-operations:

\begin{prop}
\label{pr:Gamma-ident}
Let $E\colon \mZ/I_c\ra  \mZ/I_b$ be multiplication by $\tfrac{b}{(b,c)}e$.  Then for any $x\in H^i(X;\mZ/I_c)$ one has
\[ \Gamma_b(Ex)=\tfrac{c}{(b,c)}e\cdot \Gamma_c(x)
\]
as elements of $H^{3+i}(X;\mZ)$.  
\end{prop}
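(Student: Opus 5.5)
By naturality of the operation $\Gamma$, the statement reduces to an identity in $\cD(\mZ/I_c,\Sigma^3\mZ)=\Ext^3(\mZ/I_c,\mZ)$. Since $\Gamma_b(Ex)=\Gamma_b\circ E\circ x$ and $\Gamma_c(x)=\Gamma_c\circ x$, it suffices to prove
\[ \Gamma_b\circ E=\tfrac{c}{(b,c)}e\cdot \Gamma_c \]
as maps $\mZ/I_c\ra\Sigma^3\mZ$. By linearity in $e$ we may take $e=1$, so $E$ is multiplication by $\tfrac{b}{(b,c)}$.

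The plan is to compute $\Gamma_b\circ E$ by lifting $E$ to a chain map between the standard resolutions $Y_c\ra Y_b$ from (\ref{eq:std-free}), and then reading off the degree-$3$ component. The lift is built degree by degree, using the same tools as in the proof of Proposition~\ref{pr:Sb_to_Sc}.
\begin{itemize}
\item In degree $0$ the map $\mZ\ra\mZ$ must be multiplication by $\tfrac{b}{(b,c)}$.
\item In degree $1$ we take $F_c\ra F_b$ to be $\langle(1,0,\ldots,0)\rangle$, i.e.\ $g_c\mapsto p_*p^*g_b$. As computed before Proposition~\ref{pr:Sb_to_Sc}, composing with $R\pi$ gives multiplication by $\tfrac{[b,c]}{c}=\tfrac{b}{(b,c)}$ on $\mZ$, so the degree-$0$/degree-$1$ square commutes.
\item In degree $2$ we take the same map $\langle(1,0,\ldots,0)\rangle$. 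Since $Rt$ commutes with everything, this commutes with $\id-Rt$; this is also allowed by Proposition~\ref{pr:lifting-lems-2}(b).
\item In degree $3$ we need a map $\mZ\ra\mZ$, i.e.\ multiplication by some integer $N$, with $\langle(1,0,\ldots,0)\rangle\circ I\pi_c=I\pi_b\circ N$.
\end{itemize}

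The degree-$3$ component of the lifted chain map, composed with projection onto degree $3$ of $Y_b$, is $N$ times the projection for $Y_c$. Hence $\Gamma_b\circ E=N\,\Gamma_c$, and the remaining task is to show $N=\tfrac{c}{(b,c)}$.

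This last computation is the main obstacle. The composite $\langle(1,0,\ldots,0)\rangle\circ I\pi_c$ sends $1\in\mZ(\Theta_1)$ to $p_*p^*(\pi_c)_*(1)$. A pullback computation, in the style of Lemma~\ref{le:BZ-lemma} and the proof of Proposition~\ref{pr:u-props}, should show that this element equals the transfer $(\pi_b)_*(\ldots)$ of a sum over $(b,c)$ orbits. I expect this to give $\tfrac{[b,c]}{b}\cdot I\pi_b=\tfrac{c}{(b,c)}\,I\pi_b$.

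As a sanity check that avoids the bookkeeping, we can compare with the underlying abelian groups: at $\Theta_1$ the degree-$3$ map must make $\mZ\ra F_b$ compatible after applying $R\pi$. Alternatively, we can pin down $N$ by dualizing. The resolution is self-dual, and $\uHom(\blank,\mZ)$ turns $\langle(1,0,\ldots,0)\rangle$ into the analogous map $F_b\ra F_c$, whose composite with $R\pi$ is multiplication by $\tfrac{[b,c]}{b}=\tfrac{c}{(b,c)}$, by the same computation as in degree $0$. This gives $N=\tfrac{c}{(b,c)}$ directly.

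Finally, the choice of lift does not matter. Any two chain lifts of $E$ are chain homotopic, so the resulting classes in $\Ext^3$ agree; the degree-$3$ component may differ, but only by boundaries that vanish in $\Ext^3$.
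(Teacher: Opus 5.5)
Your proposal is correct and follows the paper's proof: lift $E$ to a map of standard resolutions $Y_c\ra Y_b$ that is $\tfrac{b}{(b,c)}e$ in degree $0$, $e\cdot p_*p^*$ in degrees $1$ and $2$, and $\tfrac{c}{(b,c)}e$ in degree $3$, then read off the degree-$3$ component. Your dualization argument correctly gives $N=\tfrac{[b,c]}{b}=\tfrac{c}{(b,c)}$, so the hedged ``sum over orbits'' pullback computation is unnecessary; the same value is also immediate from the first triangle of Proposition~\ref{pr:lifting-lems-1}(a) with $x=1$, or from $\pi_c\circ p=\pi_b\circ p$ on $\Theta_{[b,c]}$ together with the fact that restriction followed by transfer along $\Theta_{[b,c]}\ra\Theta_b$ is multiplication by $\tfrac{[b,c]}{b}$.
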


\begin{proof}
We need to show that the composite $\Gamma_b\circ E$ equals the given multiple of $\Gamma_c$.  This can be checked by lifting $E$ to a map between our standard resolutions:
\[ \xymatrix{
0 \ar[r] & \mZ \ar[r]^{I\pi}\ar[d]_{\cdot\frac{c}{(b,c)}e} & F_{c} \ar[r]^{\id-Rt}\ar[d]_{f} & F_{c} \ar[r]^{R\pi}\ar[d]_{f} & \mZ \ar[r]\ar[d]^{\cdot\frac{b}{(b,c)}e} & \mZ/I_c\ar[d]^E \ar[r] & 0 \\
0 \ar[r] & \und{\Z} \ar[r]^{I\pi} & F_{b} \ar[r]^{\id-Rt} & F_{b} \ar[r]^{R\pi} & \und{\Z} \ar[r] & \mZ/I_b \ar[r] & 0 \\
}
\]
where $f(g_{c})=e\cdot (p_{\Theta_{[b,c]}\ra \Theta_{c}})_*(p_{\Theta_{[b,c]}\ra \Theta_{b}})^*(g_{b})$.  From here the result is immediate.
\end{proof}

\begin{remark}
It is convenient to remember the identity from Proposition~\ref{pr:Gamma-ident} as
\[ \Gamma_b(kx)=\tfrac{c}{b}k\cdot \Gamma_c(x).
\]
This is fine as long as one remembers that $kx$ will only make sense as a generic map from $H^i(X;\mZ/I_c)$ to $H^i(X;\mZ/I_b)$ when $k$ is a multiple of $\frac{b}{(b,c)}$, and therefore the expression $\frac{ck}{b}$ is actually an integer. 
\end{remark}

We have a canonical map $S^{\lambda_b}\ra \mZ/I_b$ that quotients out by the terms in positive degrees and is the canonical projection in degree $0$.  This is an element of $H^0(S^{\lambda_b};\mZ/I_b)\iso \Z/b$, and it generates this group.  It is natural to denote it $\bbone_b$.  Taking box products gives us $\bbone_b\bbox \bbone_b \in H^0(S^{\lambda_b}\bbox S^{\lambda_b};\mZ/I_b)$, and then we define
\[ \gamma_b=\Gamma_b(\bbone_b\bbox \bbone_b) \in H^3(S^{\lambda_b}\bbox S^{\lambda_b})=H^{3-2\lambda_b}.
\]

Recall we have generators $u_{[b:c]}\in [S^{\lambda_c}, S^{\lambda_b}]=H^{\lambda_b-\lambda_c}$ that we defined in Section~\ref{se:more-u}. The next result gives some relations between products of these with the $\gamma$-classes:

\begin{prop}
\label{pr:u-gamma}
Let $b$ and $c$ be divisors of $n$. Then
\begin{enumerate}[(a)]
\item $u_{[b\cln c]}\gamma_b=u_{[c\cln b]}\gamma_c$, and
\item $u_{[b\cln c]}^2\gamma_b=\tfrac{[b,c]}{(b,c)}\cdot \gamma_c$.  In particular, if $b|c$ then $u_{[b\cln c]}^2\gamma_b=\tfrac{c}{b}\cdot \gamma_c$.  
\end{enumerate}
\end{prop}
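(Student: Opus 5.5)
The plan is to work directly with the Bockstein description $\gamma_b=\Gamma_b(\bbone_b\bbox\bbone_b)$ and to move the $u$-classes inside the operation $\Gamma$ using naturality together with Proposition~\ref{pr:Gamma-ident}.

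First, precomposition with $u_{[b\cln c]}\colon S^{\lambda_c}\ra S^{\lambda_b}$ commutes with $\Gamma_b$, since $\Gamma_b$ is postcomposition with a fixed map $\mZ/I_b\ra\Sigma^3\mZ$. Hence multiplying $\gamma_b\in H^3(S^{\lambda_b}\bbox S^{\lambda_b})$ by $u_{[b\cln c]}$ amounts to precomposing with $\id\bbox u_{[b\cln c]}\colon S^{\lambda_b}\bbox S^{\lambda_c}\ra S^{\lambda_b}\bbox S^{\lambda_b}$. Remark~\ref{re:maps->H} lets us ignore the left/right and ordering issues. This gives
\[ u_{[b\cln c]}\gamma_b=\Gamma_b\bigl(\bbone_b\bbox(\bbone_b\circ u_{[b\cln c]})\bigr). \]

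The key computation is $\bbone_b\circ u_{[b\cln c]}\in H^0(S^{\lambda_c};\mZ/I_b)$. This map is zero in positive degrees and equals multiplication by $\tfrac{b}{(b,c)}$ on $\mZ$ in degree $0$. So it factors as $E\circ\bbone_c$, where $E\colon\mZ/I_c\ra\mZ/I_b$ is multiplication by $\tfrac{b}{(b,c)}$, which is the case $e=1$ of Proposition~\ref{pr:Gamma-ident}.

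Next I would handle the class $\bbone_b\bbox(E\bbone_c)$, which lands in $\mZ/I_b\bbox\mZ/I_b\bbox\ldots$ after identifying $\mZ/I_b\bbox\mZ/I_b=\mZ/I_b$. To bring both factors to a common target I would use that $\bbone_b\bbox\bbone_c$ lands in $\mZ/I_b\bbox\mZ/I_c\iso\mZ/I_{(b,c)}$ (the $\uTor_0$ computation). Then $\bbone_b\bbox E\bbone_c=E'(\bbone_b\bbox\bbone_c)$ for a suitable map $E'\colon\mZ/I_{(b,c)}\ra\mZ/I_b$, and Proposition~\ref{pr:Gamma-ident} gives
\[ u_{[b\cln c]}\gamma_b=k\,\Gamma_{(b,c)}(\bbone_b\bbox\bbone_c), \]
where $k$ is determined by the multiplier. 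The same computation with $b$ and $c$ swapped gives
\[ u_{[c\cln b]}\gamma_c=k'\,\Gamma_{(b,c)}(\bbone_c\bbox\bbone_b). \]
Part (a) then follows once $k=k'$ and the twist symmetry $\bbone_b\bbox\bbone_c\leftrightarrow\bbone_c\bbox\bbone_b$ are checked. The twist sign is trivial here because the fixed dimensions are even.

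The main obstacle is the bookkeeping of these multipliers and the identification $\mZ/I_b\bbox\mZ/I_c\iso\mZ/I_{(b,c)}$. If this becomes messy, a fallback for (a) is available. Both sides lie in $H^{3-\lambda_b-\lambda_c}\iso\Z/(b,c)$ by Proposition~\ref{pr:negative-cone}, applied via the divisor string $(b,c),[b,c]$. One can then localize at each prime $\ell$, where one of $b(\ell),c(\ell)$ divides the other. If, say, $b(\ell)\mid c(\ell)$, then $u_{[b\cln c]}$ acts on degree-$0$ pieces by the unit-up-to-$\ell$ factor $\tfrac{b}{(b,c)}$, and the comparison reduces to the $e$-factor formula directly.

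For part (b), multiply (a) by $u_{[b\cln c]}$ to get
\[ u_{[b\cln c]}^2\gamma_b=u_{[b\cln c]}u_{[c\cln b]}\gamma_c, \]
and then apply Proposition~\ref{pr:u-props}(4), which gives $u_{[b\cln c]}u_{[c\cln b]}=\tfrac{[b,c]}{(b,c)}$. When $b\mid c$ this factor is $\tfrac{c}{b}$, which is the special case stated.
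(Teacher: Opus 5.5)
Your proposal is correct and is essentially the paper's own proof. The paper draws one commutative diagram showing that both $u_{[b\cln c]}\gamma_b$ and $u_{[c\cln b]}\gamma_c$ equal $\Gamma_{(b,c)}(\bbone_b\bbox\bbone_c)$, using exactly your ingredients: the degree-zero description of $u_{[x\cln y]}$, the identification $\mZ/I_b\bbox\mZ/I_c=\mZ/I_{(b,c)}$, and Proposition~\ref{pr:Gamma-ident}; it then gets (b) just as you do. The multipliers you left open are $k=k'=1$: the map $\mZ/I_{(b,c)}\ra\mZ/I_b$ is multiplication by $\tfrac{b}{(b,c)}$ (the $e=1$ case), so composing $\Gamma_b$ with it gives $\tfrac{(b,c)}{(b,c)}\Gamma_{(b,c)}=\Gamma_{(b,c)}$. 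The paper also avoids the twist by letting $u_{[c\cln b]}$ act on the first box factor, so your localization fallback is not needed.
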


\begin{proof}
Part (b) follows from (a) and the fact that $u_{[b\cln c]}u_{[c\cln b]}=\tfrac{[b,c]}{(b,c)}$ from 
Proposition~\ref{pr:u-props}.  For (a) we examine the following diagram:

\[ \xymatrixcolsep{3.2pc}\xymatrix{
S^{\lambda_c}\bbox S^{\lambda_c} \ar[r]^{\bbone_c\bbox \bbone_c} & \mZ/I_c\bbox \, \mZ/I_c \ar@{=}[r] & \mZ/I_c \ar[r]^{\Gamma_c} & \Sigma^3\mZ \\
S^{\lambda_b}\bbox S^{\lambda_c} \ar[r]^{\bbone_b\bbox \bbone_c} \ar[d]_{\id\,\bbox u_{[b\cln c]}}\ar[u]^{u_{[c\cln b]}\bbox \id} & \mZ/I_b\bbox\, \mZ/I_c \ar[u]_{\frac{c}{(b,c)}\bbox \id} \ar[d]^{\frac{b}{(b,c)}\bbox \id}
 \ar@{=}[r] & \mZ/I_{(b,c)} \ar[u]_{\frac{c}{(b,c)}}\ar[d]^{\frac{b}{(b,c)}}\ar[r]^{\Gamma_{(b,c)}}
 & \Sigma^3\mZ\ar@{=}[d]\ar@{=}[u]\\
S^{\lambda_b}\bbox S^{\lambda_b} \ar[r]_{\bbone_b\bbox \bbone_b} & \mZ/I_b\bbox\, \mZ/I_b \ar@{=}[r] & \mZ/I_b \ar[r]^{\Gamma_b} & \Sigma^3\mZ. 
}
\]
The composite from $S^{\lambda_b}\bbox S^{\lambda_c}$ up and across is equal to $u_{[c\cln b]}\gamma_c$, and the composite down and across is equal to $u_{[b\cln c]}\gamma_b$.  So it will suffice to show that the diagram commutes.

The squares in the left column commute because our chain model for $u_{[x\cln y]}$ is multiplication by $\frac{x}{(x,y)}$ in degree zero (see the definition in Section~\ref{se:more-u}).  The commutativity of the squares in the middle column is evident.  Finally, the two squares in the right column commute by Proposition~\ref{pr:Gamma-ident}.
\end{proof}

\begin{remark} Write $b=\ell_1^{e_1}\cdots \ell_s^{e_s}$ where the $\ell_i$ are distinct primes.  The numbers $b(\hat{\ell_i})$ are relatively prime, therefore we can write $1=\sum_i A_i b(\hat{\ell_i})$ for some $A_i\in \Z$.  Then
\[ \gamma_b=\sum_i A_i b(\hat{\ell_i}) \gamma_b = \sum_i A_i u_{[b(\ell_i):b]}^2\gamma_{b(\ell_i)}.
\]
So every $\gamma_b$ class can be expressed in terms of the corresponding $\gamma$-classes for primes powers.
\end{remark} 

\subsection{Naming classes in the negative torsion cone} We now have preferred generators $\gamma_b\in H^{3-2\lambda_b}$ for each divisor $b$. We next use these classes to pick preferred generators for the rest of the torsion cone. We do this by showing we can always use certain $a$- and $u$-classes to isomorphically move from any spot in the negative cone to a $\gamma$-class.

Let $\und{b}=(b_1,\dots, b_k)$ be a divisor string and let $1\leq s \leq k-1$. Keep in mind that the $b_i$ need not be distinct, and often will not be. Fix another divisor $c|n$ and consider the two maps
\begin{equation}
\label{eq:a-u}
\xymatrix{\uH^{?} \ar[r]^-{\cdot a_{c}} & \uH^{2s+1-\lamu{b}} & \uH^{??}\ar[l]_-{\cdot u_{c}}
}
\end{equation}
(we have suppressed the degrees of the domains only to heighten readability, as they are easy to work out).  
The following result gives a useful range in which these maps are isomorphisms:

\begin{prop}  
\label{pr:iso-regions}
Let $\und{b}$ and $c$ be as above.  
\begin{enumerate}[(a)]
\item Suppose that $c$ can be inserted into $\und{b}$ after $b_s$ in a way that makes a new divisor string.  Then the $\cdot a_c$ map in (\ref{eq:a-u}) is an isomorphism.
\item Suppose that $c$ can be inserted into $\und{b}$ before $b_s$ in a way that makes a new divisor string.  Then the $\cdot u_c$ map in (\ref{eq:a-u}) is an isomorphism.
\end{enumerate}
\end{prop}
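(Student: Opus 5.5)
The approach is to identify both sides of each map in (\ref{eq:a-u}) as the same module $\mZ/I_{b_s}$ using Proposition~\ref{pr:negative-cone}, and then to prove the map is an isomorphism by realizing multiplication by $a_c$ (resp.\ $u_c$) as a map in a cofiber sequence whose third term has controllable homology.

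\textbf{Degree bookkeeping.} For (a), $|a_c|=\lambda_c$, so the domain is $\uH^{2s+1-\lamu{b}-\lambda_c}$. Let $\und{b}'$ be the divisor string obtained by inserting $c$ after $b_s$. Since $c$ sits in position at least $s+1$, we have $(\und{b}';s)=b_s$, and Proposition~\ref{pr:negative-cone} gives
\[ \uH^{2s+1-\lamu{b}-\lambda_c}\iso \mZ/I_{b_s}. \]
This is isomorphic to the target, which is also $\mZ/I_{b_s}$. For (b), $|u_c|=\lambda_c-2$, so the domain is $\uH^{2s+3-\lamu{b}-\lambda_c}$. Now $c$ is inserted at position at most $s$, so $(\und{b}';s+1)=b_s$, and again the domain is $\mZ/I_{b_s}$; the range $1\le s\le k-1$ keeps $s+1\le k$. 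Since both domain and target are abstractly $\mZ/I_{b_s}$, it suffices (by the cyclic-group argument already used for the positive cone) to show each map is surjective, or alternatively injective.

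\textbf{Part (a).} Use the cofiber sequence $\mZ\llra{a_c}S^{\lambda_c}\ra C$, where $C$ is the two-term complex $F_c\llra{\id-Rt}F_c$ in degrees $2,1$. By (\ref{eq:std-free}), $C$ has $\uH_2(C)\iso\mZ$ and $\uH_1(C)\iso I_c$. Box this with $S^{-\lamu{b}}$. Since $b_s\mid c$, every $b_i$ divides $c$, so Proposition~\ref{pr:sphere-box-free} gives $S^{-\lamu{b}}\bbox F_c\he\Sigma^{-2k}F_c$. Hence $S^{-\lamu{b}}\bbox C\he\Sigma^{-2k}C$, whose homology is concentrated in degrees $2-2k$ and $1-2k$. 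In the long exact sequence of $\ucD(\mZ,\Sigma^{2s+1}(\blank))$, the terms flanking the $a_c$-map sit in degrees $-(2s+1)$ and $-(2s+2)$. Because $s\le k-1$, the only potential overlap is $2s+1=2k-1$ or $2s+2=2k-1$, which do not occur. Therefore both flanking terms vanish and $\cdot a_c$ is an isomorphism directly.

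\textbf{Part (b).} Use the cofiber sequence $\Sigma^2\mZ\llra{u_c}S^{\lambda_c}\ra\mZ/I_c$ boxed with $S^{-\lamu{b}}$. We then need $\uH_*(\mZ/I_c\bbox S^{-\lamu{b}})$ near degree $-(2s+1)$, and this is the main obstacle: $c$ divides $b_s$ but need not divide the earlier $b_i$.

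I would reduce to the $\ell$-local case using (\ref{eq:ell-local-fact}) and Proposition~\ref{pr:ell-local-1}, so that all $b_i$ and $c$ are powers of $\ell$. Then I would factor $S^{-\lamu{b}}$ one sphere at a time:
\begin{itemize}
\item For each $b_i$ with $c\mid b_i$ (this includes all $i\ge s$), (\ref{eq:sphere-box}) gives $\mZ/I_c\bbox S^{-\lambda_{b_i}}\he\mZ/I_c$.
\item For each $b_i$ strictly dividing $c$ (these have $i<s$), I would compute via $S^{-\lambda_{b_i}}\he\Sigma^{-2}I_{b_i}$ and the $\uTor(\mZ/I_c,I_{b_i})$ computation of \cite[Proposition 4.27]{DHone}, as in Proposition~\ref{pr:lambda_c-lambda_b}. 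The expected outcome is homology in degrees $-2$ and $0$, both of the form $\mZ/I_{b_i}$ (or $I$-type modules).
\end{itemize}
Iterating this should show that $\mZ/I_c\bbox S^{-\lamu{b}}$ has homology only in degrees between $-2(\#\{i: b_i\text{ strictly divides }c\})$ and $0$. Since fewer than $s$ indices are involved, this range misses degrees $-(2s+1)$ and $-(2s+2)$, and the long exact sequence then gives the isomorphism.

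If the iterated Tor computation proves messy, the fallback is to compute surjectivity directly. I would work in the linear model $L(\und{b}')^*$, where $u_c$ is $I\pi$ on the top cell, and check that the generating class hits a generator of $\mZ/I_{b_s}$ at $\Theta_1$.
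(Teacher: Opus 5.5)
\textbf{Part (a) has a genuine gap.} The argument rests on the claim ``since $b_s\mid c$, every $b_i$ divides $c$,'' and this is false. Inserting $c$ after $b_s$ only forces the entries \emph{before} the insertion point to divide $c$. The entries after it are multiples of $c$ and need not divide it: for example, $\und{b}=(\ell,\ell^3)$, $s=1$, $c=\ell^2$. So Proposition~\ref{pr:sphere-box-free} does not apply, and the cofiber is not a shifted copy of $C$. Those later factors are exactly where the content of (a) lies.

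There are also two bookkeeping slips. First, to obtain the map $\uH^{2s+1-\lamu{b}-\lambda_c}\to\uH^{2s+1-\lamu{b}}$ you must box with $S^{-\lamu{b}-\lambda_c}$, not $S^{-\lamu{b}}$. As written, your sequence contains $\cdot a_c$ \emph{out of} $\uH^{2s+1-\lamu{b}}$. Second, even in your own setup the case $s=k-1$ gives $2s+1=2k-1$, so the claim that no overlap occurs is wrong.

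Here is what a correct argument needs. After the correct box, peel off $S^{-\lambda_{b_1}},\dots,S^{-\lambda_{b_s}}$ and $S^{-\lambda_c}$ using Lemma~\ref{le:suspxb}. This identifies the cofiber with $\Sigma^{-2s-1}S^{-\lambda_{\und{b}''}}\bbox X_c$, where $\und{b}''=(b_{s+1},\dots,b_k)$ is nonempty because $s\le k-1$. You then need $S^{-\lambda_{\und{b}''}}\bbox X_c$ to have no homology in degrees $1$ and $0$. A degree count cannot give this, since that complex itself reaches degree $1$.

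The missing idea is the one in the proof of Proposition~\ref{pr:amulttorcone}. Any chain map from $X_c$ to the linear model $L(\und{b}'')^*$ must vanish, because the first differential $I\pi\colon\mZ\to F_{b_{s+1}}$ is injective. That gives injectivity of $\cdot a_c$. Surjectivity then follows from your own observation that both sides are $\mZ/I_{b_s}$. This is where the hypothesis $s\le k-1$ genuinely enters; in your write-up it appears only in a numerical comparison, which should have been a warning sign.

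\textbf{Part (b) is correct, by a different route from the paper.} It has the same box slip, but here the slip is harmless because $\mZ/I_c\bbox S^{-\lambda_c}\he\mZ/I_c$ by (\ref{eq:sphere-box}). You absorb every factor $S^{-\lambda_{b_i}}$ with $c\mid b_i$. What remains is $\mZ/I_c$ boxed with at most $s-1$ spheres $S^{-\lambda_{b_i}}$. That is literally a complex concentrated in degrees $-2(s-1)$ through $0$, so you need no $\uTor$ computation and no $\ell$-localization. It misses the flanking degrees $-2s$ and $-2s-1$, and the long exact sequence gives the isomorphism directly.

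The paper (Proposition~\ref{pr:umulttorcone}) proceeds differently. It dualizes the third term to chain maps from a linear sphere model into a suspension of $\mZ/I_c$, and kills them because $\mZ/I_c$ vanishes at $\Theta_{b_j}$ whenever $c\mid b_j$. That yields only surjectivity, and injectivity is then deduced from cyclicity. Your degree-range argument is shorter and proves both directions at once.
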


We delay the proof for a moment, but notice that this gives a tool for propagating both generators and relations in $H^{2s+1-\lamu{b}}$ downwards in the negative cone, through the above isomorphisms.  

Iterated use of Proposition~\ref{pr:iso-regions} yields the following diagram. Note from Proposition~\ref{pr:negative-cone} that all of the groups are isomorphic to $\Z/b_s$.

\begin{equation}\label{eq:torsioncone}
\xymatrixcolsep{3.2pc}\xymatrix{
H^{2s+1-\lambda_{\und{b}}}\ar[r]^-{u_{b_1}\cdots u_{b_{s-1}}}_-{\cong} & H^{3-\lambda_{b_s}-\dots - \lambda_{b_{k}}} \ar[r]^-{a_{b_{s+2}}\cdots a_{b_{k}}}_-{\cong} & H^{3-\lambda_{b_s}-\lambda_{b_{s+1}}}& \\
&&\ar[u]^-\cong_-{a_{b_{s}}} H^{3-2\lambda_{b_s}-\lambda_{b_{s+1}}} \ar[r]^-{a_{b_{s+1}}}_-{\cong} & H^{3-2\lambda_{b_s}}.
}
\end{equation}
Taking the preimage of the generator $\gamma_{b_s}\in H^{3-2\lambda_{b_s}}$ across the isomorphisms, we get a generator of $H^{2s+1-\lambda_{\und{b}}}$ that we will denote
\begin{equation}
\label{eq:gamma-class}
\frac{a_{b_s}\gamma_{b_s}}{u_{b_1}\cdots u_{b_{s-1}}a_{b_{s+1}}\cdots a_{b_k}}\in H^{2s+1-\lambda_{\und{b}}}.
\end{equation}
To ease typography in what follows we will sometimes write $u_{\langle b_1,\ldots,b_{s-1}\rangle}$ as an abbreviation for the produect $u_{b_1}\cdots u_{b_{s-1}}$.  (While this doesn't seem like much of an abbreviation, it really does help when parsing complicated formulas.)

Three useful things to remember about the element in (\ref{eq:gamma-class}):
\begin{itemize}
\item The additive order is precisely $b_s$, which is the $\gamma_?$ index in the numerator.
\item The `fixed dimension' of the overall degree is equal to \[ 2\cdot(\text{\# of $u$-classes in denominator})+3.
\]
\item At least one $a$-class must appear in the denominator.  Thus, expressions like $\frac{a_{b_2}\gamma_{b_2}}{u_{b_1}}$ are not meaningful.  However, $\frac{\gamma_b}{u_b}$ is fine because it equals $\frac{a_b\gamma_b}{u_ba_b}$.  
\end{itemize}

\begin{example}
   Suppose we have three divisors $b,c,d$ with $b\mid c \mid d$ and we want to understand all of the groups $H^{*-\lambda_b-\lambda_c-\lambda_d}$. Then we have the following torsion generators
    \[
    \dfrac{a_b\gamma_b}{a_ca_d}\in H^{3-\lambda_b-\lambda_c-\lambda_d}\cong \Z/b \quad \text{and} \quad \dfrac{a_c\gamma_c}{u_ba_d}\in H^{5-\lambda_b-\lambda_c-\lambda_d}\cong \Z/c.
    \]
    The class $\frac{d}{u_bu_cu_d}$ generates $H^{6-\lambda_b-\lambda_c-\lambda_d}\iso \Z$, and $H^{*-\lambda_b-\lambda_c-\lambda_d}=0$ for $*\not\in \{3,5,6\}$. 
    
    When we just have two divisors $b\mid c$, then we have the single torsion generator
    \[
    \frac{a_b\gamma_b}{a_c}\in H^{3-\lambda_b-\lambda_c}\iso \Z/b
    \]
    as well as the nontorsion element $\tfrac{c}{u_bu_c}\in H^{4-\lambda_b-\lambda_c}\iso \Z$.  All other groups $H^{*-\lambda_b-\lambda_c}$ are zero.
\end{example}

To summarize, given a divisor string $\und{b}=(b_1,\dots, b_k)$ we get torsion generators in the negative cone by creating ``valid fractions'' with $\gamma$-, $a$-, and $u$-classes. There is a unique valid fraction for each $1\leq s<k$: the numerator is $a_{b_{s}}\gamma_{b_s}$ and the denominator is the product $u_{b_1}\cdots u_{b_{s-1}}\cdot a_{b_{s+1}}\cdots a_{b_k}$ (so we have $u$-classes for the divisors of $b_s$ and $a$-classes for the numbers divisible by $b_s$). Note that $s<k$, so every valid fraction must have an $a$-class in the denominator. Except we are allowed to ``reduce the fractions'' if we have $b_{s}=b_{s+1}$. That is, we can cancel the $a_{b_s}=a_{b_{s+1}}$ from the numerator and denominator, e.g. for $b\mid c$ we write
\[
\frac{a_c\gamma_c}{u_ba_c}=\frac{\gamma_c}{u_b}.
\]
\begin{remark}
In terms of division, this says we can infinitely divide $\gamma_c$ by $u_b$-classes with $b\mid c$ and $a_d$-classes with $c\mid d$. Also for each pair of divisors $c_1\mid c_2$, we have classes $\frac{a_{c_1}\gamma_{c_1}}{a_{c_2}}$ that are similarly infinitely divisible by $u_b$-classes with $b\mid {c_1}$ and $a_d$-classes with $c_2\mid d$. In both cases the resulting fractions generate the Mackey functor in that degree. Furthermore all degrees in which the negative torsion cone is nonzero can be generated this way.
\end{remark}

We now prove the various isomorphisms from Proposition~\ref{pr:iso-regions}.  These will be instances of the results in Proposition~\ref{pr:umulttorcone} and Proposition~\ref{pr:amulttorcone} below.  

\begin{prop}\label{pr:umulttorcone}
Suppose $\und{b}=(b_1,\dots, b_k)$ is a divisor string of $n$ and $1\leq s \leq k-1$. Then the map $\uH^{2s+1-\lambda_{\und{b}}}\to \uH^{2s-1-\lambda_{\und{b}}+\lambda_{b_i}}$ given by multiplication by $u_{b_i}$ is an isomorphism if $i<s$ and surjective if $i=s$.
\end{prop}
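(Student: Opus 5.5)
The plan is to realize multiplication by $u_{b_i}$ as the map on homology induced by a cofiber sequence, and then to show that the third term of that sequence has no homology in the degrees where it could interfere. Start from the cofiber sequence $\Sigma^2\mZ \llra{u_{b_i}} S^{\lambda_{b_i}} \ra \mZ/I_{b_i}$ and box it with $S^{-\lamu{b}}$. Using the canonical isomorphism $S^{\lambda_{b_i}}\bbox S^{-\lamu{b}}\he S^{-\lamu{b'}}$, where $\und{b}'=\und{b}\setminus b_i$ (cancel via the counit, Proposition~\ref{pr:sphere-invertible}), this gives
\[ \Sigma^2 S^{-\lamu{b}} \lra S^{-\lamu{b'}} \lra C, \qquad C=\mZ/I_{b_i}\dbox S^{-\lamu{b}}. \]
On $\uH_{-(2s-1)}$ the first map is exactly the multiplication by $u_{b_i}$ in the statement (up to the sign conventions of Remark~\ref{re:maps->H}, which do not affect whether it is an isomorphism).

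Next I identify the source and target with Proposition~\ref{pr:negative-cone}. Since $\und{b}$ is a divisor string, the source is $\mZ/I_{b_s}$. The sequence $\und{b}'$ is still a divisor string, and its $(s-1)$st entry is $b_s$ if $i<s$ and $b_{s-1}$ if $i=s$. So the target is $\mZ/I_{b_s}$ when $i<s$ and $\mZ/I_{b_{s-1}}$ when $i=s$. It therefore suffices to show
\[ \uH_{-2s+1}(C)=0 \qquad\text{and}\qquad \uH_{-2s+2}(C)=0 \quad\text{when } i<s. \]
The first vanishing gives surjectivity of the $u_{b_i}$-map in all cases. In the case $i<s$, a surjection $\mZ/I_{b_s}\ra\mZ/I_{b_s}$ is automatically an isomorphism (as in the corollary after Proposition~\ref{pr:Q-Mackey}), so the second vanishing is only a backup.

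To compute $C$, model $S^{-\lamu{b}}$ by $L(\und{b})^*$. Every free summand $F_{b_j}$ with $j\geq i$ satisfies $b_i\mid b_j$, so $F_{b_j}\bbox\mZ/I_{b_i}=0$ by (\ref{eq:box-zero}). Consequently $C$ is $\mZ/I_{b_i}$ boxed with only the top portion of $L(\und{b})^*$: namely $\mZ\ra F_{b_1}\ra F_{b_1}\ra\cdots\ra F_{b_{i-1}}\ra F_{b_{i-1}}$, sitting in degrees $0$ down to $-2(i-1)$. Because $\mZ/I_{b_i}$ is not flat, the box here must be derived. I would instead identify this truncation as a sphere-like object (it is essentially $L(b_1,\ldots,b_{i-1})^*\he S^{-\lambda_{b_1}-\cdots-\lambda_{b_{i-1}}}$, whose homology is $I_{b_{i-1}}$ concentrated in degree $-2(i-1)$) and then use the $\uTor$ computations of \cite[Proposition 4.27]{DHone}. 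These show that $\uTor_*(\mZ/I_{b_i},I_{b_{i-1}})$ is nonzero only in two adjacent even-spaced spots, so $\uH_*(C)$ is concentrated in degrees near $-2(i-1)$. Concretely, I expect nonzero homology only in degrees $-2i+2$ and $-2i$ or so; for $i=1$, $C$ is just $\mZ/I_{b_1}$ in degree $0$.

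Granting this range, $\uH_{-2s+1}(C)=0$ holds because the degree is odd. When $i<s$ we also have $-2s+2<-2i+2$, which should kill the second group as well. The main obstacle is pinning down the exact homological degrees of $C$ correctly, including the shift coming from the derived box and the truncation. If the $\uTor$ bookkeeping proves slippery, my fallback is to work $\ell$-locally and argue via (\ref{eq:ell-local-fact}): compare orders of the source and target groups and check surjectivity directly on the generator $\frac{a_{b_s}\gamma_{b_s}}{\cdots}$ using Proposition~\ref{pr:u-gamma}.
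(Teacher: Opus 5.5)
Your overall plan is sound. Boxing $\Sigma^2\mZ\llra{u_{b_i}}S^{\lambda_{b_i}}\ra\mZ/I_{b_i}$ with $S^{-\lamu{b}}$, reading off source and target from Proposition~\ref{pr:negative-cone}, and reducing to $\uH_{-2s+1}(C)=0$ (plus $\uH_{-2s+2}(C)=0$ for $i<s$) is a correct reduction.

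The gap is the vanishing step itself. You only ``grant'' it, and the justification you sketch is wrong in several places:
\begin{itemize}
\item For $i\geq 3$ the truncated complex $L(b_1,\ldots,b_{i-1})^*$ does not have homology concentrated in degree $-2(i-1)$. By Proposition~\ref{pr:negative-cone} it also carries $\mZ/I_{b_j}$ in degree $-(2j+1)$ for $1\leq j\leq i-2$.
\item Your predicted degrees $-2i+2$ and $-2i$ go the wrong way. $\uTor$ raises homological degree, and $C$ has no chains below degree $-2i+2$. Taken literally, your range would even contradict the vanishing of $\uH_{-2s+2}(C)$ when $s=i+1$.
\item Most importantly, parity is not why $\uH_{-2s+1}(C)$ vanishes, because $C$ can have odd-degree homology. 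Take $\und{b}=(\ell,\ell,\ell^2)$ and $i=3$. At the $\Theta_1$-spot the maps $\id-It$ become zero, and $I\pi R\pi$ becomes multiplication by $\ell$ on $\mZ/I_{\ell^2}(\Theta_\ell)\iso\Z/\ell$. Hence $\uH_{-3}(C)(\Theta_1)\iso\Z/\ell$.
\item Your fallback is circular. The generators $\frac{a_{b_s}\gamma_{b_s}}{u_{b_1}\cdots u_{b_{s-1}}a_{b_{s+1}}\cdots a_{b_k}}$ are defined by pulling $\gamma_{b_s}$ back along the isomorphisms of Proposition~\ref{pr:iso-regions}, and those are proved using exactly the statement at hand.
\end{itemize}

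The repair is already in your own text. Your worry that the box ``must be derived'' is misplaced. $L(\und{b})^*$ is a bounded complex of free, hence flat, modules (flatness of $F_c$ is what drives Proposition~\ref{pr:sphere-box-free}). So the ordinary box product $\mZ/I_{b_i}\bbox L(\und{b})^*$ already computes $\mZ/I_{b_i}\dbox S^{-\lamu{b}}$; the paper uses this repeatedly, e.g.\ in Lemma~\ref{le:suspxb} and the proof of Proposition~\ref{pr:spheres-small-model}. By (\ref{eq:box-zero}) this complex is zero in every degree $\leq -2i+1$, so $\uH_m(C)=0$ for $m\leq -2i+1$. If $i\leq s$ then $-2s+1\leq -2i+1$, which gives surjectivity. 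If $i<s$ then also $-2s+2\leq -2i$, which gives injectivity directly from exactness, with no $\uTor$ bookkeeping.

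Repaired this way, your argument is a complete proof, and it is dual to the paper's. The paper applies cohomology to the same cofiber sequence and rewrites the third term via $(\mZ/I_{b_i})^*\he\Sigma^{-3}\mZ/I_{b_i}$ as $\cD(S^{\lamu{b}-\lambda_{b_i}},\Sigma^{2s-1}\mZ/I_{b_i})$. It kills this group because the degree $2s-1$ term of $L(\und{b}\setminus b_i)$ is $F_{b_{s+1}}$ and $\mZ/I_{b_i}(\Theta_{b_{s+1}})=0$. It checks only the $\Theta_1$-spot, which suffices since the target is generated there, and gets injectivity for $i<s$ by comparing the two copies of $\mZ/I_{b_s}$. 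Your version uses the same mechanism: $\mZ/I_{b_i}$ dies against free modules at levels divisible by $b_i$. But it kills the whole Mackey functor at once and avoids both the duality step and the counting argument.
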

\begin{proof}
Consider the usual cofiber sequence $\Sigma^2 \mZ \overset{u_{b_i}}{\longrightarrow} S^{\lambda_{b_i}} \longrightarrow \mZ/I_{b_i}$. Applying $\uH^{2s+1-\lambda_{\und{b}}+\lambda_{b_i}}$ gives the exact sequence
\[
\xymatrix{
\uH^{2s+1-\lambda_{\und{b}}} \ar[r]^-{u_{b_i}}&\ar[r]\uH^{2s-1-\lambda_{\und{b}}+\lambda_{b_i}}&\uH^{2s+2-\lambda_{\und{b}}+\lambda_{b_i}}(\mZ/I_{b_i}).
}
\]
Write $j=2s+1$. We know by Proposition~\ref{pr:negative-cone} that $\uH^{j-2-\lambda_{\und{b}}+\lambda_{b_i}} \iso \mZ/I_r$ for an appropriate $r$, and hence the $\mZ$-module is generated in the $\Theta_1$-spot.   
So to show the map $u_{b_i}$ is surjective it suffices to show
surjectivity in the $\Theta_1$-spot. To that end we show the rightmost $\mZ$-module is zero in the $\Theta_1$-spot. Start by observing that
\begin{align*}
H^{j+1-\lambda_{\und{b}}+\lambda_{b_i}}(\mZ/I_{b_i})& = 
\cD(\Sigma^{-j-1}\mZ/I_{b_i}, S^{-\lambda_{\und{b\setminus b_i}}})\\
&\cong \cD((S^{-\lambda_{\und{b\setminus b_i}}})^*, (\Sigma^{-j-1}\mZ/I_{b_i})^*)\\ &\cong \cD(S^{\lambda_{\und{b\setminus b_i}}}, \Sigma^{j-2}\mZ/I_{b_i}),
\end{align*}
where the last isomorphism uses that $(\mZ/I_{b_i})^*\he \Sigma^{-3}\mZ/I_{b_i}$.  This group can now be computed by considering chain homotopy classes of maps $S^{\lambda_{\und{b\setminus b_i}}} \to \Sigma^{j-2}\mZ/I_{b_i}$. Using the linear models of spheres from Section~\ref{se:smallmodels}, these are maps of the form (noting $j-2=2s-1$)
\[
\xymatrix{
F_{{b_k}}\ar[r] & F_{{b_k}} \ar[r]& F_{{b_{k-1}}}\ar[r] & \dots\ar[r] & F_{{b_{s+1}}}\ar[r] & F_{{b_{s+1}}} \ar[r]\ar[d]& \dots\\
& & & & & \mZ/I_{b_{i}} &
}
\]
If $i\leq s$ then $b_i\mid b_{s+1}$ and so $\mZ/I_{b_{i}}(\Theta_{b_{s+1}})=0$. Thus all such chain maps are zero, and we conclude multiplication by $u_{b_i}$ is surjective as long as $b_i\lvert b_s$.

Finally note if $i<s$ then $(\und{b};s)=b_s=(\und{b\setminus b_{i}};s-1)$ (since $b_s$ is now the $(s-1)$-st smallest number in the string $\und{b\setminus b_{i}}$). Hence $\uH^{j-\lambda_{\und{b}}}\cong \uH^{j-2-(\lambda_{\und{b}}-\lambda_{b_i})}$, so the domain and codomain of our $\cdot u_{b_i}$ map are isomorphic. Since both of these Mackey functors are cyclic in each spot, surjectivity  implies injectivity. 
\end{proof}

Now we consider multiplication by $a_b$. Define $X_b$ to be the complex $F_{b}\overset{\id-Rt}{\longrightarrow} F_{b}$. Note $\Sigma X_b$ is exactly the cofiber of $a_b\colon \mZ \to S^{\lambda_b}$. To show multiplication by $a_b$ is injective/surjective in certain ranges, we show the cohomology of $X_b$ vanishes. We have the following lemma to help with this:

\begin{lemma}\label{le:suspxb}
For divisors $c\mid b \mid n$ we have $S^{\lambda_c}\bbox X_b\simeq \Sigma^2 X_b$.
\end{lemma}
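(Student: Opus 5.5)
The natural route is Proposition~\ref{pr:sphere-box-free}: $S^{\lambda_c}\bbox F_b\he \Sigma^2F_b$ whenever $c\mid b$. Since $X_b$ is built from two copies of $F_b$, we want this equivalence to be compatible with the differential $\id-Rt$. Concretely, start with the cofiber sequence $\Sigma^2\mZ\llra{u_c} S^{\lambda_c}\ra \mZ/I_c$ and box it with $X_b$. This gives a cofiber sequence
\[ \Sigma^2 X_b \llra{u_c\bbox \id} S^{\lambda_c}\bbox X_b \lra \mZ/I_c\bbox X_b .\]
It then suffices to show that the third term is acyclic. That would make $u_c\bbox\id$ a quasi-isomorphism, which is exactly the claimed $S^{\lambda_c}\bbox X_b\he \Sigma^2X_b$.

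For the acyclicity, note that $X_b$ is a bounded complex of free modules, so boxing with it is already derived and we may compute $\mZ/I_c\bbox X_b$ termwise. Each term is $\mZ/I_c\bbox F_b$. Because $c\mid b$, (\ref{eq:box-zero}) says $F_b\bbox \mZ/I_c=0$. So the complex $\mZ/I_c\bbox X_b$ is literally zero, and in particular contractible. Alternatively, one could argue exactly as in the proof of Proposition~\ref{pr:sphere-box-free}, by writing $X_b$ as the cofiber of the map $\id-Rt\colon F_b\ra F_b$ and applying that proposition to each term. The cofiber-sequence argument above is cleaner, though, because it avoids having to check naturality of the equivalence with respect to $\id-Rt$.

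There is one subtlety: the cofiber sequence $\Sigma^2\mZ\ra S^{\lambda_c}\ra\mZ/I_c$ involves the non-free object $\mZ/I_c$. Boxing a cofiber sequence with the levelwise-free bounded complex $X_b$ still yields a cofiber sequence, since $X_b$ is flat termwise (\cite[Corollary 2.25]{DHone}) and box product with it preserves quasi-isomorphisms and exact triangles. This is the only point requiring care, and I expect no real obstacle.
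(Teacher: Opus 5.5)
Your proof is correct and is essentially the paper's own argument: box the cofiber sequence $\Sigma^2\mZ\to S^{\lambda_c}\to \mZ/I_c$ with $X_b$, then note that $\mZ/I_c\bbox X_b=0$ termwise by (\ref{eq:box-zero}) because $c\mid b$. You also correctly label the first map $u_c$, where the paper's proof has the typo $u_b$.
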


\begin{proof}
Take the cofiber sequence $\Sigma^2 \mZ \overset{u_b}{\longrightarrow} S^{\lambda_c}\longrightarrow \mZ/I_c$ and box with $X_b$. This gives the sequence
\[
\Sigma^2 X_b \to S^{\lambda_c}\bbox X_b \to \mZ/I_c \bbox X_b.
\]
But $c\mid b$ implies $ \mZ/I_c \bbox X_b=0$ by (\ref{eq:box-zero}), and so the equivalence follows.
\end{proof}

\begin{prop}\label{pr:amulttorcone}
Suppose $\und{b}=(b_1,\dots, b_k)$ is a divisor string of $n$ and $1\leq s \leq k-1$. Then the map $\uH^{2s+1-\lambda_{\und{b}}}\to \uH^{2s+1-\lambda_{\und{b}}+\lambda_{b_i}}$ given by multiplication by $a_{b_i}$ is an isomorphism if $i\geq s+1$ and $s+1\neq k$.
\end{prop}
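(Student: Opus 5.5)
The argument mirrors Proposition~\ref{pr:umulttorcone}, with the cofiber sequence $\mZ\llra{a_{b_i}} S^{\lambda_{b_i}}\ra \Sigma X_{b_i}$ replacing the $u$-sequence. Applying $\uH^{2s+1-\lambda_{\und{b}}+\lambda_{b_i}}(\blank)$ to it gives an exact sequence
\[ \uH^{2s+1-\lambda_{\und{b}}+\lambda_{b_i}}(\Sigma X_{b_i}) \ra \uH^{2s+1-\lambda_{\und{b}}} \llra{\cdot a_{b_i}} \uH^{2s+1-\lambda_{\und{b}}+\lambda_{b_i}} \ra \uH^{2s+2-\lambda_{\und{b}}+\lambda_{b_i}}(\Sigma X_{b_i}). \]
It therefore suffices to show that the two outer terms vanish. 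Equivalently, writing $\und{b}'=\und{b}\setminus b_i$, we need the groups
\[ \ucD(\Sigma X_{b_i}, \Sigma^{m} S^{-\lambda_{\und{b}'}}) \iso \ucD(S^{\lambda_{\und{b}'}}\bbox X_{b_i},\Sigma^{m-1}\mZ) \]
to vanish for $m=2s+1$ and $m=2s+2$.

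To compute these, use Lemma~\ref{le:suspxb}. Every $b_j$ with $j\leq s$ (indeed every $j\neq i$ with $j<i$) divides $b_i$, so $S^{\lambda_{b_j}}\bbox X_{b_i}\he\Sigma^2X_{b_i}$ for those factors. The remaining factors have indices $j>i$, so $b_i\mid b_j$. For these I would instead use the linear model: $S^{\lambda_{b_{i+1}}+\cdots+\lambda_{b_k}}\bbox X_{b_i}$. Since $F_{b_i}\bbox F_{b_j}$ is a sum of copies of $F_{b_j}$ when $b_i|b_j$, these factors only contribute terms in degrees $\geq 0$; the point is that no negative-degree terms appear. In any case the first $i-1$ factors give at least $\Sigma^{2(i-1)}$, and every factor $\lambda_{b_j}$ contributes connectivity $0$. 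Better: filter $S^{\lambda_{b_j}}$ for $j>i$ by its cofiber sequence $\Sigma^2\mZ\ra S^{\lambda_{b_j}}\ra \mZ/I_{b_j}$. Then $\mZ/I_{b_j}\bbox X_{b_i}$ is supported in degrees $0$ and $1$ up to the shift $\uTor$, and the cohomology $\ucD(\blank,\Sigma^{m-1}\mZ)$ of $X_{b_i}$-type complexes lives only in degrees $0,1$ (plus degree $3$ for torsion modules via $\uExt^3$). The cleanest route is probably this: $S^{\lambda_{\und{b}'}}\bbox X_{b_i}$ has homology concentrated in degrees from $2(i-1)$ to $2(k-1)+1$, and the $\uExt$ of it against $\mZ$ is concentrated in a range determined by those homology modules.

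The main obstacle is the degree bookkeeping: showing that $\uH^{m-1}(S^{\lambda_{\und{b}'}}\bbox X_{b_i})=0$ for $m-1=2s,2s+1$. This is exactly where the hypotheses $i\geq s+1$ and $s+1\neq k$ must enter. My first attempt would be to use the small model for $S^{\lambda_{\und{b}'}}$ and box it directly with $X_{b_i}$, splitting off the factors $b_1,\dots,b_{i-1}$ (all dividing $b_i$) as a $\Sigma^{2(i-1)}$ shift. This leaves $\Sigma^{2(i-1)}S^{\lambda_{b_{i+1}}+\cdots+\lambda_{b_k}}\bbox X_{b_i}$. Its homology I would get from the cofiber sequences, which give $X_{b_i}$-homology $\mZ$-type pieces, since $\uH_*(X_{b_i})$ is $I$-type in degree $1$ and $\mZ/I_{b_i}$-type in degree $0$, in degrees $2(i-1)+2r$ and $+1$. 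I would then run the universal coefficient spectral sequence, using $\uExt^{*}(\mZ/I_d,\mZ)$ concentrated in degree $3$ and $\uExt^*(I_d,\mZ)$ in degree $0$ and $2$. From this I would check that nothing lands in total degrees $2s,2s+1$ when $s<i$, while taking care of the edge case $i=k=s+1$, which is excluded. If the spectral sequence bookkeeping becomes messy, the fallback is the direct chain-level argument used in Proposition~\ref{pr:umulttorcone}: with linear models, show that every relevant chain map must land in a value $\mZ(\Theta_{b})$-component that is forced to be killed by $\id-Rt$ or its dual $\id-It$.
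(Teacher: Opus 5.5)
\paragraph{Gap.} Your setup is sound. The exact sequence coming from $\mZ\llra{a_{b_i}}S^{\lambda_{b_i}}\ra\Sigma X_{b_i}$ is correct. So is the reduction to showing that $\uH^{2s}$ and $\uH^{2s+1}$ of $S^{\lambda_{\und{b}'}}\bbox X_{b_i}$ vanish, and so is the use of Lemma~\ref{le:suspxb} to split off $\Sigma^{2(i-1)}$. For $i\geq s+2$ this already finishes the argument by connectivity, because $S^{\lambda_{b_{i+1}}+\cdots+\lambda_{b_k}}\bbox X_{b_i}$ is a complex of frees in degrees $\geq 0$.

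The only case with content is $i=s+1$. There you need $\uH^0(Z)=\uH^1(Z)=0$ for $Z=S^{\lambda_{b_{s+2}}+\cdots+\lambda_{b_k}}\bbox X_{b_{s+1}}$. The bottom of $Z$ sits exactly in degree $0$, so connectivity gives nothing, and this is precisely where the hypothesis $s+1\neq k$ has to be used. Your proposal never carries out this step (``I would check''). The route you sketch for it is also unreliable as written:
\begin{itemize}
\item The homology of $X_b$ is misidentified. In fact $\uH_1(X_b)\iso\mZ$ (the image of $I\pi$) and $\uH_0(X_b)\iso I_b$ (the image of $R\pi$), not the other way around.
\item Even with the correct input, $\uH_0(Z)\iso\mZ/I_{b_{s+2}}\bbox I_{b_{s+1}}$. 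This is a torsion module whose $\Theta_1$-value is $\Z/(b_{s+2}/b_{s+1})$. You would still have to show that $\uExt^1(\uH_0(Z),\mZ)$ vanishes or is hit by a differential, and none of the $\uExt$ computations you cite covers this.
\end{itemize}

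\paragraph{Missing idea.} Do not move all of $S^{\lambda_{\und{b}'}}$ to the source side. Move only $b_1,\ldots,b_s$: these divide $b_i$, so they collapse to $\Sigma^{2s}$ by Lemma~\ref{le:suspxb}. Keep $\und{b}''=(b_{s+1},\ldots,b_k)\setminus b_i$ on the target side. The left-hand term then becomes $[X_{b_i},S^{-\lambda_{\und{b}''}}]$.

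Since $s+1\neq k$, the string $\und{b}''$ is nonempty. Hence the linear model of $S^{-\lambda_{\und{b}''}}$ begins with $\mZ\llra{I\pi}F_{?}$, and $I\pi$ is injective. A chain map out of $X_{b_i}$ (which lives in degrees $0$ and $1$) has only one possibly nonzero component, $F_{b_i}\ra\mZ$ in degree $0$. That component must be annihilated by $I\pi$, so it is zero. Equivalently, by Proposition~\ref{pr:negative-cone}, $S^{-\lambda_{\und{b}''}}$ has homology only in degrees $\leq -2$.

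This gives injectivity of $\cdot a_{b_i}$. Surjectivity then comes for free, because the source and target are both $\mZ/I_{b_s}$ by Proposition~\ref{pr:negative-cone}. So you never need the right-hand term of your sequence. (It does vanish too, by a similar chain-level argument using $\ker(\id-It)=\im(I\pi)$.) Your chain-level fallback is in the right spirit. But the mechanism is the injectivity of $I\pi$, and that only becomes visible once the factors divisible by $b_i$ are kept on the target side.
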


Note when $s=k-1$ multiplication by $a_{b_i}$ will be zero for degree reasons.

\begin{proof}[Proof of Proposition~\ref{pr:amulttorcone}]
Consider the cofiber sequence $\mZ\overset{a_{b_i}}{\longrightarrow} S^{\lambda_{b_i}}\longrightarrow \Sigma X_{b_i}$. Write $j=2s+1$. To show multiplication by $a_{b_i}$ is injective it suffices to show $H^{j-\lambda_{\und{b}}+\lambda_{b_i}}(\Sigma X_{b_i})=0$. Let $b'=(b_1,\dots, b_{s})$ and $b''=(b_{s+1},\dots, b_k)\setminus b_i$. We have that
\begin{align*}
H^{j-\lambda_{\und{b}}+\lambda_{b_i}}(\Sigma X_{b_i})&=[\Sigma^{1-j}X_{b_i}, S^{-\lambda_{\und{b}}+\lambda_{b_i}}] =[\Sigma^{1-j+\lambda_{\und{b}'}}X_{b_i}, S^{-\lambda_{\und{b}''}}].
\end{align*}
Every divisor in $\und{b}'$ divides $b_i$ so by Lemma~\ref{le:suspxb} we have that $\Sigma^{1-j+\lambda_{\und{b}'}}X_{b_i}\simeq \Sigma^{1-j+2s}X_{b_i}=X_{b_i}$. Thus we need to consider maps $X_{b_i}\ra S^{-\lambda_{\und{b}''}}$, which look like
\[
\xymatrix{
X_{b_i}: \ar[d] && F_{{b_i}} \ar[r]^-{\id-Rt} & F_{{b_i}}\ar[d] & &\\
S^{-\lambda_{\und{b}''}}: && & \und{\Z} \ar[r]^-{I\pi }& F_{{?}} \ar[r] & \dots \
}
\]
Observe the bottom complex is not just $\mZ$ because either $i>s+1$ and so $?=s+1$, or $i=s+1$ which implies $?=s+2$ (recall $s+1<k$ by assumption). The map $I\pi$ is injective so there are no nonzero chain maps. We conclude $[\Sigma^{1-j+\lambda_{\und{b}'}}X_{b_i}, S^{-\lambda_{\und{b}''}}]=0$. Thus multiplication by $a_{b_i}$ is injective. Finally note $\uH^{2s+1-\lambda_{\und{b}}}\cong \uH^{2s+1-\lambda_{\und{b}}+\lambda_{b_i}}\cong \mZ/I_{b_s}$ and so injectivity implies surjectivity.
\end{proof}

\subsection{Products in the negative cone}
We have so far produced three kinds of classes in $H^\star$:
\begin{itemize}
\item The $a$- and $u$- classes in the positive cone;
\item The classes $\tfrac{[\und{b}]}{u_{\und{b}}}$ on the integral edge of the negative cone;
\item The $\gamma$-classes in the torsion part of the negative cone.  
\end{itemize}
We have treated products of the first two types in Proposition~\ref{pr:integral-edge-products}.  
The following result lists some rules for products involving the third type.  Cases marked with a question mark are ones where we do not know a general rule.  

\begin{prop}
\label{pr:neg-cone-products}
Let $(b_1,\ldots,b_k)$ be a divisor string and $1\leq s<k$. 

\begin{enumerate}[(1)]
\item The product of two $\gamma$-classes is always zero.
\item $\frac{[\und{d}]}{u_{\und{d}}}\cdot \frac{a_{b_s}\gamma_{b_s}}{u_{\langle \ldots,b_{s-1}\rangle}a_{\langle b_{s+1},\ldots\rangle}}=
\begin{cases}
[\und{d}]\cdot \tfrac{a_{b_s}\gamma_{b_s}}{u_{\und{d}}u_{\langle \ldots,b_{s-1}\rangle}a_{\langle b_{s+1},\ldots\rangle}} & \text{if (**)},\\
0 & \text{if $b_s$ divides $[\und{d}]$},\\
?? & \text{other cases.}
\end{cases}
$

\noindent
where (**) is ``$\und{d}$ interlaces with $b_1,\ldots,b_{s-1},b_s$ to make a divisor string ending in $b_s$''.
\item  
$u_{c}\cdot \frac{a_{b_s}\gamma_{b_s}}{u_{\langle \ldots, b_{s-1}\rangle } a_{\langle b_{s+1},\ldots \rangle}}
= 
\begin{cases}
\tfrac{a_{b_s}\gamma_{b_s}}
{u_{\langle \ldots,\widehat{b}_i,\ldots b_{s-1}\rangle } a_{\langle b_{s+1},\ldots \rangle}} & \text{if $c=b_i$ for $i\leq s-1$}\\[0.1in]
\frac{b_i}{b_s} \cdot \frac{a_{b_{s-1}}\gamma_{{b_{s-1}}}}{u_{\langle \ldots, b_{s-2}\rangle}a_{\langle b_s,\ldots,\widehat{b}_i,\ldots\rangle}} & \text{if $c=b_i$ for $i\geq s+1$},\\
?? & \text{other cases}.
\end{cases}
$ \\[0.1in]
\item 
$a_{c}\cdot \frac{a_{b_s}\gamma_{b_s}}{u_{\langle \ldots b_{s-1}\rangle} a_{\langle b_{s+1},\ldots\rangle}}=
\begin{cases}
\tfrac{a_{b_s}\gamma_{b_s}}
{u_{\langle \ldots b_{s-1}\rangle} a_{\langle b_{s+1},\ldots,\widehat{b}_i,\ldots \rangle}}
& \text{if $c=b_i$ for $i\geq s+1$}, \\[0.1in]
\frac{b_{s+1}}{b_i} \cdot \frac{a_{b_{s+1}}\gamma_{{b_{s+1}}}}{u_{\langle \ldots,\widehat{b}_i,\ldots, b_s\rangle}
a_{\langle b_{s+2},\ldots \rangle}}
& \text{if $c=b_i$ for $i\leq s-1$}, \\
?? & \text{other cases.}
\end{cases}
$
\end{enumerate}
\end{prop}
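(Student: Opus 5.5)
The strategy is to reduce every product to a computation along the chain of isomorphisms (\ref{eq:torsioncone}) that defines the negative $\gamma$-classes. Since each such class is \emph{defined} as the preimage of $\gamma_{b_s}$ under a composite of $a$- and $u$-multiplications, it is enough to multiply the candidate product back down by those same $a$- and $u$-classes and compare with a known element, such as a multiple of some $\gamma_b$ or $0$. This works whenever the relevant multiplication maps are isomorphisms, which is exactly what Propositions~\ref{pr:umulttorcone} and \ref{pr:amulttorcone} (via Proposition~\ref{pr:iso-regions}) supply. Commutativity up to sign from (\ref{eq:skew-comm}) and Remark~\ref{re:maps->H} lets us reorder factors freely.

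For (1), the product of two $\gamma$-classes has fixed dimension $3+3+2(\#u)$ against $-\lambda$'s from two divisor strings. I would show that this degree falls outside the torsion range of Proposition~\ref{pr:negative-cone}: after using $\chi$-units to pass to the associated divisor string, the degree has the form $j-\lambda_{\und{e}}$ with $j$ even and $j<2\,\len(\und{e})$. Such degrees vanish by Proposition~\ref{pr:negative-cone}, so the product is zero.

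For (3) in the case $c=b_i$ with $i\le s-1$, multiply the claimed answer by $u_{b_i}$. The relation is then just the definition, using that $\cdot u_{b_i}$ is an isomorphism by Proposition~\ref{pr:umulttorcone}. The same holds for (4) with $i\ge s+1$, using Proposition~\ref{pr:amulttorcone}. When $s+1=k$ that map is not available, so there I would instead check directly against $\gamma_{b_s}$.

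The cross cases need more work. These are (3) with $i\ge s+1$, which converts a $u$ into an index shift downward, and (4) with $i\le s-1$. For these I would push both sides down the isomorphism chain to the tip $H^{3-2\lambda_b}$. There the comparison becomes $u_{[b_{s-1}:b_s]}^{2}\gamma_{b_{s-1}}=\tfrac{b_s}{b_{s-1}}\gamma_{b_s}$ from Proposition~\ref{pr:u-gamma}(b), combined with Proposition~\ref{pr:u-props}(1),(3) to convert $u_{b_i}$ and $a$'s into $u_{[\cdot:\cdot]}$ and $a$'s with integer factors. The factors $\tfrac{b_i}{b_s}$ and $\tfrac{b_{s+1}}{b_i}$ should arise from these conversions, and this is where care is needed.

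For (2), the second case follows from Proposition~\ref{pr:integral-edge-products}(4). In case (**), multiply both sides by $u_{\und{d}}$. The left side becomes $[\und{d}]$ times the $\gamma$-class, and the right side does too, because multiplication by $u_{\und{d}}$ is an isomorphism by iterating Proposition~\ref{pr:umulttorcone}; the interlacing hypothesis ensures each $d_j$ lies before $b_s$ in the merged string.

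I expect the main obstacle to be the bookkeeping of the integer factors in the cross cases of (3) and (4). In particular, one must verify that each intermediate multiplication is an isomorphism rather than merely a surjection, since the edge case $i=s$ of Proposition~\ref{pr:umulttorcone} is only surjective. Some degrees may therefore have to be reached through a different route around the chain.
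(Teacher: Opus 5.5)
Your plan is correct and is essentially the paper's proof. You get (1) from Proposition~\ref{pr:negative-cone}, and your observation that the target group actually vanishes is slightly sharper than the paper's ``zero or torsion-free''. You get the easy cases of (2)--(4) from the defining isomorphism chain and Proposition~\ref{pr:integral-edge-products}(4). You get the cross cases by transporting to the (sub)tips and combining the $u_{[\cdot:\cdot]}$ arithmetic of Proposition~\ref{pr:u-props} with Proposition~\ref{pr:u-gamma}; the paper packages this step as Propositions~\ref{pr:u-subtips} and~\ref{pr:u-gamma-2}.

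One small correction: in (4) with $s+1=k$ there is nothing to check against $\gamma_{b_s}$. The product lands in $H^{2k-1-\lambda_{b_1}-\cdots-\lambda_{b_{k-1}}}=0$, and the displayed right-hand side is not a valid fraction there, since its denominator has no $a$-class.
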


The proof of Proposition~\ref{pr:neg-cone-products} breaks up into two pieces.  The two identities that involve scalar factors of the form $\tfrac{b_i}{b_j}$ are harder than the others and will require more tools than we have so far.  
But let us go ahead and take care of the easy parts:

\begin{proof}[Proof of Proposition~\ref{pr:neg-cone-products}, Part 1]
For (1), observe that the $\gamma$-classes are all in degrees where the fixed dimension is odd.  So the product of two of them lives in a degree where the fixed dimension is even.  By Proposition~\ref{pr:negative-cone} the groups of the negative cone in such degrees are either zero or torsion-free, and so the product must be zero.  

For (2), in the first case the class is uniquely determined by what happens when we multiply by $u_{\und{d}}u_{\langle b_1,\ldots,b_{s-1}\rangle}$, and one readily checks that the two sides of the equation give the same answer after such multiplication.  In the second case the result is by Proposition~\ref{pr:integral-edge-products}(4).  

The first cases in both (3) and (4) follow by reasoning similar to the previous paragraph, where one simply refers to the defining properties of the classes involved. 
\end{proof}

It remains to prove the `hard' cases of Proposition~\ref{pr:neg-cone-products}(3) and (4).  We pause to develop some preliminary results.  

Recall we have the classes $\gamma_b\in H^{3-2\lambda_b}$ and, when $b|c$, $\tfrac{a_b\gamma_b}{a_c}\in H^{3-\lambda_b-\lambda_c}$.  Call all of these classes the \dfn{subtips of the negative cone}.  Our next task is to establish how to move among these subtips using the $u$-classes.  

\begin{prop}\label{pr:u-subtips} Assume $b|c$. Then
\begin{enumerate}[(a)]
\item $u_{[b:c]}\gamma_b=\tfrac{a_b\gamma_b}{a_c}$,
\item $u_{[c:b]}\tfrac{a_b\gamma_b}{a_c}=\tfrac{c}{b}\gamma_b$,
\item $u_{[c:b]}\gamma_c=\tfrac{a_b\gamma_b}{a_c}$,
\item $u_{[b:c]}\tfrac{a_b\gamma_b}{a_c}=\tfrac{c}{b}\gamma_c$,
\item ${a_b^2}\cdot \tfrac{\gamma_b}{a_c^2}=\tfrac{c}{b} \gamma_c$.
\end{enumerate}
\end{prop}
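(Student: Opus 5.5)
The plan is to derive every identity from three inputs already proved: the defining property of the fraction notation (the isomorphisms in (\ref{eq:torsioncone})), the relation $u_{[b\cln c]}a_c=\tfrac{b}{(b,c)}a_b$ from Proposition~\ref{pr:u-props}(1), and Proposition~\ref{pr:u-gamma}. Since $b|c$ we have $\tfrac{b}{(b,c)}=1$, so
\[
u_{[b\cln c]}\,a_c=a_b .
\]
Throughout, signs from reordering factors should be harmless: $a$- and $u$-classes and the $u_{[x\cln y]}$ have fixed dimension $0$, so by (\ref{eq:skew-comm}) they commute with everything.

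First I unwind the notation. For the divisor string $(b,c)$ with $s=1$, diagram (\ref{eq:torsioncone}) defines $\tfrac{a_b\gamma_b}{a_c}$ as $a_b\cdot y$, where $y\in H^{3-2\lambda_b-\lambda_c}$ is the unique class with $a_c y=\gamma_b$; uniqueness comes from Proposition~\ref{pr:amulttorcone}. Then (a) is a one-line computation:
\[
u_{[b\cln c]}\gamma_b=u_{[b\cln c]}a_c\,y=a_b\,y=\tfrac{a_b\gamma_b}{a_c}.
\]
For (b), I multiply (a) by $u_{[c\cln b]}$ and use Proposition~\ref{pr:u-props}(4), which gives $u_{[c\cln b]}u_{[b\cln c]}=\tfrac{[b,c]}{(b,c)}=\tfrac cb$. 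For (c), Proposition~\ref{pr:u-gamma}(a) gives $u_{[c\cln b]}\gamma_c=u_{[b\cln c]}\gamma_b$, and this equals $\tfrac{a_b\gamma_b}{a_c}$ by (a). For (d), I apply $u_{[b\cln c]}$ to (a) to get $u_{[b\cln c]}^2\gamma_b$, which is $\tfrac cb\gamma_c$ by Proposition~\ref{pr:u-gamma}(b).

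For (e), I interpret $\tfrac{\gamma_b}{a_c^2}$ via the divisor string $(b,b,c,c)$ with $s=1$, as the reduced fraction $\tfrac{a_b\gamma_b}{a_ba_ca_c}\in H^{3-2\lambda_b-2\lambda_c}$. Chasing (\ref{eq:torsioncone}) should show that it is the unique class $z$ with $a_c^2z=\gamma_b$, again using the isomorphisms of Proposition~\ref{pr:amulttorcone}. Then
\[
a_b^2 z=(u_{[b\cln c]}a_c)^2z=u_{[b\cln c]}^2\,a_c^2 z=u_{[b\cln c]}^2\gamma_b=\tfrac cb\,\gamma_c,
\]
again by Proposition~\ref{pr:u-gamma}(b).

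The only real care point is this bookkeeping step: confirming that the fraction notation in (a) and (e) really means ``the unique preimage of $\gamma_b$ under multiplication by the $a_c$'s, followed by multiplication by $a_b$ where appropriate.'' That requires checking that the composite isomorphisms in (\ref{eq:torsioncone}) for these particular short strings reduce to exactly that description. Everything else is formal manipulation of the cited identities.
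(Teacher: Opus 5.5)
Your proof is correct and follows essentially the paper's route: (a) comes from $u_{[b\cln c]}a_c=a_b$ together with the $a$-class isomorphisms of Proposition~\ref{pr:amulttorcone}, (c) from (a) and Proposition~\ref{pr:u-gamma}(a), and (b), (d) then follow formally. For (e), the paper chains fraction manipulations through (c) and the auxiliary class $\tfrac{\gamma_c}{a_c}$, whereas you write $a_b^2=u_{[b\cln c]}^2a_c^2$ and apply Proposition~\ref{pr:u-gamma}(b) directly; this is a slightly cleaner variant, and your reading of $\tfrac{\gamma_b}{a_c^2}$ through the string $(b,b,c,c)$ as the unique $z$ with $a_c^2z=\gamma_b$ checks out against (\ref{eq:torsioncone}).
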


The following diagram summarizes the above relations.

\[ \xymatrixcolsep{5pc}\xymatrix{
\gamma_b \ar@/^3ex/[r]^{u_{[b:c]}}& \tfrac{a_b\gamma_b}{a_c}\ar@/^3ex/[l]^{\fbox{$\scriptstyle{u_{[c:b]}}$}_{\frac{c}{b}}} \ar@/_3ex/[r]_{\fbox{$\scriptstyle{u_{[b:c]}}$}_{\frac{c}{b}}} & \gamma_c \ar@/_3ex/[l]_{u_{[c:b]}}
}
\]
The boxes indicate that an extra coefficient is involved, which is given outside the box.

\begin{proof}[Proof of Proposition~\ref{pr:u-subtips}]
First note that (a) implies (b) and (c) implies (d), both using that $u_{[b:c]}u_{[c:b]}=\tfrac{c}{b}$.  So we only need to prove (a) and (c).

For (a) we just compute that
\[ u_{[b:c]}\cdot \tfrac{\gamma_b}{a_b}\cdot a_c=u_{[b:c]}a_c\cdot \tfrac{\gamma_b}{a_b}=a_b\cdot \tfrac{\gamma_b}{a_b}=\gamma_b.
\]
Therefore $u_{[b:c]}\tfrac{\gamma_b}{a_b}=\tfrac{\gamma_b}{a_c}$, and mutiplying both sides by $a_b$ gives the desired relation.  

Part (c) is immediate from (a) and Proposition~\ref{pr:u-gamma}(a).

Finally, for (e) we just compute:
\[ a_b^2\cdot \tfrac{\gamma_b}{a_c^2}=a_b\cdot \tfrac{a_b\gamma_c}{a_c^2}=a_b\cdot \tfrac{ \left( \tfrac{a_b\gamma_c}{a_c}\right) }{a_c} = a_b\cdot \tfrac{u_{[c:b]}\gamma_c}{a_c}=a_bu_{[c:b]}\cdot \tfrac{\gamma_c}{a_c}=\tfrac{c}{b}a_c\cdot \tfrac{\gamma_c}{a_c}=\tfrac{c}{b}\gamma_c.
\]
\end{proof}

Here are two other useful facts:

\begin{prop} 
\label{pr:u-gamma-2}
Suppose that $b\lvert c$ and $c \lvert d$.  Then
\begin{enumerate}[(a)]
\item $u_{[b\cln c]}\cdot \frac{\gamma_d}{u_b}=\tfrac{c}{b}\cdot \tfrac{\gamma_d}{u_c}$,
\item  $u_{[d\cln b]}\cdot \tfrac{a_c\gamma_c}{a_d}=\tfrac{d}{c}\cdot \tfrac{a_b\gamma_b}{a_c}$.  
\end{enumerate}
\end{prop}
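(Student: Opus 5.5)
Both parts are checked by multiplying through by a class that is an isomorphism on the target group, which reduces each claim to identities already established: Proposition~\ref{pr:u-subtips}, Proposition~\ref{pr:u-props}, and the defining property of the fractional notation.

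\textbf{Part (a).} Both sides lie in $H^{3+\lambda_c-2\lambda_d-2\lambda_b}\cdot$ (after bookkeeping), that is, in the degree of $\frac{\gamma_d}{u_c}$. By Proposition~\ref{pr:umulttorcone}, multiplication by $u_c$ is an isomorphism from the group containing $\frac{\gamma_d}{u_c}$ onto $H^{3-2\lambda_d}\cong\Z/d$. The strings are $(c,d,d)$ with $s=2$, so we are in the case $i<s$.

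So it suffices to multiply both sides by $u_c$ and compare.
\begin{itemize}
\item On the right, the defining property of the notation gives $\frac{c}{b}\gamma_d$.
\item On the left, we get $u_{[b:c]}u_c\cdot\frac{\gamma_d}{u_b}$. Proposition~\ref{pr:u-props}(3) gives $u_{[b:c]}u_c=\frac{c}{(b,c)}u_b=\frac{c}{b}u_b$, using $b|c$.
\item Hence the left side becomes $\frac{c}{b}u_b\frac{\gamma_d}{u_b}=\frac{c}{b}\gamma_d$.
\end{itemize}
The two sides agree. The one point to verify is that commuting $u_{[b:c]}$ past $u_c$ causes no sign, and this holds by (\ref{eq:skew-comm}) because $u$-classes have even fixed dimension.

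\textbf{Part (b).} The target is $H^{3-\lambda_b-\lambda_c}\cong\Z/b$. By Proposition~\ref{pr:amulttorcone}, multiplication by $a_c$ does not immediately give an isomorphism here, since that case was $s+1=k$. So I would instead reduce to the subtip relations of Proposition~\ref{pr:u-subtips}.

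\begin{enumerate}
\item Factor $u_{[d:b]}$ in terms of $u_{[d:c]}u_{[c:b]}$. Proposition~\ref{pr:u-props}(2) with $b|c|d$ gives
\[ u_{[d:c]}u_{[c:b]}=\tfrac{[b,c,d]}{[b,d]}\cdot\tfrac{(b,d)}{(b,c,d)}\,u_{[d:b]}=u_{[d:b]}. \]
\item Apply Proposition~\ref{pr:u-subtips}(b) to the pair $c|d$, which gives $u_{[d:c]}\frac{a_c\gamma_c}{a_d}=\frac{d}{c}\gamma_c$.
\item Apply Proposition~\ref{pr:u-subtips}(c) to the pair $b|c$, which gives $u_{[c:b]}\gamma_c=\frac{a_b\gamma_b}{a_c}$.
\end{enumerate}
Chaining these, the left side equals $u_{[c:b]}\cdot\frac{d}{c}\gamma_c=\frac{d}{c}\frac{a_b\gamma_b}{a_c}$, as required.

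The main care needed is consistency of the orientation of the $u_{[x:y]}$ maps with their domain and target spheres: $u_{[d:c]}$ sends $S^{\lambda_c}\to S^{\lambda_d}$ and so changes $-\lambda_d$ into $-\lambda_c$. Associativity and commutativity of the products follow from Remark~\ref{re:maps->H}, since all classes involved are $u$-classes of even fixed dimension multiplied against a single torsion class.
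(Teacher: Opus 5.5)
Your proof is correct and follows the paper's argument: in (a) you multiply both sides by $u_c$, use Proposition~\ref{pr:u-props}(3) and the injectivity of $\cdot u_c$ from Proposition~\ref{pr:umulttorcone}; in (b) you factor $u_{[d:b]}=u_{[d:c]}u_{[c:b]}$ and apply Proposition~\ref{pr:u-subtips}(b),(c), exactly as the paper does. The only slip is the degree you first write in (a): both sides actually lie in $H^{5-\lambda_c-2\lambda_d}$, which is what your string $(c,d,d)$ with $s=2$ gives anyway, so the argument is unaffected.
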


\begin{proof}
For (a) start with $u_c\cdot u_{[b\cln c]}\tfrac{\gamma_d}{u_b}=\tfrac{c}{b} u_b\cdot \tfrac{\gamma_d}{u_b}=\tfrac{c}{b}\gamma_d=u_c\cdot \tfrac{c}{b}\frac{\gamma_d}{u_c}$,
where the first equality is by Proposition~\ref{pr:u-props}.
The desired identity follows from the fact that we are in the range where multiplication by $u_c$ is an isomorphism.

For (b) we write
\[ u_{[d\cln b]}\tfrac{a_c\gamma_c}{a_d}=u_{[d\cln c]}u_{[c\cln b]}\cdot \tfrac{a_c\gamma_c}{a_d}=u_{[c\cln b]}\cdot \tfrac{d}{c}\gamma_c =\tfrac{d}{c}\cdot \tfrac{a_b\gamma_b}{a_c}.
\]
The first equality is by Proposition~\ref{pr:u-props}(2), the second by Proposition~\ref{pr:u-gamma}(b), and the third by Proposition~\ref{pr:u-gamma}(c).
\end{proof}

We now wrap up our unfinished proof:

\begin{proof}[Proof of Proposition~\ref{pr:neg-cone-products}, Part 2] First consider the remaining case of (3) and fix $s\leq i \leq k$. Let $\und{b}'=(b_1,\dots, b_{s-1})$ and $\und{b}''=(b_{s+1}, \dots, b_k)$. We have the diagram
\[
\xymatrixcolsep{5pc}\xymatrix{ 
H^{3-\lambda_{b_s}-
\lambda_{b_i}} \ar[r]^{u_{[b_i:b_{s-1}]}} & H^{3-\lambda_{b_{s-1}}-
\lambda_{b_s}}\\
H^{2s+1-\lambda_{\und{b}}} \ar[u]_{\cong}^{u_{\und{b'}}a_{\und{b''}\setminus b_{i}}} \ar[r]^{u_{b_i}} & H^{2s-1-\lambda_{\und{b}}+\lambda_{b_i}}\ar[u]_{u_{\und{b'}\setminus b_{s-1}}a_{\und{b''}\setminus b_{i}}}^{\cong}.
}
\]
This diagram commutes because $u_{[b_i:b_{s-1}]}u_{b_{s-1}}=\frac{b_{s-1}}{(b_{s-1},b_i)}u_{b_i} = u_{b_i}$ by Proposition~\ref{pr:u-props}. 
By Proposition~\ref{pr:u-gamma-2}(b) we have
\begin{equation}
\label{eq:temp}
u_{[b_i:b_{s-1}]}\cdot \tfrac{a_{b_s}\gamma_{b_s}}{a_{b_{i}}} = 
\tfrac{b_i}{b_s} \tfrac{a_{b_{s-1}}\gamma_{b_{s-1}}}{a_{b_s}}.
\end{equation}
The commutative square then gives us
\[
u_{b_i} \cdot \tfrac{a_{b_s}\gamma_{b_s}}{u_{\langle b_1,\ldots,b_{s-1}\rangle} \cdot a_{\langle b_{s+1},\ldots, b_i,\ldots b_k\rangle}} = \tfrac{b_i}{b_s} \cdot \tfrac{a_{b_{s-1}}\gamma_{{b_{s-1}}}}{u_{\langle b_1,\ldots,b_{s-2}\rangle} \cdot a_{\langle b_s,\ldots,
\hat{b_i},\ldots, {b_k}\rangle}}.
\]
because the vertical maps take the fractions on the two sides to the corresponding fractions that appear in (\ref{eq:temp}).

For the remaining case of (4), first suppose $2\leq s<k-1$ and fix $1\leq i < s$. Observe $a_{b_i}=u_{[b_i:b_s]}u_{[b_s:b_{s+1}]}a_{b_{s+1}}$ by Proposition~\ref{pr:u-props}. Compute as follows:

\begin{align*}
u_{[b_i\cln b_s]}u_{[b_s\cln b_{s+1}]}\tfrac{a_{b_s}\gamma_{b_s}}{u_{\langle b_1,\ldots,b_{s-1}\rangle} a_{\langle b_{s+1},\ldots,b_k\rangle}} &= u_{[b_i\cln b_s]}\cdot \tfrac{b_{s+1}}{b_s}\cdot \tfrac{\gamma_{b_{s+1}}}{u_{\langle b_1,\ldots,b_{s-1}\rangle}{a_{\langle b_{s+2},\ldots,b_k\rangle}}}
\\
&= \tfrac{b_{s+1} b_s}{b_s b_i}\cdot \tfrac{\gamma_{b_{s+1}}}{u_{\langle b_1,\ldots,\widehat{b}_i,\ldots,b_s\rangle} a_{\langle b_{s+2},\ldots,b_k\rangle}}.
\end{align*}
The two equalities are by Propositions~\ref{pr:u-subtips}(d) and \ref{pr:u-gamma-2}(a), respectively.  Now multiply both sides by $a_{b_{s+1}}$ to obtain
\[ a_{b_i}\cdot
\frac{a_{b_s}\gamma_{b_s}}{u_{\langle b_1,\ldots,b_{s-1}\rangle} a_{\langle b_{s+1},\ldots,b_k\rangle}} =
\frac{b_{s+1}}{b_i}\cdot 
\frac{a_{b_{s+1}}\gamma_{b_{s+1}}}{u_{\langle b_1,\ldots,\widehat{b}_i,\ldots,b_s\rangle} a_{\langle b_{s+2},\ldots,b_k\rangle}}
\]

Note if $s=k-1$, then multiplication by $a_{b_i}$ for any $1\leq i \leq k$ will be zero for degree reasons. Thus the final case to consider is $1\leq i=s\leq k-1$ and $k\geq 3$ (if $k=2$, the product would again vanish for degree reasons). Then we just use $a_{b_s}=a_{b_{s+1}}u_{[b_s:b_{s+1}]}$ as in the second step above to get the desired identity.
\end{proof}

\subsection{Black magic for the negative cone}
\label{se:magic}
Here we give a heuristic procedure for working with elements in the torsion part of the negative cone.  While one can verify that this works by translating it into our more rigorous treatment from previous sections, we stress that we do not know of an explanation for {\it why\/} it works.  

Introduce a formal symbol $\Omega$ of cohomological degree one.  That is, we will formally regard $\Omega$ as belonging to the group $H^1(\pt)$ even though this group is actually zero.  
For every $k\geq 2$, every divisor string $b_1,\ldots,b_k$, and every $1\leq s<k$ there is an element
\begin{equation}
\label{eq:Omega}
\tfrac{\Omega}{u_{b_1}\cdots u_{b_s} a_{b_{s+1}}\cdots a_{b_k}}\in H^{2s+1-\lambda_{b_1}-\cdots-\lambda_{b_k}}
\end{equation}
(note that the denominator must contain at least one $u$-class and at least one $a$-class), and these elements are subject to the following relations:

\begin{enumerate}[(1)]
\item The additive order of the above element is precisely $b_{s}$, and it generates the $\mZ$-module $\uH^{2s+1-\lambda_{\und{b}}}\iso \mZ/I_{b_{s}}$. 
\item {}[Inverse gold relation \#1] For $c|d$ one has
\[ u_d\cdot\tfrac{\Omega}{(u-\text{classes})u_ca_d(a-\text{classes})}=\tfrac{d}{c}\cdot \tfrac{\Omega}{(u-\text{classes})a_c(a-\text{classes})}.
\]
\item {}[Inverse gold relation \#2] For $c|d$ one has
\[
a_c\cdot \tfrac{\Omega}{(u-\text{classes})u_ca_d(a-\text{classes})}=\tfrac{d}{c} \cdot
\tfrac{\Omega}{(u-\text{classes})u_d(a-\text{classes})}.
\]
\end{enumerate}

To make sense of the inverse gold relations, the piece of the formula labelled ``$u$-classes'' is a product of $u_i$'s, and similarly for the ``$a$-classes'', with the only provision being that the terms in the denominator can be arranged
into the form of (\ref{eq:Omega}).  In each equation, the parenthesized terms are the same on the two sides of the equation---the only changes are to the unparenthesized pieces.  To see why these are called inverse gold relations, imagine taking the first factor on the left-hand-side and moving it into the denominator of the right-hand-side, then using the gold relation to compare the two denominators.  This is not permissible because the right-hand-side would then not be of the form (\ref{eq:Omega}), but this is rectified by rewriting the equations in the way given above.  

The translation into our rigorous development is
\[ \gamma_b=\tfrac{\Omega}{u_ba_b}.
\]
So for example, the class previously called $\tfrac{a_c\gamma_c}{u_ba_d}$ for $b|c|d$ would be written as $\tfrac{\Omega}{u_bu_ca_d}$ in the new notation.  
It is an informative exercise to take the relations developed in Proposition~\ref{pr:neg-cone-products}, rewrite them in the new notation, and check that they can all be derived from (1)--(3) above.  

\begin{remark}
It is tempting to introduce expressions as in (\ref{eq:Omega}) but without the criterion that $\und{b}$ is a divisor string, and then to produce a single ``inverse gold relation'' by using the gold relation in the denominator and moving the integral factors into the numerators as appropriate.  Sadly, though, we have not found a way to do this without running into contradictions.
\end{remark}


\section{Maps between linear sphere complexes}
\label{se:Lmaps}

We now move towards the study of the irregular region of $H^\star(\pt)$.  Since $H^{\lamu{d}-\lamu{c}+r}=[S^{\lamu{c}},S^{r+\lamu{d}}]$, this is equivalent to studying homotopy classes of maps between spheres where neither $\lambda_{\und{c}}$ nor $\lambda_{\und{d}}$ is just an integer.  Because we allow $1$'s to appear in $\und{c}$ and $\und{d}$, one can reduce to $r\in \{0,1\}$ here.  

Recall from Section~\ref{se:smallmodels} that the complex $L(\und{c})$ is a small and convenient model for $S^{\lamu{c}}$. So one is led to study $[L(\und{c}),\Sigma^r L(\und{d})]$.  In this section we develop tools for approaching this problem, concentrating most of our work on the case where $r=0$ and $\und{c}$ and $\und{d}$ have the same length.   
From this special case one already sees both the complexities and some of the interesting consequences. 

\smallskip

Before diving into the homological algebra, we need to introduce an operation for integers that will prove useful.  

\subsection{The colon operation for integers}
Recall that if $I$ and $J$ are ideals in a commutative ring $R$ then $I\cln J=\{r\in R\,|\, rJ\subseteq I\}$.  When $R=\Z$ and $x,y\in \Z_{>0}$ it will be convenient to just write $x\cln y$ for the positive generator of $(x)\cln (y)$.  Note that this is similar to writing $(x,y)$ (the gcd) for the positive generator of the ideal $(x,y)$.  The following facts are easy exercises:

\begin{prop} 
\label{pr:colon}
Let $x,y,z\in \Z_{>0}$. 
\begin{enumerate}[(a)]
\item $x\cln y=\tfrac{x}{(x,y)}$.
\item $x\cln y=x\cln (x,y)$.
\item $x\cln 1=x$, and $1\cln x=1$.
\item $x\cln y=xz\cln yz$.
\item $(x\cln y)\cln z=x \cln (yz)$.
\item If $(y,z)=1$ then $(x\cln y)z=xz\cln y$.  
\item $x\cln (y\cln z)=\frac{xz\cln y}{z\cln y}=\frac{(xy,xz)}{(y,xz)}$.
\item $\frac{x\cln y}{x\cln (y\cln z)}=\tfrac{(y,xz)}{(x,y)(y,z)}$.
\item $\frac{x\cln y}{x\cln (yz)}=\frac{(x,yz)}{(x,y)}=(x\cln y,z)$.
\item If $z|y$ then
$[x\cln \tfrac{y}{z},z]=z(x\cln y)$.
\end{enumerate}
\end{prop}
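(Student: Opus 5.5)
The plan is to prove (a) directly from the definition and then derive everything else from (a) by working one prime at a time. For (a): $r\in (x)\cln(y)$ means $x\mid ry$, which is equivalent to $\tfrac{x}{(x,y)}\mid r\cdot\tfrac{y}{(x,y)}$. Since $\tfrac{x}{(x,y)}$ and $\tfrac{y}{(x,y)}$ are coprime, this holds if and only if $\tfrac{x}{(x,y)}\mid r$. Hence the positive generator is $\tfrac{x}{(x,y)}$.

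For the rest, fix a prime $\ell$ and write $a=v_\ell(x)$, $b=v_\ell(y)$, $c=v_\ell(z)$, where $v_\ell$ is the $\ell$-adic valuation. By (a),
\[ v_\ell(x\cln y)=a-\min(a,b)=\max(a-b,0). \]
Also $v_\ell$ of a gcd is the minimum of the valuations and $v_\ell$ of an lcm is the maximum, exactly as in the discussion around (\ref{eq:ell-adic}). Since positive integers are determined by their valuations, each identity in (b)--(j) becomes an identity among expressions built from $\max$, $\min$ and $+$ applied to $a,b,c$.

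The easy items go as follows.
\begin{enumerate}[(a)]
\setcounter{enumi}{1}
\item Both sides have valuation $\max(a-\min(a,b),0)=\max(a-b,0)$.
\item Immediate from (a).
\item The shift by $v_\ell(z)$ cancels in $\max(a-b,0)$.
\item Both sides have valuation $\max(\max(a-b,0)-c,0)=\max(a-b-c,0)$.
\item At each prime at most one of $b,c$ is nonzero. If $b=0$, both sides give $a+c$. If $c=0$, both sides give $\max(a-b,0)$.
\end{enumerate}
For (i), the left side of the first equality has valuation
\[ \max(a-b,0)-\max(a-b-c,0), \]
and this must be compared with $\min(a,b+c)-\min(a,b)$ and with $\min(\max(a-b,0),c)$. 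I plan to check this by splitting into the cases $a\le b$, $b<a\le b+c$ and $a>b+c$.

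For (g), the first equality comes from applying (a) twice and (d) to clear denominators. The second equality requires checking that
\[ \max\bigl(a-\max(b-c,0),\,0\bigr)=a+\min(b,c)-\min(b,a+c). \]
I will do this by cases on the order of $b$ and $c$, and then on the position of $a$ relative to $b-c$. Item (h) then follows by dividing the valuation formula from (a) by the one from (g): $\min(a,b)$ is what appears in the denominator, and $\min(b,c)$ arises from the rewriting.

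For (j), the hypothesis $z\mid y$ gives $b\ge c$, so $v_\ell(y/z)=b-c$. The valuation of the lcm is then
\[ \max\bigl(\max(a-b+c,0),\,c\bigr)=c+\max(a-b,0), \]
which is exactly $v_\ell\bigl(z(x\cln y)\bigr)$.

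Nothing here is conceptually difficult. The step I expect to need the most care is the case bookkeeping for the nested $\max/\min$ identities in (g) and (h), where it is easy to drop a case. For those I will organize the cases by the relative order of $a$, $b$, $c$ and $b-c$, and verify each case in one line.
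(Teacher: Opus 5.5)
Your proposal is correct, but it takes a different route from the paper. The paper leaves most parts to the reader and writes out only (g), as a case-free chain of gcd manipulations. It writes $x\cln(y\cln z)=\tfrac{x}{(x,\,y/(y,z))}$, multiplies numerator and denominator by $(y,z)$, uses distributivity $x(y,z)=(xy,xz)$, and absorbs $(xy,xz,y)=(y,xz)$. The middle expression $\tfrac{xz\cln y}{z\cln y}=\tfrac{x(z,y)}{(xz,y)}$ is handled the same way. You instead reduce everything to $\ell$-adic valuations and verify piecewise-linear $\max/\min$ identities by cases. I checked your formulas for (b)--(j), including the case splits you propose for (g) and (i), and they all hold.

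Your method is uniform and mechanical across all ten parts. It also matches how the paper treats gcd/lcm arithmetic elsewhere, e.g.\ (\ref{eq:ell-adic}) and the $\nu_\ell$ arguments in the integrality results. The price is casework that is easy to get wrong. The paper's direct gcd calculus avoids cases entirely, and it makes (h) a one-line quotient of (a) and (g), which is also how you obtain (h).

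Two small bookkeeping points:
\begin{enumerate}
\item The quantities in (g)--(i) are ratios, and the left side of (h) is in general only the reciprocal of an integer. Indeed $x\cln y$ divides $x\cln(y\cln z)$; for example $x=y=4$, $z=2$ gives $\tfrac12$. So the principle you need is that positive \emph{rationals}, not just positive integers, are determined by their valuations.
\item The first equality in (g) needs more than (a) and (d). You also need the absorption $(x(y,z),y)=(xy,xz,y)=(xz,y)$, or you can simply check that equality by valuations like the rest.
\end{enumerate}
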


\begin{proof}
We mostly leave these to the reader, but will prove (g) to give the general idea.  For this just write
\[ x\cln (y\cln z)=\tfrac{x}{(x,\frac{y}{(y,z)})}=\tfrac{x(y,z)}{(x(y,z),y)}=\tfrac{(xy,xz)}{(xy,xz,y)}=\tfrac{(xy,xz)}{(y,xz)}
\]
(where in the second equality we multiplied numerator and denominator by $(y,z)$) and likewise
\[ \tfrac{xz\cln y}{z\cln y}=\tfrac{  \tfrac{xz}{(xz,y)}  } {\tfrac{z}{(z,y)}}= \tfrac{x(z,y)}{(xz,y)}=\tfrac{(xz,xy)}{(xz,y)}.
\]\end{proof}

The following result will be particularly useful:

\begin{cor}
\label{co:colon}
If $d_1|d_2$ then $d_1\cln c$ divides $d_2\cln c$ and 
$(\tfrac{d_2\cln c}{d_1\cln c})\cdot d_1 = \tfrac{d_2(c,d_1)}{(c,d_2)}$.
\end{cor}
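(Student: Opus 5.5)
The plan is to reduce everything to Proposition~\ref{pr:colon}(a), which gives $d_i\cln c=\tfrac{d_i}{(d_i,c)}$ for $i=1,2$. With this, both claims become statements about gcds.

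First I will establish the displayed identity, since the divisibility claim will follow from it. Substituting (a), the quotient is
\[ \frac{d_2\cln c}{d_1\cln c}=\frac{d_2}{(d_2,c)}\cdot\frac{(d_1,c)}{d_1}, \]
and multiplying by $d_1$ gives $\tfrac{d_2(c,d_1)}{(c,d_2)}$ immediately. This part is pure algebra and needs no hypothesis.

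For the divisibility claim, I must show that $\tfrac{d_2(d_1,c)}{d_1(d_2,c)}$ is an integer. I plan to write it as $\tfrac{d_2/d_1}{(d_2,c)/(d_1,c)}$. Since $d_1|d_2$, we have $(d_1,c)\,|\,(d_2,c)$, so both the numerator and the denominator are integers, and the real task is to show that $\tfrac{(d_2,c)}{(d_1,c)}$ divides $\tfrac{d_2}{d_1}$.

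The cleanest route, and the main point needing care, is to work one prime $\ell$ at a time. Write $v=v_\ell$, and set $a=v(d_1)\le b=v(d_2)$ and $e=v(c)$. We need $\min(b,e)-\min(a,e)\le b-a$. I will check this in three cases: if $e\le a$ the left side is $0$; if $a<e\le b$ it is $e-a\le b-a$; and if $e>b$ it is $b-a$. Alternatively, one could avoid valuations by noting $(d_2,c)\,|\,(d_2,\,c\,d_2/d_1)=\tfrac{d_2}{d_1}(d_1,c)$, which gives the divisibility directly. Either way the quotient is an integer, which completes the argument.
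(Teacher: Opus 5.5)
Your proof is correct, but it takes a different route from the paper's. The paper writes $d_2=d_1u$ and applies Proposition~\ref{pr:colon}(g) with $x=u$, $y=c$, $z=d_1$. This gives $(d_1\cln c)\cdot\bigl(u\cln(c\cln d_1)\bigr)=d_2\cln c$. Since a colon of positive integers is an integer, divisibility is immediate, and the numerical identity then comes from the gcd form $\tfrac{(xy,xz)}{(y,xz)}$ in the same part (g).

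You instead use only part (a). That makes the identity a one-line computation needing no hypothesis. You then prove integrality of $\tfrac{d_2(d_1,c)}{d_1(d_2,c)}$ directly, either prime by prime or via $(d_2,c)\mid(d_2,cd_2/d_1)=\tfrac{d_2}{d_1}(d_1,c)$. Both checks are sound.

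The paper's route gives an explicit name for the quotient, $\tfrac{d_2\cln c}{d_1\cln c}=\tfrac{d_2}{d_1}\cln(c\cln d_1)$, so integrality is structural rather than verified. The cost is that it depends on the less obvious identity (g). Your route is more elementary and self-contained. It also makes clear that $d_1\mid d_2$ is needed only for the divisibility claim, not for the formula.
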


\begin{proof}
Write $d_2=d_1u$.  Then by Proposition~\ref{pr:colon}(g) we have
\[ (d_1\cln c)\cdot (u\cln (c\cln d_1))=(ud_1)\cln c=d_2\cln c.
\]
Since $u\cln(c\cln d_1)$ is an integer, $d_1\cln c$ divides $d_2\cln c$.  Proposition~\ref{pr:colon}(g) also tells us that
\[ \bigl ( \tfrac{d_2\cln c}{d_1\cln c}\bigr )d_1 = 
\bigl (\tfrac{(ud_1)\cln c}{d_1\cln c}\bigr ) d_1=
\tfrac{ (uc,ud_1)}{(c,ud_1)}\cdot d_1=
\tfrac{(uc,d_2)}{(c,d_2)}\cdot d_1=
\tfrac{(d_1uc,d_1d_2)}{(c,d_2)}=\tfrac{(d_2c,d_1d_2)}{(c,d_2)}.
\]
\end{proof}

\subsection{Maps between $L$-complexes}
\label{se:L-maps}

Recall from (\ref{eq:free-map}) that every map $f\colon F_d\ra F_e$ can be written in the form $g_d\mapsto \sum_{i=0}^{(d,e)-1} m_i (t_*)^ip_*p^* g_e$ for unique $m_i\in \Z$, and that we denote this map as $\lur{m}$.  
The sum $\sum_i m_i$ appears frequently in arguments---we saw this already in Proposition~\ref{pr:lifting-lems-2}---so it will be useful to have a name to attach to this invariant: let us call it the \dfn{normity} of the map $\lur{m}$.  This invariant depends on the choice of free generators; for example, changing the sign on $g_d$ will change the sign of the normity.  But the absolute value of the normity is choice-independent. To see this, note that $F_d(\pt)\ra F_e(\pt)$ is (after choosing generators) a map $\Z\ra \Z$; so the absolute value of the image of $1$  
is independent of any choices. Using
the generators $(\pi_d)_*g_d$ and $(\pi_e)_*g_e$, one computes that the former is sent to the  product of $d\cln e$, the normity, and the latter (e.g., use $x=1$ in the left diagram of Proposition~\ref{pr:lifting-lems-1}(a) below). 

We will often use that the normity of $I\pi R\pi\colon F_d\ra F_e$ is equal to $(d,e)$.  This follows from the fact that $(\pi_d)_*g_d$ is sent by $I\pi R\pi$ to 
$(\pi_d)_* (\pi_d)^*(\pi_e)_* g_e=d\cdot (\pi_e)^*g_e$,
and so the normity is equal to $\tfrac{d}{d\cln e}=(d,e)$. \smallskip

Maps of complexes $L(\und{c})\ra L(\und{d})$ will consist of a collection of $\langle \und{m}\rangle$ maps that fit together in prescribed ways.  Before investigating this in detail let us record some elementary facts that will be helpful.

\begin{prop}
\label{pr:lifting-lems-1}
\mbox{}\par
\begin{enumerate}[(a)]
\item The following two diagrams commute:
\[ \xymatrix{
F_{x} \ar[dr]^{(\sum_i m_i) (y\cln z) I\pi R\pi} \ar[d]_{I\pi R\pi} \\
F_y \ar[r]_{\langle \und{m} \rangle} & F_z,}
\qquad\qquad
\xymatrix{
F_x \ar[r]^{\langle \und{m}\rangle} \ar[dr]_{(\sum_i m_i)(y\cln x) I\pi R\pi} & F_y\ar[d]^{I\pi R\pi} \\
& F_z.
}
\]
(A helpful mnemonic is that the colon in these formulas is always ``the middle index $\colon$ the index opposite the $I\pi R\pi$'').  
\item The square
\[ \xymatrix{
F_{c_2}\ar[r]^{\langle \und{m}\rangle}\ar[d]_{I\pi R\pi} & F_{d_2} \ar[d]^{I\pi R\pi} \\
F_{c_1} \ar[r]_{\langle \und{Q}\rangle} & F_{d_1}
}
\]
commutes if and only if $(\sum_i Q_i)\cdot (c_1\cln d_1)=(\sum_i m_i)\cdot (d_2\cln c_2)$.
\item The diagram
\[ \xymatrixcolsep{3.2pc}\xymatrix{
& F_{d_2}\ar[d]^{I\pi R\pi}\\
F_{c_1}\ar[r]_{Q\cdot I\pi R\pi} \ar@{.>}[ur] & F_{d_1}
}
\]
admits a lifting if and only if $d_2\cln c_1$  divides $Q$.
\end{enumerate}
\end{prop}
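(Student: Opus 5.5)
The plan is to prove all three parts by direct computation on the basis generator, using two facts. First, every map $F_x\to F_y$ is a map of $\mZ$-modules, so it commutes with all restrictions $f^*$ and transfers $f_*$. Second, in $\mZ$-modules the standard double-coset identities hold. The composite $p_*p^*$ along a projection $\Theta_{a}\to\Theta_{a'}$ is multiplication by $a/a'$ on any element that is restricted from $\Theta_1$. Also, $t$ acts trivially on any element restricted from $\Theta_1$, so $(\pi)_*t^i=(\pi)_*$ and $t^i$ fixes such elements. Parts (b) and (c) will then be formal consequences of (a).

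For the left diagram of (a), recall that $I\pi R\pi$ sends $g_x\mapsto (\pi_x)^*(\pi_y)_*g_y$. Applying $\langle\und m\rangle$ and commuting it past $(\pi_x)^*(\pi_y)_*$ gives
\[
(\pi_x)^*(\pi_y)_*\textstyle\sum_i m_i (t_*)^ip_*p^*g_z .
\]
Since $(\pi_y)_*t^i_*=(\pi_y)_*$ and $(\pi_y)_*p_*=(\pi_{[y,z]})_*=(\pi_z)_*p_*$, this becomes
\[
(\sum_i m_i)\,(\pi_x)^*(\pi_z)_*\,p_*p^*g_z .
\]
The map $p\colon\Theta_{[y,z]}\to\Theta_z$ has degree $[y,z]/z=y/(y,z)=y\cln z$, and $p_*p^*$ acts as multiplication by this degree after the transfer $(\pi_z)_*$. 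So the composite equals $(\sum m_i)(y\cln z)\,I\pi R\pi$.

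For the right diagram of (a), start from $\langle\und m\rangle(g_x)=\sum m_i t^ip_*p^*g_y$ and apply the module map $I\pi R\pi$. The element $(\pi_y)^*(\pi_z)_*g_z$ is restricted from $\Theta_1$, so the $t^i$ act trivially on it, and $p_*p^*$ along $\Theta_{[x,y]}\to\Theta_x$ multiplies it by $[x,y]/x=y\cln x$. This yields $(\sum m_i)(y\cln x)\,I\pi R\pi$. I expect these transfer–restriction bookkeeping steps to be the only real obstacle. In particular I need to check carefully that the relevant elements really lie in the image of restriction from $\Theta_1$, so that the $t$-actions drop out.

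For (b), part (a) shows that both composites $F_{c_2}\to F_{d_1}$ are integer multiples of $I\pi R\pi$. The top-then-right composite is $(\sum m_i)(d_2\cln c_2)\,I\pi R\pi$, and the left-then-bottom composite is $(\sum Q_i)(c_1\cln d_1)\,I\pi R\pi$. The map $I\pi R\pi$ has nonzero normity $(c_2,d_1)$, and $\cB\Z(\Theta_{c_2},\Theta_{d_1})$ is free abelian by Lemma~\ref{le:BZ-lemma}. Hence two multiples of $I\pi R\pi$ agree exactly when their coefficients agree, which is the stated criterion.

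For (c), every candidate lift has the form $\langle\und k\rangle\colon F_{c_1}\to F_{d_2}$. By the right diagram of (a), its composite with $I\pi R\pi$ is $(\sum k_i)(d_2\cln c_1)\,I\pi R\pi$. So a lift exists iff $Q=(\sum k_i)(d_2\cln c_1)$ for some integers $k_i$, again using that $I\pi R\pi$ is not torsion. Since $\sum k_i$ can be any integer (take $\und k=(s,0,\ldots,0)$), this happens iff $d_2\cln c_1$ divides $Q$.
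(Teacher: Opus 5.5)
Your proposal is correct and follows essentially the same route as the paper: compute on the generator, drop the $t$-factors using $\pi\circ t=\pi$, factor $\pi_y\circ p=\pi_z\circ p$ through $\Theta_{[y,z]}$, use $p_*p^*=\deg p$, and then read (b) and (c) off from (a). You also supply details the paper leaves implicit, namely the right-hand triangle and the fact that $I\pi R\pi$ is non-torsion in the free group $\cB\Z(\Theta_{c_2},\Theta_{d_1})$.

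One small correction: $p_*p^*=\deg p$ holds on every element of a $\mZ$-module, not only on elements restricted from $\Theta_1$, because it is a defining relation of $\cB\Z$ (cf.\ Lemma~\ref{le:BZ-lemma}(2)). It is this universal form that justifies your step $(\pi_z)_*p_*p^*g_z=(y\cln z)\,(\pi_z)_*g_z$ in the left triangle.
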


\begin{proof}
Parts (b) and (c) follow immediately from (a).  Part (a) is  just a calculation.  For example, in the left diagram the composite is given by
\[ g_x\mapsto (\pi_x)^*(\pi_y)_*g_y \mapsto (\pi_x)^*(\pi_y)_* \sum_i m_i (t_*)^ip_*p^* g_z=\sum_i m_i (\pi_x)^*(\pi_y)_* (t_*)^i p_*p^* g_z.
\]
The relation $\pi_y\circ t=\pi_y$ implies that the $t_*$ factors
can be dropped from the formula entirely.  This gives the $(\sum_i m_i)$ part of the answer.  Next use that $\pi_y\circ p_{([y,z]\ra y)}=\pi_z\circ p_{([y,z]\ra z)}$, so that
\begin{align*}
(\pi_x)^*(\pi_y)_*(p_{([y,z]\ra y)})_*(p_{([y,z]\ra z)})^* &=
(\pi_x)^*(\pi_z)_* (p_{([y,z]\ra z)})_*(p_{([y,z]\ra z)})^*\\
&=\tfrac{[y,z]}{z}\cdot (\pi_x)^*(\pi_z)_*\\
&= \tfrac{y}{(y,z)}\cdot (\pi_x)^*(\pi_z)_*.
\end{align*}
We have therefore shown $\lur{m}\circ I\pi R\pi=(\sum_i m_i)\cdot (y\cln z) I\pi R\pi$.  

This analysis is harder to write than it is to think through: staring at the diagram
\[ \xymatrixrowsep{1pc}\xymatrix{ & \Theta_{[y,z]} \ar[r]^p\ar[d]^p & \Theta_z \\
& \Theta_y \ar[d]^{\pi} \\
\Theta_x\ar[r]^{\pi} & \Theta_1
}
\]
is generally enough to recall the argument.  

The analysis for the second triangle in (a) is entirely similar.  
\end{proof}

\begin{remark}
Note that the map $\mZ\llra{I\pi_d} F_d$ may be interpreted as $I\pi_d R\pi_1\colon F_1\ra F_d$ (since $\pi_1=\id$), and as such is an example of an ``$I\pi R\pi$'' map.  Similarly, $R\pi_d$ is also an example of such a map.  
Some of our applications of Proposition~\ref{pr:lifting-lems-1} will be to these cases, which are when one of the indices on the $F_?$ modules is equal to $1$.  
\end{remark}

Now we can proceed to construct maps of complexes $L(\und{c})\ra L(\und{d})$, where $\und{c}$ and $\und{d}$ are divisor strings of the same length.  Building such a map from degree zero upwards, we must specify integers $w_0,w_1,w_2,\ldots$ and tuples $\und{m}^{(1)}, \und{m}^{(2)},\ldots$ for the maps as follows:
\begin{equation}
\label{eq:Lmaps}
\xymatrixcolsep{4.1pc}\xymatrix{
{} \ar[r] & F_{c_2}\ar[d]^{\lur{m^{(2)}}+w_2I\pi R\pi} \ar[r]^{\id-Rt} & F_{c_2}\ar[d]^{\lur{m^{(2)}}} \ar[r]^{I\pi R\pi} & F_{c_1}\ar[d]^{\lur{m^{(1)}}+w_1I\pi R\pi} \ar[r]^{\id-Rt} & F_{c_1}\ar[d]^{\lur{m^{(1)}}} \ar[r]^{I\pi R\pi} & \mZ \ar[d]_{w_0} \\
{} \ar[r] & F_{d_2}\ar[r]_{\id-Rt} & F_{d_2}\ar[r]_{I\pi R\pi} & F_{d_1}\ar[r]_{\id-Rt} & F_{d_1}\ar[r]_{I\pi R\pi} & \mZ. \phantom{\big |}
}
\end{equation}
Proposition~\ref{pr:lifting-lems-2}(b) implies that the squares with $\id-Rt$ must be of the above form, and they commute by construction.  Let $M_i$ be the normity of $\lur{m^{(i)}}$.  Then
by Proposition~\ref{pr:lifting-lems-1}(b) the remaining squares commute if and only if
\begin{equation}
\label{eq:M-u-conds}
M_i \cdot (d_i\cln c_i)= \Bigl ( M_{i-1} + w_{i-1}(c_{i-1},d_{i-1})\Bigr )\cdot (c_{i-1}\cln d_{i-1})
\end{equation}
for all $i\geq 1$, where $M_0$ is interpreted as $0$ and 
$c_0$ and $d_0$ are interpreted as $1$.  Note that the $i=1$ condition is simply $M_1\cdot \frac{d_1}{(c_1,d_1)}=w_0$.  

All of the $w_i$ and $\und{m}^{(i)}$ together feel like an intimidating amount of information, but we can quickly make a massive reduction.  The various parts of Propositions~\ref{pr:lifting-lems-1} and \ref{pr:lifting-lems-2} only ever refer to the normity of the $\und{m}$-sequences, never to any information beyond that.  This leads to the following:

\begin{prop}
\label{pr:chain-map-reduction}
Suppose we have two maps of complexes $L(\und{c})\ra L(\und{d})$, one specified by the data $(\und{m}^{(i)},w_i)$ and the other by $(\und{m'}^{(i)},w'_i)$.  If $w_i=w'_i$ and $\sum m^{(i)}=\sum (m')^{(i)}$ for all $i$, then the two maps are chain homotopic.  
\end{prop}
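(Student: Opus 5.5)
Take the difference of the two chain maps. It is a chain map $h\colon L(\und{c})\ra L(\und{d})$ given by the data $(\und{m}^{(i)}-\und{m'}^{(i)},0)$. By the hypotheses each degree-$(2i-1)$ component $\lur{n^{(i)}}$, where $\und{n}^{(i)}=\und{m}^{(i)}-\und{m'}^{(i)}$, has normity zero. The degree-$2i$ component is the same $\lur{n^{(i)}}$, because the $w_i$ agree. The degree-$0$ component is $w_0-w_0'=0$. It therefore suffices to show that any chain map of the form (\ref{eq:Lmaps}) with all $w_i=0$ and all normities $M_i=0$ is null-homotopic.

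To build the null-homotopy, use Proposition~\ref{pr:lifting-lems-2}(a). Since $\sum_j n^{(i)}_j=0$, the map $\lur{n^{(i)}}\colon F_{c_i}\ra F_{d_i}$ lifts across $\id-Rt\colon F_{d_i}\ra F_{d_i}$; choose $s_i\colon F_{c_i}\ra F_{d_i}$ with $(\id-Rt)s_i=\lur{n^{(i)}}$. Define the homotopy $H$ to be $s_i$ from the degree-$(2i-1)$ term $F_{c_i}$ of $L(\und{c})$ to the degree-$2i$ term $F_{d_i}$ of $L(\und{d})$. All other components of $H$ are zero, including the one from degree $2i$ to degree $2i+1$ and the one out of $\mZ$ in degree $0$.

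Next verify $h=\partial H+H\partial$ degree by degree.
\begin{itemize}
\item In degree $0$ both sides vanish.
\item In degree $2i-1$, $\partial H+H\partial$ equals $(\id-Rt)s_i$ plus $H$ applied after the outgoing differential. That differential lands in degree $2i-2$, where $H=0$. So we get $(\id-Rt)s_i=\lur{n^{(i)}}$, as required.
\item In degree $2i$, the incoming term is $\partial H=0$ and the other term is $H\circ(\id-Rt)=s_i(\id-Rt)$. Since $Rt$ is central, this equals $(\id-Rt)s_i=\lur{n^{(i)}}$. This is the needed component, because $w_i=0$.
\item The remaining check is the contribution of $H$ into degree $2i$ composed with the differential $I\pi R\pi\colon F_{d_i}\ra F_{d_{i-1}}$ down to degree $2i-1$. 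This appears in degree $2i-1$ as $\partial\circ s_i$. Here the convention matters: one must check that $I\pi R\pi\circ s_i$ is zero, or else absorb it.
\end{itemize}

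That last check is the main obstacle. By Proposition~\ref{pr:lifting-lems-1}(a), the composite $I\pi R\pi\circ s_i$ equals $(\text{normity of }s_i)\cdot(d_i\cln c_i)\,I\pi R\pi$, which need not vanish. I would fix this by adjusting $s_i$ by a multiple of $I\pi R\pi\colon F_{c_i}\ra F_{d_i}$. That multiple lies in the kernel of $\id-Rt$, since $(\id-Rt)I\pi=0$, so it does not disturb the lifting property. Its normity is $(c_i,d_i)$. The fallback, if the normity of $s_i$ cannot be killed outright, is to compensate with a homotopy component $F_{c_i}\ra F_{d_{i-1}}$ in degree $2i-1\to 2i$ of the other parity and proceed inductively upward in $i$, tracking normities via (\ref{eq:M-u-conds}). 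I expect that the relation (\ref{eq:M-u-conds}) with $M_i=w_i=0$ makes these corrections consistent.
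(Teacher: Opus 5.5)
Your homotopy is exactly the paper's construction, and your first three bullet points already give a complete proof. It takes $s_i\colon F_{c_i}\ra F_{d_i}$ from degree $2i-1$ to degree $2i$, a lift of $\lur{n^{(i)}}$ across $\id-Rt$ furnished by Proposition~\ref{pr:lifting-lems-2}(a), and is zero on all other components (the paper's ``every other map can be taken to be zero'').

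The fourth bullet is a phantom caused by misreading $L(\und{d})$. The differential leaving degree $2i$ is $\id-Rt\colon F_{d_i}\ra F_{d_i}$. The map $I\pi R\pi\colon F_{d_i}\ra F_{d_{i-1}}$ instead leaves degree $2i-1$ and lands in degree $2i-2$. In the identity $h=\partial H+H\partial$, the map $s_i$ enters in only two places, both of which you already checked:
\begin{itemize}
\item $\partial_{2i}\circ s_i=(\id-Rt)s_i$, in degree $2i-1$;
\item $s_i\circ\partial_{2i}=s_i(\id-Rt)$, in degree $2i$.
\end{itemize}
The composite $I\pi R\pi\circ s_i$ is not a component of anything in that identity.

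You should simply delete the last paragraph. Its proposed remedies would not have worked had they been needed:
\begin{itemize}
\item The lift $s_i$ is unique only up to multiples of $I\pi R\pi$, which has normity $(c_i,d_i)$. So the normity of $s_i$ is fixed modulo $(c_i,d_i)$ and need not be divisible by it. For instance, take $c_i=d_i=d>1$ and $\lur{n^{(i)}}=\id-Rt$; the lifts $\id+v\,I\pi R\pi$ have normity $1+vd\neq 0$, so $I\pi R\pi\circ s_i$ cannot be killed this way.
\item The fallback component $F_{c_i}\ra F_{d_{i-1}}$ does not exist. The degree-raising components of a homotopy are $F_{c_i}\ra F_{d_i}$ (out of degree $2i-1$) and $F_{c_i}\ra F_{d_{i+1}}$ (out of degree $2i$). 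The latter are the ones you correctly set to zero.
\end{itemize}
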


\begin{proof}
By taking the difference of the two maps one reduces to the case of a map specified by data $(\und{m}^{(i)},w_i)$ where all $w_i=0$ and all $\sum m^{(i)}=0$.  One inductively constructs a chain homotopy and finds that every other map can be taken to be zero, whereas the remaining ones are automatically guaranteed by Proposition~\ref{pr:lifting-lems-2}(a).
\end{proof}

By Proposition~\ref{pr:chain-map-reduction} we can replace a chain map associated to the data $(\und{m}^{(i)},w_i)$ by the map with data $(M_ip_*p^*,w_i)$ where $M_i=\sum m^{(i)}$.  So to specify a chain homotopy class it will be enough to specify the integers $w_i$ and $M_i$ for all $i\geq 0$, subject to the conditions of (\ref{eq:M-u-conds}).  

\begin{prop}
\label{pr:L-map-null}
Suppose that $\und{c}$ and $\und{d}$ are divisor strings of the same length $k$.
The chain map specified by data $(M_i,w_i)$ is null homotopic if and only if
\begin{itemize}
\item $(c_1,d_1)|M_1$,
\item 
$(d_{i+1}\cln c_i)(d_i,c_i)=\bigl ( \frac{d_{i+1}\cln c_i}{d_i\cln c_i}\bigr )d_i$
divides $M_i+(c_i,d_i)w_i$ for $1\leq i\leq k-1$, and
\item $M_k+(c_k,d_k)w_k=0$.
\end{itemize}
(Note the second condition implies that $(c_i,d_i)|M_i$, for $1\leq i\leq k-1$.  In particular, the first condition is redundant when $k\geq 2$.)
\end{prop}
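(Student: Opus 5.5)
The plan is to run the same analysis as in the proof of Proposition~\ref{pr:Sb_to_Sc}, one rung of the ladder (\ref{eq:Lmaps}) at a time, and to reduce every equation to a statement about normities using Propositions~\ref{pr:lifting-lems-2} and \ref{pr:lifting-lems-1}.

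\textbf{Setting up the homotopy.} A null homotopy $s$ has the following components:
\begin{itemize}
\item $s_0\colon \mZ\ra F_{d_1}$, which is necessarily $v\,I\pi$ for some $v\in\Z$;
\item $s_{2i-1}\colon F_{c_i}\ra F_{d_i}$, running from degree $2i-1$ to degree $2i$;
\item $s_{2i}\colon F_{c_i}\ra F_{d_{i+1}}$, running from degree $2i$ to degree $2i+1$, for $1\leq i\leq k-1$.
\end{itemize}
We also set $s_{2k}=0$, since $L(\und{d})$ vanishes above degree $2k$.

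The equations $f=ds+sd$ come in pairs. In degree $2i-1$ we need
\[
\lur{m^{(i)}}=(\id-Rt)\,s_{2i-1}+s_{2i-2}\circ I\pi R\pi .
\]
In degree $2i$ we need
\[
\lur{m^{(i)}}+w_iI\pi R\pi=s_{2i-1}(\id-Rt)+I\pi R\pi\circ s_{2i}.
\]
Since $Rt$ commutes with everything, subtracting the first equation from the second eliminates $s_{2i-1}$. What remains is
\[
w_iI\pi R\pi+s_{2i-2}I\pi R\pi=I\pi R\pi\, s_{2i}.
\]
Given this, the degree-$(2i-1)$ equation is solvable for $s_{2i-1}$ if and only if $\lur{m^{(i)}}-s_{2i-2}I\pi R\pi$ has normity zero, by Proposition~\ref{pr:lifting-lems-2}(a).

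\textbf{Reducing to normities.} Let $N_i$ be the normity of $s_{2i}$. By Proposition~\ref{pr:lifting-lems-1}(a), each composite above is a multiple of $I\pi R\pi$, with coefficient equal to the normity times a colon factor. So the conditions become:
\begin{itemize}
\item $M_i=N_{i-1}\cdot(\text{colon factor})$, from the degree-$(2i-1)$ equation;
\item $M_i+(c_i,d_i)w_i=N_i\,(d_{i+1}\cln c_i)(d_i,c_i)$, from the difference equation, using that $I\pi R\pi\colon F_{c_i}\ra F_{d_i}$ has normity $(c_i,d_i)$.
\end{itemize}
For $i=k$ we have $N_k=0$, which yields the third bullet of the statement. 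The bottom rung gives the first bullet: there the equations read $w_0=d_1v$ and $M_1=v\,(c_1,d_1)$, so a homotopy exists there if and only if $(c_1,d_1)\mid M_1$.

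\textbf{Main obstacle.} The hard step is the bookkeeping. The integer $N_i$, which we choose freely to satisfy the divisibility at level $i$, also determines $M_{i+1}$ through the first condition. I must check that the value of $M_{i+1}$ it forces is automatically the one imposed by the chain-map condition (\ref{eq:M-u-conds}). If so, the $N_i$ impose no further constraint, and the conditions are exactly "$(d_{i+1}\cln c_i)(d_i,c_i)$ divides $M_i+(c_i,d_i)w_i$".

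I would verify this consistency by computing the colon factors with Proposition~\ref{pr:lifting-lems-1}(a) and comparing them using Proposition~\ref{pr:colon} and Corollary~\ref{co:colon}. Corollary~\ref{co:colon} also gives the identity $(d_{i+1}\cln c_i)(d_i,c_i)=\bigl(\tfrac{d_{i+1}\cln c_i}{d_i\cln c_i}\bigr)d_i$ appearing in the statement.

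\textbf{Converse.} Conversely, given integers satisfying the divisibilities, I build the homotopy inductively upward. Take $s_0=vI\pi$ and $s_{2i}=N_i\,p_*p^*$. At each stage, Proposition~\ref{pr:lifting-lems-2}(a) supplies $s_{2i-1}$, because the relevant normity vanishes by construction. Proposition~\ref{pr:chain-map-reduction} justifies working only with normities throughout.
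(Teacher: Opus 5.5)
Your proposal is correct and is essentially the paper's argument. Both build the null homotopy rung by rung: the odd-degree liftings across $\id-Rt$ impose no constraint, the even-degree liftings across $I\pi R\pi$ (via Proposition~\ref{pr:lifting-lems-1}(c)) give the divisibility conditions with $s_{2i}$ of normity $\frac{M_i+(c_i,d_i)w_i}{(d_{i+1}\cln c_i)(c_i,d_i)}$, and the top rung forces $M_k+(c_k,d_k)w_k=0$.

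The one small difference is in the odd degrees. The paper sees the obstruction vanish structurally, by applying the differential to $f_{2i-1}-s_{2i-2}\circ d$ and using the chain-map and previous homotopy identities. You instead check it numerically against (\ref{eq:M-u-conds}). You left that check unexecuted, but it does go through, since $\frac{(c_i\cln d_{i+1})(c_{i+1},d_{i+1})}{(d_{i+1}\cln c_i)(c_i,d_i)}=\frac{c_i(c_{i+1},d_{i+1})}{d_{i+1}(c_i,d_i)}=\frac{c_i\cln d_i}{d_{i+1}\cln c_{i+1}}$.
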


\begin{proof}
Let $K=L(\und{c})$ and $L=L(\und{d})$, and let $f\colon K\ra L$ be our map.  We attempt to construct a chain homotopy $s_i\colon K_i\ra L_{i+1}$ inductively, starting with $i=0$. These maps are shown in the diagram below:
\[ \xymatrixcolsep{4.1pc}\xymatrix{
{} \ar[r] & F_{c_2}\ar[d]^{f_4} \ar[r]^{\id-Rt} \ar@{.>}[dl]_{s_4} & F_{c_2}\ar[d]^{f_3}\ar@{.>}[dl]_{s_3} \ar[r]^{I\pi R\pi}& F_{c_1}\ar[d]^{f_2} \ar[r]^{\id-Rt} \ar@{.>}[dl]_{s_2}& F_{c_1}\ar[d]^{f_1} \ar[r]^{I\pi R\pi}\ar@{.>}[dl]_{s_1} & \mZ \ar[d]_{f_0}\ar@{.>}[dl]_{s_0} \\
{} \ar[r] & F_{d_2}\ar[r]_{\id-Rt} & F_{d_2}\ar[r]_{I\pi R\pi} & F_{d_1}\ar[r]_{\id-Rt} & F_{d_1}\ar[r]_{I\pi R\pi} & \mZ. \phantom{\big |}
}
\]
The form of each $f_i$ map is given in (\ref{eq:Lmaps}).

The map $s_0$ is a lifting across $I\pi R\pi$ of $f_0$. By Proposition~\ref{pr:lifting-lems-1}(c) (taking $\mZ=F_1$ and $f_0=w_0 I\pi_1R\pi_1$) the map $s_0$ exists if and only if $d_1|w_0$.  Since $w_0=M_1\tfrac{d_1}{(c_1,d_1)}$ by (\ref{eq:M-u-conds}), this is equivalent to $(c_1,d_1)|M_1$.

Assuming the above condition, we next consider $s_1$. This will need to be a lifting across $\id-Rt$ of the map $f_1-s_0I\pi R\pi$.  As in the proof of Proposition~\ref{pr:lifting-lems-2}(a), such a lifting exists if and only if $0=R\pi\circ (f_1-s_0 I\pi R\pi)$.
But on the right we have
\[ R\pi f_1-R\pi s_0 I\pi R\pi= f_0I\pi R\pi- f_0I\pi R\pi=0
\]
so the condition is automatically satisfied.  

Continuing in this way, $s_2$ will need to be a lifting across $I\pi_{d_1} R\pi_{d_2}$ of \[f_2-s_1(\id-Rt)=f_2-(\id-Rt)s_1=f_2-(f_1-s_0I\pi_1 R\pi_{c_1}).\] Observe $f_2-f_1=w_1I\pi_{d_1} R\pi_{c_1}$ using the the formulas for the $f_i$ maps from (\ref{eq:Lmaps}). By Proposition~\ref{pr:lifting-lems-1}(a), $s_0I\pi_{1} R\pi_{c_1}$ is a multiple of $I\pi_{d_1} R\pi_{c_1}$, and that multiple is the normity of $s_0$ times $1\cln d_1=1$. Finally note that $I\pi_{1}R\pi_{d_1}s_0=w_0$ implies the normity of $s_0$ equals $\tfrac{w_0}{d_1}=\tfrac{M_1}{(c_1,d_1)}$. Putting this all together we see we need a lift of
\[f_2-s_1(\id-Rt)=(w_1+\tfrac{M_1}{(c_1,d_1)})I\pi R\pi.\]
Proposition~\ref{pr:lifting-lems-1}(c) says the lifting exists if and only if $d_2\cln c_1$ divides $w_1+\tfrac{M_1}{(c_1,d_1)}$, in which case the normity of $s_2$ will equal
the quotient $\frac{1}{d_2\cln c_1} (w_1+\tfrac{M_1}{(c_1,d_1)}) = \frac{1}{(d_2\cln c_1)(c_1,d_1)} (M_1+w_1(c_1,d_1))$ by Proposition~\ref{pr:lifting-lems-1}(a).

The rest of the argument is exactly the same, repeating these two basic steps.  The condition for $s_i$ to exist when $i$ is odd is always satisfied automatically, but when $i$ is even it leads to a divisibility statement:
the normity of $s_{2i}$ needs to equal $\tfrac{1}{(d_{i+1}\cln c_i)(c_i,d_i)}\cdot (M_i+(c_i,d_i)w_i)$, and the divisibility statement is that this is an integer. The argument follows just as with $s_2$, and using the formula in \ref{eq:M-u-conds} to rewrite $M_{i-1}+w_{i-1}(c_{i-1},d_{i-1})$ in terms of $M_i$.  

At the top of the complex, when the $s$ terms run out, there is one extra condition for a triangle to commute: one needs $f_{2k}=s_{2k-1}(\id-Rt)$. The analysis here is exactly the same as in the generic `even' case, but here instead of a certain integer being divisible (in order to get a lifting) it must actually be equal to zero.  This is the $M_k+(c_k,d_k)w_k=0$ condition.
\end{proof}

\begin{remark}
The expression $M_i+(c_i,d_i)w_i$ that appears in the above proposition has a simple interpretation: this is precisely the normity of the map of complexes in degree $2i$. 
Here is another  interpretation:
\end{remark}

\begin{prop}
\label{pr:L-homology}
Let $L(\und{c})\ra L(\und{d})$ be the map of complexes given by the data $(\und{M},\und{w})$.  Then the map $H_{2i}L(\und{c})\ra H_{2i}L(\und{d})$ is (up to sign) multiplication by $(c_i\cln d_i)\cdot (M_i+w_i(c_i,d_i))$.  
\end{prop}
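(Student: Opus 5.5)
Since $H_{2i}L(\und{c})$ and $H_{2i}L(\und{d})$ are cyclic (by Proposition~\ref{pr:l-case}: $\mZ/I_{c_{i+1}}$ or $\mZ$ in the top degree), it is enough to work in the $\Theta_1$-spot. There I will pick an explicit generating cycle in degree $2i$ of $L(\und{c})$, push it through the degree-$2i$ component of the chain map, and read off which multiple of the corresponding generating cycle of $L(\und{d})$ comes out. By Proposition~\ref{pr:chain-map-reduction} I may take the degree-$2i$ component to be $(M_i+w_i(c_i,d_i))\, p_*p^*$ (plus the $w_i I\pi R\pi$ correction), i.e.\ a map $F_{c_i}\ra F_{d_i}$ whose normity is $N_i:=M_i+w_i(c_i,d_i)$.

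The cycle I will use is the same one as in the proof of Proposition~\ref{pr:spheres-small-model}, namely $z_c=(\pi_{c_i})_*g_{c_i}\in F_{c_i}(\Theta_1)$ in degree $2i$ (for $i=0$ it is $1\in\mZ(\Theta_1)$). It is a cycle because $(\id-Rt)$ kills transfers to $\Theta_1$. I will confirm that its class generates $H_{2i}(\Theta_1)$ by a direct look at the complex: the incoming boundary image in $F_{c_i}(\Theta_1)\iso\Z$ from $F_{c_{i+1}}$ via $I\pi R\pi$ is $c_{i+1}z_c$, which matches the known value $\Z/c_{i+1}$. The corresponding cycle $z_d=(\pi_{d_i})_*g_{d_i}$ generates on the target side.

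Next I compute the image of $z_c$. By the normity discussion preceding Proposition~\ref{pr:lifting-lems-1}, a map $F_{c_i}\ra F_{d_i}$ of normity $N$ sends $(\pi_{c_i})_*g_{c_i}$ to $N\cdot(c_i\cln d_i)\cdot(\pi_{d_i})_*g_{d_i}$. Concretely, $(\pi_c)_*p_*p^*g_d=(\pi_d)_*p_*p^*g_d=\tfrac{[c,d]}{d}(\pi_d)_*g_d$, and $\tfrac{[c,d]}{d}=c\cln d$; the $t_*$ factors disappear after pushing to $\Theta_1$. The $w_i I\pi R\pi$ piece is already included in $N_i$, because the normity of $I\pi R\pi$ is $(c_i,d_i)$. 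Hence $z_c\mapsto (c_i\cln d_i)\,N_i\, z_d$, which is the claimed multiplication. The only signs that appear come from the choice of generators $g$, and this accounts for the ``up to sign''.

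The main obstacle is the bookkeeping at the two ends. At $i=0$ the data consist of $w_0$ alone, and I have to check that the formula, with $M_0=0$ and $c_0=d_0=1$, returns $w_0$; this is immediate because $(1\cln 1)\cdot w_0\cdot 1=w_0$. At $i=k$ the homology is $\mZ$ rather than a torsion quotient, so I will check separately that $z_c$ still generates there, which holds because the kernel of $\id-Rt$ on $F_{c_k}$ at $\Theta_1$ is all of $\Z$. I also need to make sure that the chain homotopy replacement in Proposition~\ref{pr:chain-map-reduction} does not change the induced map on homology; it does not, since chain homotopic maps induce the same map on homology.
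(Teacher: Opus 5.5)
Your proof is correct and follows the paper's argument: both take $(\pi_{c_i})_*g_{c_i}$ as the generator of $H_{2i}$ and compute its image under $f_{2i}$ from the normity alone (the paper cites Proposition~\ref{pr:lifting-lems-1}(a) with $x=1$, which is exactly your direct computation); your checks that this class is a cycle and a generator are details the paper takes from earlier results. One wording slip: the degree-$2i$ component should read $M_i\,p_*p^*+w_i\,I\pi R\pi$, not $(M_i+w_i(c_i,d_i))\,p_*p^*$ plus a further correction, though your later remark about the normity of $I\pi R\pi$ shows you are using the right map.
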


\begin{proof}
By Corollary~\ref{co:positive-cone-computation1} and
Proposition~\ref{pr:spheres-small-model}
we have calculated $H_{2i}L(\und{c})\iso \mZ/I_{c_{i+1}}$ for $i<k$, and $H_{2i}L(\und{c})\iso \mZ$ for $i=k$ (and likewise for $L(\und{d})$).  In both cases the element $\pi_*(g_{c_i})$ is a generator for the homology $\mZ$-module.  So we need to calculate the composite
\[ \mZ=F_1 \llra{I\pi} F_{c_i}\llra{f_{2i}} F_{d_i}
\]
where $f$ is our map of complexes.  By Proposition~\ref{pr:lifting-lems-1}(a)  this equals $(c_i\cln d_i)N\cdot I\pi$ where $N$ is the normity of $f_{2i}$.   But $N=M_i+w_i(c_i,d_i)$, and we are done.
\end{proof}

At this point we can describe a method for computing the group $[S^{\lambda_{\und{c}}},S^{\lambda_{\und{d}}}]$ when $\und{c}$ and $\und{d}$ have the same length $k$.  Let $Z$ be the set of tuples 
\[ (M_1,M_2,\ldots,M_{k},w_0,w_1,\ldots,w_k)\in \Z^{2k+1}
\]that satisfy the conditions (\ref{eq:M-u-conds}).
Since one of these relations is $w_0=M_1\tfrac{d_1}{(c_1,d_1)}$  we simply discard $w_0$ from now on.  
Then $Z$ will be a free abelian group of rank $k+1$.   Because of the nature of the relations in Proposition~\ref{pr:L-map-null} it is more convenient to change coordinates and set $M_i'=M_i+w_i(c_i,d_i)$.  So we are a looking at tuples $(M'_i,w_i)_{1\leq i\leq k}$ satisfying the relations
\[ \Bigr ( M_i'-w_i(c_i,d_i) \Bigr ) (d_i\cln c_i)
 = M'_{i-1}\cdot (c_{i-1}\cln d_{i-1})
\]
for $i\geq 2$.  

The conditions from Proposition~\ref{pr:L-map-null} give $k$ torsion conditions and one integral conditions.  Encode these in the map
\[ \Phi\colon Z \ra \Z/(c_1,d_1)\oplus \Bigl [ \bigoplus_{i=1}^{k-1} \Z/(\tfrac{d_{i+1}\cln c_i}{d_i\cln c_i}\cdot d_i) \Bigr ] \oplus \Z
\]
that sends $(\und{M}',\und{w})$ to $(\overline{M'}_1,\overline{M'}_1,\ldots,\overline{M'}_{k-1}, M'_k)$.
(Note that the first two coordinates are not identical, because the modular reduction is happening in different groups.  When $k\geq 2$ we can discard the $\Z/(c_1,d_1)$ summand, as previously remarked.)  The image of this map is $[S^{\lambda_{\und{c}}},S^{\lambda_{\und{d}}}]$.  In any particular case this image can be algorithmically computed (by reducing an appropriate matrix to Smith normal form), but  doing this in any kind of generality appears unwieldy.  

\begin{example}
\label{ex:H-onefold}
Let us look at the $k=1$ case.  
Then $Z$ is the set of pairs $(M_1',w_1)$, with no relations.  Our map $\Phi$ is
\[ Z\ra  \Z/(c_1,d_1)\oplus \Z, \quad (M'_1,w_1)\mapsto (\overline{M'}_1, M_1').
\]
For an element in the image to be torsion one needs $M_1'=M_1+(c_1,d_1)w_1=0$, which means $(c_1,d_1)|M_1$, and so $\overline{M'}_1=0$.  So the image contains no nonzero torsion elements and hence is isomorphic to $\Z$.  This recovers the computation $H^{\lambda_{d_1}-\lambda_{c_1}}\iso \Z$ from Proposition~\ref{pr:Sb_to_Sc}.
\end{example}

\begin{example}
\label{ex:H-twofold}
Next we look at $k=2$, so that $Z$ consists of $4$-tuples $(M'_1,M'_2,w_1,w_2)$ satisfying the linear condition
\[ (M_2'-w_2(c_2,d_2))\cdot (d_2\cln c_2) =M_1' \cdot (c_1\cln d_1).
\]
Our map is
\[ \Phi\colon Z\ra \Z/(\tfrac{d_2\cln c_1}{d_1\cln c_1}\cdot d_1)\oplus \Z, \qquad (\und{M}',\und{w})\mapsto (\overline{M'}_1,M_2').
\]
We must have $M_1'$ a multiple of $(d_2\cln c_2)\cln (c_1\cln d_1)$,
but then $w_2$ can be taken to be any integer and $M_2'$ is then determined.  In particular, $Z$ contains tuples where $M_2'$ is nonzero.  
The image of $\Phi$ is therefore the direct sum of $\Z$ and whatever torsion is in the image.  To contribute to the latter we must have $M_2'=0$ which implies $-w_2d_2=M_1'(c_1\cln d_1)$. Thus $M_1'$ is a multiple of $d_2\cln (c_1\cln d_1)$.  The torsion in $\im \Phi$ is therefore a cyclic group whose order is
\[ \tfrac{ \bigl ( \tfrac{d_2\cln c_1}{d_1\cln c_1} \bigr ) d_1}{d_2\cln (c_1\cln d_1)}=\tfrac{(d_2\cln c_1) (c_1,d_1)}{d_2\cln (c_1\cln d_1)}=
\tfrac{(c_1,d_1d_2)}{(c_1,d_2)}.
\]
where we have applied Proposition~\ref{pr:colon}(h) to simplify the colon expression in the second equality.  
So $H^{\lambda_{d_1}+\lambda_{d_2}-\lambda_{c_1}-\lambda_{c_2}}(\pt)\iso \Z \oplus \Z/(\tfrac{(c_1,d_1d_2)}{(c_1,d_2)})$.  
\end{example}

\begin{example}
For $k=3$ the group $Z$ consists of tuples $(M_1',M_2',M_3',w_1,w_2,w_3)$ satisfying the two conditions
\begin{align*}
& (M_2'-w_2(c_2,d_2))\cdot (d_2\cln c_2) 
=M_1' \cdot (c_1\cln d_1)  \\
& (M_3'-w_3(c_3,d_3))\cdot (d_3\cln c_3) =M_2' \cdot (c_2\cln d_2).
\end{align*}
The map $\Phi\colon Z \ra \Z/
(\tfrac{d_2\cln c_1}{d_1\cln c_1}\cdot d_1)\oplus\Z/
(\tfrac{d_3\cln c_2}{d_2\cln c_2}\cdot d_2)\oplus
\Z$ is given by $(\und{M'},\und{w})\mapsto (\overline{M'}_1,\overline{M'}_2, M_3')$.  The torsion part of the image consists of elements $(\overline{M'}_1,\overline{M'_2})$ where $M_2'$ is a multiple of $d_3\colon (c_2\cln d_2)$  and $d_2$ divides $M_1'(c_1\cln d_1)-M_2'(d_2\cln c_2)$.  Trying to give a closed form for the isomorphism type of this group seems to be a mess, even in the $\ell$-local setting.
\end{example}

The above methods do yield a complete calculation in the following special case:

\begin{prop}
Suppose that $\und{c}$ and $\und{d}$ are divisor strings of the same length $k$, and that $(d_id_{i-1},c_{i-1})=(d_i,c_{i-1})$ for $2\leq i\leq k$.  Then $H^{\lambda_{\und{d}}-\lambda_{\und{c}}}\iso \Z$.  [In particular, note that the conditions hold if $c_{i-1}|d_i$ for $2\leq i\leq k$.]
\end{prop}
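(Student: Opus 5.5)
The plan is to use the description of $[S^{\lambda_{\und{c}}},S^{\lambda_{\und{d}}}]$ as the image of the map $\Phi$ constructed just above. We will show two things: that this image contains an element with nonzero last coordinate, and that the image has no nonzero torsion. Since $\im\Phi$ is a finitely generated subgroup of (finite group)$\,\oplus\,\Z$, these two facts force $\im\Phi\iso\Z$. The case $k=1$ is already Example~\ref{ex:H-onefold}, so assume $k\geq 2$ and discard the redundant $\Z/(c_1,d_1)$ summand.

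For the free part, take all $M'_i=0$ and $w_i=0$ for $i<k$, and take $w_k=1$, $M'_k=(c_k,d_k)$. Every defining relation of $Z$ is then satisfied: for $i<k$ both sides vanish, and for $i=k$ the left side is $\bigl((c_k,d_k)-(c_k,d_k)\bigr)(d_k\cln c_k)=0$. So $\Phi$ hits an element with last coordinate $(c_k,d_k)\neq 0$.

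For the torsion, write $N_i=\bigl(\tfrac{d_{i+1}\cln c_i}{d_i\cln c_i}\bigr)d_i$. By Corollary~\ref{co:colon}, $N_i=\tfrac{d_{i+1}(c_i,d_i)}{(c_i,d_{i+1})}$. Let $(\und{M}',\und{w})\in Z$ satisfy $M'_k=0$; we must show $N_i\mid M'_i$ for all $1\leq i\leq k-1$. We argue by downward induction on $i$, using the relation
\[ \bigl(M'_{i+1}-w_{i+1}(c_{i+1},d_{i+1})\bigr)(d_{i+1}\cln c_{i+1})=M'_i\,(c_i\cln d_i). \]
First we claim the left side is divisible by $d_{i+1}$. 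Because $(c_{i+1},d_{i+1})(d_{i+1}\cln c_{i+1})=d_{i+1}$, it suffices that $(c_{i+1},d_{i+1})$ divides $M'_{i+1}$. For $i+1=k$ this holds since $M'_k=0$. For $i+1<k$ it follows from the inductive hypothesis, since $N_{i+1}=\tfrac{d_{i+2}}{(c_{i+1},d_{i+2})}\cdot(c_{i+1},d_{i+1})$ is a multiple of $(c_{i+1},d_{i+1})$.

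It follows that $d_{i+1}\mid M'_i\,(c_i\cln d_i)$, that is, $d_{i+1}\cln(c_i\cln d_i)$ divides $M'_i$. By Proposition~\ref{pr:colon}(g),
\[ d_{i+1}\cln(c_i\cln d_i)=\frac{(d_{i+1}c_i,\,d_{i+1}d_i)}{(c_i,\,d_{i+1}d_i)}=\frac{d_{i+1}(c_i,d_i)}{(c_i,d_{i+1}d_i)}. \]
The hypothesis $(d_{i+1}d_i,c_i)=(d_{i+1},c_i)$, applied with index $i+1\in\{2,\ldots,k\}$, turns this into exactly $N_i$. Hence $\overline{M'}_i=0$ in $\Z/N_i$, which completes the induction.

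Therefore every element of $\im\Phi$ with last coordinate zero is itself zero, so $\im\Phi$ is torsion-free and of rank one, and $H^{\lambda_{\und{d}}-\lambda_{\und{c}}}\iso\Z$. For the bracketed remark: if $c_{i-1}\mid d_i$, then both $(d_id_{i-1},c_{i-1})$ and $(d_i,c_{i-1})$ equal $c_{i-1}$, so the hypothesis holds.

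The main obstacle is the colon-arithmetic identification of the derived divisibility modulus $d_{i+1}\cln(c_i\cln d_i)$ with $N_i$. This is precisely where the gcd hypothesis enters, and it must be matched carefully against the form of $N_i$ given by Corollary~\ref{co:colon}.
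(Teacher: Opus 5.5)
Your proposal is correct and follows the paper's proof: the same identification $d_{i+1}\cln(c_i\cln d_i)=N_i$ via Proposition~\ref{pr:colon}(g), the hypothesis, and Corollary~\ref{co:colon}, followed by the same descending induction showing that $M'_k=0$ forces $N_i\mid M'_i$ for all $i$, so $\im\Phi$ has no torsion. The only difference is that you explicitly exhibit a tuple with $M'_k=(c_k,d_k)\neq 0$ to obtain the rank-one part, which the paper leaves implicit (it also follows from rationalization).
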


\begin{proof}
First observe that our hypotheses imply that
\[ d_i\colon (c_{i-1}\cln d_{i-1})=\tfrac{(d_ic_{i-1},d_id_{i-1})}{(c_{i-1},d_id_{i-1})}=\tfrac{(d_ic_{i-1},d_id_{i-1})}{(c_{i-1},d_i)}=\Bigl (
\tfrac{d_i\cln c_{i-1}}{d_{i-1}\cln c_{i-1}}\Bigr )\cdot d_{i-1}.
\]
Here we used Proposition~\ref{pr:colon}(g) for the first equality, our hypothesis for the second, and Corollary~\ref{co:colon} for the third.  This identity will be important in our argument.   For convenience let $N_{i-1}$ denote this common number.  Since 
\[ N_{i-1}=\tfrac{d_i(c_{i-1},d_{i-1})}{(c_{i-1},d_i)}=(d_i\cln c_{i-1})\cdot (c_{i-1},d_{i-1})
\]
note that $(c_{i-1},d_{i-1})|N_{i-1}$.   It will be convenient to set $N_k=(c_k,d_k)$.  

Using the same notation as above, we have a free abelian group $Z$ and a map $\Phi\colon Z\ra \Z/(c_1,d_1)\oplus [\bigoplus_{i=1}^{k-1}\Z/N_i]\oplus \Z$ whose image is $H^{\lambda_{\und{d}}-\lambda_{\und{c}}}=[S^{\lambda_{\und{c}}},S^{\lambda_{\und{d}}}]$.  The torsion part of the image will consist of tuples $(\overline{M'}_1,\overline{M'_1},\ldots,\overline{M'_{k-1}},M_k')$ where $M'_k=0$ and the $M'_i$ integers satisfy the relations
\begin{equation*}
\tag{*}
(M_i'-w_i(c_i,d_i))\cdot (d_i\cln c_i) =M_{i-1}'\cdot (c_{i-1}\cln d_{i-1}).
\end{equation*}
We will show by descending induction that $M'_k=0$ implies that $M'_i$ is a multiple of $N_i$ for all $i$.  
For $i=k$ this is trivial, so assume $M'_i$ is a multiple of $N_i$ where $i\geq 2$.  Then we know $M'_i$ is also multiple of $(c_i,d_i)$. Using (*) this implies that $M_{i-1}'\cdot (c_{i-1}\cln d_{i-1})$ is a multiple of $(c_i,d_i)\cdot (d_i\cln c_i)=d_i$. Thus
\[ M_{i-1}' \in (d_i\cln(c_{i-1}\cln d_{i-1}))=(N_{i-1}),
\]
the latter equality being from the first paragraph.  This is what we wanted.

We conclude $M_k'=0$ implies that the other coordinates of $\Phi$ are all zero, which means that $\Phi$ has no torsion elements in its image.
\end{proof}

\begin{remark}[The $E$ operator]
\label{re:E-operator}
We leave the following as an extended exercise for the reader.
As above, fix divisor strings $\und{c}$ and $\und{d}$ of the same length $k$. The above analysis shows that a chain map $f\colon L(\und{c})\ra L(\und{d})$ represented by $(\und{M},\und{w})$ is torsion in $H^{\lamu{d}-\lamu{c}}$ if and only if $M_k+w_k(c_k,d_k)=0$.  This condition is equivalent to saying that $f_{2k}=A(\id-Rt)$ for some $A\colon F_{c_k}\ra F_{d_k}$, which in turn is equivalent to saying that $f$ is chain homotopic to a map that is zero in degree $2k$ (use the chain homotopy that is $A$ in the top degree and zero everywhere else).  So the torsion subgroup of $H^{\lamu{d}-\lamu{c}}$ is isomorphic to the set of chain homotopy classes of maps $L(\und{c})\ra L(\und{d})$ that are zero in the top degree.

Now assume that $\und{c}$ and $\und{d}$ are the initial segments of divisor strings $\und{c}'$ and $\und{d}'$ of length $k+1$.  A map $f\colon L(\und{c})\ra L(\und{d})$ that is zero in degree $2k$ readily extends to a map $Ef\colon L(\und{c}')\ra L(\und{d}')$ just by defining $Ef$ to be zero in degrees $2k+1$ and $2k+2$.  This extension respects chain homotopies, so that one obtains a map
\[ E\colon \tors H^{\lamu{d}-\lamu{c}}\ra \tors H^{\lamu{d'}-\lamu{c'}}.
\]
An easy argument using the techniques we have developed shows that this is actually injective.  While we do not explore this operator further here, it comes up briefly in Section~\ref{se:ell^2}.
\end{remark}

\subsection{Calculating the integral image}
As an application of our previous work in this section, we can completely calculate the image of $H^\star(\pt;\mZ)\ra H^\star(\pt;\mQ)$.  Recall that the target is $\Q[u_d^{\pm 1}: d|n, d<n]$.  Due to the multigrading, the image will be generated by certain integral multiples of the classes $\frac{u_{d_1}\cdots u_{d_s}}{u_{c_1}\cdots u_{c_k}}$, for every choice of divisors $\und{d}$ and $\und{c}$.  Our task is to determine these integer factors precisely.  Note that since $u_1=1$ we can append ones to the $c$'s or $d$'s without changing the problem, and for this reason can always assume that $\und{c}$ and $\und{d}$ have the same length.

Let $\und{c}=(c_1,\ldots,c_k)$ and $\und{d}=(d_1,\ldots,d_k)$ be two lists of positive integers.  
It will be useful to picture these as lined up next to each other.  By convention set $c_0=d_0=1$.  
Recursively define the following integers:
\[ Y_0=c_{0}, \quad Y_j=\Bigl ((c_{j}Y_{j-1}\cln d_j),c_{j}\Bigr ).
\]
This process terminates at $Y_k$, 
and we set $Y_{\und{c},\und{d}}=Y_k$.  

As this is a bit challenging to parse, let us look at specific examples:
\begin{align*}
k=1: & \quad Y_{\und{c},\und{d}}=(c_1c_{0}\cln d_1,c_1)=(c_1\cln d_1,c_1)=(c_1\cln d_1) \\
k=2: & \quad Y_{\und{c},\und{d}}= \biggl ( \Bigl( c_2(c_{1}\cln d_1,c_{1})\Bigr )\cln d_2,c_2\biggr ).
\end{align*}

Even when the $c_i$ and $d_i$ are all powers of a fixed prime $\ell$, we do not know a simple description of the integer $Y_{\und{c},\und{d}}$.  However, we do have a non-recursive description.
In the following array, consider paths that start at $1$ and where each step is either a right move (R), up-right move (UR), or down move (D):
\[ \xymatrixrowsep{1pc}\xymatrix{
 & d_1 & d_2 & \cdots & d_k \\
1 & c_1 & c_2 & \cdots & c_k
}
\]
Say that a path is \dfn{allowable} if it starts at $1$ and uses the above kinds of steps.
To each allowable path, associate the integer consisting of the product of all the elements encountered along the path.  For example, $c_1c_2\cdots c_k$ and $d_1d_2c_2c_3d_4d_5\cdots d_k$ are two such integers. We have the following formula: 

\begin{prop}
\label{pr:M-direct}
$Y_{\und{c}, \und{d}}=\tfrac{(E)}{(E,F)} $ where $E$ is the set of all integers corresponding to allowable paths that end at $c_k$, $F$ is the set of all integers corresponding to allowable paths that end at $d_k$, and all parentheses indicate gcds.  
\end{prop}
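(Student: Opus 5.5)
The plan is to set up recursions for the two gcds indexed by the column of the array, and then prove by induction that their ratio obeys the same recursion as the $Y_j$. For $1\le j\le k$ let $e_j$ be the gcd of the integers attached to allowable paths ending at $c_j$. Let $f_j$ be the gcd of those attached to allowable paths ending at $d_j$. Put $e_0=1$ (the trivial path at the starting $1$) and $g_j=(e_j,f_j)$ for $j\ge 1$, with the convention $g_0=1$. The claim to prove is that $Y_j=e_j/g_j$ for all $j\ge 1$; the case $j=k$ is the statement.

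First I would derive recursions by looking at the last step of a path. A path ending at $d_j$ either arrives by an R step from $d_{j-1}$ (only possible when $j\ge 2$) or by a UR step from $c_{j-1}$, where $c_0$ is the starting $1$. A path ending at $c_j$ either arrives by an R step from $c_{j-1}$ or by a D step from $d_j$. Every element encountered before the final one is unconstrained, and the gcd of a set of multiples $\{zx\}$ is $z$ times the gcd of the $x$'s. This gives
\[ f_1=d_1,\qquad f_j=d_j\,(f_{j-1},e_{j-1})=d_j\,g_{j-1}\ (j\ge 2),\qquad e_j=c_j\,(e_{j-1},f_j). \]
With the convention $g_0=1$, the formula $f_j=d_jg_{j-1}$ holds for all $j\ge1$. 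In particular $e_1=c_1$ and $g_1=(c_1,d_1)$, so $e_1/g_1=c_1\cln d_1=Y_1$ by Proposition~\ref{pr:colon}(a). This is the base case.

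For the inductive step, assume $Y_j=e_j/g_j$ and write $c=c_{j+1}$, $d=d_{j+1}$. Since $g_j$ divides $e_j$,
\[ e_{j+1}=c\,(e_j,dg_j)=c\,g_j\,(Y_j,d),\qquad g_{j+1}=(c\,g_j(Y_j,d),\,dg_j)=g_j\bigl(c(Y_j,d),d\bigr). \]
Hence $e_{j+1}/g_{j+1}=\bigl(c(Y_j,d)\bigr)\cln d$. It remains to check the integer identity
\[ \bigl(c\,(Y_j,d)\bigr)\cln d=\bigl((cY_j\cln d),\,c\bigr). \]
This is the only real computation, and I would do it one prime at a time. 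Let $a,y,b$ be the $\ell$-adic valuations of $c,Y_j,d$. The left side has valuation $\max(a+\min(y,b)-b,0)$, and the right side has valuation $\min(\max(a+y-b,0),a)$. If $y\ge b$, both sides equal $a$. If $y<b$, both equal $\max(a+y-b,0)$, because then $a+y-b<a$. This closes the induction. No divisor-string hypothesis on $\und{c}$ or $\und{d}$ is needed.

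I expect the main obstacle to be the bookkeeping in the path recursion: one must make sure the R and UR moves into the top row and the D moves into the bottom row exhaust all final steps, and treat the first column correctly. That is why I normalize with $g_0=1$ and $f_1=d_1$. Once the recursion is correct, the valuation check above is routine.
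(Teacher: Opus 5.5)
Your proof is correct and follows essentially the same route as the paper: induction on the column index, splitting paths by their last step to express $(E_j)$ and $(E_j,F_j)$ in terms of the previous column, then checking that the ratio satisfies the recursion defining $Y_j$. The differences are cosmetic. You state the recursions $f_j=d_jg_{j-1}$ and $e_j=c_j(e_{j-1},f_j)$ explicitly, where the paper leaves them to a ``brief inspection,'' and you check the final arithmetic identity one prime at a time rather than by the paper's direct gcd manipulations.
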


As an example, when $k=2$ we have $E=(c_1c_2,d_1c_1c_2,d_1d_2c_2,c_1d_2c_2)=(c_1c_2,d_1d_2c_2)$ and $F=(c_1d_2,d_1c_1d_2,d_1d_2)=(c_1d_2,d_1d_2)$ and so
\[ Y_{(c_1,c_2),(d_1,d_2)}=  \tfrac{(c_1c_2,\ c_2d_1d_2)}{(c_1c_2,\ c_1d_2,\ d_1d_2)}.
\]

\begin{proof}[Proof of Proposition~\ref{pr:M-direct}]
Recall that $Y_0=(1)$ and $Y_j=((c_jY_{j-1}\cln d_j),c_j)$ for all $j\geq 1$.  Let $E_j$ and $F_j$ be as in the statement of the proposition but for the truncated sequences $(c_1,\ldots,c_j)$ and $(d_1,\ldots,d_j)$.   We will prove by induction that $Y_j=\frac{(E_j)}{(E_j,F_j)}$.  
The base case $j=0$ is trivial.  Assuming the result for $Y_{j-1}$ we have
\[
c_jY_{j-1}\colon d_j=\tfrac{ \tfrac{c_j(E_{j-1})}{(E_{j-1},F_{j-1})}}{
 \Bigl ( \tfrac{c_j(E_{j-1})}{(E_{j-1},F_{j-1})}, d_{j} \Bigr )}
 =\frac{(c_jE_{j-1})}{(c_jE_{j-1},d_jE_{j-1},d_jF_{j-1})}
\]
and therefore
\begin{align*}
Y_j=(c_jY_{j-1}\cln d_j,c_j)&=
\Bigl ( \tfrac{(c_jE_{j-1})}{(c_jE_{j-1},d_jE_{j-1},d_jF_{j-1})},\ c_j\Bigr )\\
& =
\frac{(c_jE_{j-1},c_{j}^2E_{j-1},c_jd_jE_{j-1},c_jd_jF_{j-1})}
{(c_jE_{j-1},d_{j}E_{j-1},d_jF_{j-1})}
\\[0.05in]
&=\frac{(c_jE_{j-1},c_jd_jF_{j-1} )}{(c_jE_{j-1},d_jE_{j-1},d_jF_{j-1})}.
\end{align*}
In the last expression, a brief inspection shows that the elements in the numerator have the same gcd as the set $E_j$ and the elements in the denominator have the same gcd as $E_j\cup F_j$.
\end{proof}

\begin{cor}
\label{co:M=1}
Suppose that $\und{c}$ and $\und{d}$ have the same length $k$.
Then $Y_{\und{c},\und{d}}=1$ if either of the following two conditions holds:
\begin{enumerate}[(1)]
\item For each $1\leq j\leq k$ one has $c_jc_{j+1}\cdots c_k|d_jd_{j+1}\cdots d_k$, and
\item $c_{k-1}c_k|d_k$.
\end{enumerate}
\end{cor}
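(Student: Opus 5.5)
By Proposition~\ref{pr:M-direct} we have $Y_{\und{c},\und{d}}=\tfrac{(E)}{(E,F)}$, so $Y_{\und{c},\und{d}}=1$ exactly when $(E)$ divides $(E,F)$, that is, when $(E)$ divides every element of $F$. It is therefore enough to show the following: for every allowable path ending at $d_k$, with associated integer $f$, there is an allowable path ending at $c_k$ whose associated integer divides $f$. The whole argument consists of modifying the tail of a path ending at $d_k$ so that it ends at $c_k$, and then using the hypothesis to compare the two products. The prefix of the path is kept unchanged.

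Under condition (1), take an allowable path ending at $d_k$. The path starts on the bottom row at $1$, and the only way to get to the top row is an up-right step. So there is a last moment at which the path is on the bottom row. Say this is at $c_{j-1}$, where $c_0=1$ is the starting point. The path then steps up-right to $d_j$, and from there it can only move right, since a down step would return it to the bottom row. Hence $f=P\cdot d_jd_{j+1}\cdots d_k$, where $P$ is the product of the entries up to and including $c_{j-1}$. Now replace the tail by the all-right path $c_{j-1}\to c_j\to\cdots\to c_k$. This is an allowable path ending at $c_k$, and its integer is $P\cdot c_j\cdots c_k$. By hypothesis (1) this divides $P\cdot d_j\cdots d_k=f$.

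Under condition (2), note first that $c_k\mid c_{k-1}c_k\mid d_k$. We look only at the last step of a path ending at $d_k$, and there are two cases.
\begin{enumerate}[(i)]
\item The last step is a right step from $d_{k-1}$. Write $f=P d_k$, where $P$ is the product along the path through $d_{k-1}$. Modify the path by going down from $d_{k-1}$ to $c_{k-1}$ and then right to $c_k$. The new integer is $Pc_{k-1}c_k$, which divides $Pd_k$.
\item The last step is an up-right step from $c_{k-1}$, or from $1$ when $k=1$, using the convention $c_0=1$. Again write $f=Pd_k$. Modify the final step to a right step to $c_k$. The new integer is $Pc_k$, which divides $Pd_k$.
\end{enumerate}
A path arriving at $d_k$ by an up-right step from $c_{k-1}$ is not covered by case (i), and that is why case (ii) is needed separately.

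The only point needing care is checking that the modified tails are legal allowable paths: a down step from $d_{k-1}$ to $c_{k-1}$ is permitted, and right steps along the bottom row are permitted. Once that is checked, the two case analyses above complete the proof. I expect no real obstacle beyond that bookkeeping.
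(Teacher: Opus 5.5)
Your proof is correct and follows the paper's argument essentially verbatim. Under (1) you replace the top-row tail $d_j\cdots d_k$, which is entered by an up-right step from $c_{j-1}$, with the bottom-row run $c_j\cdots c_k$. Under (2) you replace the endings $c_{k-1}d_k$ and $d_{k-1}d_k$ by $c_{k-1}c_k$ and $d_{k-1}c_{k-1}c_k$. In both cases every element of $F$ becomes a multiple of an element of $E$, so $(E,F)=(E)$.
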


\begin{proof}
Consider an allowable path $P$ that ends in $d_k$.  The tail of this path will consist of some $c_i$ (possibly $c_0=1$) followed by $d_{i+1}d_{i+2}\cdots d_k$.  Let $P'$ be the path obtained from $P$ by replacing this tail with $c_ic_{i+1}\cdots c_k$.  If assumption (1) holds, the integer for $P'$ divides the integer for $P$.  So (using notation as in Proposition~\ref{pr:M-direct}) every element of $F$ is a multiple of an element of $E$,  therefore $(E,F)=(E)$ and 
$Y_{\und{c},\und{d}}=1$.

When assumption (2) holds, we argue similarly.  Any path $P$ will end in either $c_{k-1}d_k$ or $d_{k-1}d_k$.  If we construct $P'$ by replacing the ending with $c_{k-1}c_k$ in the first case and $d_{k-1}c_{k-1}c_k$ in the second, then assumption (2) implies that the integer for $P'$ divides the integer for $P$ (note that the assumption implies $c_k|d_k$). 
\end{proof}

\begin{remark}
\label{re:counter}
If $\ell$ is a prime then $\und{c}={(\ell^3,\ell^3,\ell^4)}$ and $\und{d}=(\ell,\ell,\ell^7)$ is an example where assumption (2) holds but (1) does not. 
\end{remark}
  
Say that an element of $H^\star(\pt;\mQ)$ is \dfn{integral} if it lies in the image of $H^\star(\pt;\mZ)$. We have the following results about integral elements:

\begin{prop}
\label{pr:integral-classes}
Let $\und{c}=(c_1,\ldots,c_k)$ and $\und{d}=(d_1,\ldots,d_k)$ be two lists of divisors of $n$.  Write $u_{\und{c}}=\prod_i u_{c_i}$ and likewise for $u_{\und{d}}$.  
\begin{enumerate}[(a)]
\item If $\und{c}$ and $\und{d}$ are divisor strings then a multiple $r\cdot \frac{u_{\und{d}}}{u_{\und{c}}}$ is integral  if and only if $r$ is a multiple of $Y_{\und{c},\und{d}}$.  
\item For arbitrary $\und{c},\und{d}$, a multiple $r\cdot \frac{u_{\und{d}}}{u_{\und{c}}}$ is integral 
if and only if $r$ is a multiple of $Y_{\und{c'},\und{d'}}$, where $\und{c}'$ and $\und{d}'$ are the divisors strings associated to $\und{c}$ and $\und{d}$.  
\end{enumerate}
\end{prop}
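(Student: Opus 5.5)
The plan is to compute the image of $H^{\lamu{d}-\lamu{c}}=[L(\und{c}),L(\und{d})]$ under rationalization via its effect on top homology. Rationalization is detected on $H_{2k}$: by Proposition~\ref{pr:L-homology}, a chain map with data $(\und{M},\und{w})$ acts on $H_{2k}L(\und{c})\iso\mZ\ra H_{2k}L(\und{d})\iso\mZ$ (at $\Theta_1$) as multiplication by $(c_k\cln d_k)\cdot M'_k$, where $M'_k=M_k+w_k(c_k,d_k)$. On the other hand, the class $r\cdot\frac{u_{\und{d}}}{u_{\und{c}}}$ multiplies $u_{\und{c}}$, the generator of $H_{2k}$ of the source, to $r\,u_{\und{d}}$. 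So it will suffice to determine the set of integers $(c_k\cln d_k)M'_k$ as $(\und{M}',\und{w})$ ranges over the group $Z$ of solutions of the relations (\ref{eq:M-u-conds}). This is the ideal generated by $(c_k\cln d_k)\cdot g_k$, where $g_k$ is the generator of the projection of $Z$ onto the last coordinate. The sign and orientation issues are harmless by Remark~\ref{re:maps->H}.

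The main step is to compute $g_k$ recursively. In the primed coordinates the relations read $(M'_i-w_i(c_i,d_i))(d_i\cln c_i)=M'_{i-1}(c_{i-1}\cln d_{i-1})$, with $M'_0$ free, corresponding to $w_0$. Let $G_i\subseteq\Z$ be the set of achievable values of $M'_i$; it is an ideal. Given $M'_{i-1}\in G_{i-1}$, we need $(d_i\cln c_i)$ to divide $M'_{i-1}(c_{i-1}\cln d_{i-1})$, and then $M'_i$ ranges over a coset of $(c_i,d_i)\Z$. This gives
\[ G_i=\Bigl(\tfrac{1}{d_i\cln c_i}\bigl[(c_{i-1}\cln d_{i-1})G_{i-1}\cap (d_i\cln c_i)\Z\bigr]\Bigr)+(c_i,d_i)\Z. \]
Set $Y'_i=(c_i\cln d_i)g_i$, the normalized generator. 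Using Proposition~\ref{pr:colon}, the recursion becomes $Y'_i=\bigl(c_i\cln d_i\bigr)\cdot\bigl((Y'_{i-1}\cln (d_i\cln c_i)),(c_i,d_i)\bigr)$. The target is to massage this into $Y'_i=((c_iY'_{i-1}\cln d_i),c_i)$, which is the definition of $Y_i$. I expect this colon/gcd bookkeeping, a prime-by-prime check, to be the main obstacle. I would do it $\ell$-locally, where all quantities are powers of $\ell$ and each identity reduces to a min/max identity on exponents. The base case is $G_0=\Z$ with $c_0=d_0=1$, giving $Y'_0=1$. As a sanity check, the case $k=1$ gives $(c_1\cln d_1)$, which agrees with Proposition~\ref{pr:u-props}.

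This gives part (a): the image is exactly $Y_{\und{c},\und{d}}\Z\cdot\frac{u_{\und{d}}}{u_{\und{c}}}$. Part (b) follows by multiplying by the $\chi$-units. By Proposition~\ref{pr:lcm-gcd-distill} and Corollary~\ref{co:basic_equiv}, iterated multiplication by the units $\chi_{b,c}=\frac{u_{(b,c)}u_{[b,c]}}{u_bu_c}$ (see (\ref{eq:t-u})) gives isomorphisms $H^{\lamu{d}-\lamu{c}}\iso H^{\lamu{d'}-\lamu{c'}}$ compatible with rationalization. These isomorphisms send $r\frac{u_{\und{d}}}{u_{\und{c}}}$ to $r\frac{u_{\und{d}'}}{u_{\und{c}'}}$, so integrality of the multiple $r$ is unchanged, and part (a) applies.
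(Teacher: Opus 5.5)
Your proposal is correct and follows essentially the same route as the paper's proof. You detect integrality on top homology via Proposition~\ref{pr:L-homology}, propagate the admissible ideals for the $(M_i,w_i)$ data inductively up the $L$-complexes, and reduce (b) to (a) by passing to the associated divisor strings. The colon/gcd step you flag as the main obstacle needs no prime-by-prime check: it is a single application of Proposition~\ref{pr:colon}(g), namely $(c_i\cln d_i)\bigl(Y'_{i-1}\cln (d_i\cln c_i)\bigr)=(c_iY'_{i-1})\cln d_i$, exactly as in the paper.
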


Before giving the proof of the above result, we discuss some examples.  Remember that we implicitly add $u_1$ terms in order to make the number of $c$'s and $d$'s coincide.
Using Proposition~\ref{pr:M-direct} we find that for divisor strings $\und{c}$ and $\und{d}$:

\begin{itemize}
\item 
$r\cdot \tfrac{u_d}{u_c}$ is integral if and only if $r$ is a multiple of $\tfrac{c}{(c,d)}=c\cln d$.  

\item
$r\cdot \tfrac{u_d}{u_{c_1}u_{c_2}u_{c_3}}$ is integral if and only if $r$ is a multiple of $\tfrac{(c_2c_3,dc_3)}{(c_2c_3,d)}$.  

\item
$r\cdot \tfrac{u_{d_1}u_{d_2}}{u_{c_1}u_{c_2}}$ is integral if and only if $r$ is a multiple of $\tfrac{(c_1c_2,d_1d_2c_2)}{(c_1c_2,c_1d_2,d_1d_2)}$.
\end{itemize}

Finally, as an even more specific example, 
$r\cdot \tfrac{u_2u_6}{u_4}=r\cdot \tfrac{u_2u_6}{u_1u_4}$ is integral if and only if $r$ is a multiple of
$\tfrac{4}{(4,6)}=2$.

It is tempting to try to classify precisely which elements $\tfrac{u_{\und{d}}}{u_{\und{c}}}$ are integral, but this seems not to have a nice answer.  The following result gives some idea of the landscape:

\begin{prop}\mbox{}\par
\begin{enumerate}[(a)]
\item $\tfrac{u_d}{u_c}$ is integral if and only if $c|d$.
\item The element $\tfrac{u_{d_1}u_{d_2}}{u_{c_1}u_{c_2}}$ is integral if and only if $[c_1,c_2]\Bigl |[d_1,d_2]$ and $c_1c_2|d_1d_2$.
\item Let $\und{c}$ and $\und{d}$ be divisor strings of length $3$.  Then $\tfrac{u_{d_1}u_{d_2}u_{d_3}}{u_{c_1}u_{c_2}u_{c_3}}$ is integral if and only if for all primes $\ell|c_3$ either
\[ \text{\parbox{3.5in}{ $c_1(\ell)\leq (d_1d_2)(\ell)$, and  $c_3(\ell)\leq d_3(\ell)$, and \\$(c_2c_3)(\ell)\leq (d_2d_3)(\ell)$ and $(c_1c_2c_3)(\ell)\leq (d_1d_2d_3)(\ell)$}}
\]
OR
\[ \text{\parbox{3.5in}{ $(d_1d_2)(\ell)< c_1(\ell)$, and $(d_1d_2c_3)(\ell)\leq (c_1d_3)(\ell)$, and
\\ $(d_1c_2c_3)(\ell)\leq (c_1d_3)(\ell)$ and $(c_2c_3)(\ell)\leq d_3(\ell)$.}
}
\]
(Note that Remark~\ref{re:counter} gives an example showing that the evident generalization of (a) and (b) does not hold 
here.)
\end{enumerate}
\end{prop}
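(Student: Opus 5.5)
The plan is to deduce everything from Proposition~\ref{pr:integral-classes}(a): $\tfrac{u_{\und{d}}}{u_{\und{c}}}$ is integral if and only if $Y_{\und{c},\und{d}}=1$. We then compute $Y$ with the path formula of Proposition~\ref{pr:M-direct}, $Y=(E)/(E,F)$. Here $Y=1$ holds exactly when $(E)$ divides every element of $F$. Both sides are products of divisors, so we may localize: the condition holds if and only if, for each prime $\ell$, the minimal $\ell$-valuation over $E$ is at most the minimal $\ell$-valuation over $F$. Primes not dividing any $c_i$ contribute valuation $0$ to the path ending in $c$'s only, so only primes $\ell|c_k$ (for divisor strings) matter. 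For non-divisor strings we first replace $\und{c},\und{d}$ by their associated divisor strings, using part (b) of Proposition~\ref{pr:integral-classes}. This is harmless in (a) and (b) because $\gcd$ and $\operatorname{lcm}$, and products, are preserved by that replacement.

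For (a) with $k=1$, we have $Y=c\cln d$, which equals $1$ iff $c|d$. For (b) we use the displayed formula
\[ Y=\tfrac{(c_1c_2,\ c_2d_1d_2)}{(c_1c_2,\ c_1d_2,\ d_1d_2)}, \]
assuming $c_1|c_2$ and $d_1|d_2$. Then $Y=1$ iff $(c_1c_2,c_2d_1d_2)$ divides $c_1d_2$ and $d_1d_2$. We check this prime by prime, writing exponents $\alpha_1\le\alpha_2$ for the $c$'s and $\beta_1\le\beta_2$ for the $d$'s. The condition becomes
\[ \min(\alpha_1+\alpha_2,\ \alpha_2+\beta_1+\beta_2)\le\min(\alpha_1+\beta_2,\ \beta_1+\beta_2). \]
We split into the cases $\alpha_1\le\beta_1+\beta_2$ and $\alpha_1>\beta_1+\beta_2$. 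In the second case the left side is $\alpha_2+\beta_1+\beta_2>\beta_1+\beta_2$, so integrality fails. This is consistent with $c_1c_2|d_1d_2$ failing, since $\alpha_1+\alpha_2>\beta_1+\beta_2$. In the first case the condition reads $\alpha_1+\alpha_2\le\alpha_1+\beta_2$ and $\alpha_1+\alpha_2\le\beta_1+\beta_2$, that is, $\alpha_2\le\beta_2$ and $\alpha_1+\alpha_2\le\beta_1+\beta_2$. These are precisely the $\ell$-parts of $[c_1,c_2]|[d_1,d_2]$ and $c_1c_2|d_1d_2$. Conversely, those two divisibilities force $\alpha_1\le\beta_1+\beta_2$, so we land in the first case.

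For (c) we do the same computation with $k=3$. We enumerate the allowable paths, or more efficiently use the recursion for $E_j,F_j$ from the proof of Proposition~\ref{pr:M-direct}. In $\ell$-valuations, $\min E$ is the minimum of $\alpha_1+\alpha_2+\alpha_3$, $\beta_1+\beta_2+\alpha_2+\alpha_3$, $\alpha_1+\beta_2+\alpha_2+\alpha_3$, and so on. We reduce these using $\alpha_1\le\alpha_2\le\alpha_3$ and $\beta_1\le\beta_2\le\beta_3$, and then split according to which term realizes $\min E$. The natural dichotomy is whether the initial $c_1$ or the path $d_1d_2$ is cheaper: $c_1(\ell)\le(d_1d_2)(\ell)$ versus $(d_1d_2)(\ell)<c_1(\ell)$. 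This matches the two displayed alternatives.

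In each case we require $\min E\le$ every element of $F$, where $F$ consists of the path values ending at $d_3$, such as $\beta_1+\beta_2+\beta_3$, $\alpha_1+\beta_2+\beta_3$, $\alpha_1+\alpha_2+\beta_3$ and $\beta_1+\beta_2+\alpha_2+\beta_3$. We discard the inequalities implied by the others. The main obstacle is this bookkeeping: pruning the redundant inequalities in each of the two cases so that exactly the four listed conditions remain. We would organize it by noting that dominated paths (those whose value is a multiple of another's) can be dropped from both $E$ and $F$ first.
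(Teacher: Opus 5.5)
Your proposal is correct and follows essentially the same route as the paper: reduce via Proposition~\ref{pr:integral-classes} to the condition $Y_{\und{c},\und{d}}=1$, evaluate $Y$ with the path formula of Proposition~\ref{pr:M-direct}, check it prime by prime for $\ell\mid c_k$, and split (c) according to whether $\nu_\ell(c_1)\le\nu_\ell(d_1d_2)$. One adjustment to your bookkeeping goal in (c): the second alternative in the statement is not an irredundant list, since its two middle inequalities follow from $(d_1d_2)(\ell)<c_1(\ell)$ and $(c_2c_3)(\ell)\le d_3(\ell)$ (the paper just writes out $\nu_\ell(Q)\le \min\{\nu_\ell(c_1c_2d_3),\nu_\ell(c_1d_2d_3),\nu_\ell(d_1d_2d_3)\}$ with $Q=c_2c_3(c_1,d_1d_2)$ in each case without pruning), so you should aim to prove equivalence with the listed conditions rather than that exactly these four survive pruning.
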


\begin{proof}
Part (a) is immediate, as $Y_{c,d}=\tfrac{c}{(c,d)}$ and this equals $1$ precisely when $c|d$.  

For (b), replacing $\und{c}$ and $\und{d}$ with their derived divisor strings does not affect either integrality or the lcm or the product, so let us do that.  Proposition~\ref{pr:M-direct} plus a little arithmetic shows $Y_{\und{c},\und{d}}=(\tfrac{c_1c_2}{(c_1c_2,c_1d_2,d_1d_2)},c_2)$.  This equals $1$ if and only if for every prime $\ell|c_2$ one has $\nu_\ell(c_1c_2)\leq \nu_\ell(c_1d_2)$ and $\nu_\ell(c_1c_2)\leq \nu_\ell(d_1d_2)$.  The first is equivalent to $\nu_\ell(c_2)\leq \nu_\ell(d_2)$, and this holding for all primes $\ell|c_2$ is exactly the condition $c_2|d_2$.    Since $c_1|c_2$, the primes dividing $c_2$ are exactly the primes dividing $c_1c_2$; so the second inequality holding for all primes $\ell|c_2$ is equivalent to $c_1c_2|d_1d_2$.  These are the required conditions.    

Finally, for part (c) we use Proposition~\ref{pr:M-direct} plus some arithmetic (namely the move $(x/y,c)=(x,cy)/y$) 
to obtain
\[ Y_{\und{c},\und{d}}=\Bigl ( \tfrac{Q}{(Q,c_1c_2d_3,c_1d_2d_3,d_1d_2d_3)}, c_3\Bigr )
\]
where
$Q=(c_1c_2c_3,d_1d_2c_2c_3)=c_2c_3(c_1,d_1d_2)$.  
The condition $Y_{\und{c},\und{d}}=1$ is equivalent to saying that for every prime $\ell|c_3$, the $\ell$-adic valuation of $\tfrac{Q}{(Q,c_1c_2d_3,c_1d_2d_3,d_1d_2d_3)}$ equals zero.  But $\nu_\ell(Q)=\nu_\ell(c_2c_3)+\min\{\nu_\ell(c_1),\nu_\ell(d_1d_2)\}$ and we are asking that
\begin{equation}
\label{eq:nu-condition}
\nu_\ell(Q)\leq \min\{ \nu_\ell(c_1c_2d_3), \nu_\ell(c_1d_2d_3), \nu_\ell(d_1d_2d_3)\}.
\end{equation}
The conditions from the statement of the proposition 
just amount to analyzing the two different possibilities for the term $\min\{\nu_\ell(c_1),\nu_\ell(d_1d_2)\}$ and how this influences (\ref{eq:nu-condition}). 
\end{proof}

We now give the proof of the main integrality result:

\begin{proof}[Proof of Proposition~\ref{pr:integral-classes}]
The groups $H_{2k}(S^{\lambda_{\und{c}}})$ and $H_{2k}(S^{\lambda_{\und{d}}})$ are generated by $u_{\und{c}}$ and $u_{\und{d}}$, respectively.  Consider the map
\[ [S^{\lambda_{\und{c}}}, S^{\lambda_{\und{d}}}] \ra \Hom\Bigl (H_{2k}(S^{\lambda_{\und{c}}}),H_{2k}(S^{\lambda_{\und{d}}}) \Bigr )\iso \Hom(\Z,\Z)=\Z. 
\]
The image is precisely the ideal consisting of all coefficients $r$ for which $r\cdot \frac{u_{\und{d}}}{u_{\und{c}}}$ is integral.  
Our approach to computing this image will be to construct all possible maps between the spheres and calculate the induced maps on the top homology group.

We may replace $\und{c}$ and $\und{d}$ with their associated divisor strings without changing the homotopy type of the spheres, so (b) follows directly from (a).  From now on we assume that $\und{c}$ and $\und{d}$ are divisor strings.

As we saw in Section~\ref{se:L-maps}, a chain map $L(\und{c})\ra L(\und{d})$ can be prescribed by specifying integers $M_1,\ldots,M_k$ and $w_1,\ldots,w_k$ satisfying the conditions
\begin{equation}
\label{eq:M-u-version2}
M_j\cdot (d_j\cln c_j)= M_{j-1}\cdot (c_{j-1}\cln d_{j-1})+w_{j-1} c_{j-1}
\end{equation}
for $2\leq j \leq k-1$.  By Proposition~\ref{pr:L-homology}
the induced map on the top homology is multiplication by
$(c_k\cln d_k)M_k + c_kw_k$. So our first goal is to show this is a multiple of $Y_{\und{c}, \und{d}}=Y_k$. Recall the integers $Y_j$ are defined recursively by 
\[
Y_0=1, \quad Y_j=((c_jY_{j-1}\cln d_j), d_j).
\]
In particular, note $Y_1$ simplifies to $Y_1=(c_1\cln d_1)$. 

All we do now is analyze the conditions (\ref{eq:M-u-version2}) inductively.  The first says that $M_2$ is in the ideal
\[ ((c_1\cln d_1),c_1)\cln (d_2\cln c_2)=(c_1\cln d_1)\cln (d_2\cln c_2)=Y_1\cln (d_2\cln c_2).
\]
Using this fact, the next condition then says that $M_3$ is in the ideal
\begin{align*}
\Bigl (  (c_2\cln d_2)((c_1\cln d_1)\cln (d_2\cln c_2)), c_2 \Bigr )\cln (d_3\cln c_3)&=\Bigl ( ((c_1\cln d_1)c_2)\cln d_2,c_2 
\Bigr ) \cln (d_3\cln c_3)\\
&=Y_2\cln (d_3\cln c_3)
\end{align*}
where in the second equality we have used Proposition~\ref{pr:colon}(g).  Proceeding inductively, we find that $M_j$ is in the ideal $Y_{j-1}\cln (d_j\cln c_j)$ for all $j\geq 2$.  

Finally the induced map on $H_{2k}$ is multiplication by
$(c_k\cln d_k)M_k+c_kw_k$, which can be any element of the ideal
\[ \Bigl (    (c_k\cln d_k)(Y_{k-1}\cln (d_k\cln c_k)),c_k \Bigr )=((Y_{k-1}c_k)\cln d_k,c_k)=Y_k=Y_{\und{c}, \und{d}}
\]
where in the first equality we have again used Proposition~\ref{pr:colon}(g) to simplify the colon expression, and the last equality is from the definition of the $Y_j$ numbers. This shows that if $r\cdot u_{\und{d}}/u_{\und{c}}$ is integral then  $r$ is a multiple of $Y_{\und{c}, \und{d}}$. But in fact all of these steps are reversible. As long as $M_k$ is chosen to be in $Y_{k-1}\cln (d_k\cln c_k)$, then one can recursively choose $M_j$ and $w_j$ (descending in $j$) in order to make a map of complexes. Thus any multiple of $Y_{\und{c}, \und{d}}$ gives rise to such an integral class as well.
\end{proof}

\subsection{The classification of units}
We end this section by classifying the units in $H^\star(pt;\mZ)$. We start with a lemma:
\begin{lemma}
\label{le:M=M}
Suppose that $c_1,\ldots,c_k$ and $d_1,\ldots,d_k$ are divisor strings.
\begin{enumerate}[(a)]
\item  If $Y_{\und{c},\und{d}}=1$ then $c_k|d_k$.
\item If $Y_{\und{c},\und{d}}=1=Y_{\und{d},\und{c}}$ then
$\und{c}=\und{d}$.  
\end{enumerate}
\end{lemma}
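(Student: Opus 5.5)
We will argue directly from the recursion $Y_0=1$, $Y_j=\bigl((c_jY_{j-1}\cln d_j),c_j\bigr)$, working one prime at a time. Since gcds, colons and divisibility all commute with taking $\ell$-adic parts, it suffices to prove both statements after replacing every integer by its $\ell$-adic valuation. Fix a prime $\ell$ and write $x_j=\nu_\ell(c_j)$, $z_j=\nu_\ell(d_j)$, $y_j=\nu_\ell(Y_j)$. Then $\nu_\ell(a\cln b)=\max\{\nu_\ell a-\nu_\ell b,0\}$, and the recursion becomes
\[ y_j=\min\bigl\{\max\{x_j+y_{j-1}-z_j,0\},\,x_j\bigr\}. \]
Because $x_j\geq 0$ and $y_{j-1}\geq 0$, this simplifies to $y_j=\max\{0,\min\{x_j,x_j+y_{j-1}-z_j\}\}$.

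For (a), the hypothesis $Y_{\und{c},\und{d}}=1$ says $y_k=0$ for every $\ell$, so $\min\{x_k,\,x_k+y_{k-1}-z_k\}\leq 0$. If $x_k=0$ then $x_k\leq z_k$ holds trivially. Otherwise $x_k+y_{k-1}\leq z_k$, and since $y_{k-1}\geq 0$ this again gives $x_k\leq z_k$. Applying this at every prime yields $c_k\mid d_k$. This part should be immediate.

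For (b), we induct on $k$ and try to peel off the last entries. Part (a) applied to both orderings gives $c_k=d_k$. Next we would like to show $Y_{(c_1,\dots,c_{k-1}),(d_1,\dots,d_{k-1})}=1$, so that induction finishes the argument. The difficulty is that $y_k=0$ no longer forces $y_{k-1}=0$ once we know $x_k=z_k$: the condition becomes $\min\{x_k,y_{k-1}\}=0$, which is automatic when $x_k=0$ and otherwise gives $y_{k-1}=0$.

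This suggests using the divisor-string structure. If $x_k=0$ at $\ell$, then since $c_{k-1}\mid c_k$ and $d_{k-1}\mid d_k$, all earlier valuations are also $0$ at $\ell$, and the two strings agree at $\ell$ trivially. If $x_k>0$, we obtain $y_{k-1}=0$, and symmetrically $y'_{k-1}=0$ for $Y_{\und{d},\und{c}}$. The induction hypothesis then applies $\ell$-locally. To make this rigorous, we will state the induction in the form ``for each prime $\ell$, if $y_k=y'_k=0$ then $x_j=z_j$ for all $j$,'' proved by induction on $k$ in valuations only. Since every step above is prime-by-prime, this is legitimate. The base case $k=1$ is part (a) in both directions.

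The main obstacle is recognizing that $y_k=0$ does not directly propagate to $y_{k-1}=0$. It does so only where $x_k>0$, and the divisor-string hypothesis is exactly what handles the primes where $x_k=0$.
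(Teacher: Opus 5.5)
Your proof is correct, and it follows a somewhat different route from the paper's.

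\textbf{Part (a).} The paper works $\ell$-locally with the path description of $Y_{\und{c},\und{d}}$ from Proposition~\ref{pr:M-direct}. It extends a minimal allowable path to $c_{k-1}$ by $d_k$ and compares it with paths ending at $c_k$. You instead read everything off the recursion in valuations, where $y_k=0$ forces $x_k=0$ or $x_k+y_{k-1}\le z_k$. This is shorter and avoids the path bookkeeping; note that a path can also reach $c_k$ by a down step from $d_k$, which the path argument has to account for. Your argument also never uses the divisor-string hypothesis.

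\textbf{Part (b).} Both proofs induct on $k$ after applying (a) twice to get $c_k=d_k$. The paper then uses the global identity $Y_{\und{c},\und{d}}=Y_{\und{c}',\und{d}'}$ when $c_k=d_k$, justified via Proposition~\ref{pr:M-direct} or the integrality interpretation in Proposition~\ref{pr:integral-classes}. You induct prime by prime, with a case split on whether $x_k=0$.

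That case split is valid but unnecessary, and the ``main obstacle'' you identify is not really there. The recursion gives $y_j\le x_j$ for $j\ge 1$, and $y_0=0$. So for a divisor string, $y_{k-1}\le x_{k-1}\le x_k$. Once $x_k=z_k$, this gives $y_k=\min\{x_k,y_{k-1}\}=y_{k-1}$ at every prime. Hence $y_k=0$ does force $y_{k-1}=0$. This equality is exactly the valuation form of the paper's identity, and it lets you run the induction globally rather than prime by prime, with no case analysis.
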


\begin{proof}
For (a),
it suffices to prove the result after $\ell$-localization for each prime $\ell$.  So we can fix $\ell$ and replace each $c_i$ and $d_i$ with their $\ell$-adic parts $c_i(\ell)$ and $d_i(\ell)$.

Using the setup from Proposition~\ref{pr:M-direct}, if $P$ is an allowable path write $[P]$ for its associated integer.  Say that path $P$ is a divisor of path $Q$ if $[P]$ divides $[Q]$. Since all of the integers are powers of $\ell$, the condition $Y_{\und{c},\und{d}}=1$ is equivalent to saying that every allowable path that ends at $d_k$ is a multiple of some allowable path that ends at $c_k$.  Let $\sigma$ be the smallest allowable path that ends at $c_{k-1}$, and let $\sigma'$ be the concatenation of $\sigma$ with $d_k$.    Then $\sigma'$ is a multiple of some allowable path $S$ that ends at $c_k$, and such a path must necessarily end in $c_{k-1}c_k$.  So the part of $S$ that ends at $c_{k-1}$ is a multiple of $\sigma$, by our choice of $\sigma$. Hence $[S]$ is a multiple of $[\sigma]c_k$.  
But $[\sigma']=[\sigma]d_k$ is a multiple of $[S]$, and so $c_k|d_k$.

For (b), the proof is by induction on $k$.  For the case $k=1$, if $\tfrac{c}{(c,d)}=1=\tfrac{d}{(c,d)}$ then $c|d$ and $d|c$, so $c=d$.  For the general case, it follows from (a) (applied twice) that $c_k=d_k$.  
Now use that if $c_k=d_k$ then $Y_{\und{c},\und{d}}=Y_{(c_1,\ldots,c_{k-1}),(d_1,\ldots,d_{k-1})}$. This is easy to prove directly (for divisor strings) using Proposition~\ref{pr:M-direct}, but it is also an immediate consequence of Proposition~\ref{pr:integral-classes}.  Finally if $\und{c}'$ and $\und{d}'$ denote the strings with the last element removed then we now have $Y_{\und{c}',\und{d}'}=1=Y_{\und{d}',\und{c}'}$ and so induction gives that $\und{c}'=\und{d}'$.
\end{proof}

\begin{prop}
\label{pr:units}
Every homogeneous unit in $H^\star(\pt;\mZ)$ is, up to sign, a product of the $\chi_{a,b}$-classes and their inverses.
\end{prop}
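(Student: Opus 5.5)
Let $x\in H^v(\pt)$ be a homogeneous unit with inverse $y\in H^{-v}$. I would first pin down the degree. The product $xy=1$ lives in $H^0\iso\Z$, so $x$ is non-torsion. By Corollary~\ref{co:rat1}, $\dim v=0$. Write $v=\lamu{d}-\lamu{c}+k$, where $\und{c},\und{d}$ are lists of divisors greater than $1$. Using $\lambda_1=2$, I would absorb $k$ into the list: append $k/2$ ones to $\und{d}$ if $k\ge 0$, and $-k/2$ ones to $\und{c}$ if $k<0$. Then pad with further ones (recall $u_1=1$) so that $v=\lamu{d}-\lamu{c}$ with $\und{c}$ and $\und{d}$ of the same length. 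Multiplying by $\chi$-classes and their inverses, which are units, replaces $\und{c}$ and $\und{d}$ by their associated divisor strings (Proposition~\ref{pr:lcm-gcd-distill} and Corollary~\ref{co:basic_equiv}). So, at the cost of multiplying $x$ by a product of $\chi^{\pm1}$, I may assume $\und{c}$ and $\und{d}$ are divisor strings of equal length $k$.

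Next I would rationalize. The rationalization $H^\star\ra\Q[u_d^{\pm1}]$ is a ring map, so the images of $x$ and $y$ are nonzero rational multiples $r\frac{u_{\und{d}}}{u_{\und{c}}}$ and $s\frac{u_{\und{c}}}{u_{\und{d}}}$ with $rs=1$. By Proposition~\ref{pr:integral-classes}(a), $r$ is an integer multiple of $Y_{\und{c},\und{d}}$ and $s$ is an integer multiple of $Y_{\und{d},\und{c}}$. Hence $r,s\in\Z$ with $rs=1$, which forces $r=\pm1$ and $Y_{\und{c},\und{d}}=Y_{\und{d},\und{c}}=1$. Lemma~\ref{le:M=M}(b) then gives $\und{c}=\und{d}$, so $v=0$ after the $\chi$-adjustment.

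Finally I would handle degree $0$: $H^0=\cD(\mZ,\mZ)=\Z$, whose units are $\pm1$. Undoing the adjustment shows the original $x$ is $\pm$ a product of $\chi$-classes and their inverses.

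The main subtlety I expect is bookkeeping in the reduction step. The $\chi$-units must be shown to convert the original degree into one with divisor strings, and trailing $1$'s and the integer part must be handled consistently so that Proposition~\ref{pr:integral-classes} applies with equal lengths. The degree-zero conclusion then relies on the fact that the strings being equal makes the degree genuinely $0$ and not merely rationally trivial. A second point to check is that $x$ itself need not equal its rational image, since there may be torsion. This does not matter, because once $v=0$ the group is $\Z$, which has no torsion.
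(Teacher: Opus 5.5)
Your proof is correct and follows the paper's own argument essentially step for step. You rule out degrees with $\dim v\neq 0$ by torsion, absorb the integer part into the lists using $\lambda_1=2$ (with $k$ even because $\dim v=0$), and use the $\chi$-units to pass to divisor strings. You then apply Proposition~\ref{pr:integral-classes} to $x$ and its inverse and Lemma~\ref{le:M=M}(b) to force $\und{c}=\und{d}$, so the adjusted unit lies in $H^0=\Z$.
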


\begin{proof}
Let $u\in H^{\lambda_{\und{d}}-\lambda_{\und{c}}+m}(\pt;\mZ)$ be a unit, for some lists of divisors $d_1,\ldots,d_k$ and $c_1,\ldots,c_s$ and some $m\in \Z$.  First note that we must have $m=2s-2k$, otherwise the group would be torsion and so could not contain a unit. By adding ones to either the $\und{d}$-sequence or $\und{c}$-sequence (according to whether $m>0$ or $m<0$) we can assume $m=0$.  

By multiplying $u$ by appropriate $\chi$-classes we can convert $\und{d}$ and $\und{c}$ into their associated divisor strings.
So we further reduce to the case where $\und{d}$ and $\und{c}$ are divisor strings, and we will prove $u=\pm 1$.  

Applying the map $H^\star(\pt;\mZ)\ra H^\star(\pt;\mQ)$, Proposition~\ref{pr:integral-classes} says that $u$ will map to $P Y_{\und{c},\und{d}}\cdot \tfrac{u_{\und{d}}}{u_{\und{c}}}$ for some integer $P$.  Likewise, $u^{-1}$ will map to $QY_{\und{d},\und{c}}\cdot \tfrac{u_{\und{c}}}{u_{\und{d}}}$ for some integer $Q$.  Then $1=PQY_{\und{c},\und{d}}\cdot Y_{\und{d},\und{c}}$ and since the four factors are integers this can only happen if $P,Q\in \{1,-1\}$ and 
$Y_{\und{c},\und{d}}=1=Y_{\und{d},\und{c}}$.
By Lemma~\ref{le:M=M} the latter condition implies that $\und{c}=\und{d}$.  Hence $u\in H^0(\pt;\mZ)=\Z$, and being a unit we must have $u=\pm 1$.
\end{proof}

\section{The irregular region}
\label{se:irregular}

In this section we explore a few aspects of the irregular region of $H^\star(\pt)$.  
We prove Proposition~\ref{pr:irregular} from the introduction, which mostly amounts to pulling together ideas we have already developed.  We then expand further on some of those techniques, and close by examining in detail the case $n=\ell^2$.\medskip

\begin{proof}[Proof of Proposition~\ref{pr:irregular}]
For (a) consider the cofiber sequence $\Sigma^2 S^{\lamu{c'}}\llra{u_{c_1}} S^{\lamu{c}}\ra \mZ/I_{c_1}$ and apply $[\blank,\Sigma^k S^{\lamu{d}}]$.  One obtains the exact sequence
\[ \ucD(\Sigma^{-1}\mZ/I_{c_1},\Sigma^kS^{\lamu{d}}) \lla \uH^{\lamu{d}-\lamu{c'}+(k-2)} \llla{\cdot u_{c_1}} \uH^{\lamu{d}-\lamu{c}+k} \lla \ucD(\mZ/I_{c_1},\Sigma^k S^{\lamu{d}}).
\]
Since $\mZ/I_{c_1}$ has a free resolution of length $3$, if $k\geq 4$ then the groups on the two ends are zero.

The proof of (b) is similar, using the cofiber sequence $\Sigma^2 S^{\lamu{d'}}\ra S^{\lamu{d}} \ra \mZ/I_{d_1}$.  
This yields the exact sequence
\[ \ucD(S^{\lamu{c}},\Sigma^{k-1}\mZ/I_{d_1}) \ra \uH^{\lamu{d'}-\lamu{c}+(k+2)}\llra{\cdot u_{d_1}} \uH^{\lamu{d}-\lamu{c}+k} \lra \ucD(S^{\lamu{c}},\Sigma^k \mZ/I_{d_1}).
\]
If $k<0$ then the groups on the two ends will be zero, since $S^{\lamu{c}}$ is concentrated in non-negative degrees. 

For (c) we use the K\"unneth spectral sequence (see \cite[Section 6]{DHone}).  Note that $H^{\lamu{d}-\lamu{c}+k}=H_{-k}(S^{-\lamu{c}}\bbox S^{\lamu{d}})$.   Recall that the homology of $S^{-\lamu{c}}$ has an $I_{c_q}$ in degree $-2q$ and $\mZ/I_{c_{q-j}}$ in degree $-2q+2j-1$ for $1\leq j\leq q-1$.  Likewise, $S^{\lamu{d}}$ has homology only in even degrees, given by $\mZ/I_{d_j}$ in degree $2j-2$ ($1\leq j\leq s$) and $\mZ$ in degree $2s$.  In the K\"unneth spectral sequence we have
\begin{itemize}
\item box products $I_{c_q}\bbox \mZ/I_{d_j}$ and $I_{c_q}\bbox \mZ$, all contributing in even total degree,
\item $\uTor_2(I_{c_q},\mZ/I_{d_j})$ terms, also contributing in even total degree (all other $\Tor_i$ are zero here),
\item box products $\mZ/I_{c_i}\bbox \mZ/I_{d_j}$ and $\mZ/I_{c_i}\bbox \mZ$, all in odd total degree, and
\item $\uTor_3(\mZ/I_{c_i},\mZ/I_{d_j})$ terms, contributing to even total degree.  
\end{itemize}
Only the third types of terms are in odd total degree, and these are all in filtration degree zero of the spectral sequence.  So for $k$ odd we find that $\uH^{\lamu{d}-\lamu{c}+k}$ is a quotient of the sum of these terms, and therefore $\uH^{\lamu{d}-\lamu{c}+k}$ is generated by products of the negative $\gamma_{\und{c}}$-classes and the $(au)_{\und{d}}$-classes.  

For (d) let $\und{c}'=(c_1,\ldots,c_s)$.  The inclusion of complexes $S^{\lamu{c'}}\inc S^{\lamu{c}}$ is a model for the homotopy class $A=a_{c_{s+1}}a_{c_{s+2}}\cdots a_q$.  Let $Q$ denote the quotient complex, which is nonzero only in degrees $2s+1$ through $2q$.  Applying $[\blank,S^{\lamu{d}}]$ gives the exact sequence
\[ [\Sigma^{-1}S^{\lamu{c}},S^{\lamu{d}}]\lla [\Sigma^{-1}Q,S^{\lamu{d}}]  \llla{g} H^{\lamu{d}-\lamu{c'}} \llla{\cdot A} H^{\lamu{d}-\lamu{c}} \lla [Q,S^{\lamu{d}}]=0.
\]
The group on the right is zero for degree reasons, and the group on the left is torsion by Proposition~\ref{pr:rationalization}.  We also know already that the group $H^{\lamu{d}-\lamu{c'}}$ is isomorphic to the direct sum of $\Z$ and a torsion group, since by 
Proposition~\ref{pr:rationalization} the rationalization is $\Q$.  We will show that $[\Sigma^{-1}Q,S^{\lamu{d}}]\iso \Z$.  From this it follows from the exact sequence that the map labelled $g$ is nonzero and that the kernel is precisely the torsion subgroup of the domain, which will prove (d).   The calculation of $[\Sigma^{-1}Q,S^{\lamu{d}}]$ is easy to do by hand: chain maps look like
\[ \xymatrix{
\cdots \ar[r] & F_{c_{s+1}}\ar[r]^{\id-Rt} & F_{c_{s+1}}\ar[d]^f \\
&& F_{d_s} \ar[r]^{\id-Rt} & F_{d_s} \ar[r] & F_{d_{s-1}}\ar[r] & \cdots 
}
\]
and to be a map of complexes one must have $(\id-Rt)f=0$, which by Proposition~\ref{pr:lifting-lems-2}(b) is equivalent to $f=uI\pi R\pi$ for some $u\in \Z$.  There are no possible nontrivial chain homotopies, and so $[\Sigma^{-1}Q,S^{\lamu{d}}]\iso \Z$.

The proof of (e) is similar to (d) but with a couple of changes.  Let $\und{d'}=(d_1,\ldots,d_q)$.  We use the linear models for the spheres and the cofiber sequence $S^{\lamu{d'}}\inc S^{\lamu{d}} \ra Q$ where $Q$ is the quotient.  Note that $Q$ is nonzero only in degrees $2q+1$ through $2s$.  The inclusion on the left represents the homotopy class $A=a_{d_{q+1}}\cdots a_{d_s}$.  Applying $[S^{\lamu{c}},\blank]$ to the cofiber sequence gives
\[ [S^{\lamu{c}},\Sigma^{-1}S^{\lamu{d}}]\lra [S^{\lamu{c}},\Sigma^{-1}Q] \lra H^{\lamu{d'}-\lamu{c}} \llra{\cdot A} H^{\lamu{d}-\lamu{c}} \lra [S^{\lamu{c}},Q]=0.
\]
The group on the right is zero for degree reasons, and the group on the left is torsion by Proposition~\ref{pr:rationalization}.  So we will be done once we show that $[S^{\lamu{c}},\Sigma^{-1}Q]$ is isomorphic to $\mZ$.   But this is easy by direct calculation: a map of complexes looks like
\[ \xymatrix{
\cdots \ar[r] & F_{d_{q+1}}\ar[r]^{\id-Rt} & F_{d_{q+1}} \\
&& F_{c_q}\ar[r]^{\id-Rt}\ar[u]^{\lur{m}} & F_{c_q}\ar[r]  & \cdots
}
\]
and such a map is null homotopic if and only if there exist $h_0,h_1\colon F_{c_q}\ra F_{d_{q+1}}$ such that $\lur{m}=(\id-Rt)h_1+h_0(\id-Rt)$.  But this expression also equals $(\id-Rt)(h_1+h_0)$, and so the map is null homotopic if and only if $\lur{m}$ lifts 
across $\id-Rt$.  By Proposition~\ref{pr:lifting-lems-2} this lift exists if and only if $\sum_i m_i=0$.  This shows that $\lur{m}\mapsto \sum_i m_i$ gives an embedding $[S^{\lamu{c}},\Sigma^{-1}Q]\ra \Z$, and the image is all of $\Z$ since $\und{m}$ can be chosen arbitrarily.  
\end{proof}

\begin{cor}
\label{co:comm}
$H^\star(\pt)$ is a commutative ring.
\end{cor}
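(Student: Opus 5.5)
By the skew-commutativity rule (\ref{eq:skew-comm}), $xy=(-1)^{\epsilon}yx$ where $\epsilon$ is the product of the fixed dimensions of $|x|$ and $|y|$. So a sign can only appear when both $x$ and $y$ lie in degrees of odd fixed dimension. It therefore suffices to prove that $xy=0$ whenever $x$ and $y$ are homogeneous of odd fixed dimension. In that case $yx=-xy=0$ as well, and commutativity holds in every case.

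I will first reduce to convenient degrees. Multiplication by the $\chi$-units is an isomorphism, and the $\chi$-classes sit in degrees of fixed dimension $0$, so they commute with everything. We may therefore replace $x$ and $y$ by $\chi$-multiples and assume their degrees have the form $\lamu{d}-\lamu{c}+k$ with $\und{c},\und{d}$ divisor strings and $k$ odd. Because $\lambda_1=2$, the fixed dimension of such a degree has the same parity as $k$. Any odd integer part can be absorbed this way, so $k$ is indeed odd for both elements.

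Proposition~\ref{pr:irregular}(c) now says that each such group is generated by products $\alpha\cdot g$ with $\alpha$ an $(au)_{\und{d}}$-class and $g$ a negative $\gamma_{\und{c}}$-class. This also covers the regular region: there the odd-degree groups are either zero or lie in the torsion cone, which is generated by negative $\gamma$-classes. By bilinearity, then, we may assume $x=\alpha g$ and $y=\beta h$, where $\alpha,\beta$ are $au$-classes and $g,h$ are negative $\gamma$-classes. The $au$-classes have even fixed dimension, so by (\ref{eq:skew-comm}) they commute with everything. Hence $xy=\pm\alpha\beta\,(gh)$. By Proposition~\ref{pr:neg-cone-products}(1) the product of two $\gamma$-classes is zero, so $gh=0$ and $xy=0$.

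The step I expect to need the most care is the appeal to Proposition~\ref{pr:neg-cone-products}(1) for general negative $\gamma$-classes, not just the subtips $\gamma_b$. Its proof is a degree argument: $gh$ lands in the negative cone at even fixed dimension, where the groups are zero or torsion-free by Proposition~\ref{pr:negative-cone}. The product of two torsion classes is torsion, so it must vanish. This needs both $\gamma$-classes to be indexed by the same type of sequence, so that the product actually lies in the negative cone. If the $\chi$-reduction leaves $g$ and $h$ attached to different divisor strings, I would first use $\chi$-units to move $gh$ back into a degree of the form $k-\lamu{b}$ before running this argument.
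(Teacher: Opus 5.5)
Your proof is correct and is essentially the paper's argument. Skew-commutativity reduces to the case where both fixed dimensions are odd, Proposition~\ref{pr:irregular}(c) then writes such elements as sums of $au$-classes times negative $\gamma$-classes, and products of two $\gamma$-classes vanish by Proposition~\ref{pr:neg-cone-products}(1); your explicit $\chi$-unit reduction to divisor strings only spells out a step the paper leaves implicit.

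The worry in your last paragraph is unnecessary. The product $gh$ automatically lies in a negative-cone degree $m-\lamu{b}-\lamu{b'}$, and Proposition~\ref{pr:negative-cone} holds for arbitrary lists of divisors, so no further $\chi$-manipulation is needed.
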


\begin{proof}
Let $x\in H^\star$ and $y\in H^\star$ be homogeneous elements.  If the fixed-dimension of either $|x|$ or $|y|$ is even then we know $xy=yx$ by (\ref{eq:skew-comm}).  So assume both of those fixed dimensions are odd.  By Proposition~\ref{pr:irregular}(c), we reduce to the case where each of $x$ and $y$ is a product of $au$-classes with negative $\gamma$-classes.  But the $au$-classes commute with everything (since $|a|$ and $|u|$ have even fixed dimension) and the product of two negative $\gamma$-classes is zero by Proposition~\ref{pr:neg-cone-products}.  So in this case $xy$ and $yx$ are both zero, hence $xy=yx$.  
\end{proof}

\subsection{Isotypical regions}

Proposition~\ref{pr:irregular} dealt with the idea of reducing a particular homogeneous component of $H^\star(\pt)$ to another one via multiplication by $u$- or $a$-classes.  
This idea can be further exploited in various directions.  The first of these says that multiplication by the largest $a$-class is almost always an isomorphism:

\begin{prop}
For any multi-index $\beta$, the map $\uH^\beta \llra{\cdot a_n} \uH^{\beta+\lambda_n}$ is an isomorphism except possibly when $|\beta|\in \{-2,-1,0\}$.  When $|\beta|=-2$ the map is injective and the image is the torsion subgroup of the target, and when $|\beta|=-1$ the map is surjective.
When $|\beta|=0$ the map is surjective, but also injective on torsion classes.
\end{prop}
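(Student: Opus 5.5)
The plan is to run the cofiber sequence for the Euler class $a_n$ and show that the third term only sees the underlying (nonequivariant) sphere, so it vanishes except in a narrow band of dimensions. Here I read $|\beta|$ as the geometric dimension $\dim\beta$. Recall that $\Sigma X_n$ is the cofiber of $a_n\colon \mZ\ra S^{\lambda_n}$, where $X_n=(F_n\llra{\id-Rt}F_n)$ sits in degrees $1$ and $0$. Boxing with $S^{\beta}$ and applying $\ucD(\mZ,\blank)$ should give a long exact sequence
\[ \uH_{1}(S^{\beta}\bbox \Sigma X_n)\lra \uH^\beta \llra{\cdot a_n} \uH^{\beta+\lambda_n}\lra \uH_0(S^{\beta}\bbox \Sigma X_n)\lra \uH^{\beta+1}\llra{\cdot a_n}\uH^{\beta+1+\lambda_n}. \]
Up to a reindexing that I will have to check carefully, the relevant terms are $\uH_{-1}$, $\uH_0$ and $\uH_1$ of $S^\beta\bbox\Sigma X_n$.

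\textbf{Step 1: compute $S^\beta\bbox X_n$.} Every divisor of $n$ divides $n$, so Proposition~\ref{pr:sphere-box-free} gives $S^\beta\bbox F_n\he \Sigma^{\dim\beta}F_n$. I expect this equivalence to be compatible with the differential $\id-Rt$, which is natural in $F_n$. If so, $S^\beta\bbox X_n\he \Sigma^{\dim\beta}X_n$. From the standard resolution (\ref{eq:std-free}), $\id-Rt$ on $F_n$ has kernel $I\pi(\mZ)\iso\mZ$ and cokernel $\mZ$ (via $R\pi$). Hence $\Sigma X_n$ has homology $\mZ$ in degrees $1$ and $2$ only, and $S^\beta\bbox\Sigma X_n$ has homology $\mZ$ in degrees $\dim\beta+1$ and $\dim\beta+2$ only.

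\textbf{Step 2: read off the generic case.} The connecting terms flanking $\cdot a_n$ involve $\uH_j(S^\beta\bbox\Sigma X_n)$ with $j$ in a window of length three around $0$. This homology vanishes unless $\dim\beta+1$ or $\dim\beta+2$ lies in that window, which should pin the exceptional values to $\dim\beta\in\{-2,-1,0\}$. In all other cases $\cdot a_n$ is an isomorphism.

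\textbf{Step 3: the three exceptional cases.} In each case exactly one flanking term is a copy of $\mZ$, which gives the two one-sided claims.
\begin{itemize}
\item For $\dim\beta=-2$, the nonzero flanking $\mZ$ lies on the cokernel side, so $\cdot a_n$ is injective. The target is $\Q$ rationally, since $\dim(\beta+\lambda_n)=0$, while the source is torsion by Corollary~\ref{co:rat1}. The map to $\mZ$ is therefore rationally nonzero, so its kernel, which is the image of $\cdot a_n$, is exactly the torsion subgroup.
\item For $\dim\beta=-1$, the nonzero $\mZ$ lies on the kernel side, so $\cdot a_n$ is surjective.
\item For $\dim\beta=0$, surjectivity follows as in the previous case. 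The kernel of $\cdot a_n$ is the image of a $\mZ$-module that is a form of $\Z$, so it is torsion-free. I would argue it meets the torsion subgroup trivially by checking that the map out of $\mZ$ is rationally injective, using Proposition~\ref{pr:rat1} (multiplication by $a_n$ kills everything rationally). Then the kernel is a rank-one non-torsion piece, and $\cdot a_n$ is injective on torsion.
\end{itemize}

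The main obstacle I expect is in Step 3: getting the precise indexing of the connecting maps right, and showing that the maps between $\mZ$ and the $\uH^\star$ groups are rationally nonzero or injective as claimed. My first approach there is to work with the linear models $L(\und{d})$ and compute these maps by hand at $\Theta_1$, as in the proofs of Proposition~\ref{pr:irregular}(d) and (e).
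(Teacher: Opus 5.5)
Your overall route is the paper's: box the cofiber sequence $\mZ\to S^{\lambda_n}\to\Sigma X_n$ with $S^\beta$, identify $S^\beta\bbox\Sigma X_n\he\Sigma^{1+\dim\beta}X_n$ (this is Lemma~\ref{le:suspxb}, extended to negative spheres by boxing with inverses), and read off that $\cdot a_n$ is an isomorphism unless $\dim\beta\in\{-2,-1,0\}$. The case analysis of the exceptional values, however, contains a real error. A smaller correction to Step 1 first: the cokernel of $\id-Rt$ on $F_n$ is the image of $R\pi$. By exactness of (\ref{eq:std-free}) this image is $I_n$, not $\mZ$. So $S^\beta\bbox\Sigma X_n$ has homology $I_n$ in degree $\dim\beta+1$ and $\mZ$ in degree $\dim\beta+2$. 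Only torsion-freeness is used later, so this slip is cosmetic.

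The substantive problem is your claim that in each exceptional case exactly one flanking term is nonzero. For $\dim\beta=-1$ the homology of $S^\beta\bbox\Sigma X_n$ sits in degrees $0$ and $1$, as your own Step 1 shows. So both flanking terms are nonzero: $\uH_1\iso\mZ$ on the kernel side and $\uH_0\iso I_n$ on the cokernel side. Surjectivity therefore does not follow from what you wrote. The missing step is that $\dim(\beta+\lambda_n)=1\neq 0$, so $\uH^{\beta+\lambda_n}$ is torsion by Corollary~\ref{co:rat1}. Its map into the torsion-free module $I_n$ must then be zero, which gives surjectivity; this is exactly how the paper argues.

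For $\dim\beta=0$, surjectivity is fine, because the cokernel-side term really is zero there. But the inference ``the kernel is the image of a form of $\Z$, so it is torsion-free'' is invalid, since a quotient of $I_n$ can have torsion. What you need is that the map $I_n=\uH_1(S^\beta\bbox\Sigma X_n)\to\uH^\beta$ is injective. The paper gets this from the preceding term $\uH^{\beta+\lambda_n-1}$: it has dimension $1$, hence is torsion, hence maps to zero in $I_n$. Your rational-injectivity idea also works, once you add that a rationally injective map out of a torsion-free module is injective.

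Your $\dim\beta=-2$ argument is correct. No chain-level computations with the $L(\und{d})$ models are needed anywhere: all three exceptional cases are settled by these torsion-versus-torsion-free observations.
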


\begin{proof}
We start with the cofiber sequence $\mZ\llra{a_n} S^{\lambda_n}\ra \Sigma X_n$, then box with $S^{\beta}$ to obtain
\[ S^\beta \ra S^{\beta+\lambda_n}\ra S^{1+\beta}\bbox X_n.
\]
This gives the long exact sequence
\[\cdots \lra \uH_1(S^{1+\beta}\bbox X_n)\lra \uH^\beta \llra{\cdot a_n} \uH^{\beta+\lambda_n} \lra \uH_0(S^{1+\beta}\bbox X_n) \lra \cdots
\]
Recall from Lemma~\ref{le:suspxb} that $S^{1+\beta}\bbox X_n\he S^{1+\dim \beta}\bbox X_n$, and so the homology is equal to $I_n$ in degree $1+\dim \beta$ and $\mZ$ in degree $2+\dim\beta$.  In particular, the groups on the two ends are zero if $0,1\notin \{1+\dim \beta,2+\dim \beta\}$, or equivalently $\dim \beta\notin \{0,-1,-2\}$.  

If $\dim \beta=-2$ then our exact sequence becomes
$0 \lra \uH^{\beta}\llra{\cdot a_n} \uH^{\beta+\lambda_n} \lra \mZ$.  But $\uH^{\beta}$ is torsion, and so its image in $\uH^{\beta+\lambda_n}$ must coincide with the entire torsion subgroup.

If $\dim \beta=-1$ then $\uH^{\beta+\lambda_n}$ is torsion, whereas $\uH_0(S^{1+\beta}\bbox X_n)\iso I_n$.  So the map from the former to the latter must be zero, and hence multiplication by $a_n$ is surjective.

Finally, if $\dim \beta=0$ then our exact sequence  becomes
\[
\uH^{\beta-1+\lambda_n}\lra I_n\lra \uH^\beta \llra{\cdot a_n} \uH^{\beta+\lambda_n} \lra 0.
\]
The Mackey functor $\uH^{\beta-1+\lambda_n}$ is torsion and $I_n$ is torsion-free, so the left map is zero.  It follows at once that multiplication by $a_n$ is injective on torsion classes.
\end{proof}

Note that multiplication by $a_n$ increases the geometric dimension by two, and so separates $H^\star$ into odd and even strands. 
The following diagram nicely describes the situation:
\begin{equation}
\label{eq:slices}
\xymatrixcolsep{2.5pc}\xymatrix{
\cdots\ar[r]^-\iso & (\dim \beta=-3) \ar[r]^-\iso &
(\dim \beta=-1) \ar@{->>}[r] & (\dim \beta=1) \ar[r]^-\iso & \cdots
 \\
\cdots \ar[r]^-\iso & (\dim \beta=-2) \ar@{ >->}[r]_{tors-iso} & (\dim \beta=0) \ar@{->>}[r]_{tors-inj} & (\dim \beta=2)\ar[r]^-\iso & \cdots 
}
\end{equation}
Thanks to the isomorphisms, once one knows the slices $\dim \beta=r$ for $r\in \{-1,0,1,2\}$  one knows the entirely of $H^\star(\pt)$.  So if $\Div_n$ is $D$-dimensional, this can be thought of as reducing to a $(D-1)$-dimensional calculation.  

\begin{cor}
If $\dim \beta$ is odd then any class in $H^\beta$ is infinitely-divisible by $a_n$.  If $\dim \beta$ is even then any nonzero torsion class $x\in H^\beta$ has the property that $a_n^kx\neq 0$ for all $k\geq 1$.
\end{cor}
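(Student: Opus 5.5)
The corollary follows by reading off the diagram (\ref{eq:slices}), i.e. the preceding proposition on multiplication by $a_n$. I will treat the two parities of $\dim\beta$ separately, in each case following the strand of $a_n$-multiplications through $\beta$ downward or upward.

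\emph{Odd case.} Let $\dim\beta$ be odd and $x\in H^\beta$. To divide $x$ by $a_n$ once, I need $x$ to lie in the image of $\cdot a_n\colon H^{\beta-\lambda_n}\to H^\beta$, where $\dim(\beta-\lambda_n)=\dim\beta-2$. By the proposition this map is surjective whenever $\dim(\beta-\lambda_n)\neq -2,0$. That always holds here, since $\dim\beta-2$ is odd: it is an isomorphism unless $\dim\beta-2=-1$, and in that case it is still surjective. This gives some $x_1$ with $a_nx_1=x$. Since $x_1$ again lives in odd dimension, I iterate to obtain $x_k$ with $a_n^kx_k=x$ for every $k$.

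\emph{Even case.} Let $\dim\beta$ be even and $x\neq 0$ torsion. It suffices to show that each step $H^{\beta+j\lambda_n}\to H^{\beta+(j+1)\lambda_n}$ is injective on torsion classes, and that $a_n^jx$ remains torsion. The latter is immediate because $a_n^jx$ is a torsion element. For the former, the proposition says $\cdot a_n$ out of degree $\gamma$ with $\dim\gamma$ even is:
\begin{itemize}
\item an isomorphism unless $\dim\gamma\in\{-2,0\}$;
\item injective when $\dim\gamma=-2$;
\item injective on torsion when $\dim\gamma=0$.
\end{itemize}
In every case it is injective on torsion classes, so by induction on $k$ we get $a_n^kx\neq 0$.

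There is no real obstacle here. The only point needing care is that the $\dim\gamma=0$ case requires the element to be torsion, not merely nonzero. That is exactly why I keep track that $a_n^jx$ stays torsion, which holds since multiplication is additive and preserves finite order.
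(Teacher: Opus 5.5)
Your proposal is correct and follows the paper's own argument. The paper says only that the corollary ``follows immediately from the diagram (\ref{eq:slices}),'' and you have written out that reading strand by strand: surjectivity along the odd strand, and injectivity on torsion along the even strand, including the care needed at $\dim\gamma=0$.
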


\begin{proof}
Follows immediately from the diagram (\ref{eq:slices}).
\end{proof}

This idea can be continued a bit further.  Multiplication by a $u_d$-class preserves the geometric dimension, so it acts on each of the slices in (\ref{eq:slices}). These classes are also isomorphisms in a large range, though the exact range varies with $d$.  Here is one result in this direction (see (\ref{eq:dim_ell}) for the definition of $\dim_{\hat{\ell}} \beta$):

\begin{prop}
\label{pr:u-iso-range}
Let $\ell$ be a prime and let $\beta$ be any multi-index.  Then $H^{\beta}\llra{\cdot u_\ell} H^{\beta+\lambda_\ell-2}$ is an isomorphism if $\dim_{\hat{\ell}} \beta\notin \{2,3\}$.  Moreover, the map is surjective when $\dim_{\hat{\ell}} \beta=3$ and injective when $\dim_{\hat{\ell}} \beta=2$.
\end{prop}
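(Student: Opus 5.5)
We follow the pattern of the proof of the preceding proposition about $a_n$, but with the cofiber sequence for $u_\ell$ in place of the one for $a_n$. Start from $\Sigma^2\mZ \llra{u_\ell} S^{\lambda_\ell}\ra \mZ/I_\ell$, desuspend by $2$, and box with $S^{\beta}$. This gives a cofiber sequence
\[ S^{\beta}\lra S^{\beta+\lambda_\ell-2}\lra \Sigma^{-2}S^\beta\bbox \mZ/I_\ell .\]
Mapping $\mZ$ into it and taking $\Theta_1$-components yields a long exact sequence
\[ \cdots\lra H_1(\Sigma^{-2}S^\beta\bbox\mZ/I_\ell)\lra H^\beta\llra{\cdot u_\ell} H^{\beta+\lambda_\ell-2}\lra H_0(\Sigma^{-2}S^\beta\bbox \mZ/I_\ell)\lra\cdots \]
Here $H^\beta=\cD(\mZ,S^\beta)=H_0(S^\beta)$. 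One has to check that this is the right homological indexing, i.e.\ that the cofiber contributes its $H_0$ after the map and its $H_1$ before it. We also need to confirm, via Remark~\ref{re:maps->H}, that the induced map really is multiplication by $u_\ell$; this is fine since $u_\ell$ has even fixed dimension.

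The key input is Proposition~\ref{pr:sphere-box-Z/I}, which gives $S^\beta\bbox \mZ/I_\ell\he \Sigma^{\dim_{\hat\ell}\beta}\mZ/I_\ell$. The third term is therefore $\Sigma^{\dim_{\hat\ell}\beta-2}\mZ/I_\ell$, whose homology is $\mZ/I_\ell$ concentrated in degree $\dim_{\hat\ell}\beta-2$. The two flanking groups $H_1$ and $H_0$ both vanish unless $\dim_{\hat\ell}\beta-2\in\{0,1\}$, that is, unless $\dim_{\hat\ell}\beta\in\{2,3\}$. In every other case $\cdot u_\ell$ is an isomorphism.

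For the edge cases:
\begin{itemize}
\item If $\dim_{\hat\ell}\beta=3$, the cofiber has homology only in degree $1$. Then $H_0$ of the cofiber vanishes, so $\cdot u_\ell$ is surjective.
\item If $\dim_{\hat\ell}\beta=2$, the cofiber has homology only in degree $0$. Then $H_1$ of the cofiber vanishes, so $\cdot u_\ell$ is injective.
\end{itemize}

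The only real point to watch is the degree bookkeeping. The argument is otherwise purely formal given Proposition~\ref{pr:sphere-box-Z/I}, which already handles an arbitrary multi-index $\beta$, including negative coefficients. So we expect no substantive obstacle beyond confirming these indexing conventions.
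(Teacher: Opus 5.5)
Your proposal is correct and is essentially the paper's proof. The paper boxes $\Sigma^2\mZ\to S^{\lambda_\ell}\to\mZ/I_\ell$ with $S^{\beta-2}$, which is the same as your desuspend-then-box-with-$S^\beta$, and uses Proposition~\ref{pr:sphere-box-Z/I} to identify the cofiber as $\Sigma^{\dim_{\hat\ell}\beta-2}\mZ/I_\ell$. It then reads off the result from the long exact sequence exactly as you do, with $H_1$ of the cofiber before the map and $H_0$ after, as you indexed it.
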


\begin{proof}
Start with the cofiber sequence $\Sigma^2\mZ \llra{u_\ell} S^{\lambda_\ell}\lra \mZ/I_\ell$ and box with $S^{\beta-2}$ to obtain $S^\beta\llra{u_{\ell}} S^{\lambda_{\ell}+\beta-2} \ra S^{\beta-2}\bbox \mZ/I_\ell$.  By Proposition~\ref{pr:sphere-box-Z/I} we know
$S^{\beta-2}\bbox \mZ/I_\ell \he \Sigma^{\dim_{\hat{\ell}}\beta -2} \mZ/I_\ell$.  Since this complex only has homology in degree $\dim_{\hat{\ell}}\beta-2$, the result now follows immediately.
\end{proof}

Proposition~\ref{pr:u-iso-range} effectively lets us cut down the difficulty of calculating $H^\star(\pt)$ by one further dimension.  

\subsection{The example of \mdfn{$G=C_{\ell^2}$}}
\label{se:ell^2}

The fundamental representations are $1$, $\lambda_\ell$ and $\lambda_{\ell^2}$, so we have a tri-graded ring.  The reductions described in the last section essentially reduce this to a one-dimensional computation, which is manageable.  
We depict the result in Figure~\ref{fig:C_lsquared} below.
Writing down this chart, and explaining how to navigate it, will be our goal in this section.  

We start with a list of the isotypical regions: parts (1) and (2) are just specializations of the general results from the last section.  

\begin{prop}
\label{pr:iso-ranges}
For the map with domain $H^{r\lambda_\ell + k\lambda_{\ell^2}+m}=H^\beta$:
\begin{enumerate}[(1)]
\item Multiplication by $a_{\ell^2}$ is an isomorphism if $\dim \beta\notin \{0,-1,-2\}$.  It is surjective if $\dim \beta\in \{0,-1\}$, and if $\dim \beta=-2$ it is injective and the image is precisely the torsion subgroup of the target.
\item Multiplication by $u_\ell$ is an isomorphism when $m\notin \{2,3\}$.  It is injective when $m=2$, and surjective when $m=3$.
\item Multiplication by $u_{\ell^2}$  is an isomorphism if either
\[ \text{ $r\geq 0$ and (\ $4\leq m$ or $m\leq 1-2r$\ )}
\] OR
\[ \text{ $r<0$ and  (\ $m\leq 1 $ or $m\geq 4-2r$\ ).}
\]
\end{enumerate}
\end{prop}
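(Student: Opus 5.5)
Parts (1) and (2) are specializations of results already proved, so I would only check that the hypotheses translate correctly. For (1), take $n=\ell^2$ in the proposition on multiplication by $a_n$. For (2), apply Proposition~\ref{pr:u-iso-range}. Both $\lambda_\ell$ and $\lambda_{\ell^2}$ have index divisible by $\ell$, so $\dim_{\hat\ell}(r\lambda_\ell+k\lambda_{\ell^2}+m)=m$. The excluded values $\dim_{\hat\ell}\beta\in\{2,3\}$ therefore become $m\in\{2,3\}$, with injectivity at $m=2$ and surjectivity at $m=3$, exactly as stated.

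For (3) I would imitate the proof of Proposition~\ref{pr:u-iso-range}. Box the cofiber sequence $\Sigma^2\mZ\xrightarrow{u_{\ell^2}}S^{\lambda_{\ell^2}}\ra\mZ/I_{\ell^2}$ with $S^{\beta-2}$. This gives
\[ S^\beta\xrightarrow{u_{\ell^2}}S^{\beta+\lambda_{\ell^2}-2}\ra S^{\beta-2}\dbox\mZ/I_{\ell^2}. \]
The map is an isomorphism on $H^\star$ as soon as the third term has no homology in the two degrees that flank $H^\beta$ in the long exact sequence. Here $F_{\ell^2}\bbox\mZ/I_{\ell^2}=0$ by (\ref{eq:box-zero}), so (\ref{eq:sphere-box}) removes all the $\lambda_{\ell^2}$ factors. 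This reduces the problem to computing $\uH_*(S^{r\lambda_\ell}\dbox\mZ/I_{\ell^2})$, shifted by $m-2$.

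I would compute this last homology by induction on $|r|$. For $r\ge 0$, use the cofiber sequence $\Sigma^2\mZ\ra S^{\lambda_\ell}\ra\mZ/I_\ell$ boxed with $S^{(r-1)\lambda_\ell}\dbox\mZ/I_{\ell^2}$. By \cite{DHone}, $\mZ/I_\ell\dbox\mZ/I_{\ell^2}$ has homology $\mZ/I_\ell$ in degrees $0$ and $3$ only. Moreover $\mZ/I_\ell$ is unchanged by boxing with $S^{\lambda_\ell}$, by (\ref{eq:sphere-box}). Together these give, inductively, that the homology is concentrated in a window of degrees running from $0$ up to roughly $2r+1$: one $\mZ$-type piece comes from the suspension, and torsion pieces are added at each step. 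For $r<0$, use $S^{-\lambda_\ell}\he\Sigma^{-2}I_\ell$ and the $\uTor(I_\ell,\mZ/I_{\ell^2})$ computations from \cite{DHone}, or dualize the $r>0$ answer via $\uHom(\blank,\mZ)$. Either route gives a window that begins at $0$ and extends downward to about $2r$.

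The final step is to intersect these windows with the two critical degrees for $S^{\beta-2}\dbox\mZ/I_{\ell^2}$. This should yield exactly the conditions $m\ge 4$ or $m\le 1-2r$ when $r\ge 0$, and $m\le 1$ or $m\ge 4-2r$ when $r<0$. I expect the main obstacle to be pinning down the exact endpoints of the homology window, including whether the extreme degrees genuinely carry nonzero homology or cancel under the connecting map. I would settle this by computing directly with the linear model $L(\ell,\dots,\ell)\bbox\mZ/I_{\ell^2}$, using $F_\ell\bbox\mZ/I_{\ell^2}$. I would then check the endpoints against the $k=1$ and $k=2$ columns of Figure~\ref{fig:gens}.
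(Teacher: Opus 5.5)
Your proposal is correct and follows the paper's route. Parts (1) and (2) are direct specializations, with $\dim_{\hat\ell}\beta=m$ in (2). For (3), the paper likewise boxes $\Sigma^2\mZ\llra{u_{\ell^2}}S^{\lambda_{\ell^2}}\ra\mZ/I_{\ell^2}$ with $S^{\beta-2}$, uses $S^{\lambda_{\ell^2}}\bbox\mZ/I_{\ell^2}\he\mZ/I_{\ell^2}$, and leaves the remaining details to the reader.

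Your worry about the endpoints is unnecessary, because the statement only claims sufficiency. It is enough that $S^{r\lambda_\ell}\bbox\mZ/I_{\ell^2}$ is a levelwise-free complex boxed with a module, and so is concentrated in degrees between $0$ and $2r$. Hence it has no homology in the two flanking degrees $2-m$ and $3-m$ in exactly the stated ranges of $m$. Note that the cruder bound of $2r+1$ from your inductive cofiber sequence would not cover the boundary case $m=1-2r$.
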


\begin{proof}
Only part (3) is new, and for this 
 one uses $\Sigma^2\mZ\llra{u_{\ell^2}} S^{\lambda_{\ell^2}}\ra \mZ/I_{\ell^2}$ together with $S^{\lambda_{\ell^2}}\bbox \mZ/I_{\ell^2}\he \mZ/I_{\ell^2}$.  We leave the details to the reader. 
\end{proof}

\vspace{0.1in}

Part (1) of the above result implies that all of the groups $H^{\beta}$ can be deduced from the values on the four planes $\dim \beta\in \{-1,0,1,2\}$.  In fact, the plane $\dim\beta=1$ turns out to be entirely zero, though this is not clear initially. 
In Figure~\ref{fig:C_lsquared}
we will draw a system of charts---one for each $r$---where each chart depicts the four lines of intersection with the above planes.

How to read the chart:
\begin{enumerate}[(i)]
\item $\square=\Z$, $\bullet=\Z/\ell$, and 
\begin{tikzpicture}[scale=1.9]
\fill(0,0) circle (0.02);
\draw(0,0) circle (0.05);
\end{tikzpicture}
$=\Z/\ell^2$.  
\item The charts show the groups $H^{r\lambda_\ell+k\lambda_{\ell^2}+m}=H^\beta$.  The $r$-value separates the different charts.  In each chart, the line with the squares consists of the groups where $\dim \beta=0$, and
the line immediately below it is $\dim \beta=-1$.
The two lines on either side of these (when present) are $\dim \beta=-2$ and $\dim \beta=1$.
The line $\dim \beta=-2$ only shows up in the charts  near the bottom of the page, and the line $\dim\beta=1$ consists only of the zero groups.

\item Note that $m$ is constant along the vertical lines.  
\item Moving around the diagram:
\begin{itemize}
\item Within any chart, moving up one unit (up two $\beta$ lines) is a $+\lambda_{\ell^2}$ move.

\item Moving right two units in a chart (e.g. from a square to the next one) is a $-\lambda_{\ell^2}+2$ move.

\item Moving from a spot on the ``squares'' line to the spot to the right of it on the line below is a $-\lambda_{\ell^2}+1$ move.  
\item Moving from a spot in chart $r$ to the same spot in chart $r+1$ (below) is a $+\lambda_\ell-\lambda_{\ell^2}$ move.  
\end{itemize}

\item To find a given group, e.g. $H^{2\lambda_\ell+\lambda_{\ell^2}-4}$, in the diagram: the coefficient of $\lambda_\ell$ says what chart to look at (here $r=2$) and then the constant coefficient indicates the appropriate vertical line (here $m=-4$).  The geometric dimension (in the example, $4+2-4=2$) tells what horizontal line to look at in the chart (so we see $H^{2\lambda_\ell+\lambda_{\ell^2}-4}\cong \Z/\ell^2$).  

\item Multiplication by the elements $u_\ell$, $u_{\ell^2}$, $a_{\ell^2}$, and $a_\ell$ behave as follows:
\begin{itemize}
\item Multiplication by $u_{\ell^2}$ stays within the given chart and moves left two units.
\item Multiplication by $u_\ell$ moves one chart down, and left two units.
\item 
Multiplication by $a_{\ell^2}$ stays within the given chart and moves up one unit.

\item Multiplication by $a_\ell$ moves down one chart and then up one unit.
\end{itemize}
Prototypes for each of these multiplications are shown in blue in the center of the diagram, emanating from the class $1$.  

\item Vertical lines indicate multiplication by $a_{\ell^2}$.  Arrows indicate that the pattern of such multiplications repeats infinitely, either above or below; for example, note the families $u_\ell (a_{\ell^2})^k$ and $\tfrac{u_\ell\gamma_{\ell^2}}{(a_{\ell^2})^k}$.  
\item Horizontal arrows (right to left) denote multiplication by $u_{\ell^2}$.  If the arrow has an integer label $k$ this means that multiplication of the tail by $u_{\ell^2}$ yields $k$ times the head.  For example, $u_{\ell^2}\cdot u_{[\ell\cln \ell^2]}=\ell\cdot u_\ell$.   Note that we have not drawn horizontal arrows when they can be readily deduced from others, e.g. the undrawn arrow pointing from $u_\ell a_{\ell^2}$ to $u_\ell u_{\ell^2}a_{\ell^2}$.   The three horizontal arrows labeled $X$ indicate that $u_{\ell^2}$ times the tail of the arrow is a nontrivial linear combination of the two basis elements in the target (more on this below). 
\item Note the $u_\ell$-periodicity: the picture to the left of $m=2$ consists of the lines $m=0$ and $m=1$ repeated over and over (with a corresponding downward shift as one moves to the left), and likewise the picture to the right of $m=2$ consists of the $m=2$ and $m=3$ lines repeated over and over (with the corresponding upward shift as one moves to the right).  So the actual computations necessary are only in the range $0\leq m\leq 3$. 
\item Note the class $\alpha\in H^{2\lambda_\ell-2\lambda_{\ell^2}}$.  This group is $\Z\oplus \Z/\ell$ by Example~\ref{ex:H-twofold} and $\alpha$ is the unique torsion class having the property that $a_{\ell^2}\alpha=a_{\ell^2}(u_{[\ell\cln \ell^2]})^2$. Note that the appearance of $\alpha$ opens the door to a formula such as
\[ u_{\ell^2}\cdot \tfrac{\ell^2(u_\ell)^3}{(u_{\ell^2})^2}=1\cdot (u_{[\ell\cln \ell^2]})^2+(??)\alpha.
\]
The $1$ coefficient is forced by rationalization.  One then deduces that $??=-1$, since that is the only way the equation is compatible with $a_{\ell^2}\cdot \tfrac{\ell^2(u_\ell)^2}{(u_{\ell^2})^3}=0$ (see the chart).  This `hidden extension' was observed in \cite{Z}; it is labelled $X$ in the chart.

\item The class $\alpha$ generates an entire family of elements via the operator $E$ from Remark~\ref{re:E-operator}.  

\item Observe the relations
\[ u_{\ell^2}\cdot \tfrac{\gamma_{\ell^2}}{u_\ell}=\tfrac{a_\ell \gamma_\ell}{a_{\ell^2}}, \qquad
u_{\ell^2}\cdot \tfrac{a_\ell\gamma_\ell}{u_\ell a_{\ell^2}}=0,\qquad\text{and}\qquad
u_{\ell^2}\cdot \tfrac{\gamma_\ell}{u_\ell}=0
\]
that can be found in the upper right corner of the chart.  These are all readily derived from the relations given in Section~\ref{se:magic} (or the rigorous versions in earlier sections), and doing so is an informative exercise.  For the third one, it helps to first divide the fraction on the left-hand-side by $a_{\ell^2}$ and prove the relation in that form instead. 
\end{enumerate}

\begin{remark}
As mentioned earlier, for $G=C_{\ell^2}$ one has $H^{\beta}=0$ whenever $\dim \beta=1$.  This phenomenon does not hold for larger $G$: for example, $H^{\lambda_{\ell^4}+\lambda_{\ell}-2\lambda_{\ell^2}+1}\neq 0$ for $G=C_{\ell^4}$, with the class $a_{\ell^4}u_\ell\gamma_{\ell^2}$ a nonzero element.  This class must support an infinite number of $a_{\ell^4}$-multiplications.  
\end{remark}

We should explain how this chart was computed.  First, via $u_{\ell^2}$-periodicity we only need to compute the lines $m\in \{0,1,2,3\}$.  Next, find the positive and negative cones in the picture: these are wedge-like sectors in the bottom-left and top-right, and we know the groups here by the general results established earlier.  Using $u_{\ell^2}$-periodicity we then know the top parts of the $m=0,1$ lines and the bottom parts of the $m=2,3$ lines.  This is already a big piece of the entire picture.   For the remaining groups, we made use of the long exact sequences for multiplication by $u_\ell$ and $u_{\ell^2}$ as well as some ad hoc chain-level analysis to settle some of the extension problems.  The methods are not hard, but not particularly enlightening. We hope some future reader will find a more illuminating approach.

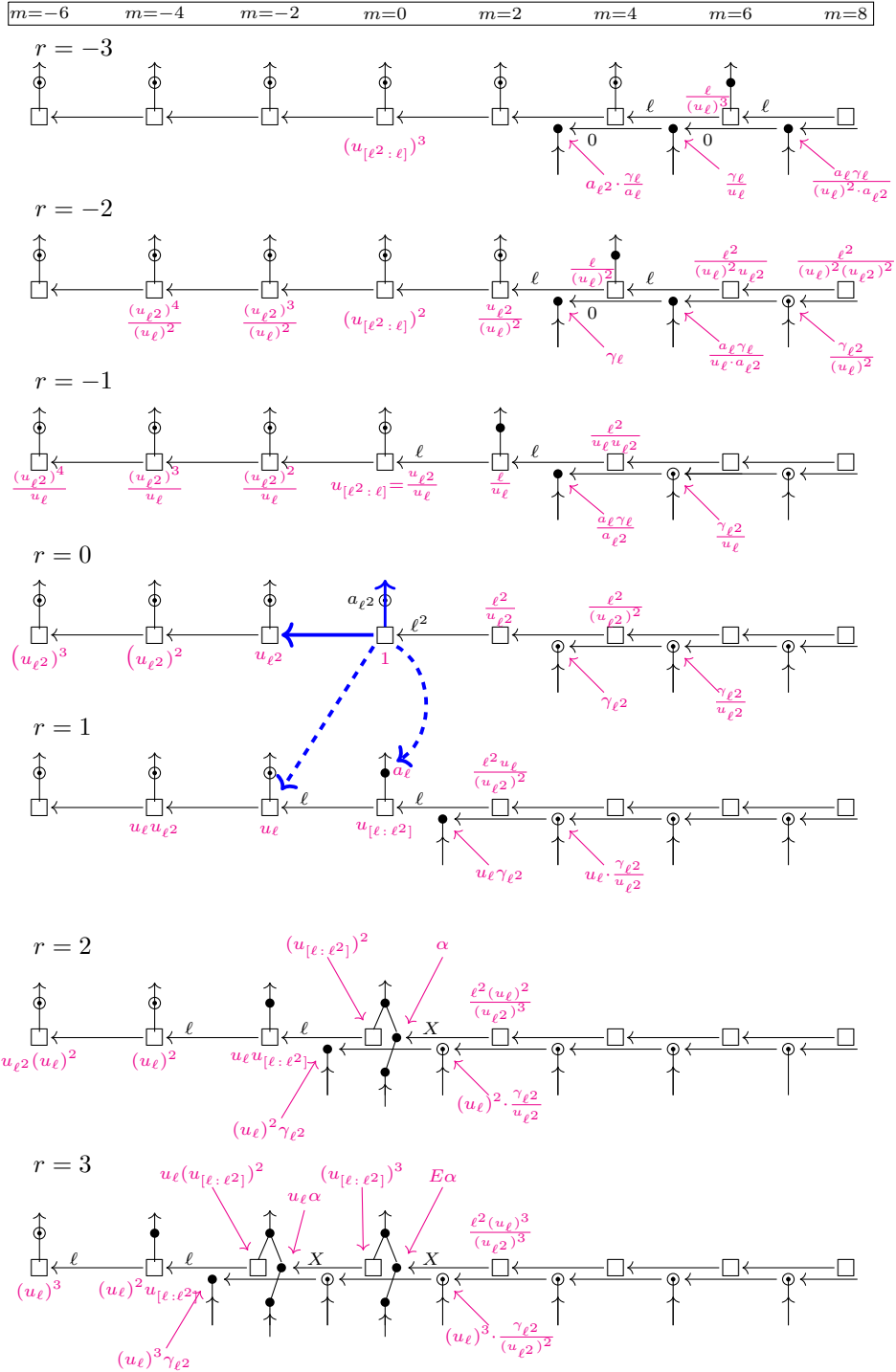
\begin{figure}[p]
\caption{$H^{r\lambda_\ell+k\lambda_{\ell^2}+m}$ for $G=C_{\ell^2}$}
\label{fig:C_lsquared}
\vspace{0.2in}

\begin{tikzpicture}[scale=1.6]
\draw(-3.27,7.0)--(4.24,7.0)--(4.24,6.8)--(-3.27,6.8)--(-3.27,7.0);
\draw(-3,6.9) node{$\scriptstyle{m=-6}$};
\draw(-2,6.9) node{$\scriptstyle{m=-4}$};
\draw(-1,6.9) node{$\scriptstyle{m=-2}$};
\draw(0,6.9) node{$\scriptstyle{m=0}$};
\draw(1,6.9) node{$\scriptstyle{m=2}$};
\draw(2,6.9) node{$\scriptstyle{m=4}$};
\draw(3,6.9) node{$\scriptstyle{m=6}$};
\draw(4,6.9) node{$\scriptstyle{m=8}$};
\draw(-2.7,6.6) node{${r=-3}$};
\draw (-3,6) node{$\square$};
\draw (-2,6) node{$\square$};
\draw (-1,6) node{$\square$};
\draw (0,6) node{$\square$};
\draw (1,6) node{$\square$};
\draw (2,6) node{$\square$};
\draw (3,6) node{$\square$};
\draw (4,6) node{$\square$};
\fill (3,6.3) circle (0.04);
\draw[->] (3,6.05)--(3,6.48);
\fill (2,6.3) circle (0.02);
\draw (2,6.3) circle (0.05);
\draw[->] (2,6.05)--(2,6.48);
\fill (1,6.3) circle (0.02);
\draw (1,6.3) circle (0.05);
\draw[->] (1,6.05)--(1,6.48);
\fill (0,6.3) circle (0.02);
\draw (0,6.3) circle (0.05);
\draw[->] (0,6.05)--(0,6.48);
\fill (-1,6.3) circle (0.02);
\draw (-1,6.3) circle (0.05);
\draw[->] (-1,6.05)--(-1,6.48);
\fill (-2,6.3) circle (0.02);
\draw (-2,6.3) circle (0.05);
\draw[->] (-2,6.05)--(-2,6.48);
\fill (-3,6.3) circle (0.02);
\draw (-3,6.3) circle (0.05);
\draw[->] (-3,6.05)--(-3,6.48);
\fill (1.5,5.9) circle (0.04);
\draw (1.5,5.5)--(1.5,5.92);
\draw[->] (1.5,5.5)--(1.5,5.75);
\fill (2.5,5.9) circle (0.04);
\draw (2.5,5.5)--(2.5,5.92);
\draw[->] (2.5,5.5)--(2.5,5.75);
\fill (3.5,5.9) circle (0.04);
\draw (3.5,5.5)--(3.5,5.92);
\draw[->] (3.5,5.5)--(3.5,5.75);
\draw[->] (-2.1,6)--(-2.9,6);
\draw[->] (-1.1,6)--(-1.9,6);
\draw[->] (-0.1,6)--(-0.9,6);
\draw[->] (0.9,6)--(0.1,6);
\draw[->] (1.9,6)--(1.1,6);
\draw[->] (2.9,6)--(2.1,6);
\draw[->] (3.9,6)--(3.1,6);
\draw (2.3,6.1) node{$\scriptstyle{\ell}$};
\draw (3.3,6.1) node{$\scriptstyle{\ell}$};
\draw[->](2.4,5.9)--(1.6,5.9);
\draw (1.8,5.8) node{$\scriptstyle{0}$};
\draw (2.8,5.8) node{$\scriptstyle{0}$};
\draw[->](3.4,5.9)--(2.6,5.9);
\draw[->](4.1,5.9)--(3.6,5.9);
\draw[magenta](2.8,6.15)  node{$\scriptstyle{\frac{\ell}{(u_\ell)^3}}$};
\draw[magenta](0,5.73) node{$\scriptstyle{
(u_{[\ell^2\cln \ell]})^3} $};

\draw[magenta](2.0,5.4) node{$\scriptstyle{a_{\ell^2}\cdot \frac{\gamma_\ell}{a_\ell}}$};
\draw[->,magenta] (1.9,5.5) -- (1.6,5.8);

\draw[magenta](3.05,5.4) node{$\scriptstyle{\frac{\gamma_\ell}{u_\ell}}$};
\draw[->,magenta](2.9,5.55)--(2.6,5.8);
\draw[magenta](4.05,5.4) node{$\scriptstyle{\frac{a_\ell\gamma_\ell}{(u_\ell)^2\cdot a_{\ell^2}}}$};
\draw[->,magenta](3.9,5.55)--(3.6,5.8);
\draw(-2.7,5.2) node{${r=-2}$};
\draw (-3,4.5) node{$\square$};
\draw (-2,4.5) node{$\square$};
\draw (-1,4.5) node{$\square$};
\draw (0,4.5) node{$\square$};
\draw (1,4.5) node{$\square$};
\draw (2,4.5) node{$\square$};
\draw (3,4.5) node{$\square$};
\draw (4,4.5) node{$\square$};
\fill (2,4.8) circle (0.04);
\draw[->] (2,4.55)--(2,4.98);
\fill (1,4.8) circle (0.02);
\draw (1,4.8) circle (0.05);
\draw[->] (1,4.55)--(1,4.98);
\fill (0,4.8) circle (0.02);
\draw (0,4.8) circle (0.05);
\draw[->] (0,4.55)--(0,4.98);
\fill (-1,4.8) circle (0.02);
\draw (-1,4.8) circle (0.05);
\draw[->] (-1,4.55)--(-1,4.98);
\fill (-2,4.8) circle (0.02);
\draw (-2,4.8) circle (0.05);
\draw[->] (-2,4.55)--(-2,4.98);
\fill (-3,4.8) circle (0.02);
\draw (-3,4.8) circle (0.05);
\draw[->] (-3,4.55)--(-3,4.98);
\fill (1.5,4.4) circle (0.04);
\draw (1.5,4.0)--(1.5,4.42);
\draw[->] (1.5,4.0)--(1.5,4.25);
\fill (2.5,4.4) circle (0.04);
\draw (2.5,4.0)--(2.5,4.42);
\draw[->] (2.5,4.0)--(2.5,4.25);
\fill (3.5,4.4) circle (0.02);
\draw (3.5,4.4) circle (0.05);
\draw (3.5,4.0)--(3.5,4.42);
\draw[->] (3.5,4.0)--(3.5,4.25);
\draw[->] (-2.1,4.5)--(-2.9,4.5);
\draw[->] (-1.1,4.5)--(-1.9,4.5);
\draw[->] (-0.1,4.5)--(-0.9,4.5);
\draw[->] (0.9,4.5)--(0.1,4.5);
\draw[->] (1.9,4.5)--(1.1,4.5);
\draw[->] (2.9,4.5)--(2.1,4.5);
\draw[->] (3.9,4.5)--(3.1,4.5);
\draw (1.3,4.6) node{$\scriptstyle{\ell}$};
\draw (2.3,4.6) node{$\scriptstyle{\ell}$};
\draw[->](2.4,4.4)--(1.6,4.4);
\draw (1.8,4.3) node{$\scriptstyle{0}$};
\draw[->](3.4,4.4)--(2.6,4.4);
\draw[->](4.1,4.4)--(3.6,4.4);
\draw[magenta](1.8,4.65)  node{$\scriptstyle{\frac{\ell}{(u_\ell)^2}}$};
\draw[magenta](1,4.23) node{$\scriptstyle{\frac{u_{\ell^2}}{(u_\ell)^2}}$};
\draw[magenta](0,4.23) node{$\scriptstyle{
(u_{[\ell^2\cln \ell]})^2} $};
\draw[magenta](-1,4.23) node{$\scriptstyle{\frac{(u_{\ell^2})^3}{(u_\ell)^2}  }$};
\draw[magenta](-2,4.23) node{$\scriptstyle{
\frac{ (u_{\ell^2})^4}{(u_\ell)^2}}   $};

\draw[magenta](3,4.725) node{$\scriptstyle{\frac{\ell^2}{(u_\ell)^2 u_{\ell^2}}}$};

\draw[magenta](4,4.725) node{$\scriptstyle{\frac{\ell^2}{(u_\ell)^2 (u_{\ell^2})^2}}$};

\draw[magenta](2.0,3.9) node{$\scriptstyle{\gamma_\ell}$};
\draw[->,magenta] (1.9,4.0) -- (1.6,4.3);
\draw[magenta](3.05,3.9) node{$\scriptstyle{\frac{a_\ell\gamma_\ell}{u_\ell\cdot a_{\ell^2}}}$};
\draw[->,magenta](2.9,4.05)--(2.6,4.3);

\draw[magenta](4.05,3.9) node{$\scriptstyle{\frac{\gamma_{\ell^2}}{(u_\ell)^2}}$};
\draw[->,magenta](3.9,4.05)--(3.6,4.3);

\draw(-2.7,3.7) node{$r=-1$};
\draw (-3,3) node{$\square$};
\draw (-2,3) node{$\square$};
\draw (-1,3) node{$\square$};
\draw (0,3) node{$\square$};
\draw (1,3) node{$\square$};
\draw (2,3) node{$\square$};
\draw (3,3) node{$\square$};
\draw (4,3) node{$\square$};
\fill (1,3.3) circle (0.04);
\draw[->] (1,3.05)--(1,3.48);
\fill (0,3.3) circle (0.02);
\draw (0,3.3) circle (0.05);
\draw[->] (0,3.05)--(0,3.48);
\fill (-1,3.3) circle (0.02);
\draw (-1,3.3) circle (0.05);
\draw[->] (-1,3.05)--(-1,3.48);
\fill (-2,3.3) circle (0.02);
\draw (-2,3.3) circle (0.05);
\draw[->] (-2,3.05)--(-2,3.48);
\fill (-3,3.3) circle (0.02);
\draw (-3,3.3) circle (0.05);
\draw[->] (-3,3.05)--(-3,3.48);
\fill (1.5,2.9) circle (0.04);
\draw (1.5,2.5)--(1.5,2.92);
\draw[->] (1.5,2.5)--(1.5,2.75);
\fill (2.5,2.9) circle (0.02);
\draw (2.5,2.9) circle (0.05);
\draw (2.5,2.5)--(2.5,2.92);
\draw[->] (2.5,2.5)--(2.5,2.75);
\fill (3.5,2.9) circle (0.02);
\draw (3.5,2.9) circle (0.05);
\draw (3.5,2.5)--(3.5,2.92);
\draw[->] (3.5,2.5)--(3.5,2.75);
\draw[->] (-2.1,3)--(-2.9,3);
\draw[->] (-1.1,3)--(-1.9,3);
\draw[->] (-0.1,3)--(-0.9,3);
\draw[->] (1.9,3)--(1.1,3);
\draw[->] (2.9,3)--(2.1,3);
\draw[->] (0.9,3)--(0.1,3);
\draw[->] (3.9,3)--(3.1,3);
\draw (0.3,3.1) node{$\scriptstyle{\ell}$};
\draw (1.3,3.1) node{$\scriptstyle{\ell}$};
\draw[->](2.4,2.9)--(1.6,2.9);
\draw[->](3.1,2.9)--(2.6,2.9);
\draw[->](3.4,2.9)--(2.6,2.9);
\draw[->](4.1,2.9)--(3.6,2.9);
\draw[magenta](1,2.8)  node{$\scriptstyle{\frac{\ell}{u_\ell}}$};
\draw[magenta](0,2.8) node{$\scriptstyle{u_{[\ell^2\cln \ell]}=\frac{u_{\ell^2}}{u_\ell}}  $};
\draw[magenta](-1,2.8) node{$\scriptstyle{\frac{(u_{\ell^2})^2}{u_\ell}  }$};
\draw[magenta](-2,2.8) node{$\scriptstyle{
\frac{ (u_{\ell^2})^3}{u_\ell}}   $};
\draw[magenta](-3,2.8) node{$\scriptstyle{
\frac{ (u_{\ell^2})^4}{u_\ell}}   $};
\draw[magenta](2,3.22) node{$\scriptstyle{\frac{\ell^2}{u_\ell u_{\ell^2}}}$};
\draw[magenta](2,2.4) node{$\scriptstyle{\frac{a_\ell \gamma_\ell}{a_{\ell^2}}}$}; 
\draw[->,magenta](1.9,2.55)--(1.6,2.8);
\draw[magenta](3,2.35) node{$\scriptstyle{ \frac{\gamma_{\ell^2}}{u_{\ell}} }$};
\draw[->,magenta] (2.9,2.5) -- (2.6,2.8);

\draw(-2.8,2.2) node{$r=0$};
\draw (-3,1.5) node{$\square$};
\draw (-2,1.5) node{$\square$};
\draw (-1,1.5) node{$\square$};
\draw (0,1.5) node{$\square$};
\draw (1,1.5) node{$\square$};
\draw (2,1.5) node{$\square$};
\draw (3,1.5) node{$\square$};
\draw (4,1.5) node{$\square$};
\fill (0,1.8) circle (0.02);
\draw (0,1.8) circle (0.05);
\draw[->,blue,line width=0.015in] (0,1.57)--(0,1.98);
\draw (-0.2,1.78) node{$\scriptstyle{a_{\ell^2}}$};
\fill (-1,1.8) circle (0.02);
\draw (-1,1.8) circle (0.05);
\draw[->] (-1,1.55)--(-1,1.98);
\fill (-2,1.8) circle (0.02);
\draw (-2,1.8) circle (0.05);
\draw[->] (-2,1.55)--(-2,1.98);
\fill (-3,1.8) circle (0.02);
\draw (-3,1.8) circle (0.05);
\draw[->] (-3,1.55)--(-3,1.98);

\fill (1.5,1.4) circle (0.02);
\draw (1.5,1.4) circle (0.05);
\draw (1.5,1.0)--(1.5,1.42);
\draw[->] (1.5,1.0)--(1.5,1.25);
\fill (2.5,1.4) circle (0.02);
\draw (2.5,1.4) circle (0.05);
\draw (2.5,1.0)--(2.5,1.42);
\draw[->] (2.5,1.0)--(2.5,1.25);
\fill (3.5,1.4) circle (0.02);
\draw (3.5,1.4) circle (0.05);
\draw (3.5,1.0)--(3.5,1.42);
\draw[->] (3.5,1.0)--(3.5,1.25);
\draw[->] (-2.1,1.5)--(-2.9,1.5);
\draw[->] (-1.1,1.5)--(-1.9,1.5);
\draw[->,blue,line width=0.02in] (-0.1,1.5)--(-0.9,1.5);
\draw[->] (1.9,1.5)--(1.1,1.5);
\draw[->] (2.9,1.5)--(2.1,1.5);
\draw[->] (0.9,1.5)--(0.1,1.5);
\draw[->] (3.9,1.5)--(3.1,1.5);
\draw (0.3,1.6) node{$\scriptstyle{\ell^2}$};
\draw[->](2.4,1.4)--(1.6,1.4);
\draw[->](3.4,1.4)--(2.6,1.4);
\draw[->](4.1,1.4)--(3.6,1.4);

\draw[->,blue,line width=0.02in,dashed] (-0.1,1.4)--(-0.9,0.15);
\draw[->,blue,line width=0.02in,dashed] (0.1,1.4) to [out=330,in=30] (0.1,0.4);

\draw[magenta](0,1.3)  node{$\scriptstyle{1}$};
\draw[magenta](-1,1.3) node{$\scriptstyle{u_{\ell^2}}$};
\draw[magenta](-2,1.3) node{$(\scriptstyle{u_{\ell^2})^2}$};
\draw[magenta](-3,1.3) node{$(\scriptstyle{u_{\ell^2})^3}$};
\draw[magenta](1,1.72) node{$\scriptstyle{\frac{\ell^2}{u_{\ell^2}}}$};
\draw[magenta](2,1.72) node{$\scriptstyle{\frac{\ell^2}{(u_{\ell^2})^2}}$};
\draw[magenta](2,0.9) node{$\scriptstyle{\gamma_{\ell^2}}$};
\draw[->,magenta](1.9,1.0)--(1.6,1.3);
\draw[magenta](3,0.9) node{$\scriptstyle{ \frac{\gamma_{\ell^2}}{u_{\ell^2}} }$};
\draw[->,magenta] (2.9,1.0) -- (2.6,1.3);

\draw(-2.8,0.7) node{$r=1$};
\fill (0,0.3)  circle (0.04);
\draw[->] (0,0.05)--(0,0.48);
\fill (-1,0.3) circle (0.02);
\draw (-1,0.3) circle (0.05);
\draw[->] (-1,0.05)--(-1,0.48);
\fill (-2,0.3) circle (0.02);
\draw (-2,0.3) circle (0.05);
\draw[->] (-2,0.05)--(-2,0.48);
\fill (-3,0.3) circle (0.02);
\draw (-3,0.3) circle (0.05);
\draw[->] (-3,0.05)--(-3,0.48);
\draw (-3,0) node{$\square$};
\draw (-2,0) node{$\square$};
\draw (-1,0) node{$\square$};
\draw (0,0) node{$\square$};
\draw (1,0) node{$\square$};
\draw (2,0) node{$\square$};
\draw (3,0) node{$\square$};
\draw (4,0) node{$\square$};
\draw[->] (-2.1,0)--(-2.9,0);
\draw[->] (-1.1,0)--(-1.9,0);
\draw[->] (-0.1,0)--(-0.9,0);
\draw[->] (3.9,0)--(3.1,0);
\draw (-0.7,0.08) node{$\scriptstyle{\ell}$};
\draw[->] (0.9,0)--(0.1,0);
\draw (0.3,0.08) node{$\scriptstyle{\ell}$};
\draw[->] (1.9,0)--(1.1,0);
\draw[->] (2.9,0)--(2.1,0);
\fill (0.5,-0.1) circle (0.04);
\draw (0.5,-0.5)--(0.5,-0.1);
\draw[->] (0.5,-0.5)--(0.5,-0.25);
\fill (1.5,-0.1) circle (0.02);
\draw (1.5,-0.1) circle (0.05);
\draw (1.5,-0.5)--(1.5,-0.08);
\draw[->] (1.5,-0.5)--(1.5,-0.25);
\fill (2.5,-0.1) circle (0.02);
\draw (2.5,-0.1) circle (0.05);
\draw (2.5,-0.5)--(2.5,-0.08);
\draw[->] (2.5,-0.5)--(2.5,-0.25);
\fill (3.5,-0.1) circle (0.02);
\draw (3.5,-0.1) circle (0.05);
\draw (3.5,-0.5)--(3.5,-0.08);
\draw[->] (3.5,-0.5)--(3.5,-0.25);
\draw[->] (1.4,-0.1)--(0.6,-0.1);
\draw[->] (2.4,-0.1)--(1.6,-0.1);
\draw[->](3.4,-0.1)--(2.6,-0.1);
\draw[->](4.1,-0.1)--(3.6,-0.1);
\draw[magenta](0,-0.2)  node{$\scriptstyle{u_{[\ell\cln \ell^2]}}$};
\draw[magenta](-1,-0.2) node{$\scriptstyle{u_\ell}$};
\draw[magenta](-2,-0.2) node{$\scriptstyle{u_\ell u_{\ell^2}}$};
\draw[magenta](1,0.3) node{$\scriptstyle{\frac{\ell^2 u_\ell}{(u_{\ell^2})^2}}$};
\draw[magenta](1,-0.6) node{$\scriptstyle{u_\ell\gamma_{\ell^2}}$};
\draw[->,magenta](0.9,-0.5)--(0.6,-0.2);
\draw[magenta](2,-0.6) node{$\scriptstyle{u_\ell\cdot \frac{\gamma_{\ell^2}}{u_{\ell^2}} }$};
\draw[->,magenta] (1.9,-0.5) -- (1.6,-0.2);
\draw[magenta] (0.15,0.3) node{$\scriptstyle{a_\ell}$};
%
\draw(-2.8,-1.2) node{$r=2$};
\draw (-3,-2) node{$\square$};
\draw (-2,-2) node{$\square$};
\draw (-1,-2) node{$\square$};
\draw (-0.1,-2) node{$\square$};
\fill (0.1,-2) circle (0.04);
\draw (1,-2) node{$\square$};
\draw (2,-2) node{$\square$};
\draw (3,-2) node{$\square$};
\draw (4,-2) node{$\square$};
\fill(0,-2.3) circle(0.04);
\draw (0,-2.3) -- (0.1,-2);
\draw[->] (0,-2.5)--(0,-2.4);
\draw (0,-2.6)--(0,-2.3);
\draw[->] (-2.1,-2)--(-2.9,-2);
\draw[->] (-1.1,-2)--(-1.9,-2);
\draw (-1.7,-1.92) node{$\scriptstyle{\ell}$};
\draw[->] (-0.2,-2)--(-0.9,-2);
\draw (-0.7,-1.92) node{$\scriptstyle{\ell}$};
\draw[->] (0.9,-2)--(0.18,-2);
\draw (0.4,-1.92) node{$\scriptstyle{X}$};
\draw[->] (1.9,-2)--(1.1,-2);
\draw[->] (2.9,-2)--(2.1,-2);
\draw[->] (3.9,-2)--(3.1,-2);
\fill (0,-1.7)  circle (0.04);
\draw (-0.1,-1.93)--(0,-1.7);
\draw (0.1,-1.95)--(0,-1.7);
\draw[->] (0,-1.7)--(0,-1.5);
\fill (-1,-1.7) circle (0.04);
\draw[->] (-1,-1.95)--(-1,-1.52);
\fill (-2,-1.7) circle (0.02);
\draw (-2,-1.7) circle (0.05);
\draw[->] (-2,-1.95)--(-2,-1.52);
\fill (-3,-1.7) circle (0.02);
\draw (-3,-1.7) circle (0.05);
\draw[->] (-3,-1.95)--(-3,-1.52);

\fill (-0.5,-2.1) circle (0.04);
\draw (-0.5,-2.5)--(-0.5,-2.1);
\draw[->] (-0.5,-2.5)--(-0.5,-2.25);
\fill (0.5,-2.1) circle (0.02);
\draw (0.5,-2.1) circle (0.05);
\draw (0.5,-2.5)--(0.5,-2.08);
\draw[->] (0.5,-2.5)--(0.5,-2.25);
\fill (1.5,-2.1) circle (0.02);
\draw (1.5,-2.1) circle (0.05);
\draw (1.5,-2.5)--(1.5,-2.08);
\fill (2.5,-2.1) circle (0.02);
\draw (2.5,-2.1) circle (0.05);
\draw (2.5,-2.5)--(2.5,-2.08);
\fill (3.5,-2.1) circle (0.02);
\draw (3.5,-2.1) circle (0.05);
\draw (3.5,-2.5)--(3.5,-2.08);
\draw[->] (1.5,-2.5)--(1.5,-2.25);
\draw[->] (2.5,-2.5)--(2.5,-2.25);

\draw[->] (0.4,-2.1)--(-0.4,-2.1);
\draw[->] (1.4,-2.1)--(0.6,-2.1);
\draw[->] (2.4,-2.1)--(1.6,-2.1);
\draw[->](3.4,-2.1)--(2.6,-2.1);
\draw[->](4.1,-2.1)--(3.6,-2.1);

\draw[magenta](-0.5,-1.2)  node{$\scriptstyle{(u_{[\ell\cln \ell^2]})^2 }$};
\draw[->,magenta](-0.5,-1.3)--(-0.19,-1.85);
\draw[magenta](0.5,-1.2) node{$\scriptstyle{\alpha}$};
\draw[->,magenta](0.5,-1.3)--(0.2,-1.88);
\draw[magenta](-1,-2.2) node{$\scriptstyle{u_\ell u_{[\ell\cln \ell^2]}}$};
\draw[magenta](-2,-2.2) node{$\scriptstyle{(u_\ell)^2}$};
\draw[magenta](-3,-2.2) node{$\scriptstyle{u_{\ell^2}(u_\ell)^2}$};
\draw[magenta](1,-1.7) node{$\scriptstyle{\frac{\ell^2 (u_\ell)^2}{(u_{\ell^2})^3}}$};
\draw[magenta](-1,-2.8) node{$\scriptstyle{(u_\ell)^2\gamma_{\ell^2}}$};
\draw[->,magenta](-0.9,-2.7)--(-0.6,-2.2);
\draw[magenta](1,-2.6) node{$\scriptstyle{(u_\ell)^2\cdot \frac{\gamma_{\ell^2}}{u_{\ell^2}} }$};
\draw[->,magenta] (0.9,-2.5) -- (0.6,-2.2);
%
%
\draw(-2.8,-3.1) node{$r=3$};
\draw (-3,-4) node{$\square$};
\draw (-2,-4) node{$\square$};
\draw (-1.1,-4) node{$\square$};
\fill (-0.9,-4) circle (0.04);
\draw (-0.1,-4) node{$\square$};
\fill (0.1,-4) circle (0.04);
\draw (1,-4) node{$\square$};
\draw (2,-4) node{$\square$};
\draw (3,-4) node{$\square$};
\draw (4,-4) node{$\square$};
\fill(-1,-4.3) circle(0.04);
\draw (-1,-4.3) -- (-0.9,-4);
\draw[->] (-1,-4.5)--(-1,-4.4);
\draw (-1,-4.6)--(-1,-4.3);
\fill(0,-4.3) circle(0.04);
\draw (0,-4.3) -- (0.1,-4);
\draw[->] (0,-4.5)--(0,-4.4);
\draw (0,-4.6)--(0,-4.3);
\fill (0,-3.7)  circle (0.04);
\draw (-0.1,-3.93)--(0,-3.7);
\draw (0.1,-3.95)--(0,-3.7);
\draw[->] (0,-3.7)--(0,-3.5);
\fill (-1,-3.7)  circle (0.04);
\draw (-1.1,-3.93)--(-1,-3.7);
\draw (-0.9,-3.95)--(-1,-3.7);
\draw[->] (-1,-3.7)--(-1,-3.5);
\fill (-2,-3.7) circle (0.04);
\draw[->] (-2,-3.95)--(-2,-3.52);
\fill (-3,-3.7) circle (0.02);
\draw (-3,-3.7) circle (0.05);
\draw[->] (-3,-3.95)--(-3,-3.52);
\draw[->] (-2.1,-4)--(-2.9,-4);
\draw[->] (-1.2,-4)--(-1.9,-4);
\draw[->] (-0.2,-4)--(-0.8,-4);
\draw[->] (0.9,-4)--(0.2,-4);
\draw[->] (1.9,-4)--(1.1,-4);
\draw[->] (2.9,-4)--(2.1,-4);
\draw[->] (3.9,-4)--(3.1,-4);
\draw (-2.7,-3.92) node{$\scriptstyle{\ell}$};
\draw (-1.7,-3.92) node{$\scriptstyle{\ell}$};
\draw (-0.6,-3.92) node{$\scriptstyle{X}$};
\draw (0.4,-3.92) node{$\scriptstyle{X}$};

\draw[->](-0.6,-4.1)--(-1.4,-4.1);
\draw[->](0.4,-4.1)--(-0.4,-4.1);
\draw[->](1.4,-4.1)--(0.6,-4.1);
\draw[->](2.4,-4.1)--(1.6,-4.1);
\draw[->](3.4,-4.1)--(2.6,-4.1);
\draw[->](4.1,-4.1)--(3.6,-4.1);
\fill (-1.5,-4.1) circle (0.04);
\draw (-1.5,-4.5)--(-1.5,-4.1);
\draw[->] (-1.5,-4.5)--(-1.5,-4.25);
\fill (-0.5,-4.1) circle (0.02);
\draw (-0.5,-4.1) circle (0.05);
\draw (-0.5,-4.5)--(-0.5,-4.08);
\draw[->] (-0.5,-4.5)--(-0.5,-4.25);
\fill (0.5,-4.1) circle (0.02);
\draw (0.5,-4.1) circle (0.05);
\draw (0.5,-4.5)--(0.5,-4.08);
\draw[->] (0.5,-4.5)--(0.5,-4.25);
\fill (1.5,-4.1) circle (0.02);
\draw (1.5,-4.1) circle (0.05);
\draw (1.5,-4.5)--(1.5,-4.08);
\draw[->] (1.5,-4.5)--(1.5,-4.25);
\fill (2.5,-4.1) circle (0.02);
\draw (2.5,-4.1) circle (0.05);
\draw (2.5,-4.5)--(2.5,-4.08);
\draw[->] (2.5,-4.5)--(2.5,-4.25);
\fill (3.5,-4.1) circle (0.02);
\draw (3.5,-4.1) circle (0.05);
\draw (3.5,-4.5)--(3.5,-4.08);
\draw[->] (3.5,-4.5)--(3.5,-4.25);

\draw[magenta](-1.5,-3.2)  node{$\scriptstyle{u_\ell (u_{[\ell\cln \ell^2]})^2 }$};
\draw[->,magenta](-1.5,-3.3)--(-1.19,-3.85);
\draw[magenta](-0.7,-3.4) node{$\scriptstyle{u_\ell \alpha}$};
\draw[->,magenta](-0.7,-3.5)--(-0.8,-3.88);
\draw[magenta](-2.05,-4.2) node{$\scriptstyle{(u_\ell)^2 u_{[\ell\cln\! \ell^2]}}$};
\draw[magenta](-3,-4.2) node{$\scriptstyle{(u_\ell)^3}$};
\draw[magenta](1,-3.7) node{$\scriptstyle{\frac{\ell^2 (u_\ell)^3}{(u_{\ell^2})^3}}$};
\draw[magenta](-2,-4.8) node{$\scriptstyle{(u_\ell)^3\gamma_{\ell^2}}$};
\draw[->,magenta](-1.9,-4.7)--(-1.6,-4.2);
\draw[magenta](1,-4.63) node{$\scriptstyle{(u_\ell)^3\cdot \frac{\gamma_{\ell^2}}{(u_{\ell^2})^2} }$};
\draw[->,magenta] (0.9,-4.5) -- (0.6,-4.2);

\draw[magenta](-0.2,-3.2)  node{$\scriptstyle{(u_{[\ell\cln \ell^2]})^3 }$};
\draw[->,magenta](-0.2,-3.3)--(-0.19,-3.85);
\draw[magenta](0.5,-3.2) node{$\scriptstyle{E \alpha}$};
\draw[->,magenta](0.5,-3.3)--(0.2,-3.88);
\end{tikzpicture}
\end{figure}


\section{Change of groups}
\label{se:change}

Throughout this paper we have worked with $C_n$ and $\mZ_{C_n}$ directly, though often the methods have involved localizing at a prime $\ell$.  One can envision a reduction from $C_n$ to the smaller group $C_{n(\ell)}$.  Here we explain how to approach this using the standard change-of-group functors.\smallskip

Fix a prime $\ell$.  Recall that $n(\ell)$ is the $\ell$-primary part of $n$,  and $n(\hat{\ell})=\frac{n}{n(\ell)}$.
We have the extension of groups
\[ 0 \ra C_{n(\hat{\ell})} \inc C_n \llra{f} C_{n(\ell)}  \ra 0.
\]
We will consider the restriction functor $\mRes_f\colon \mZ_{C_{n(\ell)}}\MMod \ra \mZ_{C_n}\MMod$, as described for example in \cite[Section 7]{DHone}.  
By \cite[Remark 7.4]{DHone} we have 
$\mRes_f(F_{\Theta_{\ell^e}})=F_{\Theta_{\ell^e}}$.
Note that the two free functors, though written the same, are slightly different: the first is a $\mZ_{C_{n(\ell)}}$-module and the second is a $\mZ_{C_n}$-module.  It follows that $\mRes_f(S^{\lambda_{\ell^e}})=S^{\lambda_{\ell^e}}$, and since $\mRes_f$ preserves the box product (\cite[Proposition 7.5]{DHone}) we get
\[ \mRes_f(S^{\lambda_{\ell^{e_1}}}\bbox \cdots \bbox
S^{\lambda_{\ell^{e_s}}} )\iso
S^{\lambda_{\ell^{e_1}}}\bbox \cdots \bbox S^{\lambda_{\ell^{e_s}}}.
\]
Once again, note that the spheres on the left and the right are slightly different: on the left they denote complexex of $\mZ_{C_{n(\ell)}}$-modules, and on the right they denote  complexes of $\mZ_{C_n}$-modules.  

For convenience write $\lambda_{\ell^{e_\bdot}}$ for $\lambda_{\ell^{e_1}}+\cdots+\lambda_{\ell^{e_s}}$.  Given sequences $(e_1,\ldots,e_s)$ and $(k_1,\ldots,k_t)$ we
now observe the following chain of isomorphisms:

\begin{align*}
[S^{\lambda_{\ell^{e_\bdot}}}, \Sigma^m S^{\lambda_{\ell^{k_\bdot}}}]_{C_n}&=
[\mRes_f(S^{\lambda_{\ell^{e_\bdot}}})\, , \, \Sigma^m \mRes_f(S^{\lambda_{\ell^{k_\bdot}}})]_{C_n}\\
&=
[S^{\lambda_{\ell^{e_\bdot}}} \, , \, \Sigma^m \mInd_f\mRes_f(S^{\lambda_{\ell^{k_\bdot}}})]_{C_{n(\ell)}}\\
&= 
[S^{\lambda_{\ell^{e_\bdot}}}, \Sigma^m S^{\lambda_{\ell^{k_\bdot}}}]_{C_{n(\ell)}}
\end{align*}
where the second equality uses the adjunction and the third equality uses \cite[Proposition 7.3(c)]{DHone}.
Restating this in words, computing homotopy classes in $\cD(\mZ_{C_n})$ between $\ell$-adic spheres reduces to the analogous computation in $\cD(\mZ_{C_{n(\ell)}})$.  In fact, the map
\[ \mRes_f \colon [S^{\lambda_{\ell^{e_\bdot}}},
\Sigma^m S^{\lambda_{\ell^{k_\bdot}}}]_{C_{n(\ell)}} \ra
[S^{\lambda_{\ell^{e_\bdot}}},
\Sigma^m S^{\lambda_{\ell^{k_\bdot}}}]_{C_{n}}
\]
is the isomorphism, and because $\mRes_f$ preserves the box products these maps assemble into an isomorphism of multigraded rings.  So when restricted to $\ell$-adic gradings, $H^\star_{C_n}(\pt)$ is isomorphic to $H^\star_{C_{n(\ell)}}(\pt)$.  

Next let us consider what happens away from the $\ell$-adic gradings.  To any abelian group $A$ we can associate the diagram of localizations
\[ \xymatrix{
A_{(2)}\ar[dr] & A_{(3)}\ar[d] & A_{(5)}\ar[dl] & \cdots \ar[dll] \\
& A_{(0)}
}
\]
where $A_{(\ell)}=A\tens \Z_{(\ell)}$ and $A_{(0)}=A\tens \Q$.  Write $\lim_\ell A_{(\ell)}$ for the limit of this diagram, and observe the canonical map $A\ra \lim_\ell A_{(\ell)}$.  It is easy to see that this is an isomorphism whenever $A$ is finitely-generated ($A=\bigoplus_\ell \Z/\ell\Z$ is a counterexample to the more general statement).  For example, when $A$ is finitely-generated and torsion then $A_{(0)}=0$ and the limit is just the direct sum of the $\ell$-primary torsion components of $A$.  

If $C$ and $D$ are bounded complexes of finitely-generated free abelian groups then the above observation implies that we have the isomorphism 
\begin{equation}
\label{eq:ell-reduction}
[C,D] \llra{\iso} \lim_\ell\, [C,D]_{(\ell)}.
\end{equation}
But we can also observe that
\begin{align*}
[C,D]_{(\ell)}=H_0(\Hom(C,D))\tens \Z_{(\ell)}&=H_0(\Hom(C,D)\tens \Z_{(\ell)} )\\
&\iso H_0(\Hom(C\tens \Z_{(\ell)},D\tens \Z_{(\ell)})) \\
&= [C_{(\ell)},D_{(\ell)}].
\end{align*}
So in fact we can rewrite (\ref{eq:ell-reduction}) as 
\begin{equation}
\label{eq:ell-reduction2}
[C,D] \llra{\iso} \lim_{\ell}\, [C_{(\ell)},D_{(\ell)}].
\end{equation}
The same argument works if $C$ and $D$ are bounded complexes of finitely-generated free $\mZ_G$-modules.  So if $(b_1,\ldots,b_s)$ and $(c_1,\ldots,c_t)$ are divisors of $n$ then we have the isomorphism
\[ [S^{\lambda_{\und{b}}}\,,\, \Sigma^m\, S^{\lambda_{\und{c}}}]_{C_n} \llra{\iso}
\lim_\ell\, 
[S^{\lambda_{\und{b}}}_{(\ell)}\, , \, \Sigma^m\, S^{\lambda_{\und{c}}}_{(\ell)} ]_{C_n}.
\]
We have seen in Proposition~\ref{pr:ell-local-1} that $S^{\lambda_d}_{(\ell)}   \he S^{\lambda_{d(\ell)}}_{(\ell)}$, and this yields
\[ 
[S^{\lambda_{\und{b}}}\,,\, \Sigma^m\, S^{\lambda_{\und{c}}}]_{C_n} \iso
\lim_\ell\, 
[S^{\lambda_{\und{b(\ell)}}}_{(\ell)}\, , \, \Sigma^m\, S^{\lambda_{\und{c(\ell)}}}_{(\ell)} ]_{C_n} \iso
\lim_\ell \,
[ S^{\lambda_{\und{b(\ell)}}}\, , \, \Sigma^m S^{\lambda_{\und{c(\ell)}}}]_{(\ell),C_{n(\ell)}}.
\]
This reduces the computation of the $H^\star_{C_n}(\pt)$ groups to the case of cyclic groups of prime-power order.  For $m\neq 2(s-t)$ the groups in the above isomorphism are all torsion, and the inverse limit is just a direct sum.  When $m=2(s-t)$ there is a single $\Z$ summand that is $\ell$-adically patched across the inverse limit.  

Tracking the multiplicative structure across the above isomorphisms is a little troublesome.  One needs to make use of the elements $u_{[d:d(\ell)]}$ and $u_{[d(\ell):d]}$, which are units after $\ell$-localization but not before.   
We do not pursue this further here.


\bibliographystyle{amsalpha}

\begin{thebibliography}{JTTW}

\bibitem[AMR]{AMR} D. Ayala, A. Mazel-Gee, N. Rozenblyum, {\em Derived Mackey functors and $C_{p^n}$-equivariant cohomology\/}, 2021 preprint, arXiv: 2105.02456.

\bibitem[BD]{BD} S. Basu and P. Dey, {\em Equivariant cohomology for cyclic groups\/}, 
Int. Math. Res. Not. IMRN 2025, Paper No. rnaf150, 41pp.

\bibitem[BG1]{BG1} S. Basu and S. Ghosh, {\em Computations in $C_{pq}$-Bredon cohomology\/}, Math. Z. {\bf 293} (2019), no. 3--4, 1443--1487.

\bibitem[BG2]{BG2} S. Basu and S. Ghosh, {\em Equivariant cohomology for cyclic groups of square-free order\/},
Res. Math. Sci. {\bf 11} (2024), no. 2, Paper No. 29, 33pp.

\bibitem[BG3]{BG3} S. Basu and S. Ghosh, {\em Non-trivial extensions in equivariant cohomology with constant coefficients\/}, 2021 preprint, arXiv: 2108.12763.

\bibitem[DV]{DV} B. Deng and M. Voineagu, {\em $RO(C_2\times C_2)$-graded cohomology ring of a point and applications\/}, 2025 preprint, arXiv: 2502: 00956.

\bibitem[D]{D} D. Dugger, {\em Coherence for invertible objects and multi-graded homotopy rings\/}, Algebr. Geom. Topol. {\bf 14} (2014), 1055--1106.

\bibitem[DH]{DHone} D. Dugger and C. Hazel, {\em Mackey homological algebra over cyclic groups\/}, preprint, 2026.

\bibitem[G1]{G1} N. Georgakopoulos, {\em The $RO(C_4)$ integral homology of a point\/}, 2019 preprint, arXiv: 1912:06758.

\bibitem[HHR1]{HHR1} M.~A. Hill, M.~J. Hopkins, and D.~C. Ravenel, {\em On the nonexistence of elements of Kervaire invariant one\/}, Ann. of Math. (2) {\bf 184} (2016), no. 1, 1--262.

\bibitem[HHR2]{HHR2} M.~A. Hill, M.~J. Hopkins, and D.~C. Ravenel, {\em The slice spectral sequence for certain $RO(C_{p^n})$-graded suspensions of $H\mZ$\/}, Bol. Soc. Mat. Mex. (3) {\bf 23} (2017), no. 1, 289--317.

\bibitem[HHR3]{HHR3} M.~A. Hill, M.~J. Hopkins, and D.~C. Ravenel, {\em Equivariant stable homotopy theory and the Kervaire invariant one problem\/}, New Math. Monogr. {\bf 40}, Cambridge University Press, Cambridge, 2021.

\bibitem[HK]{HK} J. Holler and I. Kriz, {\em On $RO(G)$-graded ``ordinary'' cohomology where $G$ is a power of $\Z/2$\/}, Algebr. Geom. Topol. {\bf 17} (2017), 741--763.

\bibitem[KL]{KL} I. Kriz and Y. Lu, {\em On the $RO(G)$-graded coefficients of dihedral equivariant cohomology\/}, Math. Res. Lett. {\bf 27} (2020), no. 4, 1109--1128.

\bibitem[Li]{Li} Y. Liu, {\em Computing equivariant homology with a splitting method\/}, Topology Appl. {\bf 334} (2023), Paper No. 108554, 57pp.

\bibitem[Lu]{Lu} Y. Lu, {\em On the $RO(G)$-graded coefficients of $Q_8$-equivariant cohomology\/}, Topology Appl. {\bf 307} (2022), Paper No. 107921, 15pp.

\bibitem[SS]{SS} S. Schwede and B. Shipley, {\em Equivalences of monoidal model categories\/}, Algebr. Geom. Topol. {\bf 3} (2003), no.~1, 287--334.

\bibitem[Ya1]{Ya1} G. Yan, {\em The $RO(C_{2^n})$-graded homotopy of $H\mZ$ through generalized Tate squares\/}, Matematica {\bf 4} (2025), no. 2, 364--418.

\bibitem[Ya2]{Ya2} G. Yan, {\em The Green functor structure of $RO(D_{2p})$-graded cohomology of a point\/}, 2023 preprint, arXiv: 2310.16118.

\bibitem[Z]{Z} M. Zeng, {\em Equivariant Eilenberg-MacLane spectra in cyclic $p$-groups\/}, preprint, 2017, arXiv:1710.01769.

\end{thebibliography}

\end{document}